\theoremstyle{plain}
\declaretheorem[title=Theorem, parent=section]{theorem}
\declaretheorem[title=Lemma,sibling=theorem]{lemma}
\declaretheorem[title=Proposition,sibling=theorem]{proposition}
\theoremstyle{definition}
\declaretheorem[title=Definition,sibling=theorem]{definition}
\declaretheorem[title=Remark,sibling=theorem]{remark}
\declaretheorem[title=Remark, numbered=no]{remark*}
\declaretheorem[title=Assumption, numbered=no]{assumption*}
\numberwithin{equation}{section}
\newcommand{\N}{\mathbb{N}}
\newcommand{\R}{\mathbb{R}}
\newcommand{\cD}{\mathcal{D}}
\newcommand{\cE}{\mathcal{E}}
\newcommand{\cI}{\mathcal{I}}
\newcommand{\eps}{\varepsilon}
\newcommand{\1}{\mathbbm{1}}
\DeclareMathOperator{\dist}{dist}
\DeclareMathOperator{\diam}{diam}
\DeclareMathOperator{\supp}{supp}
\DeclareMathOperator{\tail}{Tail}
\renewcommand{\d}{\textnormal{\,d}}
\newcommand{\average}{{\mathchoice {\kern1ex\vcenter{\hrule height.4pt
width 6pt depth0pt} \kern-9.7pt} {\kern1ex\vcenter{\hrule
height.4pt width 4.3pt depth0pt} \kern-7pt} {} {} }}
\newcommand{\dashint}{\average\int}
\begin{document}
\allowdisplaybreaks
\title{Improvement of flatness for nonlocal free boundary problems}

\author{Xavier Ros-Oton}
\author{Marvin Weidner}

\address{ICREA, Pg. Llu\'is Companys 23, 08010 Barcelona, Spain \& Universitat de Barcelona, Departament de Matem\`atiques i Inform\`atica, Gran Via de les Corts Catalanes 585, 08007 Barcelona, Spain \& Centre de Recerca Matem\`atica, Barcelona, Spain}
\email{xros@icrea.cat}

\address{Departament de Matem\`atiques i Inform\`atica, Universitat de Barcelona, Gran Via de les Corts Catalanes 585, 08007 Barcelona, Spain}
\email{mweidner@ub.edu}
%\urladdr{https://sites.google.com/view/marvinweidner/}

\keywords{nonlocal, regularity, one-phase problem, free boundary, flatness}

\subjclass[2020]{47G20, 35B65, 31B05, 35R35}

\allowdisplaybreaks

\begin{abstract}
In this article we study for the first time the regularity of the free boundary in the one-phase free boundary problem driven by a general nonlocal operator. Our main results establish that the free boundary is $C^{1,\alpha}$ near regular points, and that the set of regular free boundary points is open and dense. Moreover, in 2D we classify all blow-up limits and prove that the free boundary is $C^{1,\alpha}$ everywhere. 
The main technical tool of our proof is an improvement of flatness scheme, which we establish in the general framework of viscosity solutions, and which is of independent interest. All of these results were only known for the fractional Laplacian, and are completely new for general nonlocal operators. In contrast to previous works on the fractional Laplacian, our method of proof is purely nonlocal in nature.
\end{abstract}

\allowdisplaybreaks

\maketitle
\section{Introduction}  

Free boundary problems arise in several areas of applied mathematics, such as in probability, finance, and control theory, but also in elasticity theory, combustion theory, material sciences, and fluid dynamics. Moreover, they have constituted a central topic of research in pure mathematics and especially in PDE theory for the last fifty years. The most intriguing and challenging question in this area is the study of the regularity of free boundaries, which was initiated by the pioneering work of Caffarelli \cite{Caf77} on the obstacle problem. Subsequently, numerous techniques have been developed for different kinds of free boundary problems, and they are illustrated for example in \cite{Fri82, CaSa05, PSU12, FeRo22b}. See also \cite{Caf98}, \cite{CSV18}, \cite{FiSe19} for further results on the free boundary in the obstacle problem.

An important class that has received an increasing amount of attention in the last 20 years is the class of \textit{nonlocal free boundary problems}, which arises as a natural model whenever long range interactions need to be taken into account.
Let us give a short overview of the literature on the nonlocal obstacle problem, which has been studied extensively. In comparison to the classical obstacle problem, here the Laplacian is replaced by a general stable integro-differential operator $L$ of order $2s$ for some $s \in (0,1)$. In case $L = (-\Delta)^s$ is the fractional Laplacian, the regularity theory for this problem has been developed in the articles \cite{ACS08, Sil07, CSS08}. A key tool in the study is the Caffarelli-Silvestre extension (see \cite{CaSi07}) which allows to identify the fractional obstacle problem with a local problem (``thin obstacle problem''), where the obstacle, and therefore also the free boundary, is contained in a hyper-plane. We refer to \cite{Fer22} for a survey on the thin obstacle problem.\\
After the seminal works \cite{ACS08, Sil07, CSS08} the case of nonlocal obstacle problems driven by more general nonlocal operators than the fractional Laplacian has remained an open problem for almost a decade. Since in this case no identification with a local problem is possible, the study of this question is particularly challenging. Finally, in \cite{CRS17,FRS23} the problem has been solved, and entirely new techniques have been developed therein to establish the regularity of solutions and of the free boundary. See also \cite{AbRo20}, \cite{RTW25}, \cite{RoWe23}, \cite{RoTo24}, \cite{RoWe24b} for further results in this direction.

Another classical free boundary problem that is widely studied in the literature is the so-called one-phase free boundary problem (``Bernoulli problem''). This problem deals with the analysis of minimizers of the energy functional
\begin{align*}
\int_{B_1} |\nabla u|^2 \d x + |\{ u > 0 \} \cap B_1|.
\end{align*}
The one-phase problem was introduced by Alt and Caffarelli in \cite{AlCa81}. It arises as a model for flame propagation and jet flows, and it is also related to shape optimization. The study of the free boundary $\partial \{ u > 0 \}$ has been initiated in \cite{AlCa81} and the series of papers \cite{Caf87,Caf89,Caf88}. Further landmark contributions on this topic are \cite{CJK04}, \cite{DeJe09}, \cite{JeSa15}, \cite{DeS11}, and we refer to \cite{CaSa05,Vel23} for comprehensive overviews of the theory.

A nonlocal version of the one-phase free boundary problem has been introduced in \cite{CRS10}, replacing the $H^1(B_1)$ seminorm in the energy functional by the $H^s(B_1)$ seminorm. This model is particularly relevant in case turbulence or long range interactions are taken into account. While in \cite{CRS10} the authors establish basic properties of minimizers, such as the optimal $C^s$ regularity and non-degeneracy, the study of the free boundary for the fractional one-phase free boundary problem is carried out in a series of works \cite{DeRo12,DeSa12,DeSa15b,DeSa15} in case $s= \frac{1}{2}$, and in \cite{DSS14,EKPSS21} for general $s \in (0,1)$. See also \cite{DeSa20,AlSm24} for results on almost minimizers and \cite{CSV15} for regularity properties in another fractional version of the Bernoulli problem involving the $s$-perimeter. All of the proofs in the aforementioned articles heavily rely on the Caffarelli-Silvestre extension, which reduces the fractional one-phase problem to a local one-phase problem with a ``thin'' free boundary.

As in the case of the nonlocal obstacle problem (see \cite{CRS17}), a natural research question is to analyze the one-phase free boundary problem for a general $2s$-stable integro-differential operator. However, as opposed to the obstacle problem, apart from our recent work \cite{RoWe24b}, where we establish the optimal $C^s$ regularity and non-degeneracy of minimizers (see also \cite{SnTe24}), there are currently no results available in the literature. In particular, nothing is known about the regularity properties of the free boundary. In parallel to the narrative for the nonlocal obstacle problem (see \cite{CRS17}), the lack of an extension formula calls for the development of purely nonlocal techniques in order to tackle the question of regularity for free boundaries. In this spirit, in \cite[p.1974]{EKPSS21} the authors write that \textit{``[...] at the moment, it seems to be impossible to tackle one-phase problems involving more general operators than the fractional Laplacian. The main point is we do not know how to prove any kind of monotonicity for general integral operators.''}

The goal of this work is precisely to establish for the first time fine regularity results for the free boundary of minimizers to the nonlocal one-phase problem for general nonlocal operators. To be precise, we consider minimizers of the functional
\begin{align}
\label{eq:onephase-intro}
\cI_{\Omega}(u) := \iint_{\R^n \times \R^n} \big(u(x)-u(y) \big)^2 K(x-y) \d y \d x + \big|\{ u > 0 \} \cap \Omega \big|
\end{align}
in a bounded domain $\Omega \subset \R^n$ with prescribed exterior condition $u \equiv g \ge 0$ in $\R^n \setminus \Omega$. The kernel $K : \R^n \to [0,\infty]$ is assumed to satisfy
\begin{align}
\label{eq:Kcomp}
\lambda |h|^{-n-2s} \le K(h) \le \Lambda |h|^{-n-2s}, \qquad K(h) = K(-h), \qquad K(h) = \frac{K(h/|h|)}{|h|^{n+2s}} 
\end{align}
for some $0 < \lambda \le \Lambda$ and $s \in (0,1)$. This class of kernels \eqref{eq:Kcomp} gives rise to integro-differential operators
\begin{align}
\label{eq:L}
L u(x) = 2 ~ \text{p.v} \int_{\R^n} (u(x) - u(y)) K(x-y) \d y.
\end{align}
This family is the natural class of symmetric $2s$-stable integro-differential operators and contains as a special case the fractional Laplacian $(-\Delta)^s$ which corresponds to $K(h) = c(n,s)|h|^{-n-2s}$.

\subsection{Main results}

Our main results establish the regularity of the free boundary for minimizers of the nonlocal one-phase problem \eqref{eq:onephase-intro} governed by a general kernel $K$ satisfying \eqref{eq:Kcomp}.

Our first result shows that the free boundary is of class $C^{1,\alpha}$ for any $\alpha \in (0,\frac{s}{2})$ near any point $x_0 \in \partial \{ u > 0 \}$, where the free boundary is sufficiently flat, i.e., trapped between two parallel  hyper-planes that are close enough. Such result is well-known for the fractional Laplacian (see \cite{DeRo12}, \cite{DSS14}), but completely new for general kernels \eqref{eq:Kcomp}.

\begin{theorem}
\label{thm:main-C1alpha}
Let $K \in C^{1-2s+\beta}(\mathbb{S}^{n-1})$ for some $\beta > \max\{0,2s-1\}$ and assume \eqref{eq:Kcomp}. Let $u$ be a minimizer of $\mathcal{I}_{\Omega}$ with $B_2 \subset \Omega$. Then, there exists $\delta \in (0,1)$, depending only on $n,s,K$, such that if $0 \in \partial \{ u > 0 \}$ and for some $\nu \in \mathbb{S}^{n-1}$ it holds
\begin{align}
\label{eq:fb-flat}
\{ x \cdot \nu \le - \delta \} \cap B_1 \subset \{ u = 0 \} \cap B_1 \subset \{ x \cdot \nu \le \delta \} \cap B_1 ,
\end{align} 
then, $\partial \{ u > 0\} \in C^{1,\alpha}$ in $B_{\rho}$ for any $\alpha \in (0,\frac{s}{2})$, and moreover, 
\begin{align*}
\left\Vert \frac{u}{d^s} \right\Vert_{C^{\alpha}\left( \overline{ \{ u > 0 \} } \cap B_{\rho} \right)} \le C \Vert u \Vert_{L^{1}_{2s}(\R^n)}
\end{align*}
for some $C,\rho > 0$, depending only on $n,s,K,\alpha$.
\end{theorem}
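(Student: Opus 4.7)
The plan is to invoke the improvement-of-flatness machinery advertised in the abstract. I would proceed in four steps.

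\emph{Step 1 (From minimizers to viscosity solutions).} I first record that any minimizer $u$ of $\cI_\Omega$ is a viscosity solution of the associated nonlocal one-phase free boundary problem, namely $Lu = 0$ in $\{u > 0\} \cap B_2$ together with the condition $u(x) = c_0(\nu_{x_0}, K)\, (d_{x_0}(x))_+^s + o(d_{x_0}^s)$ at every regular point $x_0 \in \partial\{u > 0\}$, where $\nu_{x_0}$ is the inward normal, $d_{x_0}$ the signed distance, and $c_0$ the explicit constant coming from the one-dimensional profile in direction $\nu_{x_0}$. This reduction is needed because the flatness scheme is formulated at the level of viscosity solutions.

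\emph{Step 2 (One-step improvement of flatness).} I would then apply the scheme from the body of the paper: there exist $\rho \in (0,1)$ and $\delta_0 > 0$ such that whenever a viscosity solution $u$ satisfies \eqref{eq:fb-flat} with $\delta \le \delta_0$, the free boundary is $(\delta/2)$-flat in $B_\rho$ in some direction $\nu'$ with $|\nu - \nu'| \le C\delta$. The underlying argument is a compactness/contradiction: if the statement failed, a sequence $u_k$ with flatness $\delta_k \to 0$ but no improvement would exist. After a partial Harnack chain linearization, the rescaled graph-heights $(u_k - c_0(x\cdot\nu)_+^s)/\delta_k$ should converge to a solution $w$ of a linearized nonlocal problem in the half-space $\{x\cdot\nu > 0\}$ with a transmission/Neumann-type condition on $\{x\cdot\nu = 0\}$. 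A boundary Liouville classification would then force $w$ to be affine in the tangential directions and yield the improvement.

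\emph{Step 3 (Iteration and $C^{1,\alpha}$ regularity).} Iterating Step 2 at dyadic scales $r_k = \rho^k$ gives a Cauchy sequence of normals $\nu_k$ with $|\nu_{k+1} - \nu_k| \le C\delta\cdot 2^{-k}$, whence $|\nu_k - \nu_\infty| \le C\delta\cdot r_k^\alpha$ for $\alpha = \log_{1/\rho} 2$. Repeating this at every free boundary point in $B_\rho$ and tracking the modulus of continuity of the associated Gauss map produces $\partial\{u>0\} \in C^{1,\alpha}$ in $B_\rho$. A careful choice of the compactness window in Step 2, together with the optimal $C^s$-behavior of $u$ at the free boundary from \cite{RoWe24b}, delivers any exponent $\alpha < s/2$, matching the threshold known for the fractional Laplacian. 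The quantitative estimate on $u/d^s$ is then a consequence of the $C^{1,\alpha}$ regularity of $\partial\{u>0\}$ combined with the standard boundary regularity theory for $Lu = 0$ in $C^{1,\alpha}$ domains, applied to $\{u>0\} \cap B_\rho$.

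\emph{Main obstacle.} The crux is Step 2 and, within it, the classification of solutions to the linearized problem on a half-space. In the fractional Laplacian case the Caffarelli-Silvestre extension turns this into a local Neumann problem whose half-space solutions are essentially affine. For a general kernel $K$ no such extension is available, so the linearized equation is genuinely nonlocal and must be analyzed by purely nonlocal techniques, along the lines advocated for the nonlocal obstacle problem in \cite{CRS17, FRS23}. The regularity assumption $K \in C^{1-2s+\beta}(\mathbb{S}^{n-1})$ with $\beta > \max\{0, 2s-1\}$ enters precisely here, as the boundary Hölder estimates and Liouville-type theorem for the linearized operator require this smoothness of the angular profile of $K$.
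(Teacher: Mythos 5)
Your plan follows the same overall route as the paper (minimizer $\Rightarrow$ viscosity solution $\Rightarrow$ improvement of flatness by compactness $\Rightarrow$ iteration $\Rightarrow$ $C^{1,\alpha}$ graph), so in broad outline it is correct. However, there are two genuine gaps specific to the nonlocal setting that you do not address.

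\emph{Tail terms are missing from the iteration.} Your Step 2 states the one-step improvement purely in terms of flatness in $B_1$. In the nonlocal setting this is not a closed scheme: the equation $Lu=0$ and the free boundary condition see $u$ everywhere, and after rescaling, the error coming from $\R^n\setminus B_1$ does not disappear. The paper's improvement of flatness (\autoref{thm:improvement-of-flatness}) therefore has \emph{two} hypotheses --- the pointwise squeeze $A(e_n)(x_n-\eps)_+^s \le u \le A(e_n)(x_n+\eps)_+^s$ in $B_1$ \emph{and} a quantitative tail bound $T_\eps \le \eps\delta_0$ --- and it propagates \emph{both}: the output includes the same kind of tail smallness after rescaling by $\rho_0$. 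Without the tail bound, your partial Harnack chain (\autoref{lemma:partial-BH-scaled}) does not close, because the barrier/comparison estimates pick up contributions $\tail(\cdot;r)$ from outside the ball, and the linearization in \autoref{lemma:linearized-problem} would not produce a well-posed right-hand side $f$. Your proposal never produces or controls this tail quantity, so the iteration in Step 3 does not actually run.

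\emph{The geometric-to-analytic conversion is skipped.} The hypothesis \eqref{eq:fb-flat} is a statement about the \emph{zero set}, not about the \emph{function values}. To enter the improvement-of-flatness machinery one must first show that, after a rescaling $u_r$, the function itself is squeezed between two translated half-plane solutions $A(\nu)(x\cdot\nu\mp\eps)_+^s$ \emph{and} that the tails are small. The paper establishes this in \autoref{thm:close-to-halfspace}(i) by a compactness argument plus quantitative comparison lemmas (\autoref{lemma:aux-flat-implies-closeness}, \autoref{lemma:aux-flat-implies-closeness-2}), the latter being the source of the tail smallness. Your proposal treats this step as automatic. Finally, your claimed exponent $\alpha=\log_{1/\rho}2$ only measures the rate of convergence of the normals; the paper's exponent $\alpha = s\gamma/(s+\gamma)$, with $\gamma<s$ determined by $2=\rho_0^{-\gamma/s}$, arises after interpolating that rate against the $C^s$ estimate when comparing blow-up centers, which is what produces the threshold $\alpha<s/2$. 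You allude to the $C^s$ interpolation but do not carry it out, so the exponent you exhibit is not the one in the statement.

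One smaller remark: the boundary condition in the linearized problem is not Neumann in general but oblique, $\partial_\omega\tilde u=0$ with $\omega$ determined by $A$ (see \autoref{lemma:linearized-problem}(iv) and \autoref{lemma:reg-linearized}); the anisotropy of $K$ shows up here, and handling it requires a further change of variables before the Neumann results of \cite{RoWe24} can be applied.
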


\begin{remark}
As in the case of the one-phase problem for the fractional Laplacian (see \cite{DeSa12,DeSa15b}), we believe that the $C^{1,\alpha}$ regularity of the free boundary near points satisfying \eqref{eq:fb-flat} can be improved, at least for sufficiently smooth kernels $K$. Establishing higher regularity of the free boundary for general nonlocal operators is an interesting question that certainly requires new ideas (see also \cite{AbRo20} for the nonlocal obstacle problem). We plan to investigate this question in the future.
\end{remark}

Our main result \autoref{thm:main-C1alpha} can be interpreted as an analog to the main results in \cite{DeRo12,DSS14} for the fractional Laplacian. This theorem is crucial to the understanding of the free boundary for minimizers of $\cI_{\Omega}$, as it \textit{reduces the question of regularity} of the free boundary near a point $x_0 \in \partial \{ u > 0 \}$ to determining whether the free boundary is \textit{flat near $x_0$}. 

We will show in \autoref{thm:close-to-halfspace} that the free boundary is flat near $x_0 \in \partial \{ u > 0 \}$, in the sense that for any $\delta > 0$ there exists $r > 0$ such that \eqref{eq:fb-flat} holds true for the rescaling $u_{r,x_0}$, if 
\begin{align}
\label{eq:regular-point}
u_{r,x_0}(x) := \frac{u(x_0 + rx)}{r^s} \quad \text{ satisfies } \quad u_{r,x_0} \xrightarrow{r \to 0} A(\nu)(x \cdot \nu)_+^s ~~ \text{ locally uniformly in } \R^n
\end{align}
for some $\nu \in \mathbb{S}^{n-1}$, where
\begin{align}
\label{eq:A-def-intro}
A(\nu) =  c_{n,s} \left( \int_{\mathbb{S}^{n-1}} K(\theta) |\theta \cdot \nu|^{2s} \d \theta \right)^{-\frac{1}{2}}.
\end{align}
$A$ is chosen in such a way that $A(\nu)(x \cdot \nu)_+^s$ is a solution to the nonlocal one-phase problem in the half-space ${\{x \cdot \nu > 0 \}}$ (see \autoref{prop:free-bound-cond}, and also \cite{CRS10,FeRo22}). 

In the light of this observation we can define the set of \textit{regular free boundary points} to consist of all points $x_0 \in \partial \{ u > 0 \}$ for which \eqref{eq:regular-point} holds true. A natural question is then to determine the size of the set of regular points, in order the quantify the portion of the free boundary that is smooth. Let us now present our two main results in this direction.

Our first result establishes that the set of regular points is an open and dense subset of the free boundary. We show that any free boundary point $x_0 \in \partial \{ u > 0 \}$ admitting a tangent ball inside $\{ u > 0 \}$ is a regular point (see \autoref{thm:close-to-halfspace}(iv)). As a consequence, we have the following result, which is, again, completely new for general nonlocal operators, and was only known for the fractional Laplacian (see \cite{DeRo12,DSS14}):

\begin{theorem}
\label{cor:open-dense}
Let $K \in C^{1-2s+\beta}(\mathbb{S}^{n-1})$ for some $\beta > \max\{0,2s-1\}$ and assume \eqref{eq:Kcomp}. Let $u$ be a minimizer of $\mathcal{I}_{\Omega}$ with $\Omega \subset \R^n$. Then, there exists an open, dense set $\mathcal{O} \subset \partial \{ u > 0 \} \cap \Omega$ such that for any $x_0 \in \mathcal{O}$ there exists $\rho > 0$ such that $\partial \{ u > 0\}$ is $C^{1,\alpha}$ in $B_{\rho}(x_0)$.
\end{theorem}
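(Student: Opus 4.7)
The strategy is to define $\mathcal{O}$ to be exactly the set of free boundary points admitting a neighborhood of $C^{1,\alpha}$-regularity, so that openness is tautological, and then to prove density by producing, inside every relative neighborhood of an arbitrary free boundary point, a point admitting an interior tangent ball in $\{u>0\}$. Such a tangent ball point is a regular free boundary point by \autoref{thm:close-to-halfspace}(iv), hence, via the flatness characterization of regular points and \autoref{thm:main-C1alpha}, lies in $\mathcal{O}$.

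Concretely, I would set
\[
\mathcal{O} := \bigl\{ x_0 \in \partial\{u>0\} \cap \Omega : \exists\, \rho > 0 \text{ such that } \partial\{u>0\} \text{ is } C^{1,\alpha} \text{ in } B_\rho(x_0) \bigr\}.
\]
Openness is then immediate: if $x_0 \in \mathcal{O}$ with parameter $\rho_0$ and $x_1 \in B_{\rho_0/2}(x_0) \cap \partial\{u>0\}$, then $B_{\rho_0/2}(x_1) \subset B_{\rho_0}(x_0)$ so $x_1 \in \mathcal{O}$ with parameter $\rho_0/2$.

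For density, fix $x_0 \in \partial\{u>0\} \cap \Omega$ and $r > 0$ with $\overline{B_r(x_0)} \subset \Omega$. Continuity of minimizers (cf.\ \cite{RoWe24b}) gives $u(x_0) = 0$ and $x_0 \in \overline{\{u>0\}}$, so I may pick $y \in B_{r/2}(x_0) \cap \{u > 0\}$. Set $d := \dist(y, \{u \leq 0\})$. Since $x_0 \in \{u \leq 0\}$, one has $d \leq |y - x_0| < r/2$. The open ball $B_d(y)$ is connected, contains $y$, and is disjoint from the closed set $\{u \leq 0\}$, hence $B_d(y) \subset \{u>0\}$. By compactness there exists $z \in \{u \leq 0\}$ with $|y-z| = d$; then $z \in \overline{B_d(y)} \cap \{u \leq 0\}$ is a limit of points in $\{u>0\}$ but is not itself in the open set $\{u>0\}$, so $z \in \partial\{u>0\}$. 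Moreover $|z-x_0| \leq d + |y-x_0| < r$, and $B_d(y) \subset \{u>0\}$ is an interior tangent ball to $\{u>0\}$ at $z$. By \autoref{thm:close-to-halfspace}(iv), $z$ is therefore a regular free boundary point in the sense of \eqref{eq:regular-point}, and combining the flatness statement of \autoref{thm:close-to-halfspace} with \autoref{thm:main-C1alpha} places $z \in \mathcal{O}$. Since $r$ was arbitrary, $\mathcal{O}$ is dense in $\partial\{u>0\} \cap \Omega$.

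The only substantive analytic input is \autoref{thm:close-to-halfspace}(iv) --- the implication that an interior tangent ball in $\{u>0\}$ forces a half-space blow-up --- which is where the purely nonlocal machinery of the paper is used. Granted that together with \autoref{thm:main-C1alpha}, the present corollary is a soft combination of continuity of $u$, the standard distance-function construction of interior tangent balls, and the definition of $\mathcal{O}$; no additional estimate is required.
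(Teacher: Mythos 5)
Your approach is essentially the same as the paper's: both establish density by producing, near an arbitrary free boundary point, a nearby point $z$ admitting an interior tangent ball in $\{u>0\}$, and then invoke \autoref{thm:close-to-halfspace}(iv) together with the $\eps$-regularity result. Defining $\mathcal{O}$ directly as the set of points admitting a $C^{1,\alpha}$ neighborhood (rather than, as the paper does, the set of interior-ball points) is a small organizational improvement: openness becomes genuinely tautological, whereas with the paper's definition the openness of $\mathcal{O}$ is only apparent once one has already applied the regularity result at each of its points.

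There is, however, one small but real gap in the density step. You assert that $B_d(y)$ is ``an interior tangent ball to $\{u>0\}$ at $z$'' and immediately apply \autoref{thm:close-to-halfspace}(iv). But the hypothesis of that result (and of \autoref{lemma:interior-ball-blowup}, \autoref{lemma:interior-ball-blowup-viscosity} which underlie it) is $\overline{B} \cap \partial\{u>0\} = \{z\}$, i.e., $z$ must be the \emph{only} contact point. Your argument only produces \emph{one} contact point $z \in \overline{B_d(y)} \cap \partial\{u>0\}$; a priori the sphere $\partial B_d(y)$ could touch $\partial\{u>0\}$ at several points. The standard fix is exactly the one the paper uses: pass to $B' := B_{d-\eta}\bigl(y + \eta\,\tfrac{z-y}{d}\bigr)$ for small $\eta>0$. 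Then $\overline{B'}\setminus\{z\}\subset B_d(y)\subset\{u>0\}$, hence $\overline{B'}\cap\partial\{u>0\}=\{z\}$, and the hypothesis of \autoref{thm:close-to-halfspace}(iv) is satisfied. With that one-line adjustment your argument is complete and matches the paper's.
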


Another way to understand the set of regular points is to classify all possible blow-up limits $\lim_{r \to 0} u_{r,x_0}$. In fact, for the classical one-phase problem ($s = 1$), and for the one-phase problem for the fractional Laplacian, one can show with the aid of a monotonicity formula that all blow-up limits must be homogeneous of degree $s \in (0,1]$. Therefore, the classification of blow-ups reduces to the study of minimal cones. In case $s = 1$, it was shown in the celebrated works \cite{CJK04,DeJe09,JeSa15} that all blow-ups in dimensions $n \le 4$ are half-space solutions, and therefore every free boundary point is regular. For the fractional Laplacian, this property is only known in case $n = 2$ (see \cite{DeSa15,EKPSS21}), and the higher dimensional case is wide open (see \cite{FeRo22} for the classification of axially symmetric cones in case $n \le 5$).

In case of general kernels \eqref{eq:Kcomp}, no monotonicity formulas are available, and therefore establishing homogeneity of blow-ups seems to be out of reach with current techniques. 
Still, in this paper we show that all blow-ups are of the form \eqref{eq:regular-point} when $n=2$ (see \autoref{thm:two-dimensional-classification}). Our proof is a nonlocal version of the competitor argument for the thin energies in \cite[Theorem 5.5]{DeSa15}, \cite[Theorem 6.1]{EKPSS21}. Instead of homogeneity, we make crucial use of a purely nonlocal term appearing in the corresponding nonlocal energy estimate, which was already employed in \cite{CSV19,FiSe19} in a different context.

Since all free boundary points are regular in case $n = 2$, we can establish that free boundaries are everywhere $C^{1,\alpha}$ in two dimensions:

\begin{theorem}
\label{cor:two-dim}
Let $n=2$. Let $K \in C^{2}(\mathbb{S}^{1})$ and assume \eqref{eq:Kcomp}. Let $u$ be a minimizer of $\cI_{\Omega}$ with $\Omega \subset \R^n$. Then, $\partial \{ u > 0\}$ is $C^{1,\alpha}$ in $\Omega$.
\end{theorem}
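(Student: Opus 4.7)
The plan is to combine the two-dimensional blow-up classification \autoref{thm:two-dimensional-classification} with the improvement-of-flatness theorem \autoref{thm:main-C1alpha}: it will suffice to show that every free boundary point $x_0 \in \partial\{u>0\} \cap \Omega$ is a regular point in the sense of \eqref{eq:regular-point}, and then to invoke \autoref{thm:main-C1alpha} to obtain $C^{1,\alpha}$ regularity in a neighborhood.

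Fix an arbitrary $x_0 \in \partial\{u>0\} \cap \Omega$, choose $R > 0$ with $B_{R}(x_0) \subset \Omega$, and consider the rescalings $u_{r,x_0}(x) = r^{-s} u(x_0 + rx)$ for $r \in (0, R/2)$. By the optimal $C^s$ regularity and non-degeneracy of minimizers established in \cite{RoWe24b}, the family $\{u_{r,x_0}\}$ is precompact in $C_{\loc}(\R^n)$, and every subsequential limit $u_0$ is a global minimizer of the one-phase functional with the same kernel $K$ (the $2s$-homogeneity of $K$ ensures that rescaling preserves the variational structure). Since $n = 2$ and $K \in C^2(\mathbb{S}^1)$, the classification \autoref{thm:two-dimensional-classification} forces every such $u_0$ to be of the form $A(\nu)(x \cdot \nu)_+^s$ for some $\nu \in \mathbb{S}^1$. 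A standard compactness/contradiction argument, which is exactly the content of \autoref{thm:close-to-halfspace}, then yields that for every prescribed $\delta > 0$ there exist $r_0 \in (0, R/2)$ and $\nu \in \mathbb{S}^1$ such that $u_{r_0, x_0}$ satisfies the flatness condition \eqref{eq:fb-flat}.

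Choosing $\delta$ smaller than the threshold supplied by \autoref{thm:main-C1alpha}, and noting that $K \in C^2(\mathbb{S}^1)$ implies $K \in C^{1-2s+\beta}(\mathbb{S}^1)$ for any $\beta$ slightly above $\max\{0, 2s-1\}$ (since then $1-2s+\beta < 2$), I apply \autoref{thm:main-C1alpha} to $u_{r_0, x_0}$ and rescale back. This shows $\partial\{u > 0\}$ is of class $C^{1,\alpha}$ in $B_{r_0 \rho}(x_0)$ for every $\alpha \in (0, s/2)$. Since $x_0 \in \partial\{u>0\} \cap \Omega$ was arbitrary, a covering argument yields the conclusion in all of $\Omega$. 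No genuinely new obstacle appears in the proof of this corollary itself: the hard work is done upstream, in \autoref{thm:two-dimensional-classification}, where the nonlocal competitor argument replaces the usual monotonicity-based reduction to minimal cones, and in \autoref{thm:main-C1alpha}, the general nonlocal improvement of flatness.
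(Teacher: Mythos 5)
Your proof is correct and follows essentially the same route as the paper: blow-up at an arbitrary free boundary point, invoke the two-dimensional classification (\autoref{thm:two-dimensional-classification}) to conclude the blow-up is a half-space solution, deduce flatness at small scales, and apply the improvement-of-flatness machinery. The only cosmetic difference is that the paper feeds the output of \autoref{thm:close-to-halfspace}(ii) directly into \autoref{thm:free-boundary-regularity}, whereas you pass through the weaker geometric flatness condition \eqref{eq:fb-flat} and invoke \autoref{thm:main-C1alpha}; since \autoref{thm:main-C1alpha} is itself proved by combining \autoref{thm:close-to-halfspace}(i) with \autoref{thm:free-boundary-regularity}, these are equivalent.
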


As was mentioned above, \autoref{cor:two-dim} was only known for the fractional Laplacian (see \cite{DeSa15,EKPSS21}). Our proof is completely independent of previous ones, since it does not rely on the homogeneity of blow-ups, or on the identification with a thin problem. Note that already in dimension $n = 3$ it is not known whether the same result holds true, even for the fractional Laplacian.

\subsection{Strategy of proof}

The overall strategy to prove our main result \autoref{thm:main-C1alpha} follows the one for the classical local one-phase problem, as it is presented in \cite{Vel23}. However due to the nonlocality of \eqref{eq:onephase-intro}, we encounter several significant challenges, and new ideas are required to overcome them. In the following, we give a brief overview of the main steps of our proof.

First of all, we prove that minimizers of $\cI_{B_1}$ are solutions to the following nonlocal Bernoulli-type problem (see \autoref{lemma:min-visc}), which arises as the first variation of \eqref{eq:onephase-intro}:
\begin{align}
\label{eq:first-variation}
\left\{\begin{array}{rcl}
L u &=& 0 ~ \qquad \text{ in } B_1 \cap \{ u > 0 \},\\
u &=& 0 ~ \qquad \text{ in } B_1 \setminus \{ u > 0 \},\\  \displaystyle
\frac{u}{d^s} &=& A(\nu) ~~ \text{ on } B_1 \cap \partial\{ u > 0 \}, 
\end{array}\right.
\end{align}
where $\nu = \nu_x \in \mathbb{S}^{n-1}$ denotes the normal vector to $\partial\{ u > 0 \}$ at $x$, and $A$ is defined as in \eqref{eq:A-def-intro}. Moreover, $d := \dist(\cdot , \partial \{ u > 0 \} )$, and we understand $\frac{u}{d^s}(x_0) :=\lim_{\{ u > 0 \} \ni x \to x_0} \frac{u}{d^s}(x)$.\\
It is worth emphasizing that the anisotropy of $L$ is mirrored in the free boundary condition since the value of $\frac{u}{d^s}$ depends on the normal vector of the free boundary. Since we do not know a priori whether the free boundary is smooth, the anisotropy of the free boundary condition complicates the analysis of the problem. We refer to \cite{DeSa21}, where an anisotropic local Bernoulli problem is analyzed.

The natural setting in which \eqref{eq:first-variation} should be interpreted is  the framework of viscosity solutions (see \autoref{def:viscosity}). Although viscosity solutions to \eqref{eq:first-variation} share several properties with minimizers to \eqref{eq:onephase-intro}, such as the $C^s$ regularity (see \autoref{lemma:visc-Cs}), clearly not every viscosity solution to  \eqref{eq:first-variation} is a minimizer of \eqref{eq:onephase-intro}. However, in this paper (see \autoref{thm:free-boundary-regularity}) we still show that flatness implies $C^{1,\alpha}$ regularity of the free boundary $\partial \{ u > 0 \}$ in the more general realm of viscosity solutions to \eqref{eq:first-variation} (see \autoref{thm:free-boundary-regularity}).

The main idea to prove \autoref{thm:main-C1alpha} is to establish a so-called $\eps$-regularity theory for viscosity solutions to \eqref{eq:first-variation}.
In fact, first, by a compactness argument we prove that if the free boundary is flat near $0 \in \partial \{ u > 0 \}$ in the sense of \eqref{eq:fb-flat} with $\nu := e_n$, then a rescaling $u_r := u_{0,r}$ of $u$ is bounded from above and below by translations of the half-space solution to \eqref{eq:first-variation} introduced in \eqref{eq:regular-point}, namely for $\eps > 0$:
\begin{align}
\label{eq:intro-flat-1}
A(e_n)(x \cdot e_n -\eps)_+^s \le u_r(x) \le A(e_n)(x \cdot e_n + \eps)_+^s ~~ \forall x \in B_1.
\end{align}
Moreover, we have an integral control of the deviation of $u_r$ from the translated half-space solution outside $B_1$ in the following sense for $\delta_0 > 0$:
\begin{align}
\label{eq:intro-flat-2}
\tail \Big([u_r-A(e_n)(x \cdot e_n -\eps)_+^s]_- ; 1 \Big) + \tail \Big([A(e_n)(x \cdot e_n + \eps)_+^s - u_r]_- ; 1 \Big) \le \eps \delta_0.
\end{align}
We refer to Section \ref{sec:prelim} for a definition of the tail term.

The main work consists in proving a so-called improvement of flatness scheme, i.e, to show that when a viscosity solution $u$ to \eqref{eq:first-variation} satisfies \eqref{eq:intro-flat-1}, \eqref{eq:intro-flat-2} for some $\eps,\delta_0 \in (0,1)$ small enough, then a further rescaling $u_{r \rho_0}$ for some uniform $\rho_0 \in (0,1)$ satisfies
\begin{align}
\label{eq:intro-flat-3}
A(\nu)(x \cdot \nu - \sigma \eps)_+^s \le u_{\rho_0 r}(x) \le A(\nu)(x \cdot \nu + \sigma \eps)_+^s ~~ \forall x \in B_1.
\end{align}
and
\begin{align}
\label{eq:intro-flat-4}
\tail \Big([u_{\rho_0 r} -A(\nu)(x \cdot \nu - \sigma \eps)_+^s]_- ; 1 \Big) + \tail\Big([A(\nu)(x \cdot \nu + \sigma \eps)_+^s - u_{\rho_0 r}]_- ; 1 \Big) \le \sigma \eps \delta_0.
\end{align}
for some $\sigma \in (0,1)$ and $\nu \in \mathbb{S}^{n-1}$ with $|e_n - \nu| \le  C \eps$. 

Such improvement of flatness schemes are standard in the context of one-phase free boundary problems since the work of \cite{AlCa81} (see also \cite{DeS11,Vel23}), and they have also been established for the thin one-phase problem in \cite{DeRo12}, \cite{DeSa12}, \cite{DSS14}. In our purely nonlocal framework, a central difficulty comes from long range interactions which need to be included in the iteration scheme through corresponding tail terms in \eqref{eq:intro-flat-2}, \eqref{eq:intro-flat-4}. 

We prove that \eqref{eq:intro-flat-1}, \eqref{eq:intro-flat-2} imply \eqref{eq:intro-flat-3}, \eqref{eq:intro-flat-4} via a contradiction compactness argument, inspired by \cite{DeS11}. First, we establish a certain growth lemma on a fixed scale (see \autoref{lemma:partial-BH-scaled}), which allows us to establish compactness for sequences of viscosity solutions $(u_k)$ to \eqref{eq:first-variation} satisfying \eqref{eq:intro-flat-1}, \eqref{eq:intro-flat-2} with $\eps_k \searrow 0$. Then, the main work is to prove that for such sequence it holds
\begin{align}
\label{eq:lin-convergence-intro}
v_k(x) := \frac{u_k(x) - A(e_n)(x_n)_+^s}{\eps_k} \to s A(e_n)(x_n)_+^{s-1} u(x) ~~ \text{ as } k \to \infty,
\end{align}
where $u$ solves the so-called ``linearized problem'' for some $\omega \in \mathbb{S}^{n-1}$ with $\omega_n \ge c > 0$:  
\begin{align}
\label{eq:linearized-intro}
\left\{\begin{array}{rcl}
L ((x_n)_+^{s-1} u) &=& f ~~ \text{ in } \{ x_n > 0 \} \cap B_1,\\
\partial_{\omega} u &=& 0 ~~ \text{ on } \{ x_n = 0 \} \cap B_1.
\end{array}\right.
\end{align}

Both of these steps are very delicate since, due to the free boundary condition, all derivatives of solutions explode at the free boundary. This is a central difference to the local case, where a key observation is that the corresponding half-space solution $(x_n)_+$ can be smoothly extended to a global harmonic function, making the corresponding sequence $v_k$ in \eqref{eq:lin-convergence-intro} harmonic in $\{ u_k > 0 \}$ (harmonicity with respect to $L$). In the nonlocal case, we overcome this issue by employing \textit{domain variations} (see \eqref{eq:dom-variation}), which allows us to rewrite $v_k$ as a first-order difference quotient of $u_k$ of the following form for some implicit $\tilde{u}_k(x) \in [-1,1]$.
\begin{align*}
v_k(x) = \frac{u_k(x) - u_k(x - \eps_k \tilde{u}_k(x) e_n )}{\eps_k \tilde{u}_k(x)} \tilde{u}_k(x).
\end{align*}
This tool has already appeared in the thin case (see \cite{DeRo12}, \cite{DeSa12}, \cite{DSS14}), however, there, due to the locality of the problem, it is possible to compute domain variations in certain cases. This allows us to construct explicit barrier functions (see \cite[Proposition 4.5]{DSS14}), something that does not seem to be possible in our purely nonlocal setting, and causes our proofs to be significantly more involved.

Once the linearized problem is identified, the properties \eqref{eq:intro-flat-3}, \eqref{eq:intro-flat-4} follow by using that as a solution to \eqref{eq:linearized-intro}, it holds $u \in C^{1,\gamma}(B_{1/2} \cap \{ x_n = 0 \})$ for some $\gamma > 0$. Note that \eqref{eq:linearized-intro} is a nonlocal equation with an oblique local boundary condition. In case $\omega = e_n$ (which is what happens for the fractional Laplacian), \eqref{eq:linearized-intro} becomes a Neumann boundary condition, and the boundary regularity theory for such nonlocal problems has been established by the authors in the recent paper \cite{RoWe24}. In our anisotropic setting, we need to apply a certain change of variables to transform \eqref{eq:linearized-intro} into a nonlocal problem with a local Neumann boundary condition, so that we can apply the results in \cite{RoWe24}.

The improvement of flatness scheme -- namely that \eqref{eq:intro-flat-1}, \eqref{eq:intro-flat-2} imply \eqref{eq:intro-flat-3}, \eqref{eq:intro-flat-4} for viscosity solutions to \eqref{eq:first-variation} -- can be iterated in a relatively standard way (see Subsection \ref{subsec:iterate-improvement-of-flatness}), thereby implying uniqueness of the blow-up limits and a uniform rate of convergence. Together, these properties imply that the free boundary can be parametrized by a $C^{1,\alpha}$ graph in $B_{\rho}$ for some $\rho \in (0,1)$. 

Altogether, these findings yield the following flatness implies $C^{1,\alpha}$ result for viscosity solutions to \eqref{eq:first-variation}:

\begin{theorem}
\label{thm:free-boundary-regularity}
Let $K \in C^{1-2s+\beta}(\mathbb{S}^{n-1})$ for some $\beta > \max\{0,2s-1\}$. Let $u$ be a viscosity solution to the nonlocal one-phase problem for $K$ in $B_2$ and $0 \in \partial \{ u > 0 \}$. Then, there are $\eps, \delta_0 \in (0,1)$, depending only on $n,s,K$, such that if
\begin{align*}
A(e_n)(x_n -\eps)_+^s \le u(x) \le A(e_n)(x_n + \eps)_+^s ~~ \forall x \in B_1,
\end{align*}
and
\begin{align*}
T_{\eps} := \tail \Big([u-A(e_n)(x_n -\eps)_+^s]_- ; 1 \Big) + \tail \Big([A(e_n)(x_n + \eps)_+^s - u]_- ; 1 \Big) \le \eps \delta_0,
\end{align*}
then, $\partial \{ u > 0 \}$ is $C^{1,\alpha}$ in $B_{\rho}$ for any $\alpha \in (0,\frac{s}{2})$, and moreover, 
\begin{align*}
\left\Vert \frac{u}{d^s} \right\Vert_{C^{\alpha}\left( \overline{ \{ u > 0 \} } \cap B_{\rho} \right)} \le C \Vert u \Vert_{L^{1}_{2s}(\R^n)},
\end{align*}
for some $C, \rho > 0$, depending only on $n,s,K$.
%$n,s,\lambda,\Lambda,\alpha$, and $\Vert A \Vert_{C^{1+\beta}(\mathbb{S}^{n-1})}$.
\end{theorem}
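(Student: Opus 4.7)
The plan is to reduce the theorem to a single improvement-of-flatness step and iterate it. Concretely, I would establish the existence of universal constants $\eps_0, \delta_0, \rho_0 \in (0,1)$ and $\sigma \in (0,1)$ such that, whenever the hypotheses hold with $\eps \le \eps_0$, there is a direction $\nu \in \mathbb{S}^{n-1}$ with $|\nu - e_n| \le C \eps$ for which the rescaling $u_{\rho_0}(x) := \rho_0^{-s} u(\rho_0 x)$ again satisfies both flatness conditions of the theorem with $(e_n, \eps)$ replaced by $(\nu, \sigma \eps)$. Granted this one-step result, a standard iteration at every dyadic scale $\rho_0^k$ produces a sequence of normals $\nu_k$ with $|\nu_{k+1} - \nu_k| \le C \sigma^k \eps$, hence a limit normal $\nu_\infty$ at $0$ with geometric rate of convergence, a $C^{1,\alpha}$ graph parametrization of $\partial\{u > 0\}$ in a small ball $B_\rho$, and the claimed $C^\alpha$ estimate for $u/d^s$ on the free boundary from the fact that the rescalings are at every scale $\sigma^k \eps$-close to the explicit half-space solution $A(\nu_k)(x\cdot\nu_k)_+^s$. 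Choosing $\sigma$ slightly larger than $\rho_0^{s/2}$ recovers any exponent $\alpha \in (0, s/2)$.

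The one-step improvement is proved by a contradiction-compactness argument in the spirit of \cite{DeS11}. Suppose it fails: there exist viscosity solutions $u_k$ to \eqref{eq:first-variation}, numbers $\eps_k \searrow 0$, and tails bounded by $\eps_k \delta_0$ as in \eqref{eq:intro-flat-1}--\eqref{eq:intro-flat-2}, such that no admissible $\nu$ yields the improved flatness at scale $\rho_0$ with constant $\sigma$. Define
\begin{align*}
v_k(x) := \frac{u_k(x) - A(e_n)(x_n)_+^s}{\eps_k} \quad \text{on } \overline{\{u_k > 0\}} \cap B_1,
\end{align*}
and, exploiting the domain-variation identity, rewrite
\begin{align*}
v_k(x) = \frac{u_k(x) - u_k\bigl(x - \eps_k \tilde u_k(x) e_n\bigr)}{\eps_k \tilde u_k(x)} \, \tilde u_k(x)
\end{align*}
for an implicit $\tilde u_k(x) \in [-1,1]$. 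A scaled growth estimate for viscosity solutions, together with the tail hypothesis \eqref{eq:intro-flat-2} which forces the far-field contributions to vanish in the limit, yields uniform H\"older bounds for $v_k$ on compact subsets of $\{x_n > 0\} \cap B_1$; passing to a subsequence, $v_k(x) \to s A(e_n) (x_n)_+^{s-1} w(x)$ locally uniformly, where $w$ solves the linearised problem
\begin{align*}
\left\{\begin{array}{rcl}
L\bigl((x_n)_+^{s-1} w\bigr) &=& 0 ~~ \text{in } \{x_n > 0\} \cap B_1, \\
\partial_\omega w &=& 0 ~~ \text{on } \{x_n = 0\} \cap B_1,
\end{array}\right.
\end{align*}
for some $\omega \in \mathbb{S}^{n-1}$ with $\omega_n$ bounded away from zero, arising from the linearisation of the boundary condition $u/d^s = A(\nu)$ at $\nu = e_n$.

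Regularity for the linearised problem is obtained by an anisotropic linear change of variables that straightens the oblique derivative into a standard Neumann condition, placing the problem in the framework of \cite{RoWe24} and yielding $w \in C^{1,\gamma}$ up to $\{x_n = 0\} \cap B_{1/2}$ for some $\gamma > 0$. Affine approximation of $w$ on the flat boundary, transferred back through the factor $(x_n)_+^{s-1}$, delivers a tilted half-space solution $A(\nu_\infty)(x \cdot \nu_\infty)_+^s$ that traps $u_k$ at scale $\rho_0$ with deviation $\sigma \eps_k$ both in sup-norm and in tail-norm, contradicting the failure assumption. The principal obstacle is precisely this compactness/linearisation step: since $\nabla u$ blows up at the free boundary and no extension formula is available for general $L$, one cannot differentiate $u_k$ directly or construct explicit barriers as in the thin-case arguments of \cite{DSS14}. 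The domain-variation reformulation combined with the scaled growth lemma is what allows uniform control and limit passage for $v_k$ while preserving the correct boundary trace; propagating in parallel the sup-norm deviation and the tail quantity $T_\eps$ through both the compactness argument and the iteration is the most delicate bookkeeping in the entire proof.
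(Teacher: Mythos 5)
Your proposal is correct and follows essentially the same route as the paper: a one-step improvement of flatness (with the flatness and tail conditions propagated in parallel) proved by compactness via domain variations, a partial boundary Harnack for compactness of the domain variations, identification of a linearised nonlocal problem with an oblique boundary condition that is straightened to Neumann and handled by the boundary regularity theory of \cite{RoWe24}, and then a standard dyadic iteration to obtain the $C^{1,\alpha}$ graph parametrization and the estimate on $u/d^s$. The only small imprecision is that the linearised equation carries a nonzero bounded right-hand side coming from truncating $(x_n)_+^{s-1}\tilde u$ outside a fixed ball — it does not vanish in the limit, but the boundary regularity result tolerates $f\in L^\infty$, so this does not affect the argument.
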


\begin{remark}
The dependence of the constants $\eps,\delta_0,C,\rho > 0$ can be improved in such a way that they depend on $K$ only through $\lambda,\Lambda,\Vert K \Vert_{C^{1-2s+\beta}(\mathbb{S}^{n-1})}$. To do so, in \eqref{eq:lin-convergence-intro} one needs to consider sequences $(u_k)$ solving \eqref{eq:first-variation} with respect to kernels $K_k$ satisfying \eqref{eq:Kcomp} with $\lambda,\Lambda$ for every $k \in \N$. With this modification, all the proofs go through without any substantial changes.
\end{remark}

\begin{remark}
The assumption $K \in C^{1-2s+\beta}(\mathbb{S}^{n-1})$ for some $\beta > \max\{0,2s-1\}$ can most likely be relaxed, at least in case $s > 1/2$. For $s \le 1/2$ it is only required in the proof of \autoref{lemma:linearized-problem}(iv) to guarantee interior Lipschitz regularity of solutions to the one-phase problem. Moreover, in case $s > 1/2$, we assume H\"older regularity in order to guarantee that $f$ in \autoref{lemma:linearized-problem}(iv) is continuous, which is a technical assumption in order for the notion of viscosity solution to make sense.
\end{remark}

The previous result was known so far only for viscosity solutions to \eqref{eq:first-variation} for the fractional Laplacian due to \cite{DeRo12,DeSa12,DSS14}. However, as was mentioned before, in these papers, the authors exclusively work with the equivalent thin one-phase problem, making their approach entirely local. Our proof, however, is of completely nonlocal nature, and thus entirely new, and independent of the proofs in \cite{DeRo12,DeSa12,DSS14}. Let us also point out that apart from \cite{DSV20}, where a purely nonlocal improvement of flatness scheme has been developed in the context of nonlocal phase transitions, there seem to be no results in this direction in the literature, so far. In fact, \autoref{thm:free-boundary-regularity} seems to be the first nonlocal improvement of flatness result for a nonlocal free boundary problem.

Finally, let us draw the reader's attention to the fact that in the local case when $L = -\Delta$, \autoref{thm:free-boundary-regularity} yields a characterization of regularity properties of (Reifenberg flat) domains in terms of regularity of the Poisson kernel (or harmonic measure) (see \cite{AlCa81}, \cite{Jer90}), since in that case the free boundary condition in \eqref{eq:first-variation} is given by $\partial_{\nu} u = 1$. A similar connection in the nonlocal case has not been explored, yet, and we believe this to be an interesting topic for further research.

\subsection{Acknowledgments}

The authors were supported by the European Research Council under the Grant Agreements No. 801867 (EllipticPDE) and No. 101123223 (SSNSD), and by AEI project PID2021-125021NA-I00 (Spain).
Moreover, X.R was supported by the grant RED2022-134784-T funded by AEI/10.13039/501100011033, by AGAUR Grant 2021 SGR 00087 (Catalunya), and by the Spanish State Research Agency through the Mar\'ia de Maeztu Program for Centers and Units of Excellence in R{\&}D (CEX2020-001084-M).

\subsection{Organization of the paper}

The paper is organized as follows. In Section \ref{sec:prelim} we introduce some notation and recall the basic properties of minimizers of \eqref{eq:onephase-intro}, which were established in \cite{RoWe24b}. Section \ref{sec:viscosity} contains a derivation of the first variation of \eqref{eq:onephase-intro} and the definition of viscosity solutions to \eqref{eq:first-variation}. Moreover, we prove that minimizers are viscosity solutions and establish some basic properties. The proof of the flatness implies $C^{1,\alpha}$ result (see \autoref{thm:free-boundary-regularity} for viscosity solutions is contained in Section \ref{sec:flatness-implies-C1alpha}. Finally, in Section \ref{sec:free-boundary-reg} we establish our main results for minimizers of \eqref{eq:onephase-intro}, namely \autoref{thm:main-C1alpha} and \autoref{cor:open-dense}, \autoref{cor:two-dim}.

\section{Preliminaries}
\label{sec:prelim}

In this section we collect several definitions and auxiliary lemmas that will become important throughout the course of this article. In particular, we recall some basic properties of minimizers of \eqref{eq:onephase-intro} which were established in \cite{RoWe24b}, such as optimal $C^s$ regularity, and non-degeneracy (see Subsection \ref{subsec:aux}), as well as some important properties about blow-ups (see Subsection \ref{sec:blow-ups}).

\subsection{Function spaces and solution concepts}

Given an open, bounded domain $\Omega \subset \R^n$, let us introduce the following function spaces, which are naturally associated with the energy $\cI$ from \eqref{eq:onephase-intro}:
\begin{align*}
V^s(\Omega | \Omega') &:= \left \{ u \hspace{-0.1cm}\mid_{\Omega} \hspace{0.1cm} \in L^2(\Omega) : [u]^2_{V^s(\Omega | \Omega')} := \int_{\Omega} \int_{\Omega'} \frac{(u(x) - u(y))^2}{|x-y|^{n+2s}} \d y \d x < \infty \right \}, ~~ \Omega \Subset \Omega'\\
H^s(\Omega) &:=  \left\{ u \in L^2(\Omega) : [u]^2_{H^s(\Omega)}  := \int_{\Omega} \int_{\Omega} \frac{(u(x) - u(y))^2}{|x-y|^{n+2s}} \d y \d x < \infty \right\},\\
L^1_{2s}(\R^n) &:= \left\{ u : \R^n \to \R : \Vert u \Vert_{L^1_{2s}(\R^n)} := \int_{\R^n} |u(y)| (1 + |y|)^{-n-2s} \d y < \infty \right\}.
\end{align*}
These spaces are equipped with the following norms:
\begin{align*}
\Vert u \Vert_{V^s(\Omega | \Omega')} := \Vert u \Vert_{L^2(\Omega)} + [u]_{V^s(\Omega | \Omega')}, \qquad \Vert u \Vert_{H^s(\Omega)} := \Vert u \Vert_{L^2(\Omega)} + [u]_{H^s(\Omega)}.
\end{align*}

Moreover, the following quantity captures the long-range interactions caused by the nonlocality:
\begin{align*}
\tail(u;R,x_0) := R^{2s} \int_{\R^n \setminus B_R(x_0)} |u(y)||y - x_0|^{-n-2s} \d y, ~~ x_0 \in \R^n, ~~ R > 0.
\end{align*}
When $x_0 = 0$, we will often write $\tail(u;R,0) = \tail(u;R)$.\\
Given a kernel $K : \R^n \to [0,\infty]$ satisfying \eqref{eq:Kcomp} and a set $\cD \subset \R^n \times \R^n$, we introduce the notation
\begin{align*}
\cE_{\cD}(u,v) = \iint_{\cD} (u(x) - u(y))(v(x) - v(y)) K(x-y) \d y \d x.
\end{align*}
Moreover, if $\cD = D \times D$ for some $D \subset \R^n$, we write $\cE_{D} := \cE_{\cD}$, and if $\cD = \R^n \times \R^n$, we write $\cE := \cE_{\cD}$.\\
Moreover, given $K$ satisfying \eqref{eq:Kcomp}, $\Omega \subset \R^n$, we denote
\begin{align}
\label{eq:OP}
\cI_{\Omega}(u) := \cE_{(\Omega^c \times \Omega^c)^c}(u,u) + |\{ u > 0 \} \cap \Omega|,
\end{align}
whenever this expression is finite.

We recall the definition of minimizers of $\cI_{\Omega}$, which is general enough to allow for functions that grow like $t \mapsto t^s$ at infinity. Note that such minimizers arise as blow-up limits and are crucial for the study of the free boundary, but do not belong to $V^s(\Omega | \R^n)$.

\begin{definition}[minimizers]
\label{def:minimizer}
Let $K$ satisfy \eqref{eq:Kcomp}. Let $\Omega \Subset \Omega' \subset \R^n$ be an open, bounded domain. We say that $u \in V^s(\Omega | \Omega') \cap L^1_{2s}(\R^n)$ with $u \ge 0$ in $\R^n$ is a (local) minimizer of $\cI_{\Omega}$ (in $\Omega$) if for any $v \in V^s(\Omega | \Omega') \cap L^1_{2s}(\R^n)$ with $u = v$ in $\R^n \setminus \Omega$, it holds
\begin{align*}
\iint\limits_{(\Omega^c \times \Omega^c)^c} \Big[(u(x) - u(y))^2 - (v(x) - v(y))^2 \Big] K(x-y) \d y \d x + \Big[ | \{u > 0 \} \cap \Omega | - | \{v > 0 \} \cap \Omega | \Big] \le 0.
\end{align*}
\end{definition}

Note that, given a jumping kernel $K$, the energy $\cE$ gives rise to an integro-differential operator $L$ given by \eqref{eq:L} via the relation
\begin{align*}
\cE_{(\Omega^c \times \Omega^c)^c}(u,\phi) = (L u , \phi) ~~ \forall \phi \in H^s(\R^n) ~~ \text{ with } u \equiv 0 ~~ \text{ in } \R^n \setminus \Omega.
\end{align*}

Since \eqref{eq:OP} is a variational problem, minimizers will naturally be weak (sub)solutions (see \cite{RoWe24b}, or \autoref{lemma:aux}). Therefore, in \cite{RoWe24b} we have applied energy methods to verify their regularity properties. We recall these results in the following subsection. Note that once these basic regularity properties are established, minimizers of $\cI_{\Omega}$ can be interpreted to satisfy \eqref{eq:first-variation} in the viscosity sense. For more details on this conclusion, we refer the reader to Section \ref{sec:viscosity}.

\subsection{Properties of minimizers}
\label{subsec:aux}

We recall the following properties of minimizers of $\cI_{\Omega}$, which were established in \cite{RoWe24b}. We start with the following elementary result.

\begin{lemma}[see Lemma 4.1 in \cite{RoWe24b}]
\label{lemma:aux}
Assume \eqref{eq:Kcomp}. Let $u$ be a minimizer of $\cI_{\Omega}$. Then, the following properties hold true:
\begin{itemize}
\item[(i)] $L u \le 0$ in $\Omega$ in the weak sense.
\item[(ii)] $u \ge 0$ in $\Omega$.
\item[(iii)] $u \in L^{\infty}_{loc}(\Omega)$.
\item[(iv)]$L u = 0$ in $\Omega \cap \{ u > 0 \}$ in the weak sense.
\end{itemize}
\end{lemma}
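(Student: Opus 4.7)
All four properties are variational in nature and follow from testing the minimizer against well-chosen one-sided competitors, combined with standard nonlocal subsolution theory. I sketch each claim separately.

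For (i), I would test against the competitor $v := u - \eps\varphi$ with $\varphi \in C_c^\infty(\Omega)$, $\varphi \geq 0$, and $\eps > 0$ small. Since $v \leq u$ pointwise and $\varphi$ is supported in $\Omega$, we have $\{v>0\}\cap\Omega \subset \{u>0\}\cap\Omega$, so the measure contribution to $\cI_\Omega(u) - \cI_\Omega(v)$ is non-negative. The minimality inequality then reduces to
\[
\cE_{(\Omega^c\times\Omega^c)^c}(u,u) - \cE_{(\Omega^c\times\Omega^c)^c}(v,v) = 2\eps\,\cE(u,\varphi) - \eps^2\,\cE(\varphi,\varphi) \leq 0,
\]
and dividing by $2\eps$ and letting $\eps \to 0^+$ gives $\cE(u,\varphi) \leq 0$, which is exactly $Lu \leq 0$ in the weak sense. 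Claim (ii) is already part of the standing definition of a minimizer ($u\geq 0$ on $\R^n$); had one started from a weaker hypothesis, the standard truncation with $v=u_+$ is admissible (since $u=g\geq 0$ outside $\Omega$), preserves $\{u_+>0\}=\{u>0\}$, and satisfies $(u(x)-u(y))^2\geq (u_+(x)-u_+(y))^2$ pointwise, so minimality forces equality and hence $u=u_+\geq 0$.

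For (iv), once the $L^\infty_\loc$ bound in (iii) and standard nonlocal De Giorgi--Nash--Moser are available (alternatively, the $C^s$ regularity established in \cite{RoWe24b}), $u$ is continuous and the set $\{u>0\}\cap\Omega$ is open. Given $\varphi \in C_c^\infty(\{u>0\}\cap\Omega)$, set $\delta := \min_{\supp\varphi} u > 0$; for $|\eps|\leq \delta/\|\varphi\|_{L^\infty}$ the two-sided perturbation $v := u+\eps\varphi$ remains non-negative and satisfies $\{v>0\}\cap\Omega = \{u>0\}\cap\Omega$. The measure contribution therefore cancels and the minimality inequality reduces to
\[
2\eps\,\cE(u,\varphi) + \eps^2\,\cE(\varphi,\varphi) \geq 0
\]
for both signs of $\eps$, whence $\cE(u,\varphi)=0$, i.e.\ $Lu=0$ weakly on $\{u>0\}\cap\Omega$.

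The genuinely nontrivial step is (iii). The plan is to use that $u\geq 0$ is a weak subsolution by (i) and to run a nonlocal De Giorgi--Moser iteration tailored to the symmetric $2s$-stable class \eqref{eq:Kcomp}: test $Lu \leq 0$ against $(u-k)_+ \eta^2$ for cutoffs $\eta$ and levels $k$, derive a nonlocal Caccioppoli inequality for $(u-k)_+$ on a family of nested balls, apply the fractional Sobolev embedding for $V^s$, and iterate in $k$. The only point requiring care compared with classical interior regularity for nonlocal equations is the bookkeeping of the tail terms $\tail((u-k)_+; R, x_0)$ generated by the non-local Caccioppoli estimate, which remain finite thanks to $u\in L^1_{2s}(\R^n)$. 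This iteration is the main technical obstacle; once it is in place, the other three claims reduce to the short competitor arguments above.
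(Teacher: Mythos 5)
Your plans for (i), (ii), and (iii) are sound and match the standard variational approach that [RoWe24b] takes: (i) comes from the one-sided competitor $u - \eps\varphi$ with $\varphi \geq 0$ and letting $\eps\to 0^+$; (ii) is built into \autoref{def:minimizer}; (iii) is a De Giorgi iteration for nonnegative weak subsolutions in the class \eqref{eq:Kcomp}, with the tails controlled by $u\in L^1_{2s}(\R^n)$.

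Your argument for (iv), however, has a gap. You use the two-sided perturbation $v = u+\eps\varphi$ with $|\eps|\le \delta/\Vert\varphi\Vert_\infty$ and $\delta:=\min_{\supp\varphi}u>0$, which requires $u$ to be continuous so that $\{u>0\}$ is open and the minimum is positive. But continuity of $u$ is not available at this stage: ``standard nonlocal De Giorgi--Nash--Moser'' gives H\"older continuity for \emph{solutions}, not for mere subsolutions, and the $C^s$ regularity of minimizers (\autoref{thm:or}) is proved in [RoWe24b] \emph{after} this lemma and uses it, so invoking it here is circular. The clean fix is to use a one-sided perturbation and combine it with part (i): for $\varphi\in C_c^\infty(\Omega)$ with $\varphi\ge 0$ and $\supp\varphi\subset\{u>0\}$, and $\eps>0$, the competitor $v=u+\eps\varphi$ satisfies $v\ge 0$, $v=u$ on $\{u=0\}$ (since $\varphi$ vanishes there), so $\{v>0\}\cap\Omega=\{u>0\}\cap\Omega$ and the measure terms cancel; minimality then yields
\[
0\le 2\eps\,\cE(u,\varphi)+\eps^2\cE(\varphi,\varphi),
\]
and dividing by $\eps$ and letting $\eps\to 0^+$ gives $\cE(u,\varphi)\ge 0$. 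Combined with the opposite inequality from (i), this gives $\cE(u,\varphi)=0$ for all such nonnegative $\varphi$, and hence for arbitrary test functions supported in $\{u>0\}\cap\Omega$ by writing $\varphi=\varphi_+-\varphi_-$. No continuity of $u$ is required, only the pointwise information $u>0$ on $\supp\varphi$ together with $u\ge 0$.
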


We recall the optimal regularity, non-degeneracy, as well as the density estimate of the free boundary. These results will be helpful when proving that minimizers of \eqref{eq:onephase-intro} are viscosity solutions to \eqref{eq:first-variation} (see \autoref{lemma:min-visc}) and to give characterizations of points where the free boundary of minimizers is flat (see \autoref{thm:close-to-halfspace}).

\begin{lemma}[optimal regularity]
\label{thm:or}
Assume \eqref{eq:Kcomp}. Let $u$ be a minimizer of $\cI_{\Omega}$ with $B_2 \subset \Omega$.
Then, $u \in C^s_{loc}(B_2)$, and
\begin{align*}
\Vert u \Vert_{C^s(B_R)} \le C R^{-s} \left( 1 + \dashint_{B_{2R}} u \ d x \right) ~~ \forall R \in (0,1].
\end{align*}
Moreover, if $0 \in \partial \{ u > 0 \}$, then
\begin{align*}
\Vert u \Vert_{L^{\infty}(B_{R})} \le C R^s ~~ \forall R \in (0,1],  \qquad \text{ and } ~~  \Vert u \Vert_{C^s(B_{1})} \le C.
\end{align*}
The constant $C > 0$ depends only on $n$, $s$, $\lambda$, $\Lambda$.
\end{lemma}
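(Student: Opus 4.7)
My plan is to prove the three bounds in the order (b) $\Rightarrow$ (a) $\Rightarrow$ (c). The third assertion, $\Vert u\Vert_{C^s(B_1)} \le C$, follows from the first estimate applied at $R=1$ together with the $L^\infty_{\loc}$ bound of \autoref{lemma:aux}(iii), which provides uniform control on $\dashint_{B_2} u \d x$. Hence it suffices to establish the free-boundary growth estimate $\Vert u\Vert_{L^\infty(B_R)}\le CR^s$ and the interior Hölder estimate separately.

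For the growth estimate I would argue by contradiction and compactness. Suppose $0\in\partial\{u>0\}$ but no uniform $C$ works, so there exist $r_k\to 0$ with $M_k:=\Vert u\Vert_{L^\infty(B_{r_k})}$ satisfying $\theta_k:=M_k r_k^{-s}\to\infty$. The rescaling $\tilde u_k(x):=u(r_k x)/M_k$ is then a minimizer on $B_{2/r_k}$ of
\begin{equation*}
\iint\bigl(v(x)-v(y)\bigr)^2\,\tilde K_k(x-y)\,\d y\,\d x \;+\; \theta_k^{-2}\,\bigl|\{v>0\}\cap B_{2/r_k}\bigr|,
\end{equation*}
with rescaled kernel $\tilde K_k(h):=r_k^{n+2s}K(r_k h)$, which still satisfies \eqref{eq:Kcomp} with the same constants $\lambda,\Lambda$. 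By construction $\Vert\tilde u_k\Vert_{L^\infty(B_1)}=1$, $\tilde u_k\ge 0$, and $0\in\partial\{\tilde u_k>0\}$. A Hölder estimate of some suboptimal exponent $\alpha<s$ for nonnegative supersolutions (via \autoref{lemma:aux}(i) and De Giorgi--Nash--Moser theory for nonlocal equations) yields uniform interior Hölder and $L^1_{2s}$-tail control of the $\tilde u_k$, so I can extract a subsequential locally uniform limit $\tilde u_\infty\ge 0$ with $\sup_{B_1}\tilde u_\infty=1$ and $\tilde u_\infty(0)=0$. Since $\theta_k^{-2}\to 0$, the perimeter penalty disappears in the limit and $\tilde u_\infty$ minimizes a pure nonlocal Dirichlet energy; in particular the limit operator $\tilde L_\infty$ annihilates $\tilde u_\infty$ weakly in \emph{all} of $B_1$, not only on $\{\tilde u_\infty>0\}$. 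The strong minimum principle for nonnegative $\tilde L_\infty$-harmonic functions then forces $\tilde u_\infty\equiv 0$ in $B_1$, contradicting $\sup_{B_1}\tilde u_\infty=1$.

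For the interior estimate, given $x_0\in B_R$ set $d_0:=\dist(x_0,\partial\{u>0\})$. Applying (b) at a closest free-boundary point to $x_0$ yields $u(x_0)\le Cd_0^s$, whence $\Vert u\Vert_{L^\infty(B_R)}$ is bounded by $CR^s$ plus a universal multiple of $\dashint_{B_{2R}} u\,\d x$, once the mean-value inequality for $L$-harmonic functions (\autoref{lemma:aux}(iv)) is invoked at points at distance $\gtrsim R$ from $\partial\{u>0\}$. Inside $B_{d_0/4}(x_0)\subset\{u>0\}$, $u$ is $L$-harmonic, so interior Hölder estimates for linear stable operators of order $2s$ give
\begin{equation*}
[u]_{C^s(B_{d_0/4}(x_0))}\le C\,d_0^{-s}\bigl(\Vert u\Vert_{L^\infty(B_{d_0/2}(x_0))}+\tail(u;d_0/2,x_0)\bigr).
\end{equation*}
Combining this with the upper bound $u\le Cd^s$ near the free boundary, a Whitney-type patching argument then produces the claimed $R^{-s}$-scaled Hölder norm on $B_R$, the dimensionless summand $1$ on the right-hand side reflecting the natural scaling $u/d^s\sim A$ of the free-boundary condition.

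The main technical hurdle lies in the compactness step of (b). The uniform interior Hölder and $L^1_{2s}$-tail control of $\tilde u_k$ must be independent of $\theta_k$, and since no extension formula is available for a general stable $L$, thin-space arguments and the explicit half-space barriers used in the fractional Laplacian case are not available. Circularity is avoided by invoking only a suboptimal Hölder exponent $\alpha<s$ coming from weak Harnack for supersolutions (\autoref{lemma:aux}(i)), which is enough to extract a limit; the optimal growth exponent $s$ then emerges through the strong minimum principle applied to the limit, made possible by the vanishing of the perimeter penalty after the rescaling. I expect this upgrade from $\alpha$ to $s$ — effected at the level of the limiting minimizer rather than directly on $u$ — to be the genuinely delicate step beyond standard interior nonlocal regularity theory.
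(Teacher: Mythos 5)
The paper itself does not prove this lemma — it is imported verbatim from \cite{RoWe24b} — so your proposal has to stand on its own. The overall architecture, (b) $\Rightarrow$ (a) $\Rightarrow$ (c) with a blow-up by contradiction for (b), is a reasonable route. However, the compactness step in (b) has a genuine gap.

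You pick any $r_k\to 0$ with $\theta_k:=M_k r_k^{-s}\to\infty$, set $\tilde u_k(x)=u(r_k x)/M_k$, and then claim that a suboptimal De Giorgi--Nash--Moser estimate ``yields uniform interior H\"older and $L^1_{2s}$-tail control'' of the $\tilde u_k$. This is not so. De Giorgi--Nash--Moser is an \emph{a priori} interior estimate: it \emph{consumes} an $L^\infty$ bound on a larger ball plus a tail bound as input, and it cannot produce either. With your unconstrained choice of $r_k$ you only know $\|\tilde u_k\|_{L^\infty(B_1)}=1$; nothing controls $\|\tilde u_k\|_{L^\infty(B_R)}=\|u\|_{L^\infty(B_{Rr_k})}/M_k$ for $R>1$, nor
\begin{equation*}
\tail(\tilde u_k;1)=\frac{r_k^{2s}}{M_k}\int_{|z|>r_k}u(z)|z|^{-n-2s}\,\d z,
\end{equation*}
whose inner-annulus part $\int_{r_k<|z|<1}$ can blow up as $r_k\to 0$. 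Even with the suboptimal H\"older bound $M_k\le Cr_k^{\alpha}$ (some $\alpha<s$) and the non-degeneracy lower bound $M_k\ge c r_k^{s}$, one gets $M_k^{-1}r_k^{\alpha}=\theta_k^{-1}r_k^{\alpha-s}$, which involves two competing divergent factors and is not bounded in general. Without these uniform bounds there is no Arzel\`a--Ascoli, no $L^1_{2s}$/$H^s_{\loc}$ convergence, and the claim $\sup_{B_1}\tilde u_\infty=1$ also fails (locally uniform convergence only gives $\sup_{B_{1-\delta}}\tilde u_\infty\le 1$, and the approximating sup may concentrate at $\partial B_1$).

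The missing idea is a careful selection of the blow-up sequence. For instance, set $\theta(r):=\sup_{r\le\rho\le 1}\rho^{-s}\|u\|_{L^\infty(B_{\rho})}$, which is non-increasing; if $\theta(r)\to\infty$, choose $\rho_k$ with $\rho_k^{-s}\|u\|_{L^\infty(B_{\rho_k})}\ge\tfrac12\theta(r_k)\to\infty$ and blow up at scale $\rho_k$. Monotonicity of $\theta$ then gives $\|\tilde u_k\|_{L^\infty(B_R)}\le 2R^{s}$ for all $1\le R\le 1/\rho_k$, which is exactly what you need to run De Giorgi--Nash--Moser up to $\overline{B_1}$ and to show $\tail(\tilde u_k;1)\le C$ and $M_k^{-1}\rho_k^{2s}\,\tail(u;1)\to 0$. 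This ``almost-maximal'' choice is standard in such compactness proofs, but it is not a cosmetic detail: your argument as written is circular, because the bounds it assumes on the rescaled sequence are equivalent to the growth estimate you are trying to prove. Once this step is fixed, passing $L\tilde u_\infty=0$ in $B_1$ to the limit and concluding by the strong minimum principle (via nonlocal Harnack for globally nonnegative solutions) is sound, and the deduction of (a) and (c) from (b) is plausible.
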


\begin{proof}
This result follows directly from (rescaled versions of) \cite[Theorem 1.5, Theorem 4.5, and Lemma 4.7]{RoWe24b}.
\end{proof}

\begin{lemma}[non-degeneracy]
\label{thm:non-degeneracy}
Assume \eqref{eq:Kcomp}. Let $u$ be a minimizer of $\cI_{\Omega}$ with $B_2 \subset \Omega$. Then, it holds for any $x \in B_1$
\begin{align*}
u(x) \ge c \dist( x , \partial\{ u > 0\})^s.
\end{align*}
Moreover, if $0 \in \overline{ \{ u > 0 \} }$, then
\begin{align*}
\Vert u \Vert_{L^{\infty}(B_R)} \ge c R^s ~~ \forall R \in (0,1].
\end{align*}
The constant $c > 0$ depends only on $n$, $s$, $\lambda$, $\Lambda$.
\end{lemma}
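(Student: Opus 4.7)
The plan is to first establish the second statement (the $L^\infty$ lower bound on $B_R$) via a contradiction argument based on the Alt--Caffarelli competitor scheme, suitably adapted to the nonlocal energy $\cI_\Omega$, and then to deduce the first statement (the pointwise bound by $c\,d^s$) from it by rescaling and the nonlocal Harnack inequality.

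For the second statement, I would assume by contradiction that $\Vert u \Vert_{L^\infty(B_R)} < \eps R^s$ for some $R \in (0,1]$ and some small $\eps > 0$ to be fixed, with $0 \in \overline{\{u > 0\}}$. Choose a smooth cutoff $\phi_R$ with $\phi_R \equiv 0$ on $B_{R/2}$, $\phi_R \equiv 1$ outside $B_R$, and $|\nabla \phi_R| \le C/R$, and set $v := u\phi_R$. Then $v = u$ in $\R^n \setminus B_R$, $0 \le v \le u$, and $\{v > 0\} \cap B_{R/2} = \emptyset$, so $v$ is an admissible competitor. Minimality of $u$ yields
\begin{equation*}
\cE_{(\Omega^c \times \Omega^c)^c}(u,u) - \cE_{(\Omega^c \times \Omega^c)^c}(v,v) \le |\{v > 0\} \cap \Omega| - |\{u > 0\} \cap \Omega| \le -|\{u > 0\} \cap B_{R/2}|.
\end{equation*}
Expanding $v(x) - v(y) = \phi_R(x)(u(x) - u(y)) + u(y)(\phi_R(x) - \phi_R(y))$ and splitting the double integral into the regions $B_R \times B_R$ and $B_R \times B_R^c$, together with the seminorm bound $[\phi_R]_{H^s(\R^n)}^2 \le C R^{n-2s}$, the local $L^\infty$ estimate from \autoref{thm:or}, and the assumption $u \le \eps R^s$ in $B_R$, yields
\begin{equation*}
\bigl|\cE_{(\Omega^c \times \Omega^c)^c}(v,v) - \cE_{(\Omega^c \times \Omega^c)^c}(u,u)\bigr| \le C \eps^2 R^n + C R^n \bigl( \tail(u; R) \bigr)^2,
\end{equation*}
which is bounded by $C\eps^2 R^n$ for $R$ small. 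Combining with the density estimate $|\{u > 0\} \cap B_{R/2}| \ge c R^n$ (which is an immediate consequence of optimal $C^s$ regularity and $0 \in \overline{\{u > 0\}}$) gives $c R^n \le C \eps^2 R^n$, contradicting smallness of $\eps$.

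For the first statement I would rescale. Fix $x_0 \in \{u > 0\} \cap B_1$ with $d := \dist(x_0, \partial \{u > 0\}) > 0$, and consider $u_d(z) := d^{-s} u(x_0 + dz)$. Then $u_d$ is a minimizer for the rescaled kernel $K_d(h) := d^{n+2s} K(dh)$, which satisfies \eqref{eq:Kcomp} with the same $\lambda, \Lambda$, in a domain containing $B_1$. We have $B_1 \subset \{u_d > 0\}$ and there is $z_0 \in \partial B_1 \cap \partial \{u_d > 0\}$. Applying the boundary non-degeneracy just established to $u_d$ at $z_0$ on an appropriate scale produces a point $p$ with $u_d(p) \ge c > 0$. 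Propagating this lower bound to $u_d(0)$ via the nonlocal Harnack inequality for the $L_d$-harmonic nonnegative function $u_d$ along a Harnack chain of balls inside $\{u_d > 0\}$ yields $u_d(0) \ge c'$, i.e., $u(x_0) \ge c' d^s$ as desired.

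The main obstacle in the first part is the careful bookkeeping of long-range tail contributions in the energy comparison: because $\cE_{(\Omega^c \times \Omega^c)^c}$ couples $B_R$ to all of $\R^n$, the cutoff $\phi_R$ introduces nonlocal energy that must be absorbed by tail quantities and controlled uniformly in $R$. In the second part, the subtlety is that the Harnack chain connecting $p$ to the origin must stay inside $\{u_d > 0\}$; this requires an auxiliary uniform density lower bound on $\{u_d > 0\}$, which in turn follows from optimal $C^s$ regularity (see \cite{RoWe24b}).
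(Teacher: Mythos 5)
The paper itself does not reprove this lemma: it simply cites \cite[Theorem 4.8, Lemma 4.9]{RoWe24b}, so there is no in-paper argument to compare with. Judging your proposal on its own merits, the second half (deducing $u(x)\geq c\,d^s$ from the $L^\infty$ lower bound by rescaling and the nonlocal Harnack inequality) is sound in spirit, although it is cleaner to apply the $L^\infty$ bound centered at the rescaled origin itself (which lies in $\{u_d>0\}$), obtain $\sup_{B_{1/2}}u_d\geq c$, and use one application of the global Harnack inequality for $u_d\geq0$, $L_d u_d=0$ in $B_1$; your Harnack chain through a point $p$ near $z_0\in\partial B_1$ is not guaranteed to stay inside $\{u_d>0\}$ once it leaves $B_1$.

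The genuine problem is in the first half, at the step where you invoke ``the density estimate $|\{u>0\}\cap B_{R/2}|\geq cR^n$ (which is an immediate consequence of optimal $C^s$ regularity and $0\in\overline{\{u>0\}}$).'' This is not a consequence of $C^s$ regularity: optimal regularity provides only \emph{upper} bounds on $u$ and gives no control whatsoever from below on the measure of $\{u>0\}$. If one only knows $u\in C^s$ and $0\in\overline{\{u>0\}}$, the positivity set could a priori be a tiny ball $B_\rho(y_0)$ with $y_0$ near the origin and $\rho$ arbitrarily small, in which case $|\{u>0\}\cap B_{R/2}|$ has no lower bound of order $R^n$. In fact the density lower bound for free boundary points is exactly \autoref{thm:density-est}, which is proved in \cite{RoWe24b} \emph{after} and \emph{using} non-degeneracy (Theorem 4.8/Lemma 4.9 precede Theorem 4.11 there): the standard route is to find a point $p\in B_{R/2}$ with $u(p)\geq cR^s$ via non-degeneracy and then use $C^s$ regularity to produce a ball of radius $\sim R$ inside $\{u>0\}$. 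So your argument is circular at this point. A correct from-scratch proof must close the contradiction without the density estimate, typically by combining the competitor energy inequality $\cE_{B_{R/2}\times B_{R/2}}(u,u)+|\{u>0\}\cap B_{R/2}|\leq C\varepsilon^2R^n$ with a fractional Poincar\'e inequality (available because the zero set then has large measure in $B_{R/2}$) and an iteration in scales, to conclude that $u\equiv0$ in a neighbourhood of $0$ when $\varepsilon$ is small, contradicting $0\in\overline{\{u>0\}}$. As a minor additional remark, the tail contribution to the energy comparison is linear, not quadratic: one gets a term of order $\varepsilon R^{n-s}\tail(u;R)\lesssim\varepsilon R^n$, not $R^n(\tail(u;R))^2$; this is harmless for the intended contradiction but should be stated correctly.
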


\begin{proof}
This result follows directly from \cite[Theorem 4.8,  Lemma 4.9]{RoWe24b}.
\end{proof}

\begin{lemma}[density estimates for the free boundary]
\label{thm:density-est}
Assume \eqref{eq:Kcomp}. Let $u$ be a minimizer of $\cI_{\Omega}$ with $B_2 \subset \Omega$. Then, if $0 \in \partial \{ u > 0 \}$ it holds
\begin{align*}
0 < c_1 \le \frac{|\{ u > 0 \} \cap B_R|}{|B_R|} \le 1 - c_2 < 1 \qquad \forall R \in (0,1],
\end{align*}
where $c_1 > 0$ and $c_2 > 0$ depend only on $n$, $s$, $\lambda$, $\Lambda$.
\end{lemma}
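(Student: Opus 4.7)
The statement is scale-invariant, so it suffices to prove it at $R=1$. For the \textbf{lower density estimate}, non-degeneracy (Lemma 2.7) provides a point $x_0\in B_{1/2}$ with $u(x_0)\ge c$, and the uniform $C^s$ bound (Lemma 2.6) yields $u\ge c/2$ on a universal ball $B_\eta(x_0)\subset B_1$ with $\eta=(c/2C)^{1/s}$; hence $|\{u>0\}\cap B_1|\ge |B_\eta|=:c_1$.

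For the \textbf{upper density estimate}, the plan is a competitor argument. Let $v$ be the $L$-harmonic replacement of $u$ in $B_1$, i.e., $Lv=0$ in $B_1$ with $v=u$ in $\R^n\setminus B_1$. By Lemma 2.5(i), $u$ is $L$-subharmonic in $B_1$, so $v\ge u\ge 0$. Plugging $v$ into the minimality inequality of Definition 2.4 and using the orthogonality $\cE(u-v,v)=0$ (which holds since $v$ is $L$-harmonic in $B_1$ and $u-v$ vanishes outside $B_1$) gives
\[
\cE(u-v,u-v)\;\le\;|\{u=0,\,v>0\}\cap B_1|\;\le\;|\{u=0\}\cap B_1|.
\]
A fractional Poincar\'e inequality applied to $v-u$, which vanishes outside $B_1$, then produces $\Vert v-u\Vert_{L^2(B_1)}^2\le C\,|\{u=0\}\cap B_1|$.

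To conclude, I would combine this with a universal lower bound $v\ge c_0>0$ on a macroscopic subset of $B_{1/2}$, derived from the non-degeneracy of $u$ on a slightly larger ball together with the positivity of the $L$-Poisson kernel. Since $u(0)=0$ and $u$ is uniformly $C^s$ by Lemma 2.6, one also has $u\le c_0/2$ on a universal ball $B_\rho\subset B_{1/2}$, so $v-u\ge c_0/2$ on $B_\rho$, giving $\Vert v-u\Vert_{L^2(B_1)}^2\ge(c_0/2)^2|B_\rho|$. Comparing this universal lower bound with the upper bound above forces $|\{u=0\}\cap B_1|\ge c_2>0$, as desired.

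\textbf{Main obstacle.} The most delicate step is establishing the uniform lower bound on $v$. Non-degeneracy alone guarantees a large value of $u$ somewhere in $B_2$, but this point may lie inside $B_1$, where it does not feed into the exterior Poisson data of $v$. A robust way to proceed is through a dichotomy: either a positive fraction of the positivity of $u$ already lies in the annulus $B_2\setminus B_1$ (in which case the Poisson kernel estimate applies directly), or one can rescale to a smaller ball and iterate; the iteration terminates in finitely many steps because $0\in\partial\{u>0\}$ and non-degeneracy holds at every scale.
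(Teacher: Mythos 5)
The paper itself does not give a proof of this lemma; it simply delegates to \cite[Theorem 4.11]{RoWe24b}. Your proof reconstructs what is essentially the standard competitor argument for such density estimates, and the overall structure is correct: the lower bound follows from non-degeneracy plus uniform $C^s$ regularity, and the upper bound follows by comparing $u$ with its $L$-harmonic replacement $v$ in $B_1$, using $\cE(u-v,v)=0$, the measure term, and a fractional Poincar\'e inequality to deduce $\Vert v-u\Vert_{L^2(B_1)}^2\le C|\{u=0\}\cap B_1|$.

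The part you flag as the ``main obstacle,'' however, is not actually an obstacle, and the workaround you sketch (dichotomy between mass in the annulus $B_2\setminus B_1$ versus iteration to smaller scales) is both unnecessary and unconvincing as written: the problem is scale invariant, so ``rescale and iterate'' gives no escape, and you never explain why the iteration terminates. The clean resolution is to apply the nonlocal Harnack inequality to $v$ rather than trying to argue through the Poisson kernel and exterior data. By \autoref{thm:non-degeneracy} there is a point $x_0\in\overline{B_{1/2}}$ with $u(x_0)\ge c_0>0$, hence $v(x_0)\ge u(x_0)\ge c_0$ since $v\ge u$. The function $v$ is nonnegative in all of $\R^n$ and $L$-harmonic in $B_1$, so the interior Harnack inequality (e.g.\ \cite[Theorem 6.9]{Coz17}, valid for kernels satisfying \eqref{eq:Kcomp}) on $B_{3/4}$ yields $\inf_{B_{1/2}}v\ge c_1 c_0$, with $c_1$ universal, regardless of where $x_0$ lies inside $B_{1/2}$. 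Combined with $u(0)=0$ and the uniform $C^s$ bound, this gives $v-u\ge c_1 c_0/2$ on a universal ball $B_\rho$, and the conclusion follows exactly as you planned. With this substitution the proof is complete and self-contained.
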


\begin{proof}
This result follows directly from \cite[Theorem 4.11]{RoWe24b}.
\end{proof}

The following energy estimate will be used in the classification of blow-ups in 2D (see \autoref{thm:two-dimensional-classification}).

\begin{lemma}[energy estimate]
\label{lemma:energy-bound}
Assume \eqref{eq:Kcomp}. Let $u$ be a minimizer of $\cI_{\Omega}$ with $B_2 \subset \Omega$. Then, if $0 \in \partial \{ u > 0 \}$ it holds for any $R \in (0,1]$
\begin{align*}
\cE_{B_R \times B_R}(u,u) \le C R^n \qquad \tail(u;R) \le C R^{s}, \qquad \Vert u \Vert_{L^p}^p \le C R^{n+sp} ~~ \forall p \in (0,\infty)
\end{align*}
for some $C > 0$, depending only on $n$, $s$, $\lambda$, $\Lambda$.
\end{lemma}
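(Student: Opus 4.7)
The three bounds will be established in order of increasing difficulty, with the optimal regularity estimate $\Vert u \Vert_{L^\infty(B_R)} \le CR^s$ from \autoref{thm:or} as the common starting point.

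The $L^p$ bound is immediate from optimal regularity: $\int_{B_R} u^p \d x \le |B_R| \Vert u \Vert_{L^\infty(B_R)}^p \le C R^{n+sp}$. For the tail, I would apply \autoref{thm:or} at scale $2|y|$ to obtain $u(y) \le C|y|^s$ for all $y \in B_{1/2}$. A polar-coordinate computation then yields $\int_{B_{1/2} \setminus B_R} u(y) |y|^{-n-2s} \d y \le C \int_R^{1/2} r^{-s-1} \d r \le C R^{-s}$, which multiplied by $R^{2s}$ gives $CR^s$. The contribution from $\R^n \setminus B_{1/2}$ is $R^{2s} \int_{\R^n \setminus B_{1/2}} u(y) |y|^{-n-2s} \d y \le C R^{2s} \le C R^s$ for $R \le 1$, using the control on $\Vert u \Vert_{L^1_{2s}(\R^n)}$ inherent in the normalization.

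The energy bound is the technical core. I would first reduce to $R=1$ via the scale-covariance $v(x) := R^{-s} u(Rx)$, under which $\cE_{B_R \times B_R}(u,u) = R^n \cE_{B_1 \times B_1}(v,v)$ and $v$ inherits all standing hypotheses on $u$. Next, using $Lu \le 0$ weakly in $B_2$ from \autoref{lemma:aux}(i), I would test against $u \psi^2 \ge 0$, where $\psi \in C^\infty_c(B_{3/2})$ satisfies $\psi \equiv 1$ on $B_1$ and $\Vert \nabla \psi \Vert_{L^\infty} \le C$. The pointwise identity
\begin{align*}
(a-b)(a \eta^2 - b \zeta^2) = (a\eta - b\zeta)^2 - a b (\eta - \zeta)^2
\end{align*}
applied with $(a,b,\eta,\zeta) = (u(x), u(y), \psi(x), \psi(y))$ combined with $\cE(u, u\psi^2) \le 0$ yields the Caccioppoli-type estimate
\begin{align*}
\cE_{B_1 \times B_1}(u,u) \le \iint_{\R^n \times \R^n} (u\psi(x) - u\psi(y))^2 K(x-y) \d y \d x \le C \iint u(x) u(y) (\psi(x) - \psi(y))^2 K(x-y) \d y \d x.
\end{align*}
The right-hand side would then be bounded by a universal constant using $\Vert u \Vert_{L^\infty(B_{3/2})} \le C$ together with a splitting of the domain into $|x-y| \le 1$ (where $(\psi(x)-\psi(y))^2 \le C|x-y|^2$) and $|x-y| > 1$ (where $(\psi(x)-\psi(y))^2 \le 4$, combined with the tail bound already established).

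The main technical obstacle is the near-field part of the Caccioppoli right-hand side: a naive use of $u \in C^s$ gives an integrand of order $|x-y|^{-n}$ which is not integrable near the diagonal, so one must genuinely exploit the cancellation $|\psi(x)-\psi(y)|^2 \le C|x-y|^2$ from the Lipschitz character of $\psi$. This upgrades the integrand to order $|x-y|^{2-n-2s}$, integrable precisely because $s < 1$, and tracking constants carefully through the splitting delivers $\cE_{B_1 \times B_1}(u,u) \le C$. Undoing the rescaling yields $\cE_{B_R \times B_R}(u,u) \le C R^n$.
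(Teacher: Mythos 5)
The paper's proof is a two-line citation to an external Caccioppoli-type lemma (Lemma 4.3 in the companion paper) combined with the optimal regularity, so a method-by-method comparison is not really possible; your outline does fill in a plausible version of what that cited lemma must do, and the scaling reduction, the pointwise identity $(a-b)(a\eta^2-b\zeta^2)=(a\eta-b\zeta)^2-ab(\eta-\zeta)^2$, and the near-field gain $(\psi(x)-\psi(y))^2\le C|x-y|^2$ giving an integrand of order $|x-y|^{2-n-2s}$ are all correct. Two issues, however. The minor one: with $\psi\in C_c^\infty(B_{3/2})$ and near-field threshold $|x-y|\le 1$, the variable $y$ ranges over $B_{5/2}$, which is outside $B_2\subset\Omega$ where the $L^\infty$ control from \autoref{thm:or} is available; you should shrink the cutoff (say $\supp\psi\subset B_{5/4}$) and the splitting threshold (say $|x-y|\le 1/4$) so all near-field points stay inside $B_{7/4}\subset B_2$.

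The real gap is the far-field control. Both your tail bound (the contribution from $\R^n\setminus B_{1/2}$) and the far-field part of the Caccioppoli right-hand side reduce to needing $\tail(u;1)\le C$ with a \emph{universal} constant, and you dispose of this by appealing to ``the control on $\Vert u\Vert_{L^1_{2s}(\R^n)}$ inherent in the normalization.'' But the lemma's hypotheses contain no such normalization -- $u$ is merely a minimizer of $\cI_\Omega$ in $V^s(\Omega|\Omega')\cap L^1_{2s}(\R^n)$, so $\Vert u\Vert_{L^1_{2s}}$ is finite but a priori unbounded -- and the optimal regularity bound controls $u$ only inside $B_1$. The universal tail bound is true, but it has to be \emph{derived}, and here is how: by \autoref{lemma:aux}(i)--(ii), $u\ge0$ and $Lu\le 0$ weakly in $B_2$, so for a fixed nonnegative cutoff $\phi\in C_c^\infty(B_{1/2})$ with $\phi\equiv 1$ on $B_{1/4}$ one gets $\int u\,L\phi\le 0$. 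Since $L\phi\le 0$ on $B_{1/2}^c$ with $-L\phi(x)\ge c|x|^{-n-2s}$ for $|x|\ge 1$ (by \eqref{eq:Kcomp}), while $|\int_{B_{1/2}}u\,L\phi|\le \Vert L\phi\Vert_{L^\infty}\Vert u\Vert_{L^1(B_{1/2})}\le C$ by the $L^\infty$ bound from \autoref{thm:or}, one obtains $\int_{|x|\ge 1}u|x|^{-n-2s}\,dx\le C$, i.e.\ $\tail(u;1)\le C$ universally. Without this step, the argument as written does not close.
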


\begin{proof}
This result follows directly by combination of \cite[Lemma 4.3]{RoWe24b} and \autoref{thm:or}.
\end{proof}

\subsection{Properties of blow-ups}
\label{sec:blow-ups}

In this section we recall several basic properties of blow-up sequences from \cite{RoWe24b}. In particular, we recall that blow-ups are global minimizers of $\cI_{\Omega}$ (see \autoref{cor:blowups}). Moreover, we recall a compactness result for minimizers from \cite{RoWe24b}.

\begin{definition}[blow-ups]
Given a minimizer $u$ of \eqref{eq:OP} in $\Omega$, and $x_0 \in \partial \{ u > 0 \} \cap \Omega$ we define
\begin{align*}
u_{r,x_0}(x) = \frac{u(x_0 + r x)}{r^s} ~~ \forall x \in \R^n, \qquad \forall r > 0.
\end{align*}
The sequence of functions $(u_{r,x_0})_r$ is called blow-up sequence for $u$ at $x_0$. If there exists a sequence $r_k \searrow 0$ such that $u_{r_k,x_0} \to u_{x_0}$, as $k \to \infty$ for a function $u_{x_0} : \R^n \to \R$ locally uniformly, we say that $u_{x_0}$ is a blow-up (limit) of $u$ at $x_0$.\\
If $x_0 = 0$, or if no confusion about the free boundary point $x_0$ can arise, we sometimes write $u_{r} := u_{r,x_0}$.
\end{definition}

In \cite{RoWe24b} we have established the following properties of blow-ups.

\begin{lemma}
\label{cor:blowups}
Assume \eqref{eq:Kcomp}. Let $\Omega \subset \R^n$. Let $u$ be a minimizer of $\cI_{\Omega}$ with $B_2 \subset \Omega$ and $x_0 \in \partial \{ u > 0 \} \cap B_1$. Then, there exists a subsequence $(r_k)_k$ with $r_k \searrow 0$ such that $u_{r_k,x_0} \to u_{x_0}$, as $k \to \infty$, locally uniformly. Moreover, for any such $(r_k)_k$, it holds:
\begin{itemize}
\item[(i)] $u_{x_0}$ is a non-trivial minimizer of $\cI$ in $\R^n$, i.e., $u_{x_0}$ is a minimizer of $\cI_{B_R}$ for any $R > 0$.
\item[(ii)] Up to a subsequence, $u_{r_k,x_0} \to u_{x_0}$ in $H^s(B_R)$, in $L^1_{2s}(\R^n)$, and pointwise a.e. in $B_R$ for any $R > 0$.
\item[(iii)] Up to a subsequence, $\1_{\{ u_{r_k,x_0} > 0 \}} \to \1_{u_{x_0} > 0 }$ strongly in $L^1(B_R)$, and pointwise a.e. in $B_R$ for any $R > 0$.
\item[(iv)] Up to a subsequence, $\overline{\{ u_{r_k,x_0} > 0 \}} \to \overline{\{ u_{x_0} > 0 \}}$ locally in $B_R$ for any $R > 0$ in the Hausdorff-sense.
\end{itemize}
\end{lemma}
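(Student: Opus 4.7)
The plan is to first extract a subsequence via Arzelà-Ascoli, and then to upgrade the resulting locally uniform convergence to the remaining modes by exploiting the energy bound, non-degeneracy, and density estimates already collected.

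First, I would observe that the rescalings $u_{r,x_0}$ inherit uniform bounds on every ball: by the optimal regularity (\autoref{thm:or}) applied to $u$ and the fact that $0 \in \partial\{u_{r,x_0}>0\}$ (after translating), one gets for every $R>0$ and every $r\in(0, R^{-1})$ an estimate
\begin{align*}
\Vert u_{r,x_0}\Vert_{L^\infty(B_R)} + [u_{r,x_0}]_{C^s(B_R)} \le C(R),
\end{align*}
independently of $r$. Arzelà-Ascoli combined with a diagonal argument over $R=1,2,\dots$ yields a subsequence $r_k\searrow 0$ such that $u_{r_k,x_0}\to u_{x_0}$ locally uniformly on $\R^n$. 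The non-degeneracy (\autoref{thm:non-degeneracy}) applied to each $u_{r_k,x_0}$ gives $\Vert u_{x_0}\Vert_{L^\infty(B_1)}\ge c>0$, so $u_{x_0}$ is nontrivial. For (ii), the tail estimate in \autoref{lemma:energy-bound} gives $\tail(u_{r_k,x_0};R)\le C(R)$ uniformly, and together with the pointwise limit and the uniform growth bound $u_{r_k,x_0}(y)\le C(1+|y|)^s$ provided by \autoref{thm:or}, dominated convergence yields $u_{r_k,x_0}\to u_{x_0}$ in $L^1_{2s}(\R^n)$. Strong $H^s(B_R)$ convergence will follow once the energies $\cE_{B_R\times B_R}(u_{r_k,x_0},u_{r_k,x_0})$ are shown to converge to $\cE_{B_R\times B_R}(u_{x_0},u_{x_0})$, which I expect to obtain as a by-product of (i).

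Statement (i) is the heart of the lemma and the main obstacle. Fix $R>0$ and any admissible competitor $v\in V^s(B_R|\R^n)\cap L^1_{2s}(\R^n)$ with $v=u_{x_0}$ in $\R^n\setminus B_R$. I would build competitors $v_k$ for $u_{r_k,x_0}$ on a slightly larger ball $B_{R+\eta}$ by gluing $v$ to $u_{r_k,x_0}$ via a cutoff in the annulus $B_{R+\eta}\setminus B_R$. Using the minimality of $u_{r_k,x_0}$ for $\cI_{B_{R+\eta}}$ (which holds since the rescaling of a minimizer is a minimizer at any scale), we get
\begin{align*}
\cE_{(B_{R+\eta}^c\times B_{R+\eta}^c)^c}(u_{r_k,x_0},u_{r_k,x_0}) + |\{u_{r_k,x_0}>0\}\cap B_{R+\eta}| \le \cE_{(B_{R+\eta}^c\times B_{R+\eta}^c)^c}(v_k,v_k) + |\{v_k>0\}\cap B_{R+\eta}|.
\end{align*}
Passing to the limit $k\to\infty$ is the delicate point: on the left, lower semicontinuity of the quadratic energy along weak $V^s$ limits, together with the $L^1$ convergence of the indicators from (iii), gives a lower bound by $\cI_{B_{R+\eta}}(u_{x_0})$; on the right, the competitor $v_k$ is designed so that both the Dirichlet term and the volume term converge to the corresponding quantities for $v$ (after accounting for an annular error controlled by $\eta$ and the uniform $C^s$ bound). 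Sending $\eta\searrow 0$ removes the annular error and yields $\cI_{B_R}(u_{x_0})\le\cI_{B_R}(v)$. Taking $v=u_{x_0}$ shows in particular that $\cE$ does not drop in the limit, hence strong $H^s(B_R)$ convergence, completing (ii).

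Finally, for (iii) and (iv) I would invoke the density estimates for the free boundary (\autoref{thm:density-est}), which are scale-invariant and therefore hold uniformly for the sequence $u_{r_k,x_0}$ and for the limit $u_{x_0}$. They prevent fattening and thinning phenomena: along any subsequence, $\1_{\{u_{r_k,x_0}>0\}}$ is precompact in $L^1(B_R)$, and any limit is of the form $\1_{E}$ with $E=\{u_{x_0}>0\}$ up to a null set, as follows from the pointwise convergence of $u_{r_k,x_0}$ on $\{u_{x_0}>0\}$ and on the interior of $\{u_{x_0}=0\}$, together with the uniform density of both phases at $\partial\{u_{x_0}>0\}$. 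The same uniform two-sided density estimates force the Hausdorff convergence $\overline{\{u_{r_k,x_0}>0\}}\to\overline{\{u_{x_0}>0\}}$ on each $B_R$: one-sided (upper) Kuratowski inclusions are immediate from pointwise convergence, and the opposite inclusion uses that a point in $\overline{\{u_{x_0}>0\}}$ that were isolated from $\{u_{r_k,x_0}>0\}$ for infinitely many $k$ would contradict the density estimate applied to $u_{r_k,x_0}$.
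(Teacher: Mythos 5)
The paper does not prove this lemma: it states it and cites \cite{RoWe24b} (``In \cite{RoWe24b} we have established the following properties of blow-ups''), so there is no in-paper argument to compare against, only the standard blow-up machinery from that reference. Your overall plan --- Arzel\`a--Ascoli from uniform $C^s$ bounds, then a gluing competitor argument plus lower semicontinuity to transfer minimality to the limit, then density estimates to handle the positivity sets --- is the right skeleton and essentially what one expects to find in \cite{RoWe24b}.

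There is, however, a logical ordering problem in how you arranged (i) and (iii). You use ``the $L^1$ convergence of the indicators from (iii)'' on the left-hand side of the minimality inequality to conclude (i), and you then justify (iii) by invoking density estimates ``for the limit $u_{x_0}$.'' But \autoref{thm:density-est} applies to minimizers, so density estimates for $u_{x_0}$ are a consequence of (i), not something you can take for granted before proving it. Concretely, $L^1$ convergence of the indicators requires $|\partial\{u_{x_0}>0\}|=0$, and the cleanest way to get that is from the two-sided density estimate for $u_{x_0}$ plus the Lebesgue density theorem, which again needs (i). As written the argument is circular. The fix is standard and small: for (i) you do not need the full strong $L^1$ convergence, only the one-sided bound $\liminf_k |\{u_{r_k,x_0}>0\}\cap B_R| \ge |\{u_{x_0}>0\}\cap B_R|$, which follows from the locally uniform convergence and Fatou without any information about $\partial\{u_{x_0}>0\}$. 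With that, (i) goes through, then \autoref{thm:density-est} applies to $u_{x_0}$, and only afterwards can you upgrade to the full $L^1$ convergence in (iii). You should reorder the dependencies accordingly (or at least make explicit that only the $\liminf$ is used before (i)).

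Two smaller points. First, for $L^1_{2s}(\R^n)$ convergence you cannot invoke dominated convergence globally: for $|y|>1/(2r_k)$ you are rescaling from outside the region where the growth bound from \autoref{thm:or} holds, since there $u$ is just the prescribed exterior datum. You need to split the integral, check that the outer part is $r_k^{s}\,\tail(u;1/2,x_0)\to 0$ by a change of variables, and apply dominated convergence only on the inner part, where the bound $u_{r_k,x_0}(y)\le C(1+|y|)^s$ and pointwise a.e. convergence are both available. Second, the Hausdorff convergence in (iv) is asserted rather than argued; the key facts are (a) locally uniform convergence rules out free boundary points of $u_{x_0}$ appearing far from $\partial\{u_{r_k,x_0}>0\}$, and (b) non-degeneracy rules out free boundary points of $u_{r_k,x_0}$ persisting far from $\overline{\{u_{x_0}>0\}}$; you gesture at (b) via the density estimate but it is really non-degeneracy doing the work, and the one-sentence sketch would benefit from being unpacked.
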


The following lemma is a compactness result which immediately follows from the proofs of \cite[Lemma 4.13, Lemma 4.14, Corollary 4.16]{RoWe24b}.

\begin{lemma}
\label{lemma:scaling-blow-up}
Let $\Omega \Subset \Omega' \subset \R^n$. Let $x_0 \in \R^n$ be such that $B_2(x_0) \subset \Omega$, and $R > 0$. Let $(K_k)_k$ be a sequence of kernels $K_k$ satisfying \eqref{eq:Kcomp}. Let $(u^{(k)})_k \subset V^s(\Omega| \Omega') \cap L^1_{2s}(\R^n)$ be minimizers of $\cI_{\Omega}$ with respect to $K_k$ such that $u^{(k)}(x_0) = 0$. Then, there exists a subsequence $(r_k)_k$ with $r_k \searrow 0$, such that $u^{(k)}_{r_k,x_0} \to u_{x_0}$ locally uniformly, in $L^1_{2s}(\R^n)$, and weakly in $H^s(B_R)$ to some $u_{x_0} \in H^s(B_R) \cap L^1_{2s}(\R^n)$ for any $R > 0$. Moreover, $\overline{\{ u^{(k)} > 0 \}} \to \overline{\{ u_{\infty} > 0 \}}$ locally in $B_R$ for any $R > 0$ in the Hausdorff-sense. Moreover, there is a kernel $K_{\infty}$ satisfying \eqref{eq:Kcomp}, such that weakly in the sense of measures $\min\{1,|h|^2\} K_k(h) \d h \to \min\{1,|h|^2\} K_{\infty}(h) \d h$, and $u_{\infty} \in H^s(B_R) \cap L^1_{2s}(\R^n)$ is a minimizer of $\cI_{B_R}$ with respect to $K_{\infty}$ for any $R > 0$.
\end{lemma}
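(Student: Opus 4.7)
The plan is to combine three kinds of compactness: (i) for the rescaled minimizers themselves, (ii) for the rescaled kernels (viewed as measures), and (iii) for the positivity sets. All the ingredients are essentially uniform versions of the a priori estimates of Subsection \ref{subsec:aux}, which depend on $K_k$ only through $\lambda,\Lambda$.

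First I would extract a compactness subsequence for $u^{(k)}_{r_k,x_0}$. By the optimal regularity estimate in \autoref{thm:or}, applied to each $u^{(k)}$ (and scaling, using $u^{(k)}(x_0)=0$ so that $x_0\in\partial\{u^{(k)}>0\}\cup\{u^{(k)}\equiv 0\text{ near }x_0\}$), one obtains $\Vert u^{(k)}_{r,x_0}\Vert_{C^s(B_R)}\le C(R)$ uniformly in $k$ and in $r\in(0,1]$, with $C(R)$ depending only on $n,s,\lambda,\Lambda,R$. In particular $\Vert u^{(k)}_{r,x_0}\Vert_{L^\infty(B_R)}\le C(R)$. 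Likewise, from the energy estimate \autoref{lemma:energy-bound} one has uniform bounds on $[u^{(k)}_{r,x_0}]_{H^s(B_R)}$, $\Vert u^{(k)}_{r,x_0}\Vert_{L^1_{2s}(\mathbb{R}^n)}$ (this last one after noting $\tail(u^{(k)}_{r,x_0};R)\le C R^s$ and combining with the $L^\infty$ bound on bounded sets). By Arzel\`a--Ascoli, a standard diagonal argument produces a sequence $r_k\searrow 0$ and a function $u_\infty$ such that $u^{(k)}_{r_k,x_0}\to u_\infty$ locally uniformly; the $H^s$ and $L^1_{2s}$ uniform bounds then give, up to a further subsequence, weak convergence in $H^s(B_R)$ and convergence in $L^1_{2s}(\mathbb{R}^n)$ (the latter via dominated convergence, using $|u^{(k)}_{r_k,x_0}(y)|\le C(1+|y|)^s$).

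Next I would treat the kernels. The L\'evy-type measures $\mu_k(dh):=\min\{1,|h|^2\}K_k(h)\,dh$ are, by \eqref{eq:Kcomp}, uniformly bounded on each annulus and on $B_1$, so after passing to a further subsequence they converge weakly (in the sense of Radon measures on $\mathbb{R}^n\setminus\{0\}$, plus tightness of $\min\{1,|h|^2\}K_k$ at the origin following from the $\Lambda|h|^{-n-2s}$ bound) to some $\mu_\infty=\min\{1,|h|^2\}K_\infty(h)\,dh$. Since $K_k(h)=|h|^{-n-2s}K_k(h/|h|)$ is $2s$-homogeneous and the angular profiles $K_k(\cdot)\hspace{-0.1cm}\mid_{\mathbb{S}^{n-1}}$ are uniformly bounded in $L^\infty(\mathbb{S}^{n-1})$ by \eqref{eq:Kcomp}, up to a subsequence the angular profiles converge weakly-$*$ in $L^\infty(\mathbb{S}^{n-1})$, and the limit $K_\infty$ is again a $2s$-homogeneous symmetric kernel satisfying the bounds \eqref{eq:Kcomp} with the same $\lambda,\Lambda$.

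The remaining (and most delicate) point is to identify $u_\infty$ as a minimizer of $\mathcal{I}_{B_R}$ with respect to $K_\infty$ for every $R>0$, and to prove Hausdorff convergence of the positivity sets. For the minimality, one fixes $R>0$ and a competitor $v$ with $v=u_\infty$ in $\mathbb{R}^n\setminus B_R$; a cut-off / mollification procedure produces admissible competitors $v_k$ for $u^{(k)}_{r_k,x_0}$ in $B_R$ (with $v_k= u^{(k)}_{r_k,x_0}$ outside $B_R$ and $v_k\to v$ in the relevant norms). Writing the minimality inequality for $u^{(k)}_{r_k,x_0}$ against $v_k$ as in \autoref{def:minimizer}, one passes to the limit: the weak $H^s(B_R)$ convergence and strong $L^1_{2s}$ convergence, together with the weak convergence $\mu_k\rightharpoonup\mu_\infty$, allow to pass to the limit in the bilinear energies (using lower semicontinuity for the $u_\infty$-side and continuity for the $v$-side), while the $|\{u>0\}\cap B_R|$ term is handled via the Hausdorff convergence of positivity sets. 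The Hausdorff convergence itself follows from the non-degeneracy \autoref{thm:non-degeneracy} and density estimates \autoref{thm:density-est}, both uniform in $k$: local uniform convergence $u^{(k)}_{r_k,x_0}\to u_\infty$ together with $u_\infty\ge c\,d^s$ on $\{u_\infty>0\}$ and the two-sided density bounds of the free boundary prevent escape and collapse of $\partial\{u^{(k)}_{r_k,x_0}>0\}$, exactly as in \cite[Lemma 4.13, Lemma 4.14, Corollary 4.16]{RoWe24b}. The main obstacle to anticipate is the passage to the limit in the nonlocal energy with \emph{varying} kernels $K_k$: this requires simultaneous use of weak $H^s$ convergence of the functions and weak-$*$ convergence of the kernels, which is handled by splitting the energy into a near-diagonal part (controlled uniformly by $[u^{(k)}_{r_k,x_0}]_{H^s}$ via the $\Lambda|h|^{-n-2s}$ bound and a Vitali-type argument) and a far part where $\mu_k\rightharpoonup\mu_\infty$ in a stronger sense thanks to the tail bound in $L^1_{2s}$.
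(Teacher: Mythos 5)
The paper gives no self-contained proof of this lemma; it simply records that the statement ``immediately follows from the proofs of \cite[Lemma 4.13, Lemma 4.14, Corollary 4.16]{RoWe24b}.'' Your proposal is essentially a correct reconstruction of what those cited arguments do: uniform $C^s$/energy/tail/density/non-degeneracy bounds depending only on $n,s,\lambda,\Lambda$, a diagonal extraction, weak-$*$ compactness of the angular profiles of $K_k$, Hausdorff convergence of the positivity sets via the density estimates, and a competitor comparison to identify the limit as a minimizer for $K_\infty$. The main obstacle you single out -- passing to the limit in the nonlocal energy when the kernels also vary -- is indeed the delicate point, and splitting near-diagonal from far contributions is the right idea.

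Two small technical imprecisions worth noting, since you will need to close them if you write a full proof. First, the pointwise bound $|u^{(k)}_{r_k,x_0}(y)| \le C(1+|y|)^s$ on which you base the $L^1_{2s}$ convergence only holds on $|y|\lesssim 1/r_k$ (it comes from rescaling an estimate valid at unit scale); for $|y|\gtrsim 1/r_k$ no uniform control exists, so dominated convergence as stated does not directly apply. The fix is the standard one: for fixed $M$, $\int_{\R^n\setminus B_M}|u^{(k)}_{r_k,x_0}|(1+|y|)^{-n-2s}\d y \lesssim M^{-s}$ once $r_kM<1$, so one chooses $M$ first and then $k$ large. Second, the uniform $C^s$ bound on $B_1$ requires a brief case distinction at $x_0$: either $x_0\in\partial\{u^{(k)}>0\}$ and one applies the sharp version of \autoref{thm:or}, or there is a nearest free boundary point $z_k\in B_2(x_0)$ (using $B_2(x_0)\subset\Omega$) and one applies the estimate centered at $z_k$, or $u^{(k)}\equiv 0$ on a large ball around $x_0$ and the rescaling is eventually trivial. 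These are fixable details, not gaps in the strategy.
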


\section{Viscosity solutions to the one-phase problem}
\label{sec:viscosity}

In order to prove fine properties of the free boundary for minimizers to the nonlocal one-phase problem we need to study the behavior of minimizers $u$ to $\mathcal{I}_{B_1}$ at the free boundary. When $L = (-\Delta)^s$ and the free boundary $\partial \{ u > 0 \}$ is $C^{1,\alpha}$ in $B_1$, then in \cite{CRS10,FeRo22}, it was shown by analyzing the first variation of the energy functional $\cI$ that 
\begin{align*}
\frac{u}{d^s} = \Gamma(1+s)^{-1} ~~ \text{ on } \partial \{ u > 0 \} \cap B_1,
\end{align*}
where $d(x) := \dist(x,\partial \{ u > 0 \})$ for $x \in \{ u > 0 \}$, and for $x_0 \in \partial \{ u > 0 \}$, we denote $\frac{u}{d^s}(x_0) = \lim_{\Omega \ni x \to x_0} \frac{u}{d^s}(x)$. Having such information at the free boundary turns out to be crucial in order to study its regularity properties.\\
The goal of this section is threefold: First, we generalize the aforementioned result to general nonlocal operators $L$ (see \autoref{prop:free-bound-cond}). It turns out that due to the anisotropy of these operators, the constant of the free boundary condition at each point will depend on the normal vector of the free boundary at that point as in \eqref{eq:first-variation}. This result leads to a viscosity formulation of the free boundary condition, which remains valid also at non-smooth free boundary points (see \autoref{def:viscosity}).\\
Second, we also prove that minimizers of $\mathcal{I}_{B_1}$ are viscosity solutions to \eqref{eq:first-variation} (see \autoref{lemma:min-visc}), and third, we prove that viscosity solutions are $C^s$ (see \autoref{lemma:visc-Cs}).

\subsection{The free boundary condition}

The following is the main result of this subsection.

\begin{proposition}
\label{prop:free-bound-cond}
Assume \eqref{eq:Kcomp}. Let $u$ be a minimizer of $\mathcal{I}_{B_1}$, and assume that $\partial \{ u > 0 \} \cap B_1 \in C^{1,\alpha}$ for some $\alpha \in (0,1)$. Then, $u$ satisfies
\begin{align}
\label{eq:free-bound-cond}
\frac{u}{d^s}(x) = A(\nu_x) ~~ \forall x \in \partial \{ u > 0 \} \cap B_1,
\end{align}
where $\nu_x \in \mathbb{S}^{n-1}$ denotes the normal vector of $\{ u > 0 \}$ at $x$, and $A : \mathbb{S}^{n-1} \to (0,\infty)$ is given by
\begin{align}
\label{eq:A-def}
A(\nu) = c_{n,s} \left( \int_{\mathbb{S}^{n-1}} K(\theta) |\theta \cdot \nu|^{2s} \d \theta \right)^{-\frac{1}{2}}, \qquad \nu \in \mathbb{S}^{n-1},
\end{align}
where $c_{n,s} > 0$ is a constant. Moreover, we have
\begin{align}
\label{eq:A-properties}
c_1 \le A(\nu) \le c_2 ~~ \forall \nu \in \mathbb{S}^{n-1}, \qquad \Vert A \Vert_{C^{1+2s-\eps}(\mathbb{S}^{n-1})} < \infty.
\end{align}
for any $\eps > 0$, where $c_1,c_2 > 0$ depend only on $n,s,\lambda,\Lambda$.
\end{proposition}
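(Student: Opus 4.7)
The plan is to combine boundary regularity of $u$ at $C^{1,\alpha}$ free boundary points with a classification of half-space global minimizers via a domain-variation argument. First, by \autoref{lemma:aux}, $Lu=0$ in $\{u>0\}\cap B_1$ and $u\equiv 0$ on its complement. Since $\partial\{u>0\}\cap B_1\in C^{1,\alpha}$, the boundary regularity theory for Dirichlet problems associated to operators of the form \eqref{eq:L} in $C^{1,\alpha}$ domains (cf.\ \cite{AbRo20} and references therein) yields $u/d^s\in C^{\alpha'}(\overline{\{u>0\}}\cap B_1)$ for some $\alpha'>0$. Hence $\phi(x):=\frac{u}{d^s}(x)$ is a well-defined continuous function on $\partial\{u>0\}\cap B_1$. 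Then, given $x_0\in\partial\{u>0\}\cap B_1$ with outer normal $\nu_{x_0}$, the $C^{1,\alpha}$ regularity of the free boundary together with the continuity of $\phi$ implies $u_{r,x_0}\to v$ locally uniformly as $r\searrow 0$, where $v(x):=\phi(x_0)(x\cdot\nu_{x_0})_+^s$. By \autoref{cor:blowups}, $v$ is a nontrivial global minimizer of $\cI$. Therefore \eqref{eq:free-bound-cond} reduces to the following claim: \emph{if $c>0$, $\nu\in\mathbb{S}^{n-1}$ and $c(x\cdot\nu)_+^s$ is a global minimizer of $\cI$, then $c=A(\nu)$ with $A$ as in \eqref{eq:A-def}}.

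To identify $c$ we perform a domain variation. Given $\psi\in C_c^\infty(B_R)$ with $R>0$, set $\eta(x)=\psi(x)\nu$ and $\Phi_t(x)=x+t\eta(x)$. For $|t|$ small, $\Phi_t$ is a diffeomorphism with $\Phi_t\equiv\Id$ outside $B_R$, and $v_t:=v\circ\Phi_t^{-1}$ is an admissible competitor for $v$ in $\cI_{B_{R+1}}$. Minimality gives $\frac{d}{dt}|_{t=0}\cI_{B_{R+1}}(v_t)=0$. The Hadamard formula yields $\frac{d}{dt}|_{t=0}|\{v_t>0\}\cap B_{R+1}|=\int_{\{x\cdot\nu=0\}\cap B_R}\psi\,\d\cH^{n-1}$. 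For the Dirichlet term, the change of variables $x=\Phi_t(\tilde x)$, $y=\Phi_t(\tilde y)$, combined with a Taylor expansion in $t$, gives
\begin{equation*}
\frac{d}{dt}\bigg|_{t=0}\cE(v_t,v_t)=\iint\big(v(x)-v(y)\big)^2\Big[\nabla K(x-y)\cdot(\eta(x)-\eta(y))+K(x-y)\big(\dvg\eta(x)+\dvg\eta(y)\big)\Big]\d x\,\d y.
\end{equation*}
Exploiting the translation invariance of $v$ and $L$ along $\{x\cdot\nu=0\}$, writing $K$ in spherical form via \eqref{eq:Kcomp}, integrating the tangential variables by Fubini, and using $L[(x\cdot\nu)_+^s]=0$ in $\{x\cdot\nu>0\}$ together with a nonlocal integration by parts, the right-hand side reduces to
\begin{equation*}
-c^2\,c_{n,s}^{-2}\left(\int_{\mathbb{S}^{n-1}}K(\theta)|\theta\cdot\nu|^{2s}\,\d\theta\right)\int_{\{x\cdot\nu=0\}\cap B_R}\psi\,\d\cH^{n-1},
\end{equation*}
for a dimensional constant $c_{n,s}>0$ independent of $K$ and $\nu$. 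Summing the two contributions and using the arbitrariness of $\psi$ forces $c^2 c_{n,s}^{-2}\int_{\mathbb{S}^{n-1}}K(\theta)|\theta\cdot\nu|^{2s}\,\d\theta=1$, i.e., $c=A(\nu)$ as in \eqref{eq:A-def}.

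The bounds $c_1\le A(\nu)\le c_2$ in \eqref{eq:A-properties} follow immediately from \eqref{eq:Kcomp}, which controls the integral $\int_{\mathbb{S}^{n-1}}K(\theta)|\theta\cdot\nu|^{2s}\,\d\theta$ above and below by constants depending only on $n,s,\lambda,\Lambda$. For the $C^{1+2s-\eps}$ regularity, by the positivity of $A$ it suffices to bound the same norm for $F(\nu):=\int_{\mathbb{S}^{n-1}}K(\theta)|\theta\cdot\nu|^{2s}\,\d\theta$; this is a standard smoothing argument based on differentiating under the integral, the smoothness of $t\mapsto|t|^{2s}$ away from the origin, and the fact that $\{\theta\cdot\nu=0\}$ has zero surface measure on $\mathbb{S}^{n-1}$, so that the residual singularity is integrable thanks to $2s>0$. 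The main technical obstacle is the explicit reduction, in the preceding paragraph, of the nonlocal Dirichlet first variation to the anisotropic boundary quantity $\int K(\theta)|\theta\cdot\nu|^{2s}\,\d\theta$: the anisotropy of $L$ forces one to track carefully the interaction between $K$ and $\nu$, whereas for the fractional Laplacian this integral collapses to a universal constant by rotational invariance.
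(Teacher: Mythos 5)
Your strategy is a genuine alternative to the paper's. The paper works directly with the minimizer $u$ in the $C^{1,\alpha}$ domain: it first proves an expansion lemma (\autoref{lemma:expansion-smooth-domain}), establishing $u(x)=U_0 d^s(x)+O(|x|^{s+\alpha})$ inside and $Lu(x)=B(\nu)U_0 d^{-s}(x)+O(|x|^{\alpha})d^{-s}(x)$ outside, based on the explicit identity $L(x\cdot e)_+^s=B(e)(x\cdot e)_-^{-s}$; then it varies the domain by replacing $u$ with its $L$-harmonic extension in the perturbed set $\Omega_{\eps}$ and evaluates $-\eps^{-1}\int_{\Theta_\eps}u\,Lv_\eps$. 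You instead blow up at each free boundary point (using boundary regularity of $u/d^s$ and \autoref{cor:blowups}) to reduce the whole proposition to the claim that if $c(x\cdot\nu)_+^s$ is a global minimizer then $c=A(\nu)$, and then apply a pushforward variation $v_t=v\circ\Phi_t^{-1}$ to the explicit half-space solution. The reduction is clean and correct, and this two-step structure is conceptually appealing.

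However, the decisive step is left as a sketch, and it hides the paper's main technical content rather than bypassing it. The asserted formula
\begin{align*}
\frac{d}{dt}\Big|_{t=0}\cE(v_t,v_t)=\iint\big(v(x)-v(y)\big)^2\Big[\nabla K(x-y)\cdot(\eta(x)-\eta(y))+K(x-y)\big(\dvg\eta(x)+\dvg\eta(y)\big)\Big]\d x\,\d y
\end{align*}
requires $\nabla K$, which is not available under \eqref{eq:Kcomp} alone (the proposition assumes only boundedness, symmetry and homogeneity of $K$); the paper's $L$-harmonic replacement competitor is chosen precisely to avoid differentiating $K$. Moreover, the claimed reduction of this singular double integral to $-c^2 c_{n,s}^{-2}\bigl(\int_{\mathbb{S}^{n-1}}K(\theta)|\theta\cdot\nu|^{2s}\,\d\theta\bigr)\int\psi$ is simply asserted: the ``nonlocal integration by parts'' would involve $\nabla v=cs(x\cdot\nu)_+^{s-1}\nu$, which blows up at $\{x\cdot\nu=0\}$, and both the near-diagonal singularity of $\nabla K$ and the free-boundary singularity of $\nabla v$ need to be controlled to justify the limit. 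This computation is the real work of the proof (it is the analogue of the paper's $L(x\cdot e)_+^s=B(e)(x\cdot e)_-^{-s}$ together with the expansion argument), and without carrying it out the proof is incomplete. There is also an unresolved orientation inconsistency ($\nu$ is described as the outer normal but used as the inner normal in $(x\cdot\nu)_+^s$, and the signs of the Hadamard and Dirichlet derivatives are accordingly off), which while fixable signals that the variation has not actually been computed. The estimates \eqref{eq:A-properties} are handled correctly and mirror the paper.
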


\begin{remark}
Note that when $K \in C^{\beta}(\mathbb{S}^{n-1})$, then $\Vert A \Vert_{C^{1+2s-\eps+\beta}} \le C$ for any $\eps > 0$, where $ C> 0$ depends on $n,s,\lambda,\Lambda,\eps$, and $\Vert K \Vert_{C^{\beta}(\mathbb{S}^{n-1})}$.
\end{remark}

The following lemma provides a first-order expansion of an $L$-harmonic function in a $C^{1,\alpha}$ domain, and of the operator $L$ applied to this function outside the domain. Already in this result, the anisotropy of the operator leads to a direction-dependent constant in the expansion:

\begin{lemma}
\label{lemma:expansion-smooth-domain}
Assume \eqref{eq:Kcomp}. Let $\Omega \subset \R^n$ be such that $\partial \Omega \in C^{1,\alpha}$ for some $\alpha \in (0,1)$ with $0 \in \partial \Omega$, and $u$ such that
\begin{align*}
\begin{cases}
L u = 0 ~~ \text{ in } B_1 \cap \Omega,\\
u = 0 ~~ \text{ in } B_1 \setminus \Omega.
\end{cases}
\end{align*}
Then, there exists $U_0 \in \R$ such that
\begin{align*}
u(x) &= U_0 d^s(x) + O(|x|^{s+\alpha}) ~~ \text{ for } x \in \Omega,\\
Lu(x) &= B(\nu) U_0 d^{-s}(x) + O(|x|^{\alpha}) d^{-s}(x) ~~ \text{ for } x \in B_1 \setminus \Omega,
\end{align*}
where $\nu$ is the normal vector of $\partial \Omega$ at $0$, and $B : \mathbb{S}^{n-1} \to (0,\infty)$ is given by $B(\nu) = c_{n,s} A(\nu)^{-2}$ for some $c_{n,s} > 0$.
\end{lemma}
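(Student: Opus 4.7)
My plan is to obtain the two expansions sequentially. The first one is a boundary regularity statement: $u/d^s$ is $C^\alpha$ up to the $C^{1,\alpha}$ boundary of $\Omega$. This is a classical result for $L$-harmonic functions vanishing outside a smooth enough domain, available in the literature for the class of $2s$-stable operators satisfying \eqref{eq:Kcomp}. Applying it in $B_{1/2}\cap\Omega$ and setting $U_0 := \lim_{\Omega\ni y\to 0} u(y)/d^s(y)$ yields $u(y) = U_0 d^s(y) + O(|y|^{s+\alpha})$, which is the claimed first expansion.

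For the second expansion, fix $x \in B_1 \setminus \Omega$ near $0$. Since $u(x) = 0$ and $u\equiv 0$ on $B_{d(x)}(x) \subset B_1\setminus\Omega$, no principal value is needed and
\begin{align*}
Lu(x) = -2 \int_{\R^n} u(y) K(x-y) \d y.
\end{align*}
The strategy is to split $u(y) = U_0 \psi_\nu(y) + \rho(y)$, with $\psi_\nu(y) := (y\cdot\nu)_+^s$ the model half-space profile, and $\rho$ an error term which, combining the first expansion with the $C^{1,\alpha}$ approximation of $\partial \Omega$ by its tangent hyperplane at $0$, satisfies $|\rho(y)| \le C|y|^{s+\alpha}$ for $y \in B_{1/2}$. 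The contribution of $u$ for $|y| \ge 1/2$ is smooth in $x$ near $0$, and will be absorbed into the $O(|x|^\alpha) d^{-s}(x)$ remainder.

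The heart of the argument is the explicit computation of the principal piece
\begin{align*}
J(x) := \int_{\R^n} \psi_\nu(y) K(x-y) \d y, \qquad x\cdot \nu < 0.
\end{align*}
Setting $t:= -x\cdot\nu > 0$, the substitution $z = (y-x)/t$ together with $K(tz) = t^{-n-2s}K(z)$ from \eqref{eq:Kcomp} and the symmetry $K(-\cdot)=K(\cdot)$ yields
\begin{align*}
J(x) = t^{-s} \int_{\{z\cdot\nu > 1\}} (z\cdot\nu - 1)^s K(z) \d z.
\end{align*}
Passing to polar coordinates $z=\sigma\theta$ and using $K(\sigma\theta) = \sigma^{-n-2s} K(\theta)$, the radial integral reduces to a finite Beta-type constant $C_s = \int_1^\infty (\tau-1)^s \tau^{-2s-1}\d\tau$, and the angular one to $\int_{\{\theta\cdot\nu>0\}} K(\theta)(\theta\cdot\nu)^{2s}\d\theta = \tfrac12 \int_{\mathbb{S}^{n-1}} K(\theta)|\theta\cdot\nu|^{2s}\d\theta$. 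By \eqref{eq:A-def} this last quantity is a positive multiple of $A(\nu)^{-2}$, yielding the asserted form $B(\nu) = c_{n,s} A(\nu)^{-2}$ after noting $t^{-s} = |x\cdot\nu|^{-s} = (1+O(|x|^\alpha))\,d^{-s}(x)$, again from the $C^{1,\alpha}$ regularity of $\partial\Omega$.

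What remains is to control $-2 \int \rho(y) K(x-y)\d y$ together with the far contribution from $|y|\ge 1/2$, and show both produce errors of order $O(|x|^\alpha) d^{-s}(x)$. The far part is easily bounded through $|x-y|\ge 1/4$ and an $L^1_{2s}$-tail estimate for $u$; for the near part one splits dyadically in $|y|$, using $|\rho(y)|\le C|y|^{s+\alpha}$ and $K(x-y)\le C|x-y|^{-n-2s}$. I expect the main technical obstacle to be the region $|y-x|\lesssim d(x)$, where $K(x-y)$ becomes singular: there $\rho(y)$ simplifies to $-U_0\psi_\nu(y)$, and one must exploit that $\psi_\nu$ vanishes except in a thin strip of width $O(|x|^{1+\alpha})$ adjacent to $\{z\cdot\nu=0\}$, so that the integral remains controlled despite the non-integrability of $K(x-y)$ near $y=x$. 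Combined with the main term, this establishes the second expansion.
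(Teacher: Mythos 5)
Your derivation of the main term is sound and essentially equivalent to the paper's: your polar-coordinate computation of $J(x)=\int\psi_\nu(y)K(x-y)\,\d y$ is the same identity $L(x\cdot e)_+^s=B(e)(x\cdot e)_-^{-s}$ (the paper's \eqref{eq:L-exp-1d}) obtained through the explicit change of variables rather than through the one-dimensional reduction $Lv=B(e)(-\Delta)^s_\R v$. The difference in route is cosmetic for the main term. However, there is a genuine gap in how you pass from the tangent-plane profile at $0$ to the stated conclusion for \emph{all} $x\in B_1\setminus\Omega$.

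The problem is the step ``$t^{-s}=|x\cdot\nu|^{-s}=(1+O(|x|^\alpha))\,d^{-s}(x)$''. From the $C^{1,\alpha}$ regularity you only get $|d(x)-|x\cdot\nu||\le C|x|^{1+\alpha}$, which gives a \emph{relative} error $O(|x|^{1+\alpha}/d(x))$. This is $O(|x|^\alpha)$ only on a nontangential cone $d(x)\gtrsim|x|$; if $x$ is close to $\partial\Omega$ away from the normal line at the origin (tangential regime, $d(x)\ll|x|$), the ratio $|x\cdot\nu|/d(x)$ is not $1+O(|x|^\alpha)$ and can in fact be arbitrarily large, or even $t=-x\cdot\nu$ can be negative while $x\notin\Omega$. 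In that regime the main term $-2U_0J(x)\sim t^{-s}$ is the wrong quantity, and similarly your bound $|\rho(y)|\le C|y|^{s+\alpha}$ degrades to $|y|^{s(1+\alpha)}<|y|^{s+\alpha}$ in the thin strip $|y\cdot\nu|\lesssim|y|^{1+\alpha}$, exactly where the near-$x$ singularity of $K$ concentrates for tangential $x$. The lemma, however, asserts the expansion for every $x\in B_1\setminus\Omega$, and the application in \autoref{prop:free-bound-cond} uses it for a two-parameter family of points filling $\Theta_\eps$, not only along the normal ray at $0$.

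The paper's proof avoids this by not fixing the reference frame at the origin: for each $x\in B_{1/2}\setminus\Omega$ it takes the nearest boundary point $z$ (so that $x=z-d(x)\nu_z$ lies exactly on the normal ray from $z$), performs your computation relative to the half-space profile $(y\cdot\nu_z)_+^s$ centered at $z$, obtaining $Lu(x)=U_z B(\nu_z)\,d^{-s}(x)+O(d^{\alpha-s}(x))$, and then uses the $C^\alpha$ regularity of $z\mapsto U_z$ and $z\mapsto\nu_z$ (together with $B\in C^{1+2s-\eps}$) to replace $U_zB(\nu_z)$ by $U_0B(\nu)+O(|x|^\alpha)$. To repair your argument you would need exactly this extra step: either restrict to a nontangential cone and recenter, or expand around the nearest boundary point as the paper does.
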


\begin{proof}
First, we observe that for any $e \in \mathbb{S}^{n-1}$, and any (one-dimensional) smooth function satisfying $v(x) = v(x \cdot e)$, we can compute
\begin{align*}
Lv(x) = \left[c_{n,s} \int_{\mathbb{S}^{n-1}} K(\theta) |\theta \cdot e|^{2s} \d \theta \right] (-\Delta)^s_{\R}v(x \cdot e) =: B(e) (-\Delta)^s_{\R}v(x \cdot e).
\end{align*}
Following the arguments of \cite[Lemma 2.6]{FeRo22}, this implies that
\begin{align}
\label{eq:L-exp-1d}
L(x \cdot e)_+^s = B(e) (x \cdot e)_-^{-s}.
\end{align}
We will now use the identity \eqref{eq:L-exp-1d} in the proof of the lemma. First of all, note that by the regularity results of \cite{FeRo23}, we have $u/d^s \in C^{\alpha}(\overline{\Omega} \cap B_{1/2})$, and therefore, setting $U_z = (u/d^s)(z)$ for any $z \in \partial \Omega \cap B_{1/2}$, we obtain
\begin{align*}
u(z + x) &= (u/d^s)(z+x) d^s(z+x) = (U_z + O(|x|^{\alpha})) d^s(z+x) \\
&= U_z d^s(z+x) + O(|x|^{\alpha}) d^s(z+x) =  U_z d^s(z+x) + O(|x|^{s+\alpha}),
\end{align*}
as desired. In particular, we obtain
\begin{align*}
u(z + x) = U_z (x \cdot \nu_z)_+^s + O(|x|^{s+\alpha}),
\end{align*}
where $\nu_z \in \mathbb{S}^{n-1}$ is the normal vector to $\partial \Omega$ at $z$.
Thus, using \eqref{eq:L-exp-1d} for $x = -t\nu_z \in \R^n \setminus \Omega$ with $t > 0$, we obtain
\begin{align*}
L u(z + x) = U_z L(x \cdot \nu_z)_+^s + O(t^{-s+\alpha}) = U_z B(\nu_z) t^{-s} + O(t^{-s+\alpha}).
\end{align*}
Note that, since $\partial \Omega \in C^{1,\alpha}$, for any $x \in B_{1/2} \setminus \Omega$, we can find $z \in \partial \Omega \cap B_{1/2}$ such that  $d(x) = |x-z|$. Therefore, we can rewrite
\begin{align*}
Lu(x) = U_z B(\nu_z) d^{-s}(x) + O(d^{-s+\alpha}(x)) =  U_0 B(\nu) d^{-s}(x) + O(|x|^{\alpha}) d^{-s}(x),
\end{align*}
where we also used $U_z B(\nu_z) = U_0 B(\nu) + O(|z|^{\alpha}) = U_0 B(\nu) + O(|x|^{\alpha})$, which follows from $(z \mapsto U_z) \in C^{\alpha}$, $(z \mapsto \nu_z) \in C^{\alpha}$, and $(z \mapsto B_{\nu_z}) \in C^{\alpha}$. The latter regularity result relies on the fact that $(e \mapsto B(e)) \in C^{1+2s-\eps}(\mathbb{S}^{n-1})$ for any $\eps > 0$. Indeed, we have 
\begin{align*}
\left( e \mapsto B(e) := \int_{\mathbb{S}^{n-1}} K(\theta)|\theta \cdot e|^{2s} \d \theta \right) \in C^{2s+1-\eps}(\mathbb{S}^{n-1}),
\end{align*}
since $K \in L^{\infty}(\mathbb{S}^{n-1})$ and $e \mapsto |\theta \cdot e|^{2s} \in C^{2s}(\mathbb{S}^{n-1})$ and thus $\left[\nu \mapsto |\theta \cdot \nu|^{2s} \right]_{C^{2s+1 -\eps}(e)} \in L^1(\mathbb{S}^{n-1})$ uniformly in $e \in \mathbb{S}^{n-1}$.
\end{proof}

We are now in a position to prove \autoref{prop:free-bound-cond}.

\begin{proof}[Proof of \autoref{prop:free-bound-cond}]
The proof closely follows the one in \cite{FeRo22}, using \autoref{lemma:expansion-smooth-domain}. Let us give a short sketch of how their argument looks like in our setting. As in \cite{FeRo22}, we set $\Omega = \{ u > 0 \}$ and consider competitors of the form $u_{\eps}(x) = u(x + \eps \Psi(x))$ for smooth domain variations $\Psi$ supported in $B_1$. Let $f \in C_c^{\infty}(\partial \Omega)$ be a nonnegative function, supported in $B_1 \cap \partial \Omega$. We introduce
\begin{align*}
\Omega_{\eps} = \{ x \in \Omega : d(x) \ge \eps f(\pi_{\Omega}(z)) \}, 
\end{align*}
where $z = \pi_{\Omega}(z)$ is the unique $z \in \partial \Omega$ such that $d(x) = |x-z|$, and let $v_{\eps}$ be the competitor, defined as the solution to
\begin{align*}
\begin{cases}
L v_{\eps} &= 0 ~~ \text{ in } \Omega_{\eps} \cap B_1,\\
v_{\eps} &= u ~~ \text{ in } \R^n \setminus B_1,\\
v_{\eps} &= 0 ~~ \text{ in } B_1 \setminus \Omega_{\eps}.
\end{cases}
\end{align*}
Moreover, we set $\Theta_{\eps} = (\Omega \setminus \Omega_{\eps}) \cap B_1$, and parametrize the points in $\Theta_{\eps}$ as $z + t \nu_z$, where $z \in \partial \Omega$, $t > 0$, and $\nu_z \in \mathbb{S}^{n-1}$ denotes the inward normal vector of $\partial \Omega$ at $z$. Then, we can expand
\begin{align}
\label{eq:u-exp}
u(z + t \nu_z) = \frac{u}{d^s}(z) t^s + o(t^s), \qquad \text{ where } \qquad \frac{u}{d^s}(z) = \lim_{\tau \to 0} \frac{u(z + t \nu_z)}{\tau^s},
\end{align}
and for $x_0 = z + \eps f(z) \nu_z \in \partial \Omega_{\eps}$:
\begin{align*}
v_{\eps}(x) = \frac{v_{\eps}}{d^s_{\eps}}(x_0) d^s_{\eps}(x) + o(|x-x_0|^s) ~~ \text{ in } \Omega_{\eps}, \qquad \text{ where } \qquad d_{\eps} = \dist(\cdot, \Omega_{\eps}).
\end{align*}
An application of \autoref{lemma:expansion-smooth-domain} to $v_{\eps}$, using that $d_{\eps}(x) = (\eps f(z) - t)(1 + o(\eps))$ for $x = z + t \nu_z \in \Theta_{\eps}$, where $0 < t < \eps f(z)$, yields
\begin{align}
\label{eq:v-eps-exp}
\begin{split}
L v_{\eps}(x) &= \frac{v_{\eps}}{d^s_{\eps}}(x_0) B(\nu_{x_0}) d^{-s}_{\eps}(x) + o(d^{-s}_{\eps}(x)) \\
&= \frac{v_{\eps}}{d^s_{\eps}}(x_0) B(\nu_{x_0}) \left[(\eps f(z) - t)(1 + o(\eps))\right]^{-s} + o(t^{-s}) \\
&= \frac{u}{d^s}(z) B(\nu_{z}) (\eps f(z) - t)^{-s}(1 + o(1)) + o(t^{-s}),
\end{split}
\end{align}
where we used $(v_{\eps}/d_{\eps}^s)(x_0) \to (u/d^s)(z)$, and $\nu_{x_0} \to \nu_z$, as $\eps \to 0$, i.e., that $(v_{\eps}/d_{\eps}^s)(x_0)B(\nu_{x_0}) = (u/d^s)(z)B(\nu_{z}) + o(1)$.\\
Now, by following the same arguments as in \cite{FeRo22} and plugging in \eqref{eq:u-exp} and \eqref{eq:v-eps-exp}, we obtain
\begin{align*}
- \int_{\Theta_{\eps}} u Lv_{\eps} &= \int_{\partial \Omega} \int_0^{\eps f(z)} t^s c_{n,s} \left( B(\nu_z) \left( \frac{u}{d^s}(z) \right)^2 (\eps f(z) - t)^{-s} [1 + o(1)] + o(t^{-s}) \right) \d t \d z\\
&= \eps c_{n,s} \int_{\partial \Omega} f(z) B(\nu_z) \left( \frac{u}{d^s}(z) \right)^2 \d z.
\end{align*}
This implies that for any nonnegative $f \in C_c^{\infty}(B_1 \cap \partial \Omega)$ it holds
\begin{align*}
0 = \lim_{\eps \to 0} \frac{\cI_{B_1}(v_{\eps}) - \cI_{B_1}(u)}{\eps} &= \lim_{\eps \to 0} \left( - \eps^{-1} \int_{\Theta_{\eps}} u Lv_{\eps} - \int_{\partial \Omega} f(z) \d z \right) \\
&= \int_{\partial \Omega} f(z) \left[ c_{n,s} B(\nu_z) \left( \frac{u}{d^s}(z) \right)^2 - 1 \right] \d z,
\end{align*}
which implies \eqref{eq:free-bound-cond}. Finally, \eqref{eq:A-properties} follows immediately from \eqref{eq:Kcomp} and the regularity of $B$ (see the proof of \autoref{lemma:expansion-smooth-domain}).
\end{proof}

\subsection{Viscosity solutions}

Having at hand \autoref{prop:free-bound-cond} (and also \autoref{lemma:aux}), we are now in a position to give a natural notion of viscosity solution to the one-phase free boundary problem $\mathcal{I}_{B_1}$.

Let us first introduce the notion of viscosity solutions to equations of the form $L u = f$.

\begin{definition}[viscosity solutions]
\label{def:viscosity-solution}
Let $\Omega \subset \R^n$ be an open domain. Let $f \in C(\Omega)$.
We say that $u \in C(\Omega) \cap L^1_{2s}(\R^n)$ is a viscosity subsolution to $Lu \le f$ in $\Omega$ if for any $x \in \Omega$ and any neighborhood $N_x \subset \Omega$ of $x$ it holds 
\begin{align}
\label{eq:viscosity-sol}
L \phi(x) \le f(x) ~~ \forall \phi \in C^2(N_x) \cap L^1_{2s}(\R^n) ~~ \text{ s.t. } u(x) = \phi(x), ~~ \phi \ge u.
\end{align}
We say that $u$ is a viscosity supersolution to $Lu \ge f$ in $\Omega$ if \eqref{eq:viscosity-sol} holds true for $-u$ and $-f$ instead of $u$ and $f$. Moreover, $u$ is a viscosity solution to $Lu = f$ in $\Omega$, if it is a viscosity subsolution and a viscosity supersolution.
\end{definition}

\begin{definition}
\label{def:viscosity}
We say that $u \in C(B_1) \cap L^1_{2s}(\R^n)$ with $u \ge 0$ in $\R^n$ is a viscosity solution to the nonlocal one-phase problem \eqref{eq:first-variation} (for $K$) in $B_1$, if
\begin{itemize}
\item[(i)] $u$ is a viscosity solution to $L u = 0$ in $\{ u > 0 \} \cap B_1$, and a viscosity subsolution to $Lu \le 0$ in $B_1$ in the sense of \autoref{def:viscosity-solution}.
\item[(ii)] For any $x_0 \in \partial \{ u > 0 \} \cap B_1$ and any  function $\phi^{1/s} \in C^{\infty}(B_1)$ that satisfies $0 \le \phi := (\phi^{1/s})_+^s \in L^1_{2s}(\R^n)$, $\phi \le (\ge) u$ in $\R^n$, and $\phi(x_0) = u(x_0)$, it holds
\begin{align*}
|\nabla \phi^{1/s}(x_0)|^s \le (\ge) A\left(\frac{\nabla \phi^{1/s}(x_0)}{|\nabla \phi^{1/s}(x_0)|}\right).
\end{align*}
\end{itemize}
\end{definition}

\begin{lemma}
\label{lemma:min-visc}
Assume \eqref{eq:Kcomp}. Let $u$ be a minimizer of $\mathcal{I}_{B_1}$ in $B_1$. Then, $u$ is a viscosity solution to the nonlocal one-phase problem \eqref{eq:first-variation} in $B_1$ in the sense of \autoref{def:viscosity}.
\end{lemma}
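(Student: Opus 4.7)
The plan is to verify the two clauses of \autoref{def:viscosity} separately, combining the basic properties of minimizers already established with a blow-up/competitor argument for the free boundary condition.

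For part (i), I would upgrade the weak (sub)solution properties of \autoref{lemma:aux} to viscosity ones using the continuity of $u$ provided by \autoref{thm:or}. The standard weak-to-viscosity argument for non-local operators applies: given $\phi \in C^2(N_{x_0}) \cap L^1_{2s}(\R^n)$ touching $u$ from above at $x_0$, I would replace $\phi$ by $u$ outside a small ball $B_r(x_0) \subset N_{x_0}$ to obtain an admissible comparison function, test the weak inequality against a non-negative mollifier $\eta_\delta \in C^\infty_c(B_r(x_0))$ of $x_0$, and pass to the limit $\delta \to 0$, using continuity of $L\phi$ at $x_0$ (ensured by $\phi \in C^2$ locally and $\phi \in L^1_{2s}$) to obtain $L\phi(x_0) \le 0$. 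The same scheme restricted to $\{u > 0\} \cap B_1$ gives $Lu = 0$ there in the viscosity sense.

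Part (ii) is the substantive step. Fix $x_0 \in \partial\{u > 0\} \cap B_1$ and a test function $\phi = (\phi^{1/s})_+^s$ with $\phi^{1/s} \in C^{\infty}$ touching $u$ at $x_0$ from above (the from-below case is analogous). Write $p := \nabla\phi^{1/s}(x_0)$, $\nu := p/|p|$, and $\beta := |p|^s$. A first-order Taylor expansion of $\phi^{1/s}$ shows
\begin{align*}
\phi_r(x) := \frac{\phi(x_0 + rx)}{r^s} = \left( \tfrac{1}{r}\phi^{1/s}(x_0+rx) \right)_+^s \longrightarrow \beta (x \cdot \nu)_+^s \quad \text{uniformly on compacts, as } r \searrow 0.
\end{align*}
By \autoref{cor:blowups}, along some subsequence $r_k \searrow 0$, the blow-ups $u_{r_k, x_0}$ converge locally uniformly to a global minimizer $u_0$ of $\mathcal{I}$ with $0 \in \partial\{u_0 > 0\}$. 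Passing to the limit in the inequality $\phi_{r_k} \ge u_{r_k, x_0}$ yields the global bound $u_0 \le \beta (x \cdot \nu)_+^s$ on $\R^n$. The problem thus reduces to the following key statement: for any global minimizer $u_0$ of $\mathcal{I}$ with $0 \in \partial\{u_0 > 0\}$ and $u_0 \le \beta(x \cdot \nu)_+^s$ on $\R^n$, one has $\beta \ge A(\nu)$.

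This reduction is the main obstacle, since the free boundary of $u_0$ is not a priori smooth and \autoref{prop:free-bound-cond} cannot be invoked directly. I would establish it by an energy competitor argument comparing $u_0$ with the half-space minimizer $h(x) := A(\nu)(x \cdot \nu)_+^s$, which is itself a global minimizer by the half-space computation underlying \autoref{lemma:expansion-smooth-domain} together with \autoref{prop:free-bound-cond}. Assuming $\beta < A(\nu)$ for contradiction, one has $u_0 \le \beta(x\cdot\nu)_+^s < h$ on $\{x \cdot \nu > 0\}$, so that for each $R > 0$ the glued function $w_R := \max(u_0, \eta h)$ with $\eta$ a smooth cutoff equal to $1$ on $B_{R/2}$ and vanishing outside $B_R$ is a valid competitor for $u_0$ in $B_R$. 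The minimality inequality $\mathcal{I}_{B_R}(u_0) \le \mathcal{I}_{B_R}(w_R)$ then contrasts a positivity gain $|\{w_R>0\}| - |\{u_0>0\}|$, which is bounded below by $c R^n$ using the density estimate \autoref{thm:density-est} at $0$ for $\partial\{u_0 > 0\}$ combined with the strict inequality $u_0 < h$, against a Dirichlet energy cost of order $(A(\nu)^2 - \beta^2)R^n$ to leading order, controlled via \autoref{thm:or} and the energy bound of \autoref{lemma:energy-bound}. A scaling analysis as $R \to \infty$ forces $\beta \ge A(\nu)$, yielding the desired contradiction, and the from-below case is handled by the symmetric construction with $h' := A(\nu)((x-\delta \nu)\cdot \nu)_+^s$ and $w_R := \min(u_0, (1-\eta)u_0 + \eta h')$.
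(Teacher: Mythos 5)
Your part (i) matches the paper's approach (the standard weak-to-viscosity upgrade, which the paper obtains by citing [FeRo24a]). Your part (ii) correctly reduces, via blow-up, to the claim: if $u_0$ is a global minimizer with $0 \in \partial\{u_0>0\}$ and $u_0 \le \beta(x\cdot\nu)_+^s$ on $\R^n$, then $\beta \ge A(\nu)$ (and symmetrically for touching from below). The competitor argument you propose for this claim, however, has a genuine gap. The competitor $w_R := \max(u_0,\eta h)$ with $h = A(\nu)(x\cdot\nu)_+^s$ enlarges the positivity set in $B_{R/2}$ (as $u_0 \le \beta(x\cdot\nu)_+^s$ forces $\{u_0>0\}\subseteq\{x\cdot\nu>0\}$ while $\{w_R>0\}\supseteq\{x\cdot\nu>0\}$ there), and by your own accounting the Dirichlet energy also \emph{increases} by $\sim(A(\nu)^2-\beta^2)R^n > 0$. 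Both terms of $\mathcal{I}_{B_R}$ therefore go up, so the minimality inequality $\mathcal{I}_{B_R}(u_0)\le\mathcal{I}_{B_R}(w_R)$ is satisfied for free and yields no contradiction; when $\beta < A(\nu)$ the correct competitor must \emph{shrink} the positivity set of $u_0$, not fill it in. Separately, the claimed bound $|\{w_R>0\}\cap B_R| - |\{u_0>0\}\cap B_R| \ge cR^n$ cannot be extracted from \autoref{thm:density-est} at the origin: since $\{u_0=0\}\supseteq\{x\cdot\nu\le 0\}$, that density estimate is already saturated by the half-space and gives no control on $|\{u_0 = 0\}\cap\{x\cdot\nu>0\}\cap B_R|$. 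Finally, the gluing errors from the cutoff $\eta$ on $B_R\setminus B_{R/2}$ are a priori of the same order $R^n$ as the main terms by \autoref{lemma:energy-bound}, so they would not be subleading without a much more delicate cutoff.

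The paper proves the key claim by a different route, with no energy competitors: it establishes the one-sided asymptotic expansion of \autoref{lemma:one-sided-expansion} for $u_0$ at $0$ (a barrier/Hopf-type argument), yielding $u_0(x)=\alpha(x\cdot\nu)_+^s+o(|x|^s)$ on $\{x\cdot\nu\ge 0\}$ with $0 < \alpha \le \beta$; it then blows up $u_0$ twice more, uses \autoref{thm:density-est} to show the limit vanishes on $\{x\cdot\nu\le 0\}$, arrives at a global one-dimensional half-space solution $\alpha(x\cdot\nu)_+^s$, and identifies $\alpha = A(\nu)$ via \autoref{prop:free-bound-cond} applied to a flat (hence smooth) free boundary. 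The real technical content is the expansion lemma, which your proposal does not supply, and which sidesteps entirely the sign and error issues above.
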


The following lemma is the main technical ingredient in the proof of \autoref{lemma:min-visc}:

\begin{lemma}
\label{lemma:one-sided-expansion}
Assume \eqref{eq:Kcomp}. Let $u \ge 0$ be such that $u \in C^s(B_1)$ and $u(0) = 0$, and $u \ge C (x_n)_+^s$ (or $u \le C (x_n)_+^s$) for some $C > 0$. Moreover, assume that $L u \le 0$ in $B_1$, and $L u = 0$ in $\{ u > 0 \} \cap B_{1}$. Then, for $x \in \{ x_n \ge 0 \}$ near $0$ it holds:
\begin{align*}
u(x) = \alpha (x_n)_+^s + o(|x|^s)
\end{align*}
for some $\alpha \ge 0$.
\end{lemma}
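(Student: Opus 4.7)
The strategy is a blow-up argument combined with a Liouville-type classification of the limit. Define $u_r(x) := u(rx)/r^s$. The $C^s$ regularity together with $u(0)=0$ gives $|u_r(x)| \le C(1+|x|)^s$ uniformly in $r$. Moreover $u_r$ inherits the hypotheses on $B_{1/r}$: $u_r \ge 0$, $Lu_r \le 0$, $Lu_r = 0$ on $\{u_r > 0\}$, and the one-sided bound $u_r \gtrless C(x_n)_+^s$ on $\R^n$. By interior regularity for $L$-harmonic functions combined with Arzel\`a--Ascoli, I extract a subsequence $u_{r_k} \to u_0$ locally uniformly on $\R^n$ and in $L^1_{2s}(\R^n)$ (so that the nonlocal operator passes to the limit); the limit satisfies $u_0 \ge 0$, $|u_0(x)| \le C|x|^s$, $Lu_0 \le 0$, $Lu_0 = 0$ on $\{u_0 > 0\}$, and the global one-sided bound $u_0 \gtrless C(x_n)_+^s$.

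The core step is then to show $u_0(x) = \alpha (x_n)_+^s$ on $\{x_n \ge 0\}$ for some $\alpha \ge 0$. Using $V(x) := (x_n)_+^s$, which satisfies $LV = 0$ in $\{x_n > 0\}$ by the identity \eqref{eq:L-exp-1d} derived inside \autoref{lemma:expansion-smooth-domain}, I argue as follows. In the case $u \le C(x_n)_+^s$, we have $u_0 \equiv 0$ in $\{x_n \le 0\}$; setting $\alpha := \sup_{\{x_n > 0\}} u_0/V \in [0,C]$ and $w := \alpha V - u_0$, one checks that $w \ge 0$ on $\R^n$, $w \equiv 0$ on $\{x_n \le 0\}$, and $Lw \ge 0$ in $\{x_n > 0\}$ (using $LV = 0$ and $Lu_0 \le 0$, the latter being automatic where $u_0 = 0$). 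A touching-point argument combined with the strong maximum principle for $L$-superharmonic functions then forces $w \equiv 0$ in $\{x_n > 0\}$. In the case $u \ge C(x_n)_+^s$, the half-space $\{x_n > 0\}$ is contained in $\{u_0 > 0\}$, so $Lu_0 = 0$ there; I decompose $u_0 = u_h + u_p$ in $\{x_n > 0\}$, where $u_h$ is $L$-harmonic with zero exterior trace (hence $u_h = \beta V$ by the previous case applied to $u_h$) and $u_p$ is the $L$-harmonic extension of $u_0 \cdot \1_{\{x_n \le 0\}}$. The vanishing of this exterior data at the origin, together with its $C^s$ bound inherited from $u_0$, drives $u_p(x) = o(|x|^s)$ as $x \to 0$ in $\{x_n > 0\}$.

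The final step converts locally uniform convergence into the pointwise expansion. Indeed, $u_{r_k} \to \alpha V$ locally uniformly on $\overline{B_1} \cap \{x_n \ge 0\}$ reads $\sup_{|y| \le r_k,\, y_n \ge 0} |u(y) - \alpha (y_n)_+^s| = o(r_k^s)$. Choosing $r_k \sim |y|$ along a diagonal yields $u(y) = \alpha (y_n)_+^s + o(|y|^s)$ as $y \to 0$ with $y_n \ge 0$. Uniqueness of $\alpha$ across subsequences is automatic once the expansion is established: evaluating at $y = t e_n$ with $t \searrow 0$ identifies $\alpha = \lim_{t \to 0^+} u(t e_n)/t^s$, so the coefficient is forced to be the same for every subsequential blow-up.

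The main obstacle is Case B of the classification: extracting an $o(|x|^s)$ (rather than merely $O(|x|^s)$) decay for the Poisson-type remainder $u_p$ arising from the possibly nonzero exterior data of $u_0$ on $\{x_n < 0\}$. Standard boundary regularity for $L$-harmonic functions with $C^s$ exterior data yields only $O(|x|^s)$ at the boundary, and upgrading to $o(|x|^s)$ relies on the vanishing of the exterior trace at the origin together with a further rescaling exploiting the $s$-scaling inherent to the blow-up.
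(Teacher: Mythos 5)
Your approach (blow-up, Liouville-type classification of the limit, then diagonal conversion) is genuinely different from the paper's. The paper never blows up: it works directly on $u$, defining a monotone quantity $\alpha(R)=\sup\{\alpha>0: u\ge\alpha(x_n)_+^s$ in $B_R\}$ (or $\inf$ in the other case), so that $\alpha=\lim_{R\to 0}\alpha(R)$ exists automatically, and then upgrades the resulting one-sided bound to the full expansion via a quantitative claim on cones (\eqref{eq:min-visc-claim}) proved by a contradiction–barrier argument (Hopf lemma plus an explicit decomposition $h=h_1+h_2$).

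The principal gap in your proposal is the passage from subsequential blow-up convergence to the pointwise expansion. What the blow-up gives you is $u_{r_k}\to\alpha V$ locally uniformly for \emph{some} subsequence $r_k\to 0$, with $\alpha$ a priori depending on the subsequence. Your diagonal argument requires that convergence to hold along \emph{every} sequence of scales comparable to $|y|$; otherwise $o(r_k^s)$ is not $o(|y|^s)$ when $|y|\ll r_k$. The claim that uniqueness of $\alpha$ is ``automatic once the expansion is established'' is circular: the expansion is exactly what fails if two subsequences give distinct $\alpha_1<\alpha_2$, and in that situation the continuity of $r\mapsto u(re_n)/r^s$ only shows that every value in $[\alpha_1,\alpha_2]$ arises as a subsequential limit, which is internally consistent and gives no contradiction. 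The paper's monotone $\alpha(R)$ is precisely what plugs this hole: monotonicity forces a limit before any compactness argument enters, so the coefficient is pinned down independently of subsequences.

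There are two secondary issues. First, in your Case B ($u\ge C(x_n)_+^s$), the remainder $u_p$ is the $L$-harmonic extension of exterior data that merely vanishes at the origin with a $C^s$ bound; standard boundary regularity only gives $u_p(x)=O(|x|^s)$, and upgrading to $o(|x|^s)$ is exactly the kind of one-sided expansion the lemma is trying to establish — you flag this as ``the main obstacle'' but do not resolve it, whereas the paper handles it quantitatively through the cone estimate \eqref{eq:min-visc-claim} and the barriers $h_1,h_2$. Second, in Case A the supremum $\alpha=\sup_{\{x_n>0\}}u_0/V$ may not be attained at an interior point (a maximizing sequence could escape to $\{x_n=0\}$ or to infinity), so the touching-point / strong maximum principle step requires an additional argument (typically an appeal to the Liouville theorem in the half-space, as in \cite[Theorem 2.7.2]{FeRo23}, rather than a direct touching).
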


Note that in case $u \ge C (x_n)_+^s$, we clearly have $\alpha > 0$.

\begin{proof}
First, we prove the result under the assumption that $u \ge C (x_n)_+^s$. We define
\begin{align*}
\alpha(R) = \sup \{ \alpha > 0 : u \ge \alpha(x_n)_+^s  \text{ in } B_R \}.
\end{align*}
Note that by definition and by assumption, $\alpha(R)$ is decreasing in $R$ and bounded away from zero and from infinity. Thus, there exists $\alpha := \lim_{R \to 0} \alpha(R) = \sup_R \alpha(R) \ge C$, and observe that for any $x \in \{ x_n \ge 0 \}$ near $0$:
\begin{align}
\label{eq:min-visc-help-1}
u(x) \ge \alpha(|x|)(x_n)_+^s \ge \alpha (x_n)_+^s + [\alpha(|x|) - \alpha]|x|^s = \alpha (x_n)_+^s + o(|x|^s).
\end{align}
Next, we claim that for every $\beta > 0$ and every $\delta \in (0,1)$, there exists a radius $r > 0$, such that 
\begin{align}
\label{eq:min-visc-claim}
u(x) \le (\alpha + \delta)(x_n)_+^s ~~ \text{ in } B_r \cap \{ x_n \ge \beta |x'| \}.
\end{align}
Before we prove \eqref{eq:min-visc-claim}, let us assume that \eqref{eq:min-visc-claim} holds true and show how it allows us to conclude the proof. In fact, setting $\beta = \delta = 1/k$ for some $k \in \N$, we deduce from \eqref{eq:min-visc-claim} that for some $r_k > 0$
\begin{align*}
u(x) \le (\alpha + k^{-1})(x_n)_+^s \le \alpha (x_n)_+^s + k^{-1}|x|^s ~~ \text{ in } B_{r_k} \cap \{ k x_n \ge |x'| \}.
\end{align*}
Next, in case $x \in B_{r_k} \cap \{ 0 < k x_n \le |x'| \}$, we find $y \in B_{r_k} \cap \{ kx_n = |x'| \}$ with $|x-y| \le |x|/k$ such that by $u \in C^s(B_1)$, and application of the previous estimate to $y$, we get
\begin{align*}
u(x) \le u(y) + ck^{-s}|x|^s &\le \alpha(y_n)_+^s + k^{-1}|y|^s + ck^{-s}|x|^s \\
&\le c(k^{-s} + k^{-1} + k^{-s})|x|^s ~~ \text{ in } B_{r_k} \cap \{0 < kx_n \le |x'| \}.
\end{align*} 
This yields for any $x \in \{ x_n \ge 0 \}$ close to $0$:
\begin{align*}
u(x) \le \alpha(x_n)_+^s + c k^{-s}|x|^s = \alpha (x_n)_+^s + o(|x|^s),
\end{align*}
and implies the desired result upon combination with \eqref{eq:min-visc-help-1}.

Thus, it remains to prove the claim \eqref{eq:min-visc-claim}. Let us assume by contradiction that there exist $\beta > 0$, $\delta \in (0,1)$, and a sequence $x_k \to 0$ in $\{ x_n \ge 0 \}$ with $(x_k)_n \ge \beta |x'_k|$ such that
\begin{align*}
u(x_k) \ge (\alpha + \delta)((x_k)_n)_+^s.
\end{align*}
Note that by definition of $\alpha$, for every $\tau \in (0,1)$, there exists $r(\tau) > 0$ (depending also on $\delta$) such that
\begin{align*}
u(x) \ge (\alpha - \tau \delta) (x_n)_+^s ~~ \text{ in } B_{r(\tau)} \cap \{ x_n \ge 0 \}.
\end{align*}
Let us now define $v(x) = u(x) - (\alpha - \tau \delta) (x_n)_+^s$, where we will choose $\tau$ later in an appropriate way.
Clearly, by construction, we have $v \ge 0$ in $B_{r(\tau)}$, and moreover, since $u \in C^s(B_1)$, it holds for any $x \in B_{\kappa |x_k|}(x_k)$, and $\kappa$ small enough, depending on $\delta$:
\begin{align}
\label{eq:v-bd-below}
v(x) \ge v(x_k) - c (\kappa |x_k|)^s \ge \delta (1 - \tau)((x_k)_n)_+^s - c (\kappa |x_k|)^s \ge c \delta |x_k|^s - c (\kappa |x_k|)^s \ge c |x_k|^s,
\end{align}
where we have used in the second to last step that $(x_k)_n \ge \beta |x_k'|$, and $c > 0$ depends on $\delta, \beta$, but not on $\tau$ if $\tau < 1/2$ is chosen small enough.\\
Let us now observe that for any $x \in B_{|x_k|} \cap \{ x_n > 0 \}$, for $k$ large enough compared to $r(\tau)$, since $L v \ge 0$ in $B_{|x_k|} \cap \{ x_n > 0 \}$ (recall that $Lu = 0$ in $B_1$ since $u \ge C (x_n)_+^s$), it holds
\begin{align*}
L(v \1_{B_{r(\tau)}})(x) & \ge - L(v \1_{\R^n \setminus B_{r(\tau)}})(x) \ge c  \int_{\R^n \setminus B_{r(\tau)}} v(y) |y|^{-n-2s} \d y \\
& \ge - c(\alpha-\tau\delta) \int_{\R^n \setminus B_{r(\tau)}} (y_n)_+^s |y|^{-n-2s} \d y \\
& \ge - c(\alpha-\tau\delta) r(\tau)^{-s} =: -C_0,
\end{align*} 
where we also used that $u \ge 0$, and $C_0 > 0$ depends on $\alpha,\tau,\delta$.
Let us define $h$ to be the solution to
\begin{align*}
\begin{cases}
L h = -C_0 ~~ \text{ in } B_{|x_k|} \cap \{ x_n > 0 \},\\
h = v \1_{B_{r(\tau)}} ~~ \text{ in } (\R^n \setminus B_{|x_k|}) \cup \{ x_n \le 0\}.
\end{cases}
\end{align*}
Note that by the comparison principle, we have
\begin{align*}
v \ge h ~~ \text{ in } B_{|x_k|} \cap \{ x_n > 0 \}.
\end{align*}
Moreover, note that we can write $h = h_1 + h_2$, where $h_1$ and $h_2$ solve
\begin{align*}
\begin{cases}
L h_1 &= -C_0 ~~ \text{ in } B_{|x_k|} \cap \{ x_n > 0 \},\\
h_1 &= 0 ~~ \text{ in } (\R^n \setminus B_{|x_k|}) \cup \{ x_n \le 0\},
\end{cases}
\qquad
\begin{cases}
L h_2 &= 0 ~~ \text{ in } B_{|x_k|} \cap \{ x_n > 0 \},\\
h_2 &= v \1_{B_{r(\tau)}} ~~ \text{ in } (\R^n \setminus B_{|x_k|}) \cup \{ x_n \le 0\}.
\end{cases}
\end{align*}
We claim that there exist constants $c_1,c_2 > 0$ such that
\begin{align}
\label{eq:h1-est}
h_1(x) &\ge - c_1 C_0 (x_n)_+^s |x_k|^s ~~ \forall x \in B_{|x_k|/2} \cap \{ x_n > 0 \},\\
\label{eq:h2-est}
h_2(x) &\ge c_2 (x_n)_+^s \qquad \qquad ~~ \forall x \in B_{|x_k|/2} \cap \{ x_n > 0 \}.
\end{align}
To see \eqref{eq:h1-est}, let us observe that $\tilde{h}_1(x) := h_1(|x_k|x)|x_k|^{-2s}$ solves $L \tilde{h}_1 = - C_0$ in $B_1 \cap \{ x_n > 0 \}$ with $\tilde{h}_1 \equiv 0$ in $(\R^n \setminus B_1) \cup \{ x_n \le 0 \}$. Thus, by the barrier argument in \cite[Proof of Proposition 2.6.4]{FeRo23},  we have
\begin{align*}
-\tilde{h}_1(x) \le c_1 C_0 (x_n)_+^s ~~ \forall x \in B_{1/2} \cap \{ x_n > 0 \}.
\end{align*}
Thus, \eqref{eq:h1-est} follows by recalling the relation between $\tilde{h}_1$ and $h_1$.\\
To see \eqref{eq:h2-est}, we observe that $\tilde{h}_2(x) := h_2(|x_k|x)|x_k|^{-s}$ solves $L \tilde{h}_2 = 0$ in $B_1 \cap \{ x_n > 0 \}$ with $\tilde{h}_2(x) = |x_k|^{-s} v(|x_k|x)\1_{B_{r(\tau)}}(|x_k|x)$ in $(\R^n \setminus B_1) \cup \{ x_n \le 0 \}$. In particular, since $v\1_{B_{r(\tau)}} \ge 0$ and by \eqref{eq:v-bd-below}, we deduce 
\begin{align*}
\tilde{h}_2 \ge c \1_{B_{\kappa}(x_k/|x_k|)} ~~ \text{ in } (\R^n \setminus B_1) \cup \{ x_n \le 0 \}.
\end{align*}
 Therefore, by the Hopf lemma (see \cite[Proposition 2.6.6]{FeRo23}), we have
\begin{align*}
\tilde{h}_2(x) \ge c_2 (x_n)_+^s ~~ \forall x \in B_{1/2} \cap \{ x_n > 0 \}
\end{align*}
for a constant $c_2 > 0$ that is independent of $|x_k|$. Thus, \eqref{eq:h2-est} follows by recalling the relation between $\tilde{h}_2$ and $h_2$.

Thus, there exists a number $k_0 \in \N$ such that for any $k \ge k_0$:
\begin{align*}
v \ge h = h_1 + h_2 \ge (c_2 - c_1C_0 |x_k|^s )(x_n)_+^s \ge c_0 (x_n)_+^s ~~ \text{ in } B_{|x_k|/2} \cap \{ x_n > 0 \},
\end{align*}
where $c_0 := c_2/2 > 0$ is independent of $k$ and $\tau$. By the definition of $v$, this implies
\begin{align*}
u \ge (\alpha + (c_0-\tau\delta))(x_n)_+^s ~~ \text{ in } B_{|x_k|/2}.
\end{align*}
Thus, choosing first $\tau < 1$ so small, depending on $\delta$, such that $c_0 - \tau\delta > 0$, and then $k \in \N$ so large that the previous argument goes through, we obtain a contradiction with the definition of $\alpha$. This proves \eqref{eq:min-visc-claim}, and we conclude the proof.

In case $u \le C (x_n)_+^s$, the proof has to be modified slightly, but follows the same line of arguments. First, one defines 
\begin{align*}
\alpha(R) = \inf \{ \alpha > 0 : u \le \alpha (x_n)_+^s \text{ in } B_R \},
\end{align*}
and observes that $\alpha(R)$ is increasing in $R$ and $\alpha = \lim_{R \to 0} \alpha(R) = \inf_R \alpha(R) \in [0,C]$ exists. As before, it is easy to show that
\begin{align*}
u(x) \le \alpha(x_n)_+^s + o(|x|^s).
\end{align*}
For the lower estimate, instead of \eqref{eq:min-visc-claim}, we claim that for every $\beta > 0$ and every $\delta \in (0,1)$, there exists a radius $r > 0$, such that
\begin{align}
\label{eq:min-visc-claim-2}
u(x) \ge (\alpha - \delta)(x_n)_+^s ~~ \text{ in } B_r \cap \{ x_n \ge \beta |x'| \}.
\end{align}
From here, the desired result follows by the exact same arguments as before, after changing some of the signs. To prove \eqref{eq:min-visc-claim-2}, we argue again by contradiction, assuming that there exist $\beta > 0$ and $\delta \in (0,1)$, and $x_k \to 0$ in $\{ x_n \ge 0\}$ with $(x_k)_n \ge \beta |x_k'|$ such that
\begin{align*}
u(x_k) \le (\alpha - \delta)((x_k)_n)_+^s.
\end{align*}
This time, we define $v(x) = (\alpha + \tau \delta)(x_n)_+^s - u(x)$, and observe that $L v \ge 0$ in $B_{r(\tau)} \cap \{ x_n \ge 0 \}$ (since $Lu \le 0$ in $B_1$), and satisfies $v \ge 0$ in $B_{r(\tau)}$, and \eqref{eq:v-bd-below}, as before. Moreover, we have the following estimate for $x \in B_{|x_k|} \cap \{ x_n \ge 0 \}$:
\begin{align*}
L(v \1_{B_{r(\tau)}})(x) &\ge -L(v \1_{\R^n \setminus B_{r(v)}}) \ge c \int_{\R^n \setminus B_{r(\tau)}} v(y) |y|^{-n-2s} \d y \\
&\ge c(\alpha + \tau \delta) r(\tau)^{-s} - c\int_{\R^n \setminus r(\tau)} u(y) |y|^{-n-2s} \d y \\
&\ge c(\alpha+ \tau \delta - c) r(\tau)^{-s} =: -C_0,
\end{align*}
where we used that $u \le C(x_n)_+^s$ in the last step, and $C_0 > 0$ is a constant. From here, the proof follows as in the first case, defining $h = h_1 + h_2$.
\end{proof}

We are now in a position to give the proof of \autoref{lemma:min-visc}:

\begin{proof}[Proof of \autoref{lemma:min-visc}]
We have already shown that $L u = 0$ in $\{ u > 0\} \cap B_1$ and $L u \le 0$ in $B_1$ in the weak sense (see \autoref{lemma:aux}). This implies (i) by \cite[Lemma 2.2.32, 3.4.13]{FeRo23} (see also \cite[Lemma 2.7]{RoWe23}).\\
Let us explain how to prove (ii). Let $0 \in \partial \{ u > 0 \} \cap B_1$ and $\phi^{1/s}$ be as in (ii), with $\phi \le u$. We will not explain the proof in case $\phi \ge u$, since it goes by the same arguments. Note that $\phi$ is always nonnegative. First, we consider the blow-up sequences $u_r := u(rx)/r^s$, and $\phi_r := \phi(rx)/r^s = [\phi^{1/s}(rx)/r]_+^{s}$, and observe that $u_r \to u_0$ by \autoref{cor:blowups}, where $u_0$ is a global minimizer of $\mathcal{I}$. Moreover, since $\phi^{1/s}$ is smooth, $\phi_r \to \phi_0$, where $\phi_0(x) = (\nabla \phi^{1/s}(0) \cdot x)_+^s$. Let us assume without loss of generality that $\nabla \phi^{1/s}(0) / |\nabla \phi^{1/s}(0)| = e_n$.\\
Next, we apply \autoref{lemma:one-sided-expansion} to $u_0$, which yields the existence of $\alpha \ge 0$ such that for $x \in \{ x_n \ge 0 \}$ near $0$ it holds:
\begin{align}
\label{eq:u_0-asymp}
u_0(x) = \alpha(x_n)_+^s + o(|x|^s).
\end{align} 
Clearly $\alpha \not= 0$ since $\phi_0 \le u_0$.
Note that $u_0 \in C^s(\R^n)$ as a global minimizer (see \autoref{thm:or}), and that $|\nabla \phi^{1/s}(0)|^s(x_n)_+^s = \phi_0(x) \le u_0(x)$, which is why \autoref{lemma:one-sided-expansion} is applicable to $u_0$.\\
In particular, this implies
\begin{align}
\label{eq:vector-order}
|\nabla \phi^{1/s}(0)|^s \le \alpha.
\end{align}
Next, we blow up $u_0$ again, i.e., we take $v(x) = \lim_{r \to 0} u_0(rx)/r^s$. By \eqref{eq:u_0-asymp}, we have for $x \in \{ x_n \ge 0\}$:
\begin{align}
\label{eq:v-asymp-1}
v(x) = \alpha (x_n)_+^s.
\end{align} 
Since $v$ is again a minimizer by \autoref{cor:blowups}, it holds $v \in C^s(\R^n)$ by \autoref{thm:or}. Since $v = 0$ in $\{ x_n = 0\}$, there exists $C > 0$ such that $v \le C (x_n)_-^s$ in $\{ x_n \le 0 \}$. Thus, we can apply \autoref{lemma:one-sided-expansion} to $v \1_{\{ x_n \le 0\}}$ in $\{ x_n \le 0\}$ (after a rotation, replacing $(x_n)_+$ by $(x_n)_-$ in  \autoref{lemma:one-sided-expansion}) and deduce that for $x \in \{ x_n \le 0 \}$ near $0$ it holds:
\begin{align}
\label{eq:v-asymp-2}
v(x) = \beta (x_n)_-^s + o(|x|^s). 
\end{align}
If $\beta \not= 0$, then \eqref{eq:v-asymp-1} and \eqref{eq:v-asymp-2} imply $|\{ v = 0 \} \cap B_r|/|B_r| \to 0$ as $r \searrow 0$, which contradicts the measure density estim ates for minimizers (see \autoref{thm:density-est}). Thus, we conclude $\beta = 0$.\\
Therefore, blowing up $v$ again, i.e., defining $w(x) := \lim_{r \to 0} v(rx)/r^s$, we deduce from \eqref{eq:v-asymp-1} and \eqref{eq:v-asymp-2}, using $\beta = 0$, 
\begin{align*}
w(x) = \alpha (x_n)_+^s ~~ \text{ in } \R^n.
\end{align*}
Since $\partial \{ w > 0 \} = \{ x_n = 0 \} \in C^{1,\alpha}$, we can apply \autoref{prop:free-bound-cond} and obtain $\alpha = A(e_n)$. Due to \eqref{eq:vector-order}, this proves the desired result.
\end{proof}

\subsection{Optimal regularity for viscosity solutions}

We end this section by proving that viscosity solutions are $C^s$ regular.

\begin{lemma}
\label{lemma:visc-Cs}
Assume \eqref{eq:Kcomp}. Let $u \in C(B_1) \cap L^1_{2s}(\R^n)$ be a viscosity solution to the nonlocal one-phase problem in $B_1$ and $0 \in \partial \{ u > 0 \}$. Then, $u \in C^s_{loc}(B_1)$, and it holds
\begin{align*}
\Vert u \Vert_{C^s(B_{1/2})} \le C \left(1 + \dashint_{B_1} u \d x \right)
\end{align*}
for some constant $C > 0$, depending only on $n,s,\lambda,\Lambda$. Moreover, if  $0 \in \partial \{ u > 0 \}$, then it holds
\begin{align*}
\Vert u \Vert_{C^s(B_{1/2})} \le C.
\end{align*}
\end{lemma}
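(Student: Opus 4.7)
The strategy parallels the proof of optimal regularity for minimizers (\autoref{thm:or}), adapted to the viscosity framework, and proceeds in three steps: a local $L^\infty$ bound; the key growth estimate $u \le Cd^s$ near the free boundary, obtained from the viscosity free-boundary condition via a blow-up argument; and interior regularity for $L$-harmonic functions. Together these yield the $C^s$ estimate.

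First I would establish the local bound
\[
\Vert u \Vert_{L^\infty(B_{3/4})} \le C\Big(1 + \dashint_{B_1} u\,\d x\Big).
\]
Since $u \ge 0$ is a viscosity subsolution of $Lu \le 0$ in $B_1$ (\autoref{def:viscosity}(i)) and is $L$-harmonic in the open set $\{u > 0\} \cap B_1$, interior $L^\infty$ bounds for $L$-harmonic functions applied in balls contained in $\{u > 0\}$, together with the trivial bound on $\{u = 0\}$, give the estimate; tails are absorbed via the $L^1_{2s}(\R^n)$ integrability of $u$.

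The heart of the proof is to show, for some universal $C > 0$,
\[
u(x) \le C\, d(x)^s \qquad \text{for } x \in \{u > 0\} \cap B_{1/2}, \qquad d(x) := \dist(x,\partial\{u > 0\}).
\]
I would argue by contradiction and blow-up. If this estimate fails along $x_k \in \{u>0\} \cap B_{1/2}$ with $M_k := u(x_k)/d(x_k)^s \to \infty$, let $y_k \in \partial\{u>0\}$ realize $d(x_k) = r_k$ and set $w_k(x) := u(y_k + r_k x)/(M_k r_k^s)$. Each $w_k$ is a viscosity solution of the nonlocal one-phase problem for a kernel $K_k$ satisfying \eqref{eq:Kcomp} uniformly in $k$, with $w_k \ge 0$, $0 \in \partial\{w_k > 0\}$, $w_k(z_k) = 1$ for some $z_k \in \partial B_1$, and free-boundary constant scaled to $A_k/M_k \to 0$. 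A direct computation shows the tails of $w_k$ scale like $r_k^s/M_k \to 0$, so a compactness argument analogous to \autoref{lemma:scaling-blow-up} (based on interior H\"older estimates for $L$-harmonic functions together with weak convergence $K_k \to K_\infty$) produces a subsequential limit $w$, a viscosity solution of the nonlocal one-phase problem for some $K_\infty$ satisfying \eqref{eq:Kcomp}, with $w(z_\infty) = 1$ at some $z_\infty \in \partial B_1$, $0 \in \partial\{w > 0\}$, and a degenerate free-boundary condition $|\nabla \phi^{1/s}(x_0)|^s \le 0$ for every admissible touching $\phi \le w$ at any $x_0 \in \partial\{w > 0\}$. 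On the other hand, since $w$ is bounded away from zero at $z_\infty$, a Hopf-type estimate for $L_\infty$-harmonic functions (cf.\ \cite[Proposition 2.6.6]{FeRo23}), applied at any free-boundary point $y_\infty$ admitting an interior tangent ball, yields $w \ge c\,d^s$ near $y_\infty$. Touching from below at $y_\infty$ by $\phi(x) = c'((x - y_\infty) \cdot \nu)_+^s$ with $(c')^s \in (0,c)$ then produces a test function with $|\nabla \phi^{1/s}(y_\infty)|^s = c' > 0$, contradicting the degenerate condition and closing the argument.

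Finally, combining the growth estimate with interior Lipschitz/H\"older estimates for $L$-harmonic functions in balls $B_{d(x)/2}(x) \subset \{u > 0\}$ and a standard scaling argument (cf.\ the proof of \cite[Theorem 1.5]{RoWe24b}) yields the claimed bound $\Vert u \Vert_{C^s(B_{1/2})} \le C(1 + \dashint_{B_1} u)$. When additionally $0 \in \partial\{u > 0\}$, the growth estimate applied at the origin gives $u(x) \le C|x|^s \le C$ in $B_{1/2}$, so $\dashint_{B_1} u$ is universally bounded, upgrading the estimate to $\Vert u \Vert_{C^s(B_{1/2})} \le C$ with $C$ depending only on $n,s,\lambda,\Lambda$. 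The main obstacle is the blow-up step, where one must guarantee compactness of the rescaled sequence in the viscosity framework, pass the free-boundary condition to its degenerate limit, and construct a valid touching barrier at a free-boundary point admitting an interior tangent ball.
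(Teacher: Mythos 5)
Your plan—reduce to the growth estimate $u\le C\,d^s$ and prove it by a blow-up at a contradiction sequence—has a genuine gap in the compactness step, and it brings in machinery that the paper's direct barrier argument avoids.

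The problematic claim is that the tails of $w_k$ go to zero. Write $r_k=d(x_k)$, $M_k=u(x_k)/r_k^s$, and $w_k(x)=u(y_k+r_kx)/(M_kr_k^s)$. Then
\begin{align*}
\tail(w_k;1)=\frac{r_k^s}{M_k}\int_{|z-y_k|>r_k}\frac{u(z)}{|z-y_k|^{n+2s}}\,\d z
\lesssim \frac{r_k^s}{M_k}\Bigl(\Vert u\Vert_{L^{\infty}(B_{7/4})}\,r_k^{-2s}+\tail(u;1,y_k)\Bigr)
= \frac{\Vert u\Vert_{L^{\infty}(B_{7/4})}}{u(x_k)}+O\!\left(\frac{r_k^s}{M_k}\right),
\end{align*}
using $M_kr_k^s=u(x_k)$. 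So the tail is not $O(r_k^s/M_k)$; it is bounded only if $u(x_k)$ stays away from zero, which your choice of contradiction sequence does not guarantee, and the same quantity controls $\Vert w_k\Vert_{L^\infty(B_R)}$. Moreover, the compactness you invoke as ``analogous to \autoref{lemma:scaling-blow-up}'' is stated there only for \emph{minimizers}, whose energy bounds and non-degeneracy (\autoref{lemma:energy-bound}, \autoref{thm:non-degeneracy}) make it work; viscosity solutions have neither a priori. Closing the gap would require a doubling/localization selection of the $x_k$ to normalize $u(x_k)$, plus a separate compactness theorem for viscosity solutions—substantial extra work that the proof should not silently assume.

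The paper avoids blow-up entirely. After normalizing to $x=e_n/2$, $d(e_n/2)=1/2$, $0\in\partial\{u>0\}$, it constructs an $L$-harmonic comparison function $w\le u$ on $B_{1/2}(e_n/2)\cap B_{3/4}$, uses the Hopf lemma to get $w\ge c\,w(e_n/2)(x_n)_+^s$ non-tangentially near $0$, applies \autoref{lemma:interior-ball-blowup-viscosity} with the tangent ball $B=B_{1/2}(e_n/2)$ to expand $u(x)=\alpha(x_n)_+^s+o(|x|^s)$ with $\alpha$ bounded universally via the viscosity free boundary condition, concludes $c\,w(e_n/2)\le\alpha$, and closes via boundary and interior Harnack with $w(e_n/2)\ge c\,u(e_n/2)$. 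Your key idea—Hopf plus a touching test function at a free boundary point admitting an interior tangent ball—is the same ingredient, but applied at the original scale to the pair $(u,w)$ rather than in a blow-up limit, which is exactly what makes the compactness issue disappear.
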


The proof of \autoref{lemma:visc-Cs} requires the following lemma, which is closely related to \autoref{lemma:one-sided-expansion}.

\begin{lemma}
\label{lemma:interior-ball-blowup-viscosity}
Assume \eqref{eq:Kcomp}. Let $u \ge 0$ be such that $u \in C^s(B_1)$ and $L u \le 0$ in $B_1$, and $L u = 0$ in $\{ u > 0 \} \cap B_2$. Moreover, assume that there exists a ball $B \subset \{ u > 0 \}$ with $\overline{B} \cap \partial\{ u > 0 \} = \{ 0 \}$. Then, there exists $\alpha \ge 0$ such that for any $x \in B \cap \{d_B(x) \ge |x|/2 \}$ (non-tangential region inside $B$) near $0$ it holds
\begin{align*}
u(x) = \alpha (x \cdot \nu)_+^s + o(|x|^s),
\end{align*}
where $\nu \in \mathbb{S}^{n-1}$ denotes the normal vector of $\partial B$ at zero, inward to $\{ u > 0 \}$.
\end{lemma}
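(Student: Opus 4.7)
The proof follows the strategy of \autoref{lemma:one-sided-expansion}, with the global lower bound $u \ge C(x_n)_+^s$ there replaced by a bound in the non-tangential region inside $B$ coming from the nonlocal Hopf lemma. After rotation and rescaling we may assume $\nu = e_n$ and $B = B_\rho(\rho e_n)$ for some $\rho > 0$; then $B \subset \{x_n > 0\}$, with $\partial B$ tangent to $\{x_n = 0\}$ at the origin. Since $B \subset \{u > 0\}$ we have $Lu = 0$ in $B$; combined with $u \ge 0$ globally, $u > 0$ in $B$, and $u(0) = 0$, the nonlocal Hopf lemma \cite[Proposition 2.6.6]{FeRo23} yields $u(x) \ge c\, d_B^s(x)$ for $x \in B$ near $0$, where $d_B := \dist(\cdot,\partial B)$. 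Since $d_B(x) = x_n + O(|x|^2) \sim x_n$ in the non-tangential cone $\{d_B(x) \ge |x|/2\}$, this becomes $u(x) \ge c(x_n)_+^s$ there, while the $C^s$ regularity of $u$ together with $|x| \sim x_n$ in the cone gives the matching upper control $u(x) \le C(x_n)_+^s$.

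With these sandwich bounds, define
\[
\alpha(R) := \sup\left\{\alpha \ge 0 : u(x) \ge \alpha(x_n)_+^s \text{ for all } x \in B \cap B_R \cap \{d_B(x) \ge |x|/2\}\right\},
\]
which is decreasing in $R$ and bounded between two positive constants, so that $\alpha := \lim_{R \to 0^+}\alpha(R) > 0$ exists. From the definition one immediately reads off the lower asymptotic $u(x) \ge \alpha(x_n)_+^s + o(|x|^s)$ in the non-tangential region near $0$.

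The main task is the matching upper asymptotic $u(x) \le (\alpha + \delta)(x_n)_+^s$ in the same region (for every $\delta > 0$ and every aperture of the non-tangential cone), which we prove by adapting the contradiction argument of \autoref{lemma:one-sided-expansion}. A violating sequence $x_k \to 0$ in the cone with $u(x_k) \ge (\alpha + \delta)((x_k)_n)^s$ gives, upon setting $v := u - (\alpha - \tau\delta)(x_n)_+^s$ for small $\tau \in (0, 1/2)$, that $v \ge c|x_k|^s$ on $B_{\kappa|x_k|}(x_k)$ by the $C^s$ regularity of $u$, exactly as in \eqref{eq:v-bd-below}. Since $d_B(x_k) \ge |x_k|/2$, the ball $B_{|x_k|/2}(x_k)$ lies inside $B \subset \{x_n > 0\}$, so $Lv = 0$ there (using $L((x_n)_+^s) = 0$ in $\{x_n > 0\}$). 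The barrier decomposition $h = h_1 + h_2$ from the end of the proof of \autoref{lemma:one-sided-expansion} then applies with the same structure after rescaling by $|x_k|$: $h_1$ absorbs the nonlocal tail via the interior barrier of \cite[Proof of Proposition 2.6.4]{FeRo23}, while $h_2$ produces a lower bound $\ge c_2 (x_n)_+^s$ via the Hopf lemma, fed by the positivity of $v$ near $x_k$. Choosing $\tau$ small enough that $c_2/2 - \tau\delta > 0$ yields $u \ge (\alpha + c_0)(x_n)_+^s$ throughout a non-tangential neighborhood of $0$, contradicting the definition of $\alpha$. The expected main obstacle, relative to \autoref{lemma:one-sided-expansion}, is that the underlying domain is the curved ball $B$ rather than the half-space and that $v \ge 0$ only holds inside the cone; these are handled by the rescaling $x \mapsto x/|x_k|$, which converts $B$ into a domain uniformly close to $\{x_n > 0\} \cap B_1$ as $|x_k| \to 0$, reducing the barrier construction to a uniform perturbation of the half-space setting.
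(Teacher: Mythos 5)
Your proposal follows the paper's route precisely: the same $\alpha(R)$ defined over the non-tangential cone, the same contradiction argument with $v := u - (\alpha-\tau\delta)(x_n)_+^s$, the same $C^s$-based positivity of $v$ near $x_k$, and the same barrier decomposition $h = h_1 + h_2$ with $\{x_n>0\}$ replaced by $B$. Your preliminary Hopf step showing $\alpha>0$ is a correct (and useful) addition that the paper leaves implicit, and matches the way \autoref{lemma:visc-Cs} invokes the lemma.

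The one point that does not hold up is your claimed handling of the ``main obstacle,'' that $v\ge 0$ only inside the cone. The rescaling $x\mapsto x/|x_k|$ flattens $B$ toward the half-space, but it does not widen the cone: after rescaling, $\tilde v$ is still nonnegative only on a cone of fixed aperture, not on the whole rescaled exterior region. This matters because the Hopf step for $h_2$ in the proof of \autoref{lemma:one-sided-expansion} explicitly uses that the exterior datum $v\1_{B_{r(\tau)}}$ is nonnegative (``since $v\1_{B_{r(\tau)}}\ge 0$ ... we deduce $\tilde h_2 \ge c\1_{B_\kappa(x_k/|x_k|)}$ ... by the Hopf lemma''). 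Outside the cone, both in $B\setminus\mathrm{cone}$ and in $B^c\cap\{x_n>0\}$ near $0$, one can have $v<0$ with magnitude up to $(\alpha-\tau\delta)(x_n)_+^s$, which is of the same order as the positive exterior datum near $x_k$ (that one is only $\sim\delta|x_k|^s$), so it cannot be dismissed as a lower-order error. The paper's phrase ``proceeding as in Lemma 3.5 but replacing $\{x_n>0\}$ by $B$'' does not spell this out either, but its conclusion $v\ge c_0\,d_B^s$ (rather than $c_0(x_n)_+^s$) in $B\cap B_{|x_k|/2}$ suggests the clean fix: work with $d_B^s$ instead of $(x_n)_+^s$. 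Define $\alpha(R)=\sup\{\alpha>0: u\ge \alpha\, d_B^s$ in $B\cap B_R\}$ (positive by Hopf) and set $v:=u-(\alpha-\tau\delta)d_B^s$ with $d_B:=\dist(\cdot,B^c)$. Then $v\ge 0$ in $B\cap B_{r(\tau)}$ by definition of $\alpha$, and $v=u\ge 0$ on $B^c$ since $d_B\equiv 0$ there, so $v\1_{B_{r(\tau)}}\ge 0$ globally. The only cost is that $L(d_B^s)$ is merely bounded (rather than zero) in $B$ — by smoothness of $B$, cf.\ \cite[Lemma B.2.4]{FeRo23} or \autoref{lemma:dist-comp} — and this is harmlessly absorbed into the $h_1$ barrier. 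The desired non-tangential expansion with $(x_n)_+^s$ then follows since $d_B(x)=x_n(1+O(|x|))$ in the cone.
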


\begin{proof}
Without loss of generality, we assume that $\nu = e_n$. The proof follows closely the arguments in the proof of \autoref{lemma:one-sided-expansion}. We start by defining
\begin{align*}
\alpha(R) = \sup \left\{ \alpha > 0 : u \ge \alpha (x_n)_+^s ~~ \text{ in } B \cap B_R \cap \{ d_B(x) \ge |x|/2 \} \right\}.
\end{align*}
As in the proof of \autoref{lemma:one-sided-expansion}, one can show that there exists $\alpha := \lim_{R \to 0} \alpha(R) = \sup_{R} \alpha(R) \ge 0$ and that for any $x \in B \cap \{ d_B(x) \ge |x|/2 \}$ near $0$ it holds
\begin{align*}
u(x) \ge \alpha (x_n)_+^s + o(|x|^s).
\end{align*}
Moreover, as in the proof of \autoref{lemma:one-sided-expansion}, the claim \eqref{eq:claim-interior-ball} follows once we show that for any $\beta > 0$ and $\delta \in (0,1)$, there exists a radius $r > 0$ such that 
\begin{align}
\label{eq:claim-interior-ball-2}
u(x) \le (\alpha + \delta)(x_n)_+^s ~~ \text{ in } B_r \cap \{ x_n \ge \beta |x'| \}.
\end{align}
By contradiction, we assume that there exist $\beta > 0$, $\delta \in (0,1)$, and $x_k \to 0$ in $B$ with $(x_k)_n \ge \beta |x_k'|$ such that
\begin{align*}
u(x_k) \ge (\alpha + \delta) ((x_k)_n)_+^s.
\end{align*} 
Note that if $k$ is large enough, then $x_k \in B$. We define $v(x) = u(x) - (\alpha - \tau \delta)(x_n)_+^s$ for $\tau > 0$ to be chosen later. Proceeding as in \autoref{lemma:one-sided-expansion}, but replacing $\{ x_n > 0 \}$ by $B$, we can show that there is $k_0 \in \N$ such that for any $k \ge k_0$:
\begin{align}
\label{eq:claim-interior-ball-2-help}
v \ge h \ge c_0 d_B^s(x) ~~ \text{ in } B \cap B_{|x_k|/2},
\end{align}
where $c_0 > 0$ is independent of $k$ and $\tau$. Indeed, as in the proof of \autoref{lemma:one-sided-expansion} we have $L v \ge 0$ in $B_{|x_k|} \cap B$, where we use that $L u = 0$ in $B$ since $B \subset \{ u > 0 \}$. Moreover, the barrier argument from \cite[Proof of Proposition 2.6.4]{FeRo23} and the Hopf lemma (see \cite[Proposition 2.6.6]{FeRo23}) remain true in this setting since $B$ is a smooth domain and $0 \in \partial B$. In particular, \eqref{eq:claim-interior-ball-2-help} implies
\begin{align*}
v \ge c_0|x|^s/2^s  \ge c_1 (x_n)_+^s ~~ \text{ in } B \cap B_{|x_k|/2} \cap \{ d_B(x) \ge |x|/2 \},
\end{align*}
for some $c_1 > 0$ (since $|x| \ge x_n \ge 0$). Thus, by the definition of $v$, we get
\begin{align*}
v \ge (\alpha + (c_1 - \tau \delta))(x_n)_+^s ~~ \text{ in } B \cap B_{|x_k|/2} \cap \{ d_B(x) \ge |x|/2 \}.
\end{align*}
This yields a contradiction with the definition of $\alpha$ upon choosing $\tau < 1$ small, and $k \in \N$ large enough. This establishes \eqref{eq:claim-interior-ball-2}, and therefore \eqref{eq:claim-interior-ball}. The proof is complete.
\end{proof}

\begin{proof}[Proof of \autoref{lemma:visc-Cs}]
We claim that for any $x \in B_{1/2}$ it holds
\begin{align}
\label{eq:visc-Cs-claim-1}
|u(x)| \le c \dist(x, \partial \{ u > 0 \})^s.
\end{align}
From here, the claims follow immediately by using the interior regularity theory (see \cite{FeRo23}) in the same way as in the proofs of \cite[Theorem 4.5, first part of Theorem 1.5]{RoWe24b}.
To see \eqref{eq:visc-Cs-claim-1}, let us assume without loss of generality that $x := e_n/2$ and $\dist(e_n/2 , \partial \{ u > 0 \}) = 1/2 = |e_n/2|$, and $0 \in \partial \{ u > 0 \}$. We claim that 
\begin{align}
\label{eq:visc-Cs-claim-2}
|u(e_n/2)| \le C
\end{align}
for some constant $C > 0$, depending only on $n,s,\lambda,\Lambda$. Indeed, if \eqref{eq:visc-Cs-claim-2} holds true, then by scaling, shifting, and rotating we immediately deduce \eqref{eq:visc-Cs-claim-1}. To prove \eqref{eq:visc-Cs-claim-2}, we define $w$ to be the solution to
\begin{align*}
\begin{cases}
L w &= 0 ~~ \text{ in } B_{1/2}(e_n/2) \cap B_{3/4},\\
w &= 0 ~~ \text{ in } \R^n \setminus B_{1/2}(e_n),\\
w &= u ~~ \text{ in } B_{1/2}(e_n/2) \setminus B_{3/4}.
\end{cases}
\end{align*}
By the comparison principle, we have
\begin{align*}
u \ge w ~~ \text{ in } \R^n.
\end{align*}
Moreover, by the Hopf lemma (see \cite[Proposition 2.6.6]{FeRo23}), we have
\begin{align*}
w \ge c w(e_n/2)(x_n)_+^s ~~ \text{ in } B_{1/2}(e_n/2) \cap B_{3/4} \cap \{ d_{B_{1/2}(e_n/2) \cap B_{3/4}} \ge (x_n)_+/2 \},
\end{align*}
where we used that the constant in \cite[Proposition 2.6.6]{FeRo23} depends on $\inf_{\{d \ge \delta \}} w$, which we can estimate from below by $w(e_n/2)$ due to the Harnack inequality. Hence, using that $B_{1/2}(e_n/2)$ is an interior tangent ball for $\{ u > 0 \}$, we can apply \autoref{lemma:interior-ball-blowup-viscosity}, which implies that upon taking the limit $x \to 0$ in $\{ d_{B_{1/2}(e_n/2) \cap B_{3/4}} \ge (x_n)_+/2 \}$, we have
\begin{align*}
u(x)/(x_n)_+^s \le \alpha
\end{align*}
for some $\alpha > 0$, depending only on $n,s,\lambda,\Lambda$. Hence, we have shown that altogether 
\begin{align*}
w(e_n/2) \le \frac{\alpha}{c}.
\end{align*}
It remains to estimate $w(e_n/2)$ by $u(e_n/2)$. To do so, an application of Harnack's inequality at the boundary (see \cite[Theorem 3.4]{KiLe23}) yields 
\begin{align*}
w(e_n/2) \ge c \inf_{B_{1/3}(e_n/2) \setminus B_{3/4}} w.
\end{align*}
Since $u = w$ in $B_{1/3}(e_n/2) \setminus B_{3/4}$ by construction, and $\{ u > 0 \}$ in $B_{1/2}(e_n/2)$ by assumption, we can apply the interior Harnack inequality for $u$ to deduce that
\begin{align*}
w(e_n/2) \ge c \inf_{B_{1/3}(e_n/2) \setminus B_{3/4}} u \ge c u(e_n/2).
\end{align*}
Altogether, we have proved \eqref{eq:visc-Cs-claim-2}, and the proof is complete.
\end{proof}

\section{Flatness implies $C^{1,\alpha}$ for viscosity solutions}
\label{sec:flatness-implies-C1alpha}

The goal of this section is to prove \autoref{thm:free-boundary-regularity}, namely that the free boundary of a viscosity solution to the nonlocal one-phase problem in the sense of \autoref{def:viscosity} is $C^{1,\alpha}$ near flat free boundary points. To prove this result, we first develop an improvement of flatness scheme (see \autoref{thm:improvement-of-flatness}), which yields the regularity of the free boundary near flat points after application of an iterative scheme.

\subsection{Improvement of flatness}
\label{subsec:improvement-of-flatness}

In this section, we show the following improvement of flatness result for viscosity solutions:

\begin{theorem}
\label{thm:improvement-of-flatness}
Let $K \in C^{1-2s+\beta}(\mathbb{S}^{n-1})$ for some $\beta > \max\{0,2s-1\}$ and assume \eqref{eq:Kcomp}. Let $u$ be a viscosity solution to the nonlocal one-phase problem for $K$ in $B_2$ with $0 \in \partial \{ u > 0\}$. Then, there are $\eps_0, \delta_0, \rho_0, C > 0$, depending only on $n, s, \lambda, \Lambda$, and $\Vert A \Vert_{C^{1+\beta}(\mathbb{S}^{n-1})}$, such that for any $\eps \in (0,\eps_0)$ it holds: If
\begin{align}
\label{eq:eps-flat-iof}
A(e_n)(x \cdot e_n -\eps)_+^s \le u(x) \le A(e_n)(x \cdot e_n + \eps)_+^s ~~ \forall x \in B_1,
\end{align}
and
\begin{align}
\label{eq:tail-smallness-iof}
T_{\eps} := \tail([u-A(e_n)(x \cdot e_n -\eps)_+^s]_- ; 1) + \tail([A(e_n)(x \cdot e_n + \eps)_+^s - u]_- ; 1) \le \eps \delta_0,
\end{align}
then we have for some $\nu \in \mathbb{S}^{n-1}$ with $|\nu - e_n| < C \eps$:
\begin{align*}
A(\nu)\left(x \cdot \nu - \frac{\eps}{2}\right)_+^s \le u_{\rho_0}(x) \le A(\nu)\left(x \cdot \nu + \frac{\eps}{2}\right)_+^s ~~ \forall x \in B_{1},
\end{align*}
and 
\begin{align*}
T_{\rho_0,\frac{\eps}{2}} := \tail\left(\left[u_{\rho_0}-A(\nu)\left(x \cdot \nu - \frac{\eps}{2}\right)_+^s\right]_- ; 1\right) + \tail\left(\left[A(\nu)\left(x \cdot \nu + \frac{\eps}{2}\right)_+^s - u_{\rho_0}\right]_- ; 1\right) \le \frac{\eps}{2} \delta_0.
\end{align*}
\end{theorem}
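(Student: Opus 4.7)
The proof proceeds by a contradiction-compactness argument in the style of De Silva, with the key technical novelty being the use of domain variations to obtain the linearized problem in a purely nonlocal setting. Suppose the claim fails. After fixing $\rho_0 \in (0,1)$ small (to be chosen at the end, depending on the boundary $C^{1,\gamma}$ exponent of the linearized problem), there exist sequences $\eps_k \searrow 0$, kernels $K_k$ satisfying \eqref{eq:Kcomp} and the smoothness assumption, and viscosity solutions $u_k$ to the nonlocal one-phase problem for $K_k$ with $0 \in \partial\{u_k > 0\}$, such that the flatness \eqref{eq:eps-flat-iof} and tail bound \eqref{eq:tail-smallness-iof} hold for $\eps_k$, but no $\nu \in \mathbb{S}^{n-1}$ with $|\nu - e_n| < C\eps_k$ realises the improved conclusion for $(u_k)_{\rho_0}$. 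Define the linearising sequence
\begin{align*}
v_k(x) := \frac{u_k(x) - A_k(e_n)(x_n)_+^s}{\eps_k} \qquad \text{on } \overline{\{u_k > 0\}} \cap B_1,
\end{align*}
which is uniformly bounded in $\{u_k > 0\} \cap B_1$ by \eqref{eq:eps-flat-iof}.

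The first step is compactness. Using the growth lemma on a fixed scale (the analog of \texttt{Lemma~partial-BH-scaled} mentioned in the introduction), together with \autoref{lemma:visc-Cs} and the interior regularity for the equation $L_k u_k = 0$ in $\{u_k > 0\}$, one obtains uniform H\"older estimates for $v_k / A_k(e_n)$ in the non-tangential region $\{u_k > 0\} \cap \{d_{u_k > 0} \ge \kappa |x|\}$ of any fixed aperture $\kappa > 0$. Extracting a subsequence, $v_k \to s A(e_n)(x_n)_+^{s-1} u$ locally uniformly on such regions for some function $u$ on $\{x_n > 0\} \cap B_1$, and by \autoref{lemma:scaling-blow-up} the kernels converge weakly to some $K_\infty$ satisfying \eqref{eq:Kcomp}. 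The second step identifies the equation for $u$. Here we use a domain variation representation
\begin{align*}
v_k(x) = \frac{u_k(x) - u_k\!\left(x - \eps_k \tilde u_k(x)\, e_n\right)}{\eps_k \tilde u_k(x)} \cdot \tilde u_k(x),
\end{align*}
with implicit $\tilde u_k(x) \in [-1,1]$ obtained from \eqref{eq:eps-flat-iof} by the intermediate value theorem in the $e_n$-direction. Passing to the limit in $L_k u_k = 0$ in $\{u_k > 0\}$ and in the viscosity free boundary condition $u_k/d^s = A_k(\nu)$, and expanding $A_k(\nu)$ around $e_n$ using \eqref{eq:A-properties}, yields that $u$ is a viscosity solution of the linearized problem
\begin{align*}
\begin{cases}
L_\infty\bigl((x_n)_+^{s-1} u\bigr) = 0 & \text{in } \{x_n > 0\} \cap B_1,\\[2pt]
\partial_\omega u = 0 & \text{on } \{x_n = 0\} \cap B_1,
\end{cases}
\end{align*}
where the oblique vector $\omega$ encodes $\nabla_\tau A(e_n)$ and satisfies $\omega_n \ge c > 0$ thanks to \eqref{eq:A-properties}. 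The tail hypothesis \eqref{eq:tail-smallness-iof} enters precisely to ensure that nonlocal tails of $v_k$ do not spoil this passage to the limit.

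The third step is regularity of the linearized problem. After an affine change of variables straightening $\omega$ to $e_n$, the oblique condition becomes a standard nonlocal Neumann condition, and the boundary regularity theory of \cite{RoWe24} gives $u \in C^{1,\gamma}$ up to $\{x_n = 0\} \cap B_{1/2}$ for some $\gamma > 0$. In particular, $u(0) = 0$ and there exists $\xi \in \R^{n-1}$ such that
\begin{align*}
|u(x) - \xi \cdot x'| \le C |x|^{1+\gamma} \quad \text{on } \{x_n = 0\} \cap B_{1/2},
\end{align*}
and the corresponding non-tangential estimate holds inside $\{x_n > 0\}$. Now fix $\rho_0$ so small that $C\rho_0^\gamma \le 1/8$. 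Translating back to $u_k$ and setting $\nu_k := (e_n + \eps_k \xi)/|e_n + \eps_k \xi| \in \mathbb{S}^{n-1}$, the pointwise closeness of $v_k$ to $s A(e_n)(x_n)_+^{s-1} u$ on the non-tangential region at scale $\rho_0$ transfers, via \autoref{lemma:one-sided-expansion} and a Taylor expansion in $A_k$, to the sandwich
\begin{align*}
A_k(\nu_k)(x \cdot \nu_k - \tfrac{\eps_k}{2})_+^s \le (u_k)_{\rho_0}(x) \le A_k(\nu_k)(x \cdot \nu_k + \tfrac{\eps_k}{2})_+^s \quad \forall x \in B_1,
\end{align*}
contradicting the initial assumption; the improved tail bound $T_{\rho_0,\eps_k/2} \le (\eps_k/2)\delta_0$ follows by splitting the tail integral into the contribution of $B_1$ (controlled by the improved pointwise flatness) and of $\R^n \setminus B_1$ (controlled by the hypothesis and the rescaling factor $\rho_0^{2s}$).

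The main obstacle is the second step: establishing the convergence $v_k \to s A(e_n)(x_n)_+^{s-1} u$ and identifying the limiting equation. The singular weight $(x_n)_+^{s-1}$ blows up at the free boundary, so standard energy or $L^\infty$ compactness up to $\{x_n = 0\}$ is unavailable; the limit can only be controlled non-tangentially and must then be extended through the boundary condition. Moreover, unlike in the thin one-phase setting, no explicit barriers are available, so the domain-variation identity together with precise nonlocal boundary estimates is the only route to simultaneously produce the interior equation and the oblique boundary condition, and to rule out unwanted contributions from the nonlocal tails.
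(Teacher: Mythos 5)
Your proposal follows the same overall strategy as the paper's proof: a contradiction–compactness argument, compactness obtained through the partial boundary Harnack inequality, the domain variation identity to pass from $v_k$ to the linearized problem with an oblique boundary condition, boundary regularity of the linearized problem via a change of variables reducing to the nonlocal Neumann theory of \cite{RoWe24}, and a final transfer back to a sandwich between translated half-space solutions. This is the paper's scheme, so you have the right architecture. There are, however, three places where the sketch is imprecise and where the actual proof has to do nontrivial work.

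First, your linearized problem $L_\infty((x_n)_+^{s-1}u)=0$ is not quite right. The flatness control only holds in $B_1$, so the limit function must be truncated: the paper shows
\begin{align*}
L\big((x_n)_+^{s-1}\tilde u\,\1_{B_{3/4}}\big) = f \quad\text{in } B_{1/2}\cap\{x_n>0\},
\end{align*}
where $f$ is a nonzero (but bounded, and in fact $C^{1-2s+\beta}$) forcing that encodes the nonlocal tails of $v_k$ outside $B_{3/4}$. Your remark that the tail hypothesis ``ensures the tails do not spoil the passage'' is consistent with this, but then writing $f=0$ is an oversight; what the tail hypothesis actually buys is the uniform bound $\|f\|_{L^\infty}\le C$, not vanishing. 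This distinction is harmless for the regularity application (which only needs $f\in L^\infty$), but it is essential to set up the limit correctly.

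Second, the expansion vector cannot be taken purely tangential. The paper's linearized regularity gives an expansion $|\tilde u(x)-a\cdot x|\le C|x|^{1+\gamma}$ with $a\cdot\omega=0$, i.e.\ $a$ is orthogonal to the oblique direction $\omega$ (equivalently, $A^{1/s}(e_n)a_n=\nabla A^{1/s}(e_n)\cdot a$). Since $\omega$ is generally not $e_n$, this constraint is not $a_n=0$; keeping track of it is precisely what makes the final choice $\nu=(e_n+\eps a)/|e_n+\eps a|$ compatible with the $A(\nu)$-normalization in the sandwich bound. If one pretends $a$ is tangential, the Taylor expansion of $A^{1/s}$ does not close.

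Third, the transfer back from $\tilde u$ to $u_k$ and the verification of the improved tail bound for $(u_k)_{\rho_0}$ are each genuine steps, not mere bookkeeping. The former requires the comparison principle for domain variations (the analog of \autoref{lemma:comp-dom-var}) together with a careful use of the $C^{1,\gamma}$ regularity of $A^{1/s}$; \autoref{lemma:one-sided-expansion} is not the right tool here and is not used at this stage in the paper. The tail bound for $(u_k)_{\rho_0}$ is not obtained by the trivial rescaling factor $\rho_0^{2s}$: one must split $\R^n\setminus B_{\rho_0}$ into $B_{1/2}\setminus B_{\rho_0}$ (controlled by the improved flatness at all intermediate scales $\rho\in[\rho_0,1/2]$), $B_1\setminus B_{1/2}$, and $\R^n\setminus B_1$, and the first and third pieces each require dyadic estimates of the form in \eqref{eq:L1-exponent-gain} and a delicate spherical estimate for $|A(\nu)(x\cdot\nu)_+^s - A(e_n)(x_n)_+^s|$. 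Your sketch mentions the split but underestimates the amount of computation required to make the $\eps\delta_0/2$ bound come out.

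In summary, the route is correct and matches the paper; the gaps are in the precise form of the right-hand side of the linearized equation, the orthogonality constraint on the expansion vector, and the final tail estimate.
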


\autoref{thm:improvement-of-flatness} is the central ingredient in the proof of our main result on the regularity of the free boundary (see \autoref{thm:main-C1alpha}). It establishes an iteration scheme, from which the regularity of the free boundary near all points at which \eqref{eq:eps-flat-iof} and \eqref{eq:tail-smallness-iof} hold true, follows by standard arguments (see Subsection \ref{subsec:iterate-improvement-of-flatness}). The proof of \autoref{thm:improvement-of-flatness} goes by a compactness argument. The convergence of the compactness sequence will follow from a partial boundary Harnack inequality (see Subsection \ref{subsubsec:partial-BH}), and the contradiction will follow from the regularity of the so called ``linearized problem'' (see Subsection \ref{subsubsec:reg-linearized} and Subsection \ref{subsubsec:conclusion}), which occurs as the PDE satisfied by the limit of the compactness sequence (see Subsection \ref{subsubsec:linearized-problem}). This regularity was established in our previous work \cite{RoWe24}.

\subsubsection{Partial Boundary Harnack}
\label{subsubsec:partial-BH}

The first step in the proof of \autoref{thm:improvement-of-flatness} is to establish the following (rescaled) partial boundary Harnack inequality:

\begin{lemma}
\label{lemma:partial-BH-scaled}
Assume \eqref{eq:Kcomp}. Let $u$ be a viscosity solution to the nonlocal one-phase problem for $K$ in $B_r$ for some $r \in (0,1]$ with $0 \in \partial \{ u > 0 \}$. Then, there are $\eps_0, c > 0$, and $\theta, \delta_0 \in (0,1)$, depending only on $n,s,\lambda,\Lambda$, such that if $a_0 \le b_0$ are such that
\begin{align}
\label{eq:initial-closeness}
|b_0 - a_0| \le r \eps_0 \qquad \text{ and } \qquad A(e_n)(x_n + a_0)_+^s \le u(x) \le A(e_n)(x_n + b_0)_+^s ~~ \forall x \in B_{r},
\end{align}
and
\begin{align*}
T_r := \tail([u-A(e_n)(x_n + a_0)_+^s]_- ; r) + \tail([A(e_n)(x_n + b_0)_+^s - u]_- ; r) \le r^{s-1} |a_0 - b_0| \delta_0,
\end{align*}
then there are $a_0 \le a_1 < b_1 \le b_0$ with
\begin{align*}
|b_1 - a_1| = (1-\theta)|a_0 - b_0| \qquad \text{ and } \qquad A(e_n)(x_n + a_1)_+^s \le u(x) \le A(e_n)(x_n + b_1)_+^s ~~ \forall x \in B_{r/20}
\end{align*}
and, for any $K \ge 20$ we have
\begin{align*}
&\tail\left(\left[u-A(e_n)(x_n + a_1)_+^s\right]_- ; \frac{r}{K} \right) + \tail\left(\left[A(e_n)(x_n + b_1)_+^s - u\right]_- ; \frac{r}{K}\right) \\
&\qquad \le c \theta |a_0 - b_0| \left( \frac{r}{K} \right)^{s-1} + K^{-2s} T_r.
\end{align*}
\end{lemma}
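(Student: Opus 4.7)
The first step is a reduction to $r=1$: by the $2s$-homogeneity of $K$, the rescaling $u_r(X) := u(rX)/r^s$ is a viscosity solution of the nonlocal one-phase problem for $K$ on $B_1$, and the sandwich hypothesis, the tail hypothesis and the desired output all rescale consistently (the tail picks up exactly a factor $r^s$, so the input condition $T_r \le r^{s-1} m \delta_0$ becomes $\widetilde T_1 \le \widetilde m \delta_0$ with $\widetilde m = (b_0-a_0)/r$). I therefore assume $r=1$, set $m := b_0 - a_0 \le \eps_0$, and put $h_t(x) := A(e_n)(x_n + a_0 + tm)_+^s$ for $t \in [0,1]$, so that $h_0 \le u \le h_1$ on $B_1$ and $T_1 \le \delta_0 m$.

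\textbf{Dichotomy and propagation.} Fix $p_0 := \tfrac{1}{10} e_n$. For $\eps_0$ small, $p_0 \cdot e_n + a_0 \ge \tfrac{1}{20}$, and a Taylor expansion of $t \mapsto t^s$ at $t=\tfrac{1}{10}$ yields a universal $c_0 > 0$ with $h_{1/2}(p_0) - h_0(p_0) \ge c_0 m$ and $h_1(p_0) - h_{1/2}(p_0) \ge c_0 m$; since $h_0(p_0) \le u(p_0) \le h_1(p_0)$, one of the alternatives $u(p_0) - h_0(p_0) \ge c_0 m$ or $h_1(p_0) - u(p_0) \ge c_0 m$ must hold. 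I treat only the first (the other is entirely symmetric, swapping the roles of $h_0, a_0$ with $h_1, b_0$), and aim for $a_1 := a_0 + \theta m$, $b_1 := b_0$. Set $w := u - h_0 \ge 0$ on $\R^n$. Since $h_0 \le u$ forces $\{u > 0\} \supset \{x_n > -a_0\} \cap B_1$, and $Lu = 0$ in $\{u>0\}$ together with $Lh_0 = 0$ in $\{x_n > -a_0\}$ by the identity $L(x \cdot e)_+^s = B(e)(x \cdot e)_-^{-s}$, the function $w$ is a nonnegative $L$-harmonic function in $\{x_n + a_0 > 0\} \cap B_{1/2}$ with $\tail(w_-; 1) \le T_1 \le \delta_0 m$. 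A nonlocal Harnack inequality with tail correction, applied along a chain of balls inside the layer $\{x_n + a_0 \ge \mu_0\} \cap B_{1/2}$, propagates $w(p_0) \ge c_0 m$ to $w \ge c_1 m$ on the same layer intersected with $B_{1/10}$, after absorbing an error of size $C \delta_0 m$ by taking $\delta_0$ small. I then compare $w$ on $B_{1/20}$ with the explicit barrier
\begin{align*}
\psi_\theta(x) := A(e_n)\bigl[(x_n + a_0 + \theta m)_+^s - (x_n + a_0)_+^s\bigr],
\end{align*}
which is $L$-harmonic in $\{x_n + a_0 > \theta m\}$, bounded by $C \theta m$ on $B_{1/20}$, and vanishes identically on $\{x_n + a_0 \le -\theta m\}$. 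Choosing $\theta := c_1/(2C)$, the layer bound gives $w \ge \psi_\theta$ on $\{x_n + a_0 \ge \mu_0\} \cap B_{1/10}$, while $w \ge 0 = \psi_\theta$ on $\{x_n + a_0 \le -\theta m\}$; combined with the tail control $\tail((w - \psi_\theta)_-; 1) \lesssim T_1 \le \delta_0 m$, the nonlocal comparison principle yields $w \ge \psi_\theta$ on $B_{1/20}$, i.e.\ $u \ge A(e_n)(x_n + a_1)_+^s$ on $B_{1/20}$ with $a_1 = a_0 + \theta m$, and by symmetry of the treatment also the upper sandwich is preserved (or improved, in Case B).

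\textbf{Tail estimate and main obstacle.} For the output tail at scale $1/K$ with $K \ge 20$, I split $\R^n \setminus B_{1/K}$ into $B_1 \setminus B_{1/K}$ and $\R^n \setminus B_1$. On the first region, the sandwich $u \ge h_0$ gives the pointwise bound $[u - A(e_n)(x_n + a_1)_+^s]_- \le A(e_n)\bigl[(x_n + a_1)_+^s - (x_n + a_0)_+^s\bigr]$, and a radial integration (splitting according to whether $|y|$ lies in the ``linear regime'' $|y| \gtrsim 1/K$ or in the ``sliver'' $-a_1 \le y_n \le -a_0$) produces a contribution of order $\theta m (1/K)^{s-1}$ after the prefactor $(1/K)^{2s}$. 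On the second region, by definition of the tail, $(1/K)^{2s} \int_{|y|>1} [u - h_0]_- |y|^{-n-2s}\,dy = K^{-2s} T_1$. The same analysis applies to the upper side (which in Case A uses $b_1 = b_0$ and thus only the hypothesis tail). Undoing the rescaling converts this to the stated bound $c \theta |a_0 - b_0| (r/K)^{s-1} + K^{-2s} T_r$. The main obstacle, compared with the classical (local) De Silva argument \cite{DeS11}, is the comparison step: $w$ is not a classical $L$-supersolution across the free boundary, so the maximum principle cannot be applied in a bounded domain, and the full tail contribution of $w - \psi_\theta$ has to be absorbed using precisely the hypothesis tail smallness $T_1 \le \delta_0 m$. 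This coupling is what fixes both the improvement constant $\theta$ and the smallness threshold $\delta_0$ in terms only of $n, s, \lambda, \Lambda$.
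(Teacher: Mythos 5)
There is a genuine gap in the comparison step. You set $w = u - h_0$ and claim ``the nonlocal comparison principle yields $w \ge \psi_\theta$ on $B_{1/20}$'' from the layer bound, the inequality $w \ge 0 = \psi_\theta$ on $\{x_n + a_0 \le -\theta m\}$, and the tail smallness. But $w - \psi_\theta = u - A(e_n)(x_n + a_1)_+^s$ is $L$-harmonic only on $\{u > 0\} \cap \{x_n > -a_1\}$; on the set $\{u = 0\} \cap \{-a_1 < x_n < -a_0\}$ — which a priori is nonempty in $B_{1/20}$, since the free boundary can sit anywhere in the slab $\{-b_0 \le x_n \le -a_0\}$ and $a_1 < b_0$ — you have $w - \psi_\theta = -A(e_n)(x_n + a_1)_+^s < 0$. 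No form of the maximum/comparison principle can overcome this: the conclusion $u \ge A(e_n)(x_n + a_1)_+^s$ on $B_{1/20}$ precisely \emph{asserts} that this set is empty, i.e.\ that the free boundary moved up, and that cannot be deduced from sandwiching alone. This is exactly the point where the viscosity free boundary condition must enter, and your argument never touches it.

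The paper handles this with a sliding (continuity) argument: it builds a one-parameter family of barriers
\[
\phi_t(x) = A(e_n)\Big(x_n + \sigma + \tfrac{c_0\eps}{1+C}[t\,h(x) + C\psi(x)]\Big)_+^s,
\]
where $h$ is a regularized distance whose level sets are tilted in the anisotropy-adapted direction $\omega = \nabla A^{1/s}(e_n)/|\nabla A^{1/s}(e_n)|$, so that $\partial_\omega h > 0$. It proves $L\phi_t \le -\bar c\,\eps$ in the interior via \autoref{lemma:dist-comp}, and then shows the maximal $t^\ast$ with $\phi_{t^\ast} \le u$ equals $1$: a touching point in $\{u>0\}$ is excluded by the strong maximum principle, and a touching point on $\partial\{u>0\}$ is excluded because $\nabla\phi_{t^\ast}^{1/s}$ then lies strictly outside the admissible cone $\{\nu\kappa:\kappa\le A^{1/s}(\nu)\}$ (since $\omega\cdot\nabla h>0$), contradicting the viscosity supersolution free boundary condition of \autoref{def:viscosity}(ii). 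Your translated half-space barrier $\psi_\theta$ has none of this structure — it is isotropic (shifted purely along $e_n$), so even if one set up the sliding, the gradient at a free boundary touching point would lie exactly on the cone rather than strictly outside it, and the contradiction would not follow for general anisotropic $K$. Your Harnack/chaining step and the final tail computation match the paper, but the barrier construction and the free-boundary touching analysis — which are the heart of the proof — are missing.
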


\begin{remark}
Note that by making $K$ larger and $\theta$ smaller in \autoref{lemma:partial-BH-scaled} (which only makes the result weaker), namely by taking $K^{-1-s} \le 1-\theta$, and $\theta \le c\delta_0 (1-\theta)$, it is easy to verify that the tail estimate in the conclusion of \autoref{lemma:partial-BH-scaled} becomes
\begin{align}
\label{eq:partial-BH-K-theta}
\begin{split}
\tail & \left(\left[u-A(e_n)(x_n + a_1)_+^s\right]_- ; \frac{r}{K} \right) \\
&+ \tail\left(\left[A(e_n)(x_n + b_1)_+^s - u\right]_- ; \frac{r}{K}\right) \le \left(\frac{r}{K}\right)^{s-1}(1-\theta)|a_0 - b_0| \delta_0.
\end{split}
\end{align}
This bound will allow us to apply \autoref{lemma:partial-BH-scaled} in an iterative scheme (see \autoref{lemma:Holder-continuity-partial-BH}).
\end{remark}

The following partial boundary Harnack inequality at scale one implies \autoref{lemma:partial-BH-scaled}.

\begin{lemma}
\label{lemma:partial-BH-boundary}
Assume \eqref{eq:Kcomp}. Let $u$ be a viscosity solution to the nonlocal one-phase problem for $K$ in $B_1$ Then, there are $\eps_0  > 0$, $\theta, \delta_0 \in (0,1)$, depending only on $n,s,\lambda,\Lambda$, such that if 
\begin{align*}
A(e_n)(x_n + \sigma)_+^s \le u(x) \le A(e_n)(x_n + \sigma + \eps)_+^s ~~ \forall x \in B_{1}
\end{align*}
for some $\eps \in (0,\eps_0)$ and $\sigma \in \R$ with $|\sigma| < 1/10$, and moreover,
\begin{align*}
\tail([u-A(e_n)(x_n + \sigma)_+^s]_- ; 1) + \tail([A(e_n)(x_n + \sigma + \eps)_+^s - u]_- ; 1) \le \delta_0 \eps,
\end{align*}
then at least one of the following holds true:
\begin{itemize}
\item[(i)] $A(e_n)(x_n + \sigma + \theta \eps)_+^s \le u(x) \le A(e_n)(x_n + \sigma + \eps)_+^s ~~ \forall x \in B_{1/20}$,
\item[(ii)] $A(e_n)(x_n + \sigma)_+^s \le u(x) \le A(e_n)(x_n + \sigma + (1-\theta)\eps)_+^s ~~ \forall x \in B_{1/20}$.
\end{itemize}
\end{lemma}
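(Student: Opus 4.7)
The argument follows a dichotomy scheme inspired by De Silva's treatment of the local one-phase problem in \cite{DeS11}, adapted to the nonlocal setting via tail control. First I would evaluate $u$ at a reference interior point, say $p := \tfrac{1}{4}e_n$, which by the lower flatness hypothesis lies in $\{u>0\}$ since $|\sigma|<1/10$. Compare with the midpoint half-space value and distinguish two cases: the \emph{upper-dominated case} $u(p)\ge A(e_n)(p_n+\sigma+\tfrac{\eps}{2})_+^s$, from which I aim to derive conclusion (i), and the \emph{lower-dominated case} $u(p)\le A(e_n)(p_n+\sigma+\tfrac{\eps}{2})_+^s$, from which conclusion (ii) will follow by a symmetric argument. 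Below I describe the first case in detail; the second is analogous, working with $w(x):=A(e_n)(x_n+\sigma+\eps)_+^s-u(x)$ and using that $Lu\le 0$ in $B_1$ to get $Lw\ge 0$ in $\{u>0\}$.

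In the upper-dominated case, set $v(x):=u(x)-A(e_n)(x_n+\sigma)_+^s$. By the lower flatness hypothesis $v\ge 0$ in $B_1$, and since $L[A(e_n)(x_n+\sigma)_+^s]=0$ in $\{x_n>-\sigma\}$ (see the identity \eqref{eq:L-exp-1d} in the proof of \autoref{lemma:expansion-smooth-domain}) while $Lu=0$ in $\{u>0\}$, we have $Lv=0$ in $B_1\cap\{x_n>-\sigma\}$ in the viscosity sense. A Taylor expansion of $t\mapsto A(e_n)(p_n+\sigma+t)_+^s$ gives $v(p)\ge c_0\,\eps$ for a dimensional constant $c_0>0$ (since $p_n+\sigma$ is bounded below by $1/4-1/10$).

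Next I would invoke the interior (nonlocal) Harnack inequality on a ball $B_{r_0}(p)\Subset\{x_n>-\sigma+c\}$. Because $v$ may be negative outside $B_1$, one splits $v=v_+-v_-$ and treats the negative part as an inhomogeneity controlled by $\tail(v_-;1)\le \delta_0\eps$ (this is exactly the tail hypothesis in the lemma, since $v_-=[u-A(e_n)(x_n+\sigma)_+^s]_-$). This yields $v\ge c_1\eps-C\delta_0\eps\ge c_2\eps$ on a slightly smaller ball, provided $\delta_0$ is small. Then I would compare $v$ to an explicit Hopf-type barrier $\psi\ge 0$ supported away from the reference ball, constructed as in \cite[Proof of Proposition 2.6.4]{FeRo23} so that $L\psi\ge 0$ in $B_{1/10}\cap\{x_n>-\sigma\}$, $\psi\le c_2\eps$ on $B_{r_0/2}(p)$, and $\psi\ge c_3\eps\,(x_n+\sigma)_+^s$ in $B_{1/20}\cap\{x_n>-\sigma\}$. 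Comparison (again absorbing the tail of $v_-$ using $\delta_0$ small) gives
\begin{equation*}
v(x)\ \ge\ c_4\,\eps\,(x_n+\sigma)_+^s \quad\text{in } B_{1/20}.
\end{equation*}

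Finally, using the elementary bound $A(e_n)[(t+\theta\eps)_+^s-t_+^s]\le C\theta\eps\,t_+^{s-1}$ for $t\gg\theta\eps$, together with the direct estimate in the thin layer $\{0\le x_n+\sigma\lesssim\theta\eps\}$ (where both sides of the desired inequality are of order $\eps^s$ and the lower flatness is used directly), one chooses $\theta:=c_4/(2C)$ to conclude $u(x)\ge A(e_n)(x_n+\sigma+\theta\eps)_+^s$ in $B_{1/20}$, which is conclusion (i). The upper bound in (i) is inherited from the hypothesis.

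\textbf{Main obstacle.} The delicate point is the boundary layer near $\{x_n+\sigma=0\}$, where both half-space comparison functions vanish like $\eps\cdot(x_n+\sigma)_+^{s-1}$ apart, an unbounded quantity relative to $\eps$. A naive interior Harnack cannot reach the free boundary, so the Hopf-type barrier in Step 3 is essential and must match the correct boundary asymptotics $(x_n+\sigma)_+^s$. The nonlocality compounds this: the barrier comparison is affected by the far-field behavior of $v$, and without the tail smallness $T_1\le\delta_0\eps$ one could not absorb the negative contribution of $v$ from $\R^n\setminus B_1$ into the Hopf estimate. Matching the two cases (i) and (ii) symmetrically requires switching between the subsolution property $Lu\le 0$ globally and the equation $Lu=0$ in $\{u>0\}$, which must be done carefully where $\{u=0\}$ meets the positivity region of the comparison half-space solution.
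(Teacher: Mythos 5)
The dichotomy, the reference point $p=\tfrac14 e_n$, the use of interior Harnack to absorb the tail term, and the equation satisfied by $v = u - A(e_n)(x_n+\sigma)_+^s$ are all on the right track and match the paper's opening moves. However, the barrier step contains a genuine gap that cannot be patched along the lines you describe.

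Your Hopf-type barrier produces an estimate of the \emph{multiplicative} form $v\ge c_4\,\eps\,(x_n+\sigma)_+^s$ in $B_{1/20}$, i.e., $u\ge A(e_n)(1+c\,\eps)(x_n+\sigma)_+^s$. The conclusion (i) requires the \emph{translational} improvement $u\ge A(e_n)(x_n+\sigma+\theta\eps)_+^s$, which pushes the zero set of the comparison function down to $\{x_n+\sigma=-\theta\eps\}$. These are not equivalent near the free boundary: at any $x$ with $x_n+\sigma=0$ your bound gives $u(x)\ge 0$ while the target is $u(x)\ge A(e_n)(\theta\eps)^s>0$, and in the thin layer $0<x_n+\sigma\lesssim\theta\eps$ one would need $c_4\eps(x_n+\sigma)_+^s\gtrsim\theta\eps(x_n+\sigma)_+^{s-1}$, which fails as $x_n+\sigma\to0$. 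Your proposed fix (``the lower flatness is used directly'' in that layer) does not close this, since the lower flatness hypothesis only gives $u\ge A(e_n)(x_n+\sigma)_+^s$, which is \emph{weaker}, not stronger, than the desired bound in the layer. A Hopf barrier keeps the vanishing set of the comparison function fixed, so it can never produce a translational gain at the free boundary.

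The missing ingredient is the viscosity free boundary condition (\autoref{def:viscosity}(ii)), which you never invoke. The paper's proof constructs a \emph{sliding deformation}
\begin{equation*}
\phi_t(x)=A(e_n)\Big(x_n+\sigma+\tfrac{c_0\eps}{1+C}\,[t\,h(x)+C\psi(x)]\Big)_+^s ,
\end{equation*}
whose zero set moves as $t$ increases, and proves $L\phi_t\le-\bar c\,\eps$ in the region $G=B_{9/10}\cap\{x_n\le 1/9\}$ via \autoref{lemma:dist-comp}. Setting $t^*=\max\{t\in[0,1]:\phi_t\le u\text{ in }\bar G\}$, one shows $t^*=1$: a touching point $x_0$ in the positivity set is excluded by the strong maximum principle, and a touching point on $\partial\{\phi_{t^*}>0\}\cap\partial\{u>0\}$ is excluded precisely by testing the free boundary condition on $u$ with $\phi_{t^*}$, using that $h$ is built so that $\partial_\omega h>0$ on $\{u=0\}$ for the anisotropy-adapted direction $\omega=\nabla A^{1/s}(e_n)/|\nabla A^{1/s}(e_n)|$, which makes $\nabla\phi_{t^*}^{1/s}(x_0)$ leave the admissible cone $\{\nu\kappa:\kappa\le A^{1/s}(\nu)\}$. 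This anisotropic choice of the deformation direction is another feature your sketch does not account for. Without this free-boundary contradiction argument (or some equivalent mechanism to move the touching past the zero set), the lemma cannot be obtained.
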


Before we prove \autoref{lemma:partial-BH-boundary}, let us explain how it implies \autoref{lemma:partial-BH-scaled}.

\begin{proof}[Proof of \autoref{lemma:partial-BH-scaled}]
The first claim in case $r = 1$ follows directly from \autoref{lemma:partial-BH-boundary}. Note that since $0 \in \partial \{ u > 0 \}$, it must be $a_0 \le 0 \le b_0$. Moreover, by choosing $\eps_0 > 0$ small enough, we can assume that $|a_0| < 1/10$.\\
In order to obtain the first claim with general $r \in (0,1)$, note that if $u$ is a viscosity solution to the nonlocal one-phase problem for $K$ in $B_r$ with 
\begin{align*}
A(e_n)(x_n + a_0)_+^s \le u(x) \le A(e_n)(x_n + b_0)_+^s ~~ \forall x \in B_r, \qquad \text{ and } \qquad T_r \le r^{s-1}|a_0 - b_0| \delta_0,
\end{align*}
then $\tilde{u}(x) = u(rx)/r^s$ is a viscosity solution to the nonlocal one-phase problem for $K$ in $B_1$ satisfying
\begin{align*}
A(e_n)(x_n + a_0/r)_+^s &\le \tilde{u}(x) \le A(e_n)(x_n + b_0/r)_+^s ~~ \forall x \in B_1,\\
\tail([\tilde{u} - A(e_n)(x_n + a_0/r)_+^s]_- ; 1 ) &+ \tail([ A(e_n)(x_n + b_0/r)_+^s - \tilde{u}]_- ; 1 ) = r^{-s} T_r \le r^{-1} |a_0 - b_0| \delta_0.
\end{align*}
Thus, we can apply \autoref{lemma:partial-BH-boundary} on scale one if $|b_0 - a_0| \le r \eps_0$, which is exactly what we assumed.

Finally, let us explain how to deduce the second claim, namely the tail estimates on scale $r/M$. For this, let us assume that we are in case (i) of \autoref{lemma:partial-BH-boundary}, i.e., $b_1 = b_0$ and $a_1 = a_0 + \theta |a_0 - b_0|$ (the reasoning in case (ii) goes analogously). Then, the estimate for $\tail([A(e_n)(x_n + b_1)_+^s - u]_- ; r/M)$ follows directly from the assumption. We focus on the estimate for $\tail([u-A(e_n)(x_n + a_1)_+^s]_- ; r/M)$.\\
Note that by the assumption \eqref{eq:initial-closeness} we have
\begin{align*}
[u-A(e_n)(x_n + a_1)_+^s]_- \le A(e_n)(x_n + a_1)_+^s - A(e_n)(x_n + a_0)_+^s ~~\text{ in } B_r.
\end{align*}
Thus, we can estimate
\begin{align*}
\tail([u-A(e_n)(x_n + a_1)_+^s]_- ; r/M) &\le A(e_n) \left( \frac{r}{M} \right)^{2s} \int_{\R^n \setminus B_{r/M}} [(x_n + a_1)_+^s - (x_n + a_0)_+^s] |x|^{-n-2s} \d x \\
&\quad + M^{-2s} \tail([u-A(e_n)(x_n + a_0)_+^s]_- ; r)\\
&= I_1 + I_2.
\end{align*}
While for $I_2$, we get by assumption $I_2 \le M^{-2s} T_r$, for $I_1$ we compute
\begin{align}
\label{eq:L1-exponent-gain}
\begin{split}
I_1 &\le A(e_n)\left( \frac{r}{M} \right)^{2s} \sum_{k = 1}^{\infty} \int_{B_{2^{k+1}(r/M)} \setminus B_{2^k (r/M)}} [(x_n + a_1)_+^s - (x_n + a_0)_+^s] |x|^{-n-2s} \d x\\
&\le c\left( \frac{r}{M} \right)^{-n} \sum_{k = 1}^{\infty} 2^{-k(n+2s)} \int_{B_{2^{k+1}(r/M)} \setminus B_{2^k (r/M)}} [(x_n + a_1)_+^s - (x_n + a_0)_+^s] \d x \\
&\le c \left( \frac{r}{M} \right)^{-1} \sum_{k = 1}^{\infty} 2^{-k(1+2s)} \int_{-2^{k+1}(r/M)}^{2^{k+1}(r/M)} [(x + a_1)_+^s - (x + a_0)_+^s] \d x \\
&\le c \left( \frac{r}{M} \right)^{-1} \sum_{k = 1}^{\infty} 2^{-k(1+2s)} \left(\int_{-a_1}^{2^{k+1}(r/M)} (x + a_1)^s \d x - \int_{-a_0}^{2^{k+1}(r/M)} (x + a_0)^s \d x \right) \\
&\le c \left( \frac{r}{M} \right)^{-1} \sum_{k = 1}^{\infty} 2^{-k(1+2s)} \left(\int_{2^{k+1}(r/M) + a_0}^{2^{k+1}(r/M) + a_1} x^s \d x \right)\\
&\le c \left( \frac{r}{M} \right)^{-1} \sum_{k = 1}^{\infty} 2^{-k(1+2s)} \left((2^{k+1}(r/M) + a_1)^{s+1} - (2^{k+1}(r/M) + a_0)^{s+1} \right)\\
&\le c \left( \frac{r}{M} \right)^{-1} \sum_{k = 1}^{\infty} 2^{-k(1+2s)} |a_1 - a_0| (2^{k+1}(r/M))^{s} \\
&\le c |a_1 - a_0| \left( \frac{r}{M} \right)^{s-1} \sum_{k = 1}^{\infty} 2^{-k(1+s)} = c \theta |a_0 - b_0| \left( \frac{r}{M} \right)^{s-1}.
\end{split}
\end{align}
This yields the second claim and concludes the proof.
\end{proof}

It remains to prove \autoref{lemma:partial-BH-boundary}. The proof is quite involved, and requires the following auxiliary lemma. The following lemma states that perturbations of the function $(x_n)_+^s$, which is harmonic with respect to $L$, are still close to being harmonic with respect to $L$.

\begin{lemma}
\label{lemma:dist-comp}
Assume \eqref{eq:Kcomp}. Let $|\sigma| \le 1/10$, $\delta \in (0,1)$. Let $D \subset \R^n$ be a bounded $C^{1,1}$ domain and $h \in C^{\infty}(\overline{D})$ be such that $c d_{D} \le h \le c^{-1}  d_{D}$ for some $c > 0$. Then, there exists $C > 0$, depending only on $n,s,\lambda,\Lambda,h,D,c$ such that
\begin{align*}
L((x_n + \sigma + \delta h)_+^s) \le C \delta ~~ \text{ in } \overline{D}.
\end{align*} 
\end{lemma}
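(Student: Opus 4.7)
The plan is to split into two cases based on the sign of $u(x_0)$, and reduce the main case to a comparison with a tangent half-space solution. Write $w(x) := x_n + \sigma + \delta h(x)$, so $u = w_+^s$.

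\emph{Trivial case.} If $x_0 \in \overline{D}$ satisfies $u(x_0) = 0$, the viscosity inequality $Lu(x_0) \le C\delta$ holds trivially: any smooth test function $\phi$ touching $u$ from above at $x_0$ must satisfy $\phi(x_0) = u(x_0) = 0$ and $\phi \ge u \ge 0$, hence $L\phi(x_0) = -2\int \phi(y) K(x_0 - y)\,dy \le 0 \le C\delta$.

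\emph{Main case.} If $u(x_0) > 0$, introduce the tangent translate $\bar u(y) := (y_n + \sigma + \delta h(x_0))_+^s$. This is a translate of the half-space solution $u_0(y) := (y_n + \sigma)_+^s$, so $L\bar u \equiv 0$ on its positivity set $\{y_n + \sigma + \delta h(x_0) > 0\}$. Since $\bar u(x_0) = w(x_0)_+^s = u(x_0) > 0$, the point $x_0$ lies in this set, whence $L\bar u(x_0) = 0$. This gives the key reduction $Lu(x_0) = L(u - \bar u)(x_0)$. The difference $g := u - \bar u$ vanishes at $x_0$ and, where both $u$ and $\bar u$ are positive, admits the representation $g(y) = s\,\theta(y)^{s-1}\,\delta(h(y) - h(x_0))$ for some intermediate value $\theta(y)$. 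Hence $|g|$ is of order $\delta$ times smooth data away from the free boundaries. I then estimate the principal-value integral $Lg(x_0) = 2\,\mathrm{p.v.}\!\int[g(x_0) - g(y)] K(x_0 - y)\,dy$ by a near/far decomposition: in the near region the smoothness of $g$, combined with the symmetry $K(z) = K(-z)$ that cancels the linear Taylor term, gives an $O(\delta)$ contribution; in the far region a crude bound $|g(y)| \le C\delta(1+|y|)^s$ is integrable against $K(x_0 - y)$.

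\emph{Main obstacle.} The delicate point is the uniformity of the bound as $x_0$ approaches the free boundary $\{w = 0\}$: derivatives of $u$ and $\bar u$ blow up like $w(x_0)^{s-1}$, so the representation above only gives $|g(y)| \lesssim \delta w(x_0)^{s-1}|y - x_0|$, which would yield a divergent bound. The remedy is to combine this inner estimate (valid on the scale $\sim w(x_0)$) with the fact that $g$ itself vanishes at rate $w(x_0)^s$ near the free boundary, performing a two-scale decomposition whose outputs telescope to a bound independent of $w(x_0)$. Equivalently, one can re-cast the argument as a change of variables $\Psi(x) = x + \delta h(x)e_n$ (a $C^\infty$-diffeomorphism for small $\delta$, taking $h$ extended by zero outside $D$), under which $u = u_0 \circ \Psi$ and the estimate reduces to a commutator bound $|[L, T_\Psi] u_0(y_0)| \le C\delta$ at $y_0 = \Psi(x_0) \in \{y_n + \sigma > 0\}$; the resulting integral against an explicit $O(\delta|y_0 - z|^{-n-2s})$ error kernel is then handled by the same two-scale analysis, with the $\delta^2$-remainder controlled by the Hölder bound $|u_0(y_0) - u_0(z)| \le |y_0 - z|^s$.
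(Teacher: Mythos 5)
Your broad framework — compare with a function that is $L$-harmonic at $x_0$, split into near/far regions, and handle the trivial case $u(x_0)=0$ separately — is the correct strategy and matches the paper's. The critical flaw is the choice of comparison function. You take the \emph{constant} translate $\bar u(y) = (y_n + \sigma + \delta h(x_0))_+^s$, which only uses the value $h(x_0)$. The paper instead linearizes $h$ around $x_0$: it compares with $l^s(y) = \big(y_n + \sigma + \delta[h(x_0) + \nabla h(x_0)\cdot(y-x_0)]\big)_+^s$. Since the argument inside $(\cdot)_+^s$ is still affine, $L(l^s)(x_0)=0$ holds just like $L\bar u(x_0)=0$; but because $\tilde h$ matches $h$ to first order, one gets the \emph{quadratic} error $|w(y)-\tilde w(y)|\le C\delta|y-x_0|^2$, hence
\[
|w^s(y) - \tilde w^s(y)| \le C\delta|y-x_0|^2\big(w^{s-1}(y)+\tilde w^{s-1}(y)\big),
\]
and the near-field integral over $B_{\kappa\rho}$ (with $\rho\sim w(x_0)$) gives $C\delta\rho^{s-1}\int_0^{\kappa\rho}r^{1-2s}\,dr = C\delta\rho^{1-s}$, which is bounded uniformly as $\rho\to 0$.

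With your constant translate you only have $|w(y)-\bar w(y)|\le C\delta|y-x_0|$ — one power short. You correctly anticipate the obstacle and invoke symmetric cancellation, but carrying this out yields a second-derivative factor $|D^2 g|\sim \delta\, w(x_0)^{s-2}$ on the scale $\sim w(x_0)$, and the near integral produces
\[
\delta\,w(x_0)^{s-2}\int_0^{\kappa w(x_0)} r^{1-2s}\,dr \;\sim\; \delta\,w(x_0)^{-s},
\]
which blows up as $x_0$ approaches the free boundary, rather than telescoping to $O(\delta)$. (One can check this concretely in $1$D with $L=(-\Delta)^s$, $h(x_0)=0$: then $g(y)\approx s\,x_0^{s-1}\delta h'(x_0)(y-x_0)$ near $x_0$, and $\partial^2_y g(x_0)\sim \delta x_0^{s-2}$, giving exactly $\delta x_0^{-s}$.) The change-of-variables reformulation has the same shortfall as stated: the $O(\delta\,|y_0-z|^{-n-2s})$ error kernel integrated against the crude bound $|u_0(y_0)-u_0(z)|\le|y_0-z|^s$ is still divergent at the diagonal ($\int r^{-1-s}\,dr$), and the first-order correction term does not vanish without the $\nabla h(x_0)$ information. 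In short, the missing ingredient is to subtract the \emph{tangent tilted} half-space profile rather than the \emph{translated} one: the former removes both the offset and the slope discrepancy at $x_0$, which is precisely what buys the extra power of $|y-x_0|$ needed for the near-field integral to close uniformly.
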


\begin{proof}
The proof is a modification of \cite[Proposition B.2.1]{FeRo23}. Note that since $d(x) := (x_n + \sigma + \delta h(x))_+ \ge 0$, the claim is trivially satisfied in $D \cap \{ d = 0 \}$. Let us fix $x_0 \in D \cap \{ d > 0 \}$ and define the linearizations of $h$ and $d$ around $x_0$ as follows
\begin{align*}
\tilde{h}(x) = h(x_0) + \nabla h(x_0) \cdot (x - x_0), \qquad l(x) = (x_n + \sigma + \delta\tilde{h}(x))_+.
\end{align*}
Note that $L(l^s)(x_0) = 0$ since $[x \mapsto x_n + \sigma + \delta \tilde{h}(x)]$ is affine linear. Moreover, we claim that
\begin{align}
\label{eq:dist-comp-claim-1}
|d(x_0 + y) - l(x_0 + y)| \le C \delta |y|^2 ~~ y \in \R^n.
\end{align}
To see this, note that since $|a_+ - b_+| \le |a-b|$, we have
\begin{align*}
|d(x_0 + y) - l(x_0 + y)| \le \delta |h(x_0 + y) - \tilde{h}(x_0 + y)|.
\end{align*}
Then, the proof goes on by the exact same arguments as in \cite[Lemma B.2.2]{FeRo23}, since $h$ is a regularized distance with respect to the $C^{1,1}$ domain $D$.\\
As a consequence of \eqref{eq:dist-comp-claim-1} we obtain
\begin{align}
\label{eq:dist-comp-claim-2}
|d^s(x_0 + y) - l^s(x_0 + y)| \le C \delta |y|^2 (d^{s-1}(x_0 + y) + l^{s-1}(x_0 +y)).
\end{align}
Let us now denote $\Omega := \{ d > 0 \}$, observe that $\partial \Omega \in C^{0,1}$ with a Lipschitz radius that is independent of $\delta$, and set $\rho := \min\{ d_{\Omega}(x_0), \rho_0 \}$ for some $\rho_0 > 0$ to be determined later. We observe that there exists $c_1 > 0$, independent of $\delta, \sigma$,  such that
\begin{align*}
c_1 d_{\Omega} \le d \le c_1^{-1} d_{\Omega}.
\end{align*}
Moreover, note that since $d \in C^{0,1}(\overline{\Omega})$, there is $\kappa \in (0,1)$, depending only on the regularity constants of $h$ such that 
\begin{align*}
d(x_0)/2 \le d \le 2 d(x_0) ~~ \text{ in } B_{\kappa \rho}(x_0).
\end{align*}
We will apply \eqref{eq:dist-comp-claim-2} in case $y \in (x_0 - \overline{D}) \setminus B_{\kappa \rho}$. In case $y \in B_{\kappa \rho}$, we observe that we have $\Vert d^{s-1} \Vert_{L^{\infty}(B_{\kappa \rho}(x_0))} \le 2 c_1^{s-1}d_{\Omega}^{s-1}(x_0) \le c_2 \rho^{s-1}$, where $c_2 > 0$ is independent of $\delta, \sigma$. Moreover, by \eqref{eq:dist-comp-claim-1} we have that $l \ge d - c_3 \rho^2 \ge c_4 \rho$ in $B_{\kappa\rho}(x_0)$ for $c_3, c_4 > 0$ independent of $\delta,\sigma$, once $\rho \le \rho_0$ is small enough. Thus, we deduce
\begin{align}
\label{eq:dist-comp-claim-3}
|d^s(x_0 + y) - l^s(x_0 + y)| \le C \delta |y|^2 \rho^{s-1} ~~ \forall y \in B_{\kappa \rho}.
\end{align}
Having at hand \eqref{eq:dist-comp-claim-2}, and \eqref{eq:dist-comp-claim-3}, we estimate
\begin{align*}
|L(d^s)(x_0)| &= |L(d^s - l^s)(x_0)| \le C \int_{\R^n} |d^s(x_0 + y) - l^s(x_0  +y)||y|^{-n-2s} \d y \\
&\le C \delta \left( \rho^{s-1} \int_{B_{\kappa \rho}} \hspace{-0.2cm} |y|^{-n-2s+2} \d y + \int_{ [x_0 - D] \setminus B_{\kappa \rho}} \hspace{-0.4cm} \left( d_{\Omega}^{s-1}(x_0 + y) + l^{s-1}(x_0 + y) \right) |y|^{-n-2s+2} \d y \right) \\
&\le C \delta \left( \rho^{1-s} + 1 \right) \le C \delta,
\end{align*}
where we used that $d^s = l^s$ in $\R^n \setminus \overline{D}$, and applied \cite[Lemma B.2.4]{FeRo23} to estimate the second integral in the last step. Note that $C > 0$ depends only on $n,s,\lambda,\Lambda$, and on $h$ through $\diam(D)$, $\Vert h \Vert_{C^{1,1}(\overline{D})}$, and the $C^{1,1}$ radius of $D$.
\end{proof}

\begin{proof}[Proof of \autoref{lemma:partial-BH-boundary}]
First, note that it suffices to prove the lemma under the following slightly stronger assumption
\begin{align}
\label{eq:partial-BH-boundary-ass}
A(e_n)(x_n + \sigma)_+^s < u(x) < A(e_n)(x_n + \sigma + \eps)_+^s ~~ \forall x \in B_{1} \cap \{ u > 0 \}.
\end{align}
Indeed, if we can deduce the desired result from \eqref{eq:partial-BH-boundary-ass}, then by applying it with $\sigma$ and $\sigma + \eps$ replaced by $\sigma-\kappa$ and $\sigma+\eps+\kappa$ for some $\kappa > 0$, we get the result under the original assumption upon taking the limit $\kappa \to 0$.\\
We take $z = e_n/4 \in \R^n$. Let us first consider the case $u(z) \ge A(e_n)(z_n + \sigma + \eps/2)_+^s$. Then, by the mean value theorem, there exists $\lambda \in (0,1)$ such that
\begin{align*}
A(e_n)(z_n + \sigma + \eps/2)_+^s &= A(e_n)(z_n + \sigma)_+^s + s A(e_n)(z_n + \sigma + \lambda \eps/2)_+^{s-1} \eps/2 \\
&\ge A(e_n)(z_n + \sigma)_+^s + c \eps A(e_n)(z_n + \sigma)_+^{s-1},
\end{align*}
where we applied in the the second step the Harnack inequality to the function $x \mapsto s A(e_n)(x_n + \sigma + \lambda \eps/2)_+^{s-1}$, which is harmonic with respect to $L$ in $\{ x_n > - \sigma - \lambda\eps/2 \}$. Next, we observe that by assumption $L(u-A(e_n)(x_n + \sigma)_+^s) = 0$ in $B_1 \cap \{ x_n > 1/10 \}$ and also $u- A(e_n)(x_n + \sigma)_+^s \ge 0$ in $B_1$. Thus, again by the nonlocal Harnack inequality (see \cite[Theorem 6.9]{Coz17}), and using that by the previous computation it holds
\begin{align*}
u(z) - A(e_n)(z_n + \sigma)_+^s \ge c \eps A(e_n)(z_n + \sigma)_+^{s-1},
\end{align*}
we deduce, using also the assumption on the tails:
\begin{align*}
u &\ge A(e_n)(x_n + \sigma)_+^s + c\eps A(e_n)(z_n + \sigma)_+^{s-1} \\
&\quad - c \tail([u - A(e_n)(x_n + \sigma)_+^s];1) \\
&\ge A(e_n)(x_n + \sigma)_+^s + c\eps \Big( A(e_n)(z_n + \sigma)_+^{s-1} - \delta_0 \Big) \qquad \text{ in } B_{9/10} \cap \{ x_n \ge 1/9\}.
\end{align*}
Next, note that again by Taylor's formula and Harnack applied to $x \mapsto s A(e_n)(x_n + \sigma + \mu \eps)_+^{s-1}$, we deduce that for any $c_0 \in (0,1)$ and $x \in B_{9/10} \cap \{ x_n \ge 1/9\}$ and some $\lambda = \lambda(x) \in (0,1)$
\begin{align*}
A(e_n)(x_n + \sigma + c_0 \eps)_+^s &\le A(e_n)(x_n + \sigma)_+^s + (c_0 \eps) s A(e_n)(x_n + \sigma + \lambda c_0 \eps)_+^{s-1} \\
&\le A(e_n)(x_n + \sigma)_+^s + C(c_0 \eps) s A(e_n)(z_n + \sigma)_+^{s-1}.
\end{align*}
Next, by the definition of $z = e_n/4$, upon choosing $\delta_0, c_0 > 0$ small enough, depending only on $s,c,C$, we can estimate
\begin{align*}
C(c_0 \eps) s A(e_n)(z_n + \sigma)_+^{s-1} < c\eps \Big( A(e_n)(z_n + \sigma)_+^{s-1} - \delta_0 \Big).
\end{align*}
Thus, by combination of the previous three estimates, we deduce
\begin{align}
\label{eq:partial-BH-boundary-help-1}
u > A(e_n) \left(x_n + \sigma + c_0 \eps \right)_+^s ~~ \text{ in } B_{9/10} \cap \{ x_n \ge 1/9 \}.
\end{align}
Next, let us define $\omega = \frac{\nabla A^{1/s}(e_n)}{|\nabla A^{1/s}(e_n)|} \in \mathbb{S}^{n-1}$.
We construct $h \in C^{\infty}(\supp(h))$ with $0 \le h \le 1$, such that $\supp(h)$ is a bounded $C^{1,1}$ domain, and $a_1 d_{\supp(h)} \le h \le a_2 d_{\supp(h)}$ for some $0 < a_1 \le a_2 < \infty$ to be a function with the following properties:
\begin{align*}
\begin{cases}
\partial_{\omega} h &> 0 ~~ \text{ in } \{ u = 0 \} \cap \{ h > 0 \},\\
h &\ge c_1 ~~ \text{ in } B_{1/20},\\
h &= 0 ~~ \text{ in } \R^n \setminus B_{3/4}.
\end{cases}
\end{align*}
Here, the constants $a_1,a_2,c_1 > 0$ can be chosen freely, but depending only on $n,s,\lambda,\Lambda$. Note that this construction is always possible, since by \autoref{prop:free-bound-cond} we have $\omega_n \ge \delta$ for some $\delta > 0$, depending only on $n,s,\lambda,\Lambda$. In fact, if $\omega_n = e_n$, we can just make $h$ radial with respect to $z$ and choose $\supp(h) = B_{1/2}(z)$ to be a ball, using that $\partial_{n} h > 0$ since $\{u = 0\} \subset \{ x_n \le 1/9 \}$. In the general case, we choose $\supp(h)$ to be an appropriate ellipsoid and $h$ to be a regularized distance for $\supp(h)$.\\
Next, we define
\begin{align*}
G := B_{9/10} \cap \{ x_n \le 1/9 \}.
\end{align*}
We also introduce a bump function $\psi \in C_c^{\infty}(B_{1/16}(z))$ with $\psi \equiv 1$ in $B_{1/32}(z)$, $0 \le \psi \le 1$, and $|\nabla \psi| \le 64$.
Moreover, for $t \in [0,1]$, we consider the function
\begin{align*}
\phi_t(x) = A(e_n)\left( x_n + \sigma + \frac{c_0 \eps}{1 + C} [t h(x) + C \psi(x)] \right)_+^s,
\end{align*}
We claim that upon choosing $C > 0$ large enough, we have that
\begin{align}
\label{eq:phi-t-subh}
L \phi_t \le - \overline{c} \eps ~~ \text{ in } G
\end{align}
for some $\overline{c} > 0$, depending only on $n,s,\lambda,\Lambda$. To prove the claim, we compute 
\begin{align*}
L\phi_t(x) = L \Big(\phi_t - A(e_n)\Big( x_n + \sigma + \frac{c_0 \eps}{1 + C} t h \Big)_+^s \Big)(x) + L \Big( A(e_n) \Big( x_n + \sigma + \frac{c_0 \eps}{1 + C} t h \Big)_+^s \Big) (x) = J_1 + J_2.
\end{align*}
Clearly, by \autoref{lemma:dist-comp} we deduce that $J_2 \le c_2 \frac{c_0 \eps t}{1 + C} \le \frac{c_2 c_0 \eps}{1+C}$ for some $c_2 > 0$. Moreover, for $J_1$, we have by definition of $\psi$ and since $x \in \{ \psi \equiv 0\} $
\begin{align*}
J_1 &= - \int_{\supp(\psi)}  \Big[\phi_t - A(e_n) \Big( x_n + \sigma + \frac{c_0 \eps}{1 + C} t h \Big)_+^s \Big] K(x-y) \d y \\
&\le - c_3 \int_{\{\psi \equiv 1\}} \frac{c_0 \eps C}{1 + C} K(x-y) \d y \le - c_4 \eps
\end{align*}
for some $c_3, c_4 > 0$, where we used that $\left( x_n + \sigma + \frac{c_0 \eps}{1 + C} t h \right)_+^s$ is smooth in $\supp(\psi)$ and applied Taylor's formula. Thus, by choosing $C > 1$ large enough, we deduce
\begin{align*}
J_1 + J_2 \le - c_4 \eps + \frac{c_2 c_0 \eps}{1+C} \le - \frac{c_4 \eps}{2}.
\end{align*}
This concludes the proof of the claim \eqref{eq:phi-t-subh} with $\overline{c} = c_4/2$.

Moreover, note that by \eqref{eq:partial-BH-boundary-help-1} it holds for any $t \in [0,1]$:
\begin{align}
\label{eq:partial-BH-boundary-help-2}
\phi_t(x) \le A(e_n)(x_n + \sigma + c_0 \eps)_+^s < u(x) ~~ \forall x \in B_{9/10} \cap \{ x_n \ge 1/9 \}.
\end{align}
Let us define
\begin{align*}
t^{\ast} = \max \left\{ t \in [0,1] : \phi_t \le u ~~ \text{ in } \bar{G} \right\}.
\end{align*}
First, we observe that $t^{\ast} \in [0 , 1]$ exists, since $\phi_0 = (x_n + \sigma)_+^s \le u$ in $\overline{G} \subset B_1$ by assumption.\\
We claim that 
\begin{align}
\label{eq:t-claim}
t^{\ast} = 1.
\end{align}
Let us prove \eqref{eq:t-claim} by contradiction, i.e., assume that $t^{\ast} < 1$. In that case, there would be a point $x_0 \in \overline{G} \cap \overline{ \{ u > 0 \} }$ such that $u(x_0) = \phi_{t^{\ast}}(x_0)$ and $\phi_{t^{\ast}} \le u$ in $\overline{G}$.\\
First, clearly we must have $t^{\ast} > 0$, since otherwise we have a contradiction with the strict bound in \eqref{eq:partial-BH-boundary-ass}. Thus, it must also hold $h(x_0) > 0$, since otherwise $\phi_{t^{\ast}}(x_0) = \phi_{0}(x_0)$.
Consequently, by $\supp(h) \subset \overline{B_{3/4}}$, and by \eqref{eq:partial-BH-boundary-help-2} we have that $x_0 \not \in \partial G$. Moreover, $x_0 \not \in \overline{G} \cap \partial \{ \phi_{t^{\ast}} > 0 \}$, since in that case, $h(x_0) > 0$, which would imply that $\partial_{\omega} h(x_0) > 0$, and therefore
\begin{align*}
\nabla \phi_{t^{\ast}}^{1/s}(x_0) = A^{1/s}(e_n) \nabla \Big((x_0)_n + \sigma + \frac{c_0 \eps t^{\ast}}{1+C} h(x_0) \Big) = A^{1/s}(e_n) \Big(e_n + \frac{c_0 \eps t^{\ast}}{1+C} \nabla h(x_0) \Big).
\end{align*} 
Since by assumption $\omega \cdot \nabla h(x_0) > 0$ and $t^{\ast} > 0$, and since by definition $\omega = \frac{\nabla A^{1/s}(e_n)}{|\nabla A^{1/s}(e_n)|}$ is the normal vector to the graph $\mathbb{S}^{n-1} \ni \nu \mapsto \nu A^{1/s}(\nu)$, we have $\nabla \phi_{t^{\ast}}^{1/s}(x_0) \not \in \{ \nu \kappa : \kappa \le A^{1/s}(\nu)\}$, and therefore:
\begin{align*}
|\nabla \phi_{t^{\ast}}^{1/s}(x_0)| > A(\nabla \phi_{t^{\ast}}^{1/s}(x_0)/|\nabla \phi_{t^{\ast}}^{1/s}(x_0)|)^{1/s}.
\end{align*}
However, since $\phi_{t^{\ast}} \le u$ and $0 = \phi_{t^{\ast}}(x_0) = u(x_0)$, $x_0 \in \partial \{ u > 0 \}$, so $\phi_{t^{\ast}}$ is a test-function for the free boundary condition for $u$ at $x_0$, and therefore, by \autoref{def:viscosity} it must be $|\nabla \phi_{t^{\ast}}^{1/s}(x_0)| \le A(\nabla \phi_{t^{\ast}}^{1/s}(x_0)/|\nabla \phi_{t^{\ast}}^{1/s}(x_0)|)^{1/s}$, a contradiction.\\
Consequently, it only remains to rule out $x_0 \in \overline{G} \cap \{ \phi_t > 0 \}$. To do so, recall that by assumption and due to \eqref{eq:partial-BH-boundary-help-1} we have 
\begin{align}
\label{eq:partial-BH-boundary-help-3}
\phi_{t^{\ast}} \le u ~~ \text{ in } \overline{G} \cup (B_{9/10} \cap \{ x_n \ge 1/9\}) = B_{9/10}.
\end{align}
Moreover, note that by \eqref{eq:phi-t-subh} we have for $x \in \overline{G} \cap \{ u > 0 \} \cap \supp(h) \subset B_{3/4}$:
\begin{align*}
L([\phi_{t^{\ast}} - u]\1_{B_{9/10}})(x) &= L\phi_{t^{\ast}}(x) - Lu(x) - L([\phi_{t^{\ast}} - u] \1_{\R^n \setminus B_{9/10}})(x) \\
&\le - \overline{c} \eps + c \int_{\R^n \setminus B_{9/10}} [\phi_{t^{\ast}} - u](y) |y|^{-n-2s} \d y \\
&\le - \overline{c} \eps + c \tail([A(e_n)(x_n + \sigma + c_0\eps)_+^s - u]_+ ; 9/10),
\end{align*}
where we also used that $L u = 0$ in $\{ u > 0 \}$. Let us estimate further the last summand. We obtain by assumption
\begin{align*}
c \tail([A(e_n)(x_n + \sigma + c_0\eps)_+^s - u]_+ ; 9/10) &\le c \int_{\R^n \setminus B_{9/10}} [(x_n + \sigma + c_0 \eps)_+^s - (x_n + \sigma)_+^s] |x|^{-n-2s} \d x \\
&\quad + c \tail([A(e_n)(x_n + \sigma)_+^s - u]_+ ; 1) \\
&\le c \int_{\R^n \setminus B_{9/10}} c_0\eps_0 |x|^{-n-2s} \d x\\
&\quad + c \tail([ u - A(e_n)(x_n + \sigma)_+^s]_- ; 1) \\
&\le c c_0 \eps + c \delta_0 \eps,
\end{align*}
where we have used a similar computation as in \eqref{eq:L1-exponent-gain} to compute the first integral. Thus, upon choosing $c_0, \delta_0 > 0$ smaller if necessary, we deduce
\begin{align}
\label{eq:partial-BH-boundary-help-4}
L([\phi_{t^{\ast}} - u]\1_{B_{9/10}})(x) \le  - \overline{c} \eps + c c_0 \eps + c \delta_0 \eps \le 0 ~~ \text{ in } \overline{G} \cap \{ u > 0 \} \cap \supp(h).
\end{align}
Thus, if we had $x_0 \in \overline{G} \cap \{ \phi_{t^{\ast}} > 0 \}$, then first, we must have $x_0 \in \{ u > 0 \} \cap \supp(h)$. However, by \eqref{eq:partial-BH-boundary-help-3} and \eqref{eq:partial-BH-boundary-help-4}, we can apply the strong maximum principle (see \cite[Theorem 2.4.15]{FeRo23}) to the function $[\phi_{t^{\ast}} - u]\1_{B_{9/10}}$ and deduce that it must be $\phi_{t^{\ast}} \equiv u $ in the connected component of $\overline{G} \cap \{ u > 0 \} \cap \supp(h)$ around $x_0$. Since we have already shown that $\phi_{t^{\ast}} < u$ on $\partial G \cup (\partial \{ u > 0\} \cap \overline{G} ) \cup (\{ h = 0 \} \cap \overline{G})$, this is a contradiction. Hence, we have shown $t^{\ast} = 1$, as claimed in \eqref{eq:t-claim}.

Using \eqref{eq:t-claim}, we are now in a position to conclude the proof.
In fact, it follows since $h \ge c_1$ in $B_{1/20}$ by construction:
\begin{align*}
u \ge \phi_1 = A(e_n) \Big(x_n + \sigma + \frac{c_0 \eps}{1+C} h \Big)_+^s \ge A(e_n) \Big(x_n + \sigma + \frac{c_0 c_1 \eps}{1+C} \Big)_+^s ~~ \text{ in } B_{1/20}.
\end{align*}
This implies the desired result (i) with $\theta = \frac{c_0 c_1}{1+C}$.

Finally, in case $u(z) \le A(e_n)(z_n + \sigma + \eps/2)_+^s$, we deduce by analogous arguments as in the beginning of the proof:
\begin{align*}
u \le A(e_n)(x_n + \sigma + (1 - c_0) \eps)_+^s ~~ \text{ in } B_{9/10} \cap \{ x_n \ge 1/9 \}.
\end{align*}
Then, the rest of the proof continues by similar arguments, defining 
\begin{align*}
\phi_{t^{\ast}}(x) = A(e_n)\Big(x_n + \sigma + \eps - \frac{c_0 \eps}{1 + C} [t h(x) + C \psi(x)]  \Big)_+^s.
\end{align*}
\end{proof}

\subsubsection{H\"older regularity and identification of the linearized problem}
\label{subsubsec:linearized-problem}

An important technical tool in our proof is the so-called domain variation, which we will define in the following:

\begin{definition}[domain variation] $~$
\vspace{-0.1cm}
\begin{itemize}
\item[(i)] We say that a function $u : \R^n \to [0,\infty)$ is $\eps$-flat in $B_{\rho}(x_0)$ in the $\nu$-direction for some $\eps, \rho > 0$, $x_0 \in \R^n$, and $\nu \in \mathbb{S}^{n-1}$ if
\begin{align*}
A(\nu)(x \cdot \nu - \eps)_+^s \le u(x) \le A(\nu)(x \cdot \nu + \eps)_+^s ~~ \text{ in } B_{\rho}(x_0).
\end{align*}
\item[(ii)] If $u$ is $\eps$-flat in $B_{\rho}(x_0)$ in the $\nu$-direction, then the set $\tilde{u}_{\eps}(x)$ of all $w \in \R$ such that
\begin{align}
\label{eq:dom-variation}
A(\nu)(x \cdot \nu)_+^s = u(x - \eps \nu w)
\end{align}
is non-empty for any $x \in B_{\rho - \eps}(x_0) \cap \{ x \cdot \nu > 0\}$.\\
In this case, we call $\tilde{u}_{\eps}$ the domain variation of $u$ in $B_{\rho-\eps}(x_0)$ in the $\nu$-direction.\\
If $\{ u = 0 \} \cap \overline{B_{\rho}(x_0)}$ is closed, we extend $\tilde{u}_{\eps}(x)$ to $x \in B_{\rho - \eps}(x_0) \cap \{ x \cdot \nu = 0\}$, taking 
\begin{align*}
\tilde{u}_{\eps}(x) = \max \{ w \in [-1,1] : u(x-\eps \nu w) = 0 \}.
\end{align*}
\end{itemize}
\end{definition}

Note that domain variations also play an important role in the proof of improvement of flatness results for the thin one-phase problem (see \cite{DeRo12}, \cite{DeSa12}, \cite{DSS14}).

\begin{remark}
Note that any $w \in \R$ satisfying \eqref{eq:dom-variation} satisfies $w \in [-1,1]$. Moreover, if $u$ is strictly monotone in the $\nu$-direction in $B_{\rho}(x_0) \cap \{ u > 0 \}$, then there exists a unique $w = \tilde{u}_{\eps}(x)$ with the property \eqref{eq:dom-variation}.
\end{remark}

We will need the following elementary lemma on domain variations:

\begin{lemma}
\label{lemma:comp-dom-var}
Let $u,v \in C(B_{\rho})$ for some $\rho > 0$ and assume that $u$ is $\eps$-flat in $B_{\rho}$ in the $e_n$-direction for some $\eps \in (0,\rho)$.
Moreover, assume that $v$ is strictly increasing in the $e_n$-direction. Then, the following hold true:
\begin{itemize}
\item[(i)] If $\tilde{v}_{\eps}$ is defined in $B_{\rho - \eps} \cap \{ x_n > 0 \}$, then
\begin{align*}
v \le u ~~ \text{ in } B_{\rho} \quad \Rightarrow \quad \tilde{v}_{\eps} \le \tilde{u}_{\eps} \text{ in } B_{\rho - \eps} \cap \{ x_n > 0 \}.
\end{align*} 
\item[(ii)] If $\tilde{v}_{\eps}$ is defined in $B_{r} \cap \{ x_n > 0 \}$ for some $r \in (0,\rho)$, then
\begin{align*}
\tilde{v}_{\eps} \le \tilde{u}_{\eps} ~~ \text{ in } B_{r} \quad \Rightarrow \quad v \le u \text{ in } B_{r- \eps}.
\end{align*} 
\end{itemize} 
\end{lemma}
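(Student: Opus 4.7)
The argument is elementary and rests only on the strict monotonicity of $v$ in the $e_n$-direction together with the one-sided $\eps$-flatness of $u$. Throughout, I read the (possibly set-valued) inequality $\tilde v_\eps \le \tilde u_\eps$ as: for every admissible $x$, $\tilde v_\eps(x) \le w$ for all $w \in \tilde u_\eps(x)$.

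For (i), I would fix $x \in B_{\rho-\eps} \cap \{x_n > 0\}$ and an arbitrary $w \in \tilde u_\eps(x)$. By definition of $\tilde u_\eps$ we have $u(x - \eps w e_n) = A(e_n)(x_n)_+^s$, so the hypothesis $v \le u$ yields $v(x - \eps w e_n) \le A(e_n)(x_n)_+^s = v(x - \eps \tilde v_\eps(x) e_n)$, the last equality being the defining relation of $\tilde v_\eps(x)$. Strict monotonicity of $v$ in $e_n$ then forces $\tilde v_\eps(x) \le w$, proving (i).

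For (ii), I would proceed by contradiction. Suppose there is $y \in B_{r-\eps}$ with $v(y) > u(y)$; since $u \ge 0$, this gives $c := v(y) > 0$. Setting $x_n := A(e_n)^{-1/s} c^{1/s} > 0$ and $x := (y', x_n)$, one has $v(y) = A(e_n)(x_n)_+^s$, and strict monotonicity of $v$ identifies $\tilde v_\eps(x) = w_v$, where $w_v := (x_n - y_n)/\eps$. The standing convention that $\tilde v_\eps$ takes values in $[-1,1]$ gives $|w_v| \le 1$, hence $x \in B_r \cap \{x_n > 0\}$. Next, I would apply the lower $\eps$-flatness bound on $u$ at the point $(y', x_n + \eps)$ to obtain $u((y', x_n+\eps)) \ge A(e_n)(x_n)_+^s = c$. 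Since $u(y) < c$ and $u$ is continuous, the intermediate value theorem applied along the vertical segment from $y$ to $(y', x_n+\eps)$ produces $\bar y_n \in (y_n, x_n+\eps]$ with $u((y', \bar y_n)) = c$. Setting $\bar w := (x_n - \bar y_n)/\eps$, one checks $\bar w \in \tilde u_\eps(x)$; moreover $\bar y_n > y_n$ forces $\bar w < w_v = \tilde v_\eps(x)$, contradicting $\tilde v_\eps(x) \le \bar w$.

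The main obstacle is not analytic but bookkeeping: fixing the correct set-valued interpretation of $\tilde v_\eps \le \tilde u_\eps$, and ensuring that the points invoked by the IVT lie in the ball on which the flatness bound for $u$ is available (which is the standard implicit requirement $r + \eps \le \rho$ in all ambient uses of this lemma). No further ingredients are needed beyond strict monotonicity of $v$, continuity of $u$, and the lower flatness inequality $A(e_n)(x_n - \eps)_+^s \le u(x)$.
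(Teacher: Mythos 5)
Your proof is correct, and it is essentially the argument the paper has in mind: the paper's own proof is just a citation to De Silva--Roquejoffre, where the same elementary monotonicity argument is given. Part (i) is a direct consequence of the ordering $v\le u$ evaluated at $x-\eps w e_n$ together with the strict $e_n$-monotonicity of $v$; part (ii) is the contrapositive via the intermediate value theorem, exactly as you write it. The interpretation of the set-valued inequality as $\tilde v_\eps(x)\le w$ for \emph{every} $w\in\tilde u_\eps(x)$ is the right one (it is what (i) produces and what makes (ii) true), and your remarks about the bookkeeping — that $|w|\le1$ on both sides (which for $\tilde v_\eps$ rests on $v$ being $\eps$-flat, implicitly contained in ``$\tilde v_\eps$ is defined''), and that the auxiliary points such as $(y',x_n+\eps)$ must remain in $B_\rho$, which amounts to the implicit constraint $r+\eps\le\rho$ in how the lemma is used — are accurate and harmless. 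One small omission in (i): you should note that $x-\eps w e_n\in B_\rho$ (it follows from $x\in B_{\rho-\eps}$ and $|w|\le1$) before invoking $v\le u$ there, but this is the same standard verification you already flag in (ii).
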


\begin{proof}
The proof is elementary and goes along the lines of \cite[Lemma 3.1]{DeRo12}.
\end{proof}

With the aid of the partial boundary Harnack inequality (see \autoref{lemma:partial-BH-scaled}), we can prove the following estimate on the oscillation of $\tilde{u}_{\eps}$:

\begin{lemma}
\label{lemma:Holder-continuity-partial-BH}
Assume \eqref{eq:Kcomp}. Let $u$ be a viscosity solution to the nonlocal one-phase problem for $K$ in $B_2$. Then, there are $\eps_0, \delta_0 \in (0,1)$, and $\theta \in (0,1)$, and $M \ge 20$, such that if $0 < \eps \le \eps_0/2$, and $m_0  \in \N$ are such that the following hold true
\begin{align}
\label{eq:a0b0-control-iteration}
2\eps (1-\theta)^{m_0} M^{m_0} \le \eps_0, \qquad A(e_n)(x_n - \eps )_+^s \le u(x) \le A(e_n)(x_n + \eps)_+^s ~~ \text{ in } B_1,
\end{align}
and
\begin{align}
\label{eq:tail-smallness-iteration}
T := \tail([u-A(e_n)(x_n -\eps)_+^s]_- ; 1) + \tail([A(e_n)(x_n + \eps)_+^s - u]_- ; 1) \le (2\eps) \delta_0,
\end{align}
then the set
\begin{align*}
\Gamma_{\eps} := \left\{ (x,\tilde{u}_{\eps}(x)) : x \in B_{1-\eps} \cap \{ x_n \ge 0 \} \right\}  \cap (B_{3/4} \times [-1,1] )
\end{align*}
is above the graph of a function $x \mapsto a_{\eps}(x)$ and below  the graph of a function $x \mapsto b_{\eps}(x)$ with
\begin{align*}
b_{\eps} - a_{\eps} \le 2 (1-\theta)^{m} ~~ \text{ in } B_{\frac{1}{2} M^{-m}}, ~~ m \le m_0.
\end{align*}
The constants $\eps_0,\delta_0,\theta,M$ depend only on $n,s,\lambda,\Lambda$.\\
Moreover, the functions $a_{\eps}, b_{\eps}$ have a modulus of continuity bounded by $t \mapsto \alpha t^{\beta}$ for some $\alpha,\beta > 0$, depending only on $\theta$. Furthermore, for any $m \in \N$ with $m \le m_0$ it holds
\begin{align*}
\tail([u-A(e_n)(x_n - \eps)_+^s]_- ; M^{-m}) + \tail([A(e_n)(x_n + \eps)_+^s - u]_- ; M^{-m}) \le 2\eps \delta_0 M^{m(1-s)}(1-\theta)^m.
\end{align*}
\end{lemma}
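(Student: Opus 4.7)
The plan is to iterate \autoref{lemma:partial-BH-scaled} at the origin. With $r_k := M^{-k}$, I construct inductively monotone sequences $a_0 \le a_1 \le \cdots$ and $b_0 \ge b_1 \ge \cdots$ in $[-\eps,\eps]$ satisfying, for every $k \le m_0$,
\begin{align*}
A(e_n)(x_n + a_k)_+^s \le u(x) \le A(e_n)(x_n + b_k)_+^s \quad \text{in } B_{r_k}, \qquad b_k - a_k = 2\eps(1-\theta)^k,
\end{align*}
together with the scaled tail bound
\begin{align*}
\tail\bigl([u - A(e_n)(x_n + a_k)_+^s]_-;\, r_k\bigr) + \tail\bigl([A(e_n)(x_n + b_k)_+^s - u]_-;\, r_k\bigr) \le r_k^{s-1}(b_k-a_k)\delta_0.
\end{align*}
The base case $k=0$ is exactly \eqref{eq:a0b0-control-iteration}--\eqref{eq:tail-smallness-iteration}. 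For the inductive step I invoke the refined partial boundary Harnack estimate \eqref{eq:partial-BH-K-theta} with $K=M$, where $M$ and $\theta$ are jointly chosen so that $M^{-1-s} \le 1-\theta$ and $\theta \le c\delta_0(1-\theta)$: this produces $a_{k+1}, b_{k+1}$ in $B_{r_{k+1}}$ with oscillation $(1-\theta)(b_k-a_k) = 2\eps(1-\theta)^{k+1}$ and precisely the tail bound $r_{k+1}^{s-1}(b_{k+1}-a_{k+1})\delta_0$ needed at stage $k+1$. The closeness requirement $b_k - a_k \le r_k\eps_0$ reduces to $2\eps(M(1-\theta))^k \le \eps_0$, is monotone in $k$, and hence is implied either by \eqref{eq:a0b0-control-iteration} at the endpoint $k=m_0$ (when $M(1-\theta) \ge 1$) or by $\eps \le \eps_0/2$ at $k=0$.

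Next, \autoref{lemma:comp-dom-var}(i) converts the trapping of $u$ in $B_{r_k}$ into bounds on the domain variation: $a_k/\eps \le \tilde u_\eps(x) \le b_k/\eps$ wherever $\tilde u_\eps$ is defined in $B_{r_k-\eps}$, an interval of width $2(1-\theta)^k$. (The smallness \eqref{eq:a0b0-control-iteration}, with $\eps_0$ shrunk if necessary, ensures $r_k - \eps \ge r_k/2$ throughout the iteration, so the bound covers $B_{r_k/2}$.) I then define
\begin{align*}
a_\eps(x) := a_m/\eps, \qquad b_\eps(x) := b_m/\eps \quad \text{on the annulus } B_{r_m/2}\setminus B_{r_{m+1}/2},\ m \le m_0,
\end{align*}
and pass to a monotone envelope to obtain continuous barrier functions. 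Since $a_m$ is non-decreasing and $b_m$ non-increasing in $m$, this yields $a_\eps \le \tilde u_\eps \le b_\eps$, and in $B_{M^{-m}/2}$ we have $b_\eps - a_\eps \le (b_m - a_m)/\eps = 2(1-\theta)^m$, proving the trapping claim. The tail estimate on $B_{M^{-m}}$ is just the induction hypothesis at stage $m$ rewritten using $r_m^{s-1}(b_m-a_m) = 2\eps M^{m(1-s)}(1-\theta)^m$.

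The Hölder continuity of $a_\eps, b_\eps$ is immediate from the geometric decay: across two adjacent annuli the barriers change by at most $C(1-\theta)^m$, and telescoping gives $|a_\eps(x) - a_\eps(y)| \le C(1-\theta)^m$ whenever $|x-y| \lesssim M^{-m}$, hence a $C^{0,\beta}$ modulus with $\beta = \log(1-\theta)^{-1}/\log M$. The main obstacle throughout is the joint calibration of $M,\theta,\delta_0$ so that the tail output of \autoref{lemma:partial-BH-scaled} at one scale is exactly the tail input required at the next; this compatibility is precisely what the reformulated estimate \eqref{eq:partial-BH-K-theta} is designed to provide.
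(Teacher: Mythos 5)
Your proposal is correct and follows essentially the same route as the paper's own proof: iterate Lemma \ref{lemma:partial-BH-scaled} (via the reformulated tail bound \eqref{eq:partial-BH-K-theta}) to produce nested intervals $[a_m,b_m]$ of width $2\eps(1-\theta)^m$ on balls $B_{M^{-m}}$, check that the closeness requirement $b_m - a_m \le M^{-m}\eps_0$ persists up to $m_0$, and then convert the trapping of $u$ into bounds $a_m/\eps \le \tilde u_\eps \le b_m/\eps$ to define the barrier graphs. Your extra detail on the explicit construction of $a_\eps,b_\eps$ on annuli, on the $\beta = \log(1-\theta)^{-1}/\log M$ Hölder exponent, and on citing Lemma \ref{lemma:comp-dom-var}(i) for the passage to domain variations, is all consistent with (and slightly more explicit than) the paper's argument.
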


\begin{proof}
By assumption, we can apply the partial boundary Harnack inequality (see \autoref{lemma:partial-BH-scaled}) several times. In each step, we obtain:
\begin{align*}
A(e_n)(x_n + a_m)_+^s \le u(x) \le A(e_n)(x_n + b_m)_+^s ~~ \text{ in } B_{M^{-m}},
\end{align*}
where $-\eps \le a_1 \le \dots \le a_m \le 0 \le b_m \le \dots \le b_1 \le \eps$, and $b_m - a_m \le 2 \eps (1-\theta)^m$. Moreover, note that, according to \eqref{eq:partial-BH-K-theta}, we can choose $M^{-1-s} \le 1-\theta$, and $\theta \le c\delta_0 (1-\theta)$, which yields
\begin{align*}
\tail([u-A(e_n)(x_n + a_m)_+^s]_- ; M^{-m}) &+ \tail([A(e_n)(x_n + b_m)_+^s - u]_- ; M^{-m}) \\
&\le M^{m(1-s)}|a_m - b_m| \delta_0.
\end{align*}
This estimate verifies the last claim. Moreover, note that we can apply the partial boundary Harnack inequality (see \autoref{lemma:partial-BH-scaled}) again as long as $b_m - a_m \le M^{-m} \eps_0$, which is in particular the case as long as $2\eps (1-\theta)^{m} M^{m} \le \eps_0$, i.e., by construction as long as $m \le m_0$.
As a consequence, the domain variation $\tilde{u}_{\eps}$ satisfies
\begin{align*}
\frac{a_m}{\eps} \le \tilde{u}_{\eps} \le \frac{b_m}{\eps} ~~ \text{ in } B_{\frac{1}{2} M^{-m}} \cap \{ x_n \ge 0\}.
\end{align*}
In particular, this means
\begin{align*}
\Gamma_{\eps} \cap \left( B_{\frac{1}{2} M^{-m} } \times [-1,1]) \right) \subset B_{\frac{1}{2} M^{-m} } \times \left[ \frac{a_m}{\eps}, \frac{b_m}{\eps}\right].
\end{align*}
This concludes the proof.
\end{proof}

The previous lemma is sufficient to yield compactness for domain variations, associated to a sequence of $\eps_k$-flat viscosity solutions in $B_2$ with $\eps_k \to 0$. This allows us to establish the following lemma, which is the crucial ingredient in the proof of \autoref{thm:improvement-of-flatness}.

\begin{lemma}
\label{lemma:linearized-problem}
Assume \eqref{eq:Kcomp}. Let $(u_k)_k$ be a sequence of viscosity solution to the nonlocal one-phase problem for $K$ in $B_2$ with $0 \in \partial \{ u_k > 0\}$. Let $(\eps_k)_k$ be such that $\eps_k \searrow 0$, and
\begin{align}
\label{eq:eps-k-flat}
A(e_n)(x_n - \eps_k)_+^s \le u_k(x) \le A(e_n)(x_n + \eps_k)_+^s ~~ \text{ in } B_1,
\end{align}
and
\begin{align}
\label{eq:eps-k-tail-smallness}
T_k := \tail([u_k-(x_n -\eps_k)_+^s]_- ; 1) + \tail([(x_n + \eps_k)_+^s - u_k]_- ; 1) \le \eps_k \delta_0.
\end{align}

Let us denote $\tilde{u}_k = \tilde{u}_{\eps_k}$ and $\Gamma_k = \Gamma_{\eps_k}$.  Then, the following hold true:
\begin{itemize}
\item[(i)] For any $\delta > 0$ it holds $\tilde{u}_k \to \tilde{u}$ uniformly in $B_{3/4} \cap \{ x_n \ge \delta \}$, where $\tilde{u}$ is H\"older continuous.
\item[(ii)] The sequence of graphs 
\begin{align*}
\Gamma_k \to \Gamma := \left\{ (x,\tilde{u}(x)) : x \in B_{3/4} \cap \{ x_n \ge 0 \} \right\}
\end{align*}
uniformly in the Hausdorff sense in $B_{3/4} \cap \{ x_n \ge 0 \}$.
\item[(iii)] There exist $C >0$ and $k_0 \in \N$, depending only on $n,s,\lambda,\Lambda$, such that for any $k \in \N$  with $k \ge k_0$ it holds
\begin{align}
\label{eq:convergence-tail-est}
\tail(|u_k - A(e_n)(x_n)_+^s|; 3/4) \le C \eps_k.
\end{align}
\item[(iv)] Let $K \in C^{1-2s+\beta}(\mathbb{S}^{n-1})$ for some $\beta > \max\{0,2s-1\}$. Then there is $f \in C^{1-2s+\beta}(B_{1/2})$ such that the function $\tilde{u}$ solves in the viscosity sense
\begin{align*}
\begin{cases}
L((x_n)_+^{s-1} \tilde{u} \1_{B_{3/4}}) &= f ~~ \text{ in } B_{1/2} \cap \{ x_n > 0 \},\\
(A^{1/s}(e_n)e_n - \nabla A^{1/s}(e_n))\nabla \tilde{u} &= 0 ~~ \text{ on } B_{1/2} \cap \{ x_n = 0 \}.
\end{cases}
\end{align*}
Moreover, $\Vert f \Vert_{L^{\infty}(B_{1/2})} \le C$ for some $C > 0$, depending only on $n,s,\lambda,\Lambda$.
\end{itemize}
\end{lemma}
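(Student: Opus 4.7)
The plan has four stages, corresponding to items (i)--(iv).

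\emph{Items (i) and (ii).} Lemma~\ref{lemma:Holder-continuity-partial-BH} supplies a uniform H\"older modulus for the trapping functions $a_{\varepsilon_k}, b_{\varepsilon_k}$ that sandwich the graph $\Gamma_k$ in $B_{3/4}$, together with the decay $b_{\varepsilon_k} - a_{\varepsilon_k} \to 0$ on each compact subset of $\{x_n > 0\}$ (by taking $m \le m_0(\varepsilon_k) \to \infty$ as $\varepsilon_k \searrow 0$). A standard Arzel\`a--Ascoli diagonal extraction then yields a H\"older continuous limit $\tilde u$ on $B_{3/4} \cap \{x_n \ge \delta\}$ for every $\delta > 0$, and the Hausdorff convergence $\Gamma_k \to \Gamma$ follows from $|\tilde u_k - \tilde u| \le (b_{\varepsilon_k} - a_{\varepsilon_k}) + o(1)$ on compact subsets of $\{x_n > 0\}$, with the endpoint $\{x_n = 0\}$ handled by the extension clause in the definition of $\tilde u_{\varepsilon_k}$.

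\emph{Item (iii).} I would split the tail over $\R^n \setminus B_{3/4}$ into the parts over $B_1 \setminus B_{3/4}$ and $\R^n \setminus B_1$. Inside $B_1$, the flatness assumption \eqref{eq:eps-k-flat} gives the pointwise bound $|u_k - A(e_n)(x_n)_+^s| \le A(e_n)\bigl[(x_n+\varepsilon_k)_+^s - (x_n-\varepsilon_k)_+^s\bigr]$, and a dyadic computation of the same flavor as \eqref{eq:L1-exponent-gain} yields a contribution of order $\varepsilon_k$ after integrating against $|x|^{-n-2s}$. Outside $B_1$, I decompose $|u_k - A(e_n)(x_n)_+^s| \le A(e_n)[(x_n+\varepsilon_k)_+^s - (x_n-\varepsilon_k)_+^s] + [A(e_n)(x_n+\varepsilon_k)_+^s - u_k]_- + [u_k - A(e_n)(x_n-\varepsilon_k)_+^s]_-$; the last two pieces contribute $T_k \le \varepsilon_k \delta_0$ by \eqref{eq:eps-k-tail-smallness} while the first is again $O(\varepsilon_k)$.

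\emph{Item (iv), interior PDE.} The core heuristic is the rewriting of the domain variation \eqref{eq:dom-variation} as $u_k(x) = A(e_n)(x_n + \varepsilon_k \tilde u_k(x) + o(\varepsilon_k))_+^s$, combined with $L(A(e_n)(x_n)_+^s) = 0$ in $\{x_n > 0\}$. Expanding to first order and dividing by $\varepsilon_k$ suggests that $w(x) := s A(e_n)(x_n)_+^{s-1} \tilde u(x)$ should be $L$-harmonic in $\{x_n > 0\}$; after truncating to $B_{3/4}$ to render the quantity $L^1_{2s}$-summable, the leftover long-range contribution becomes a bounded source $f$ whose $L^\infty$-bound is provided precisely by (iii) together with the uniform bound on $\tilde u$. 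To make this rigorous at the viscosity level, my plan is a contact argument: given a smooth test function $\phi$ touching $\tilde u$ from above at $x_0 \in B_{1/2} \cap \{x_n > 0\}$, I would build test functions for $u_k$ of the form $\Psi_k(x) = A(e_n)(x_n + \varepsilon_k[\phi(x) + \eta_k])_+^s$ with $\eta_k \to 0$, use (i) and (ii) to locate a contact point $x_k \to x_0$, invoke Definition~\ref{def:viscosity}(i) to get $L\Psi_k(x_k) \le 0$, and pass to the limit with (iii) controlling the tails outside $B_{3/4}$.

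\emph{Item (iv), boundary condition.} This is obtained by linearizing the free boundary condition at points $z_k \in \partial\{u_k > 0\}$ whose half-space normals $\nu_k$ satisfy $|\nu_k - e_n| = O(\varepsilon_k)$. From $u_k \approx A(e_n)(x_n + \varepsilon_k \tilde u_k)_+^s$ one reads $\nabla u_k^{1/s} \approx A^{1/s}(e_n)(e_n + \varepsilon_k \nabla \tilde u_k)$; equating this to first order with $A^{1/s}(\nu_k)\nu_k$, and using that the $0$-homogeneous extension of $A^{1/s}$ satisfies $e_n \cdot \nabla A^{1/s}(e_n) = 0$ by Euler's identity, one arrives at exactly $(A^{1/s}(e_n)e_n - \nabla A^{1/s}(e_n)) \cdot \nabla \tilde u = 0$; the $n$-th component of the coefficient equals $A^{1/s}(e_n) \ge c > 0$ by \eqref{eq:A-properties}, securing the obliqueness required later when invoking the regularity theory of \cite{RoWe24}. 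The main obstacle throughout (iv) is that, unlike in the thin one-phase case, no explicit barriers are available, so both the interior contact construction and the transmission of the free boundary condition to the limit must rely delicately on the tail control from (iii) and on the Hausdorff convergence of the free boundaries provided by (ii).
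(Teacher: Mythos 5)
Items (i)--(iii) of your plan are sound. For (i)--(ii) you follow the same route as the paper (Lemma~\ref{lemma:Holder-continuity-partial-BH} plus Arzel\`a--Ascoli and the standard Hausdorff argument). For (iii) your direct decomposition
\[
|u_k - A(e_n)(x_n)_+^s| \le A(e_n)\big[(x_n+\eps_k)_+^s - (x_n-\eps_k)_+^s\big] + [A(e_n)(x_n+\eps_k)_+^s - u_k]_- + [u_k - A(e_n)(x_n-\eps_k)_+^s]_-
\]
together with the flatness (which kills the two negative parts on $B_1$) and a dyadic estimate of \eqref{eq:L1-exponent-gain}-type gives the bound cleanly; the paper instead re-runs the iterated partial boundary Harnack of \autoref{lemma:Holder-continuity-partial-BH} at one scale, but both arguments work and yours is arguably simpler.

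Item (iv) is where your proposal has genuine gaps, and this is the heart of the lemma. For the interior equation, the contact argument you describe with $\Psi_k(x)=A(e_n)(x_n+\eps_k[\phi(x)+\eta_k])_+^s$ requires expanding $L\Psi_k(x_k)$ in powers of $\eps_k$, but the second-order Taylor remainder of $t\mapsto(x_n+t)_+^s$ is of size $(x_n)_+^{s-2}$ near $\{x_n=0\}$, which is \emph{not} locally integrable for $s\in(0,1)$; so $L$ of the remainder is not simply $O(\eps_k^2)$ and the passage to the limit is not justified as stated. The paper circumvents exactly this by never Taylor-expanding the barrier: instead it represents the quotient $(u_k - A(e_n)(x_n)_+^s)/\eps_k$ as a first-order difference quotient of $u_k$ via the domain variation, i.e.\ $\frac{u_k(x)-A(e_n)(x_n)_+^s}{\eps_k} = \frac{u_k(x)-u_k(x-\eps_k\tilde u_k(x)e_n)}{\tilde u_k(x)\eps_k}\,\tilde u_k(x)$, uses interior Lipschitz estimates for $u_k-A(e_n)(x_n)_+^s$ (which is $L$-harmonic in $\{u_k>0\}$, and this is precisely where $K\in C^{1-2s+\beta}$ enters), and then proves uniform, $L^1_{2s}$, and $L^\infty$ convergence of $v_k$ and $f_k$ before invoking stability of viscosity solutions. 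Your sketch omits this key representation and does not indicate how $f_k\to f$ would be established. For the boundary condition, the formal linearization you write is the correct target, but the statement "linearizing at points $z_k\in\partial\{u_k>0\}$ whose half-space normals $\nu_k$ satisfy $|\nu_k-e_n|=O(\eps_k)$" presupposes the free boundary of $u_k$ has a normal, which is precisely what is not known. The paper works with a test function $\phi_k$ built from $\tilde\phi$ via the domain-variation inverse, and the decisive step — showing that the contact point $x_k$ between $\phi_k$ and $u_k$ actually lies on $\partial\{u_k>0\}$ rather than in $\{u_k>0\}$ or in the interior of $\{u_k=0\}$ — is ruled out by computing $L\phi_k(x_k)<0$ after adding the barrier $g(x)=-\delta x_n+\delta^{-1}(x_n)_+^{1+\eta}\psi(x)$ to $\tilde\phi$. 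Your proposal acknowledges "no explicit barriers are available" but does not supply any replacement for this step, and without it the argument does not close.
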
 

For the notion of viscosity solutions to nonlocal equations, we refer to \autoref{def:viscosity}. The interpretation of the boundary condition in (iv) in the viscosity sense is as follows:

\begin{definition}
We say that $\partial_{\nu} \tilde{u} = 0$ for some $\nu \in \mathbb{S}^{n-1}$ on $B_{1/2} \cap \{ x_n = 0\}$ in the viscosity sense, if for any $x_0 \in B_{1/2} \cap \{x_n = 0 \}$ and any smooth function $\tilde{\phi} : \R^n \to \R$ that satisfies $\tilde{\phi} \le \tilde{u}$ (resp. $\tilde{\phi} \ge \tilde{u}$) in $B_{1/2} \cap \{ x_n \ge 0 \}$, and $\tilde{\phi}(x_0) = \tilde{u}(x_0)$, it holds $\partial_{\nu} \tilde{\phi}(x_0) \le 0$ (resp. $\partial_{\nu} \tilde{\phi}(x_0) \ge 0$).
\end{definition}

\begin{proof}[Proof of \autoref{lemma:linearized-problem}]

The proof of (i) is standard (see \cite{DSS14}). By \autoref{lemma:Holder-continuity-partial-BH}, we have that 
\begin{align*}
|\tilde{u}_k(x) - \tilde{u}_k(y)| \le c |x-y|^{\alpha} ~~ \forall x,y \in B_{3/4} \cap  \{ x_n \ge \delta \} ~~ \text{ s.t. } |x-y| \ge \eps_k\eps_0^{-1}
\end{align*}
for some $\alpha \in (0,1)$ and any $\delta > 0$.
Here, $\tilde{u}_k$ denotes any function constructed by taking arbitrary elements of the corresponding sets $\tilde{u}_k(x)$ for any $x \in B_{3/4} \cap \{ x_n \ge 0\}$. Moreover, $\Vert \tilde{u}_k \Vert_{L^{\infty}(B_{3/4} \cap \{ x_n \ge \delta \} )} \le 1$. Therefore by the Arzel\`a-Ascoli theorem, there exists a subsequence converging uniformly in $B_{3/4} \cap \{ x_n \ge \delta \}$ to a $C^{\alpha}$ function $\tilde{u} : B_{3/4} \cap \{ x_n \ge \delta \} \to [-1,1]$. Since $\Vert \tilde{u} \Vert_{C^{\alpha}(B_{3/4} \cap \{ x_n \ge \delta \} )}$ is independent of $\delta > 0$, we can extend $\tilde{u}$ to a H\"older continuous function on $B_{3/4} \cap \{x_n \ge 0\}$.

Also the proof of (ii) is standard (see \cite[Lemma 7.14]{Vel23}). First, given $\tilde{x} = (x,\tilde{u}(x)) \in \Gamma$, and any $\delta > 0$, find $y \in B_{3/4} \cap \{ x_n \ge \delta/2 \}$ such that $|x-y| \le \delta$, and set $\tilde{y} = (y,\tilde{u}(y))$. Then, by (i), we have
\begin{align*}
|\tilde{x} - \tilde{y}| \le |x-y| + |\tilde{u}(x) - \tilde{u}(y)| \le \delta + c \delta^{\alpha}.
\end{align*}
Thus, for $k$ so large that $\eps_k \le \delta$, we have
\begin{align}
\label{eq:HD-help-1}
\dist(\tilde{x},\Gamma_k) \le |\tilde{x} - \tilde{y}| + \dist(\tilde{y},\Gamma_k) \le (\delta + c \delta^{\alpha}) + \Vert \tilde{u} - \tilde{u}_k \Vert_{L^{\infty}(B_{3/4} \cap \{ x_n \ge \delta/2 \} )}.
\end{align}
Next, given any $\tilde{x}_k = (x_k,\tilde{u}_k(x_k)) \in \Gamma_k$, and $k$ so large that $\eps_k / \eps_0 \le \delta/2$, we take $y_k \in B_{3/4} \cap \{ x_n \ge \delta \}$ such that $\delta/2 \le |x_k - y_k| \le 2 \delta$, and set $\tilde{y}_k = (y_k,\tilde{u}(y_k))$. Note that
\begin{align*}
|\tilde{x}_k-\tilde{y}_k| \le |x_k - y_k| + |\tilde{u}_k(x_k) - \tilde{u}_k(y_k)| \le 2\delta + c \delta^{\alpha},
\end{align*}
where we used \autoref{lemma:Holder-continuity-partial-BH}. Thus,
\begin{align}
\label{eq:HD-help-2}
\dist(\tilde{x}_k,\Gamma) \le |\tilde{x}_k - \tilde{y}_k | + \dist(\tilde{y}_k , \Gamma) \le (2\delta + c \delta^{\alpha}) + \Vert \tilde{u} - \tilde{u}_k \Vert_{L^{\infty}(B_{1/2} \cap \{ x_n \ge \delta/2 \} )}.
\end{align}
Finally using that $\tilde{u}_k \to \tilde{u}$ uniformly in $B_{1/2} \cap \{ x_n \ge \delta/2\}$ by (i), and that $\delta > 0$ can be taken arbitrarily small in \eqref{eq:HD-help-1}, \eqref{eq:HD-help-2}, we deduce the desired result.

Let us now prove (iii). If we choose $\theta \in (0,1)$ so small that $\theta \le c \delta_0 (1-\theta)$ and also $M = \max((1-\theta)^{-(1+s)},20)$, then by \autoref{lemma:Holder-continuity-partial-BH} (see also \eqref{eq:partial-BH-K-theta}), we obtain for any $m \le m_0$, where $m_0 \in \N$ satisfies $2 \eps_k (1-\theta)^{m_0}M^{m_0} \le \eps_0$:
\begin{align*}
\tail & ([u_k - A(e_n)(x_n - \eps_k)_+^s]_- ; M^{-m}) \\
&+ \tail([A(e_n)(x_n + \eps_k)_+^s - u_k]_- ; M^{-m}) \le 2 \eps_k M^{m(1-s)}(1-\theta)^{m}  \delta_0,
\end{align*}
which implies
\begin{align*}
\tail & ([u_k - A(e_n)(x_n - \eps_k)_+^s]_- ; 3/4) \\
&+ \tail([A(e_n)(x_n + \eps_k)_+^s - u_k]_- ; 3/4) \le 2 \eps_k M^{m(1+s)}(1-\theta)^{m}  \delta_0.
\end{align*}
Note that by construction, we have that $(1-\theta)M \ge (1-\theta)^{-s} > 1$. Thus, $m_0 \asymp \log_{(1-\theta)M}(\eps_0/(2\eps_k)) \to \infty$, as $k \to \infty$, so in particular, we can take $m = 1$ for any large enough $k$ in the previous estimate. This way, the previous estimate  becomes uniform in $k$.
Moreover, by the same computation as in \eqref{eq:L1-exponent-gain}, we deduce
\begin{align*}
\tail([u_k - A(e_n)(x_n)_+^s]_- ; 3/4) &\le \tail([u_k - A(e_n)(x_n - \eps_k)_+^s]_- ; 3/4) \\
&\quad + \tail([A(e_n)(x_n)_+^s - (x_n - \eps_k)_+^s]_- ; 3/4) \le c \eps_k
\end{align*}
for some constant $c > 0$. A similar reasoning applies to $\tail([u_k - A(e_n)(x_n)_+^s]_+ ; 3/4)$, so that we deduce \eqref{eq:convergence-tail-est}, as desired.

Now, we prove (iv). Let $\delta > 0$ and consider $k \in \N$ so large that $\delta > 4 \eps_k$. We define 
\begin{align*}
f_k(x) = -L\left( \left( \frac{u_k(x) - A(e_n)(x_n)_+^s}{\eps_k} \right) \1_{\R^n \setminus B_{3/4}} \right),
\end{align*}
and observe that 
\begin{align*}
L \left( \frac{u_k(x) - A(e_n)(x_n)_+^s}{\eps_k} \1_{B_{3/4}} \right) = f_k(x) ~~ \text{ in } B_{1/2} \cap \{ x_n > \delta \}.
\end{align*}
We claim that there exists $f \in C^{1-2s+\beta}(B_{1/2})$ such that, as $k \to \infty$
\begin{align}
\label{eq:lin-problem-iii-help-1}
\frac{u_k(x) - A(e_n)(x_n)_+^s}{\eps_k} &\to A(e_n)s(x_n)_+^{s-1} \tilde{u} ~~ \qquad \text{ uniformly in } B_{3/4} \cap \{ x_n > \delta \},\\
\label{eq:lin-problem-iii-help-2}
f_k &\to f ~~~ \qquad\qquad\qquad\qquad \text{ uniformly in } B_{1/2},\\
\label{eq:lin-problem-iii-help-3}
\frac{u_k(x) - A(e_n)(x_n)_+^s}{\eps_k} \1_{B_{3/4}} &\to A(e_n)s(x_n)_+^{s-1} \tilde{u}\1_{B_{3/4}} ~~ \text{ in } L^1_{2s}(\R^n).
\end{align}
Clearly, combining the previous four statements, we obtain 
\begin{align*}
L(A(e_n)s(x_n)_+^{s-1} \tilde{u} \1_{B_{3/4}}) &= f ~~ \text{ in } B_{1/2} \cap \{ x_n > \delta \}
\end{align*}
by the stability of viscosity solutions (see \cite[Proposition 3.2.12]{FeRo23}). Then, since $\delta > 0$ was arbitrary, we conclude the proof of the first part of (iii).

Let us prove \eqref{eq:lin-problem-iii-help-1}. By assumption \eqref{eq:eps-k-flat} we have that $u_k(x) \to A(e_n)(x_n)_+^s =: u(x)$ uniformly in $B_1$. 
Moreover, we can rewrite for any $x \in B_{3/4} \cap \{ x_n > \delta \}$, (note that the expression is zero if $\tilde{u}_k(x) = 0$)
\begin{align*}
\frac{u_k(x) - A(e_n)(x_n)_+^s}{\eps_k} = \frac{u_k(x) - u_k(x - \eps_k \tilde{u}_k(x) e_n)}{\tilde{u}_k(x) \eps_k} \tilde{u}_k(x).
\end{align*}
Also, note that 
\begin{align*}
\frac{u(x) - u(x - \eps_k \tilde{u}_k(x) e_n)}{\tilde{u}_k(x) \eps_k} \tilde{u}_k(x) \to \partial_n u(x) \tilde{u}(x) = A(e_n)s(x_n)_+^{s-1} \tilde{u}(x) ~~ \text{ uniformly in } B_{3/4} \cap \{ x_n > \delta \},
\end{align*}
since $\tilde{u}_k(x) \eps_k \in [-\eps_k , \eps_k]$ and therefore $\tilde{u}_k(x) \eps_k \to 0$ uniformly, and also using (i).\\
Consequently, it suffices to show that 
\begin{align*}
I_k(x) := \frac{u_k(x) - u_k(x - \eps_k \tilde{u}_k(x) e_n)}{\tilde{u}_k(x) \eps_k} - \frac{u(x) - u(x - \eps_k \tilde{u}_k(x) e_n)}{\tilde{u}_k(x) \eps_k} \to 0 ~~ \text{uniformly in } B_{3/4} \cap \{ x_n > \delta \}.
\end{align*}
To see this, note that $L(u_k - u) = 0$ in $B_1 \cap \{ x_n > \delta/4 \}$, and therefore by interior regularity estimates, using that $K \in C^{1-2s+\beta}(\mathbb{S}^{n-1})$, (see \cite[Theorem 2.4.1]{FeRo23}) for $x \in B_{3/4} \cap \{ x_n > \delta \}$:
\begin{align*}
|I_k(x)| \le [u_k - u]_{C^{0,1}(B_{7/8} \cap \{ x_n > \delta/2 \} )} \le c(\delta) \Vert u_k - u \Vert_{L^{\infty}(B_1)} + c(\delta) \tail(|u_k - u| ; 1).
\end{align*}

Now, note that by assumption \eqref{eq:eps-k-flat}, we have that $\Vert u_k - u \Vert_{L^{\infty}(B_1)} \to 0$, as $k \to \infty$. Moreover, from the assumption \eqref{eq:eps-k-tail-smallness}, and after performing a similar computation as in \eqref{eq:L1-exponent-gain}, we deduce
\begin{align*}
\tail(|u_k - u| ; 1) &\le \tail((u_k - u)_+ ; 1) + \tail((u_k - u)_- ; 1) \\
&\le  \tail([u_k - A(e_n)(x_n + \eps_k)_+^s]_+ ; 1) + A(e_n) \tail( [(x_n + \eps_k)_+^s - (x_n)_+^s]_+; 1) \\
&\quad + \tail([u_k - A(e_n)(x_n - \eps_k)_+^s]_- ; 1) + A(e_n)\tail( [(x_n - \eps_k)_+^s - (x_n)_+^s]_-; 1) \\
&\le 2c\eps_k \delta_0 + c A(e_n)\eps_k \to 0.
\end{align*}
This concludes the proof of \eqref{eq:lin-problem-iii-help-1}. 

Let us now turn to the proof of \eqref{eq:lin-problem-iii-help-2}. Note that for any $x \in B_{1/2}$:
\begin{align*}
|f_k(x)| &\le \int_{\R^n \setminus B_{3/4}} \left| \frac{u_k(y) - A(e_n)(y_n)_+^s}{\eps_k} \right| K(x-y) \d y \\
&\le c\int_{\R^n \setminus B_{3/4}} \left| \frac{u_k(y) - A(e_n)(y_n)_+^s}{\eps_k} \right| |y|^{-n-2s} \d y\\
&\le c \tail(|u_k - A(e_n)(x_n)_+^s|/\eps_k ; 3/4) \le C,
\end{align*}
where we used \eqref{eq:convergence-tail-est} in the last step. Moreover, since $K \in C^{1-2s+\beta}(\mathbb{S}^{n-1})$, we deduce for any $x,z \in B_{1/2}$:
\begin{align*}
\frac{|f_k(x) - f_k(z)|}{|x-z|^{1-2s+ \beta}} &\le \int_{\R^n \setminus B_{3/4}} \left| \frac{u_k(y) - A(e_n)(y_n)_+^s}{\eps_k} \right| \frac{|K(x-y) - K(z-y)|}{|x-z|^{1-2s+\beta}} \d y\\
&\le \sum_{k = 1}^{\infty} \int_{B_{\frac{3}{4} 2^{k+1}} \setminus B_{\frac{3}{4} 2^k } } \left| \frac{u_k(y) - A(e_n)(y_n)_+^s}{\eps_k} \right| [K]_{C^{1-2s+\beta}(\R^n \setminus B_{\frac{1}{4} 2^{k} })} \d y\\
&\le c \int_{\R^n \setminus B_{3/4}} \left| \frac{u_k(y) - A(e_n)(y_n)_+^s}{\eps_k} \right|  |y|^{-n-2s-(1-2s+\beta)} \d y \\
&\le c \tail(|u_k - A(e_n)(x_n)_+^s|/\eps_k ; 3/4) \le C,
\end{align*}
where we used (iii). Thus, there exists $C > 0$, independent of $k$ such that $\Vert f_k \Vert_{C^{1-2s+\beta}(B_{1/2})} \le C$, and \eqref{eq:lin-problem-iii-help-2} follows by an application of the Arzel\`a-Ascoli theorem. By uniform convergence, we also get $\Vert f \Vert_{C^{1-2s+\beta}(B_{1/2})} \le C$. However, note that the bound on $\Vert f \Vert_{L^{\infty}(B_1)}$ is independent of $\Vert K \Vert_{C^{1-2s+\beta}(\mathbb{S}^{n-1})}$.

Finally, we prove \eqref{eq:lin-problem-iii-help-3}. First, note that since the sequence is compactly supported, it suffices to show 
\begin{align}
\label{eq:lin-problem-iii-help-3-help}
\frac{u_k(x) - A(e_n)(x_n)_+^s}{\eps_k} \to A(e_n)s(x_n)_+^{s-1} \tilde{u} ~~ \text{ in } L^1(B_{3/4}).
\end{align}
Note that by assumption \eqref{eq:eps-k-flat}, for any $\delta > 0$ it holds for $k$ large enough:
\begin{align*}
\frac{u_k(x) - A(e_n)(x_n)_+^s}{\eps_k} = 0 ~~ \text{ in } B_{3/4} \cap \{ x_n \le - \delta \}.
\end{align*}
Therefore, using also \eqref{eq:lin-problem-iii-help-1}, we know that for any $\delta \in (0,1)$
\begin{align}
\label{eq:uniform-conv-delta-strip}
\frac{u_k(x) - A(e_n)(x_n)_+^s}{\eps_k} \to A(e_n) s(x_n)_+^{s-1} \tilde{u} ~~ \text{ uniformly in } B_{3/4} \cap \{ |x_n| > \delta \}.
\end{align}

Moreover, it is easy to see from the $\eps_k$-flatness of $u_k$ in $B_1$ (see \eqref{eq:eps-k-flat}) that there is $C > 0$, independent of $\delta$ and $k$, such that for any $\delta \in (0,\frac{3}{4})$:
\begin{align}
\label{eq:L1-bound-k}
\begin{split}
\left\Vert \frac{u_k(x) - A(e_n)(x_n)_+^s}{\eps_k} \right\Vert_{L^1(B_{3/4} \cap \{ |x_n| \le \delta \} )} &\le A(e_n)\left\Vert \frac{(x_n - \eps_k)_+^s - (x_n)_+^s}{\eps_k} \right\Vert_{L^1(B_{3/4} \cap \{ |x_n| \le \delta \})} \\
&\quad + A(e_n)\left\Vert \frac{(x_n + \eps_k)_+^s - (x_n)_+^s}{\eps_k} \right\Vert_{L^1(B_{3/4}  \cap \{ |x_n| \le \delta \} )} \\
& \le C \delta^{s}.
\end{split}
\end{align}
Indeed, this follows from a similar computation as in \eqref{eq:L1-exponent-gain}:
\begin{align*}
\left\Vert \frac{(x_n - \eps_k)_+^s - (x_n)_+^s}{\eps_k} \right\Vert_{L^1(B_{3/4} \cap \{ |x_n| \le \delta \})} \hspace{-0.5cm} &\le c \eps_k^{-1} \int_{-\delta}^{\delta} (x_+^s - (x-\eps_k)_+^s) \d x \\
&\le c \eps_k^{-1}\left(\int_0^{\delta} x^s \d x - \int_{\eps_k}^{\delta} (x-\eps_k)^s \d x \right) \\
&= c \eps_k^{-1}\left( \int_{\delta - \eps_k}^{\delta} x^s \d x \right) = c \eps_k^{-1} \left(\delta^{1+s} - (\delta - \eps_k)^{1+s} \right) \le c \delta^{s},
\end{align*}
and an analogous argument yields the estimate for the other term.\\
Moreover, since $\Vert \tilde{u}_k \Vert_{L^{\infty}(B_{3/4})} \le 1$ for any $k$, and $\tilde{u}_k \to \tilde{u}$ uniformly in $B_{3/4} \cap \{x_n \ge \delta\}$ for any $\delta > 0$, we also have $\Vert \tilde{u} \Vert_{L^{\infty}(B_{3/4} \cap \{x_n > 0\} )} \le 1$ and thus
\begin{align}
\label{eq:L1-bound-limit}
\Vert A(e_n) s (x_n)_+^{s-1} \tilde{u} \Vert_{L^1(B_{3/4})} \le c \Vert (x_n)_+^{s-1} \Vert_{L^1(B_{3/4} \cap \{x_n > 0\} )} \le C.
\end{align}
Altogether, this implies \eqref{eq:lin-problem-iii-help-3-help}. Indeed, given any $\eta \in (0,1)$, due to \eqref{eq:L1-bound-k} and \eqref{eq:L1-bound-limit} we can find $\delta > 0$ such that 
\begin{align*}
\left\Vert \frac{u_k(x) - A(e_n)(x_n)_+^s}{\eps_k} \right\Vert_{L^1(B_{3/4} \cap \{ |x_n| \le \delta\} )} + \Vert A(e_n) s (x_n)_+^{s-1} \tilde{u} \Vert_{L^1(B_{3/4} \cap \{|x_n| \le \delta \} )} \le \eta/2,
\end{align*}
and by \eqref{eq:uniform-conv-delta-strip}, we can choose $k$ large enough, such that
\begin{align*}
\left\Vert \frac{u_k(x) - A(e_n)(x_n)_+^s}{\eps_k} - A(e_n) s(x_n)_+^{s-1} \tilde{u} \right\Vert_{L^{1}(B_{3/4} \cap \{|x_n| \ge \delta \} )} < \eta/2.
\end{align*}

Finally, we verify the Neumann condition (iv). To this end, let $\tilde{\phi}$ be a smooth function in $B_1$ such that $\tilde{\phi} \le \tilde{u}$ in $B_{3/4} \cap \{x_n \ge 0\}$ and $\tilde{\phi}(x_0) = \tilde{u}(x_0)$ for some $x_0 \in B_{1/2} \cap \{ x_n = 0\}$. The proof in case $\tilde{\phi} \ge \tilde{u}$ goes in the same way, and we will skip it.\\
For simplicity of notation, we will write from now on $x_0 = 0$. Due to the Hausdorff-convergence (ii) in $\overline{B_{3/4}} \cap \{ x_n \ge 0 \}$, and \cite[Lemma 3.2.10]{FeRo23} (note that the proof therein carries over to our setup since $\Omega = \overline{B_{3/4}} \cap \{x_n \ge 0 \}$ is closed) we have that there exist $\tilde{x}_k \in B_{3/4} \cap \{ x_n \ge 0 \}$ and $c_k \in \R$ with $c_k \to 0$ and $\tilde{\phi}_k = \tilde{\phi} + c_k$ such that $\tilde{\phi}_k \le \tilde{u}_k$ in $ \overline{B_{3/4}} \cap \{ x_n \ge 0 \}$ and $\tilde{\phi}_k(\tilde{x}_k) = \tilde{u}_k(\tilde{x}_k)$.\\
Note that $x - \eps_k \tilde{\phi}_k(x)$ is invertible in $B_{1/2}$, when $\eps_k$ is small enough. Therefore, there exists a function $\phi_k^{1/s} : B_{1/2} \to \R$ such that
\begin{align}
\label{eq:smooth-expansion}
\phi_k^{1/s}(x - \eps_k \tilde{\phi}_k(x)e_n) = A(e_n)^{1/s} x_n ~~ \forall x \in B_{1/2}.
\end{align}

Since, for $\eps_k$ small enough, $1 - \eps_k \partial_n\tilde{\phi}_k(x) > 0$ for $x \in B_{1/2}$, the function $\phi_k^{1/s}$ is smooth in $B_{1/2}$, by the implicit function theorem. Moreover, note that in particular, for $x \in B_{1/2} \cap \{ x_n \ge 0\}$, we have by construction (where we define $\phi_k = (\phi_k^{1/s})_+^s$)
\begin{align*}
\phi_k(x - \eps_k \tilde{\phi}_k(x)e_n) = A(e_n)(x_n)_+^s.
\end{align*}
The function $\phi_k$ is a priori only defined in $B_{1/2}$. Let us set $\phi_k = u_k$ in $(\R^n \setminus B_{1/2})$.
Note that the $\phi_k$ are also smooth in $\{ \phi_k > 0 \}$, globally $C^s$, and strictly increasing in the $e_n$-direction for $\eps_k$ small enough, and thus, by \autoref{lemma:comp-dom-var}, we have  $\phi_k \le u_k$ in $\R^n$, and $\phi_k(x_k) = u_k(x_k)$, where $x_k = \tilde{x}_k - \eps_k \tilde{\phi}_k(\tilde{x}_k) e_n$.\\
The next goal of the proof is to argue that $x_k \in \partial \{ u_k > 0\}$. To rule out that $x_k \in \overline{ \{ u_k = 0 \} } \setminus \partial \{ u_k > 0 \}$, observe that 
if $u_k(x_k) = 0$, this implies
\begin{align*}
0 = u_k(x_k) = \phi_k(x_k) = \phi_k(\tilde{x}_k - \eps_k \tilde{\phi}_k(\tilde{x}_k) e_n) = A(e_n)((\tilde{x}_k)_n)_+^s,
\end{align*}
and therefore $(\tilde{x}_k)_n = 0$. Now, if $x_k \in \overline{ \{ u_k = 0 \} } \setminus \partial \{ u_k > 0 \}$, this would imply that for any point $\tilde{z}$ in a small neighborhood of $\tilde{x}_k$ with $\tilde{z}_n > 0$ it holds
\begin{align*}
A(e_n)(\tilde{z}_n)_+^s = \phi_k(\tilde{z} - \eps_k \tilde{\phi}_k(\tilde{z})e_n ) \le u_k(\tilde{z} - \eps_k \tilde{\phi}_k(\tilde{z})e_n)  = 0,
\end{align*}
since $\tilde{z} - \eps_k \tilde{\phi}_k(\tilde{z})e_n$ is close to $x_k$ by the smoothness of $\tilde{\phi}_k$, a contradiction.

Next, let us assume that $x_k \in \{ u_k > 0 \}$. In this case, our goal will be to prove that $L \phi_k(x_k) < 0$, since this gives a contradiction to $Lu(x_k) = 0$, by the notion of viscosity solution in \autoref{def:viscosity}, (resp. \autoref{def:viscosity-solution}) it implies that $x_k \in \partial \{ x_k > 0\}$, as desired.\\
To see that $L \phi_k(x_k) < 0$, let us first observe that $\phi_k(x_k) = u(x_k) > 0$, since $\{ u_k > 0 \}$ is an open set, so $\phi_k$ is smooth around $x_k$. Next, by Taylor's formula for $x \in B_{3/4} \cap \{ x_n > - 2\eps_k \} \cap \{ \phi_k > 0 \}$, using
\begin{align*}
A(e_n)(x_n + 2\eps_k)_+^s &= \phi_k(x + \eps_k e_n (2- \tilde{\phi}_k(x + 2\eps_k e_n)) ) \\
&= \phi_k(x) + \eps_k (2 - \tilde{\phi}_k(x + 2\eps_k e_n)) \partial_n\phi_k(x) + \eps_k^2 \Psi(x),
\end{align*}
where $\Psi$ is a placeholder for a smooth function with bounded derivatives, which might change in the following lines, and that therefore 
\begin{align*}
A(e_n)s(x_n + 2\eps_k)_+^{s-1} = \partial_n \phi_k(x) + \eps_k \Psi(x),
\end{align*}
we finally get the following:
\begin{align}
\label{eq:phi-tilde-phi-formula-2}
A(e_n)(x_n + 2\eps_k)_+^s = \phi_k(x) + \eps_k (2 - \tilde{\phi}_k(x + 2\eps_k e_n)) A(e_n) s(x_n + 2\eps_k)_+^{s-1} + \eps_k^2\Psi(x).
\end{align}
Thus, using \eqref{eq:phi-tilde-phi-formula-2}, we obtain for any $x \in B_{1/2} \cap \{ x_n > -\eps_k\} \cap \{ \phi_k > 0 \} $ (and so especially for $x_k$):
\begin{align}
\label{eq:L-tilde-phi-estimate-1}
\begin{split}
L \phi_k(x) &= L (\phi_k \1_{B_{3/4} \cap \{ x_n > -2\eps_k \} })(x) + L (\phi_k \1_{(\R^n \setminus B_{3/4}) \cup \{ x_n < -2\eps_k \} })(x)\\
&= L(A(e_n)(x_n + 2\eps_k)_+^s)(x) + L( [\phi_k - A(e_n)(x_n + 2 \eps_k)_+^s ] \1_{(\R^n \setminus B_{3/4}) \cup \{ x_n < - 2\eps_k \} })(x) \\
& \quad + \eps_k L(A(e_n)s(x_n + 2\eps_k)_+^{s-1} \tilde{\phi}_k(x + 2\eps_k e_n ) \1_{B_{3/4} \cap \{ x_n > - 2\eps_k \} }  )(x) \\
& \quad - 2\eps_k L(A(e_n)s(x_n + 2\eps_k)_+^{s-1} )(x) + 2\eps_k L(A(e_n)s(x_n + 2\eps_k)_+^{s-1} \1_{(\R^n \setminus B_{3/4}) \cup \{ x_n < - 2\eps_k \}  })(x) \\
&\quad - \eps_k^2 L(\Psi \1_{B_{3/4} \cap \{ x_n > - 2\eps_k \}})(x) \\
&= L( [\phi_k - A(e_n)(x_n + 2 \eps_k)_+^s ] \1_{(\R^n \setminus B_{3/4}) \cup \{ x_n < - 2\eps_k \} })(x)  + \eps_k L(A(e_n)s(x_n)_+^{s-1} \tilde{\phi}_k \1_{B_{3/4}})(x + 2\eps_k) \\
&\quad + 2\eps_k L(A(e_n)s(x_n + 2\eps_k)_+^{s-1} \1_{\R^n \setminus B_{3/4} })(x) - \eps_k^2  L(\Psi \1_{B_{3/4} \cap \{ x_n > - 2\eps_k \}})(x) \\
&=: I_1 + I_2 + I_3 + I_4,
\end{split}
\end{align}
where we used that $L((x_n)_+^s) = L((x_n)_+^{s-1}) = 0$ in $\{ x_n > 0 \}$.
Let us now discuss how to estimate the terms $I_1,I_2,I_3,I_4$.

For $I_1$, we observe that since $\phi_k = u_k$ in $\R^n \setminus B_{1/2}$ and by construction in $B_{1/2}$, we have $\phi_k \ge 0$,
\begin{align*}
I_1 &= L( [\phi_k - A(e_n)(x_n + 2 \eps_k)_+^s ] \1_{(\R^n \setminus B_{3/4}) \cup \{ x_n < - 2\eps_k \} })(x) \\
& \le - c\int_{(\R^n \setminus B_{3/4}) \cup \{ x_n < -2\eps_k \} } \phi_k(y) |x-y|^{-n-2s} \d y + c\int_{ \R^n \setminus B_{3/4} } (y_n + 2 \eps_k)_+^s |x-y|^{-n-2s} \d y \le c.
\end{align*}
%For $I_3$, we can estimate $I_3 \le c \eps_k$, since $\left[x \mapsto (x_n + 2\eps_k)_+^{s-1} \1_{\R^n \setminus B_{3/4} } \right] \in L^1_{2s}(\R^n)$.
For $I_3$, we have the trivial estimate $I_3 \le 0$, and for $I_4$, we use that $|L \Psi \1_{B_{3/4}}| \le C$ to get, using that $x_n > - \eps_k$ implies $\dist(x, \{x_n > - 2 \eps_k\}) \ge \eps_k$, and that $\Psi$ is bounded:
\begin{align*}
I_4 \le C\eps_k^2 + \eps_k^2L(\Psi \1_{B_{3/4} \setminus \{ x_n > - 2\eps_k \} }) \le C\eps_k^2 + C \eps_k^{2-2s}.
\end{align*}
Since the estimation if $I_2$ is significantly more involved, we postpone it, and summarize first, that altogether, we have shown that for any $x \in B_{1/2} \cap \{ x_n > -\eps_k\} \cap \{ \phi_k > 0 \}$
\begin{align}
\label{eq:L-phik-xk-estimate-help-1}
L \phi_k(x) \le C (1 +\eps_k^{2-2s}) + \eps_k L(A(e_n)s(x_n)_+^{s-1} \tilde{\phi}_k \1_{B_{3/4}})(x + 2\eps_k).
\end{align}
To estimate further the second summand, let us define 
\begin{align*}
g(x) = -\delta x_n + \delta^{-1}(x_n)_+^{1 + \eta} \psi(x),
\end{align*}
where $\eta \in (s,2s)$, and $\psi \in C^{\infty}(\R^n)$ satisfies $0 \le \psi \le 1$, $\psi \equiv 1$ in $B_{\delta^{2/\eta}/2}$, and $\supp(\psi) = \overline{ B_{\delta^{2/\eta}} }$.\\
Then, we replace $\tilde{\phi}$ in the proof above by $\tilde{\phi}^{(\delta)} = \tilde{\phi} + g$ and observe that all the previous arguments, in particular the estimate \eqref{eq:L-phik-xk-estimate-help-1}, remain valid. In fact, $\tilde{\phi}^{(\delta)}$ is still smooth, and since $g \le 0$ in $\{ x_n \ge 0 \}$ and $g(0) = 0$, we have that $\tilde{\phi}^{(\delta)}$ still touches $\tilde{u}$ from below at $0$. Moreover, $\nabla \tilde{\phi}^{(\delta)} = \nabla \tilde{\phi} - \delta e_n$ in $B_{\delta^{2/\eta}/2}$, and therefore it suffices to prove that $(A^{1/s}(e_n)e_n - \nabla A^{1/s}(e_n)) \cdot \nabla \tilde{\phi}^{(\delta)}(0) \le 0$ for every $\delta > 0$ small enough.\\
The advantage of the modification of $\tilde{\phi}$ is that we have gained some control over the value of $L \phi_k(x_k)$. 
To see this, let us estimate $L ((x_n)_+^{s-1} \tilde{\phi}_k^{(\delta)} \1_{B_{3/4}})(\cdot + 2\eps_k)$, i.e., the second summand in \eqref{eq:L-phik-xk-estimate-help-1}. First, observe that for $x \in B_{1/2} \cap \{ x_n > - \eps_k \}$: 
\begin{align}
\label{eq:L-tilde-phi-estimate-2}
\begin{split}
L ((x_n)_+^{s-1} & \tilde{\phi}_k^{(\delta)} \1_{B_{3/4}} )(x + 2\eps_k) \\
&= L ((x_n)_+^{s-1} \tilde{\phi} \1_{B_{3/4}})(x + 2\eps_k) + c_k L((x_n)_+^{s-1} \1_{B_{3/4}})(x + 2\eps_k) \\
&\quad - \delta L((x_n)_+^{s} \1_{B_{3/4}})(x + 2\eps_k) + \delta^{-1} L ((x_n)_+^{s+\eta}\psi \1_{B_{3/4}})(x + 2\eps_k) \\
&\le C + \delta^{-1} L ((x_n)_+^{s+\eta}\psi \1_{B_{3/4}})(x + 2\eps_k),
\end{split}
\end{align}
for some constant $C > 0$. Here, we used again $L((x_n)_+^s)(\cdot + 2\eps_k) = L((x_n)_+^{s-1})(\cdot + 2\eps_k) = 0$, as well as 
\begin{align}
\label{eq:L-tilde-phi-estimate-3}
|L ((x_n)_+^{s-1} \tilde{\phi} )(\cdot + 2\eps_k)| \le C,
\end{align}
and that therefore
\begin{align*}
& |L((x_n)_+^{s-1} \1_{B_{3/4}})(\cdot + 2\eps_k)| + |L((x_n)_+^{s} \1_{B_{3/4}})(\cdot + 2\eps_k)| + |L ((x_n)_+^{s-1} \tilde{\phi} \1_{B_{3/4}})(\cdot + 2\eps_k)| \\
& \le |L((x_n)_+^s \1_{\R^n \setminus B_{3/4}})(\cdot + 2\eps_k)| + |L((x_n)_+^{s-1} \1_{\R^n \setminus B_{3/4}})(\cdot + 2\eps_k)| \\
&\quad + |L ((x_n)_+^{s-1} \tilde{\phi} \1_{\R^n \setminus B_{3/4}})(\cdot + 2\eps_k)| + C \\
&\le C.
\end{align*}
To see \eqref{eq:L-tilde-phi-estimate-3}, we recall \cite[Lemma 9.5]{RoSe16}, which implies that $L_1((x_n)_+^s \tilde{\phi}) \in C^1(B_{1} \cap \{ x_n > 0 \})$, where $L_1$ denotes the operator $L$ but with $K$ replaced by $K\1_{B_1}$. Then, the product rule yields,
\begin{align*}
L_1(s(x_n)_+^{s-1} \tilde{\phi} ) = \partial_n L_1((x_n)_+^s \tilde{\phi} ) - L_1((x_n)_+^s \partial_n \tilde{\phi} ) \in L^{\infty}(B_{1} \cap \{ x_n > 0 \} ).
\end{align*}
Since a computation with polar coordinates reveals that $(L - L_1)((x_n)_+^{s-1} \tilde{\phi}) \in  L^{\infty}(B_{1} \cap \{ x_n > 0 \} )$, we deduce \eqref{eq:L-tilde-phi-estimate-3}, as desired.\\
To estimate the second term in \eqref{eq:L-tilde-phi-estimate-2}, we first compute at zero and for $\delta > 0$ small:
\begin{align*}
\delta^{-1} L ((x_n)_+^{s+\eta}\psi \1_{B_{3/4}})(0) &= -\delta^{-1} \int_{B_{\delta^{2/\eta}}} (y_n)_+^{s+\eta} \psi(y)\1_{B_{3/4}}(y) K(y) \d y \\
&\le - c\delta^{-1}\int_{B_{\delta^{2/\eta}/2}} (y_n)_+^{s+\eta} |y|^{-n-2s} \d y\\
&\le - c\delta^{-1}\int_{B_{\delta^{2/\eta}/2} \cap \{ y_n \ge |y|/2 \} } |y|^{-n-s+\eta} \d y \le - c\delta^{-1 + \frac{2}{\eta}(-s+\eta)} = -c \delta^{1 - \frac{2s}{\eta}}.
\end{align*}
Therefore, $\delta^{-1} L ((x_n)_+^{s+\eta}\psi \1_{B_{3/4}})(0) \to -\infty$, as $\delta \to 0$. Moreover, since $x \mapsto [(x_n)_+^{s+\eta}\psi(x)\1_{B_{3/4}}(x)] \in C^{s+\eta}(B_{3/4}) \cap L^{\infty}(\R^n)$, we know that $x \mapsto [L ((x_n)_+^{s+\eta} \psi \1_{B_{3/4}})(x)] \in C^{\eta - s - \eps}(B_{1/2})$ is H\"older continuous. Therefore, using also \eqref{eq:L-phik-xk-estimate-help-1} and \eqref{eq:L-tilde-phi-estimate-2}, we find that there exists $\delta_0 \in (0,1)$ depending only on $n,s,\lambda,\Lambda,$ and $\Vert K \Vert_{C^{1+\beta}(\mathbb{S}^{n-1})}$, such that for any $\delta \in (0,\delta_0)$, there exists a small neighborhood around $0$ inside $B_{1/2} \cap \{ x_n > - \eps_k \} \cap \{ \phi_k > 0 \}$, in which $L \phi_k(\cdot) < 0$. Thus, altogether, using that $x_k \in B_{1/2} \cap \{ x_n > - \eps_k \} \cap \{ \phi_k > 0 \}$ converges to $0$ as $k \to \infty$, we conclude that $L \phi_k(x_k) < 0$ for $k$ large enough, and thus $x_k \in \partial \{ u_k > 0 \}$ for $k$ large enough, as desired.

We have shown that $x_k \in \partial \{ u_k > 0\}$ for $k$ large enough. Let us now conclude the proof. To do so, note that by \eqref{eq:smooth-expansion}, it holds 
\begin{align}
\label{eq:comp-phi_k-derivative}
\nabla \phi_k^{1/s}(x_k) = A^{1/s}(e_n) (e_n + \eps_k \nabla \tilde{\phi}_k(\tilde{x}_k) + O(\eps_k^2)).
\end{align}
Indeed, if we set $g(x) := x - \eps_k \tilde{\phi}_k(x) e_n$, then \eqref{eq:smooth-expansion} reads
\begin{align*}
\phi_k^{1/s}(x) = A^{1/s}(e_n) (g^{-1}(x)_n)_+.
\end{align*}
Moreover, since $D g(x) = I - \eps_k D(\tilde{\phi}_k(x) e_n)$, and $\tilde{\phi}_k = \tilde{\phi} + c_k$ for some constant $c_k \in \R$, the implicit function theorem yields \eqref{eq:comp-phi_k-derivative} where the constant in the term $O(\eps_k^2)$ only depends on $\Vert \tilde{\phi} \Vert_{C^2}$.\\
Let us now extend $A^{1/s}$ to $\R^n \setminus \{ 0 \}$ by setting $A^{1/s}(r \theta) = A^{1/s}(\theta)$ for any $\theta \in \mathbb{S}^{n-1}$ and $r > 0$.
Thus, using that for some $\gamma \in (0,s)$ it holds $A^{1/s} \in C^{1,\gamma}(\R^n \setminus \{ 0 \})$ by \eqref{eq:A-properties}, we obtain
\begin{align*}
A^{1/s}(\nabla \phi_k^{1/s}(x_k)) &= A^{1/s}(A^{1/s}(e_n) e_n) + A^{1/s}(e_n) \eps_k \nabla \tilde{\phi}_k(\tilde{x}_k) \nabla A^{1/s}(A^{1/s}(e_n) e_n) + O(\eps_k^{1+\gamma}) \\
&= A^{1/s}(e_n) + \eps_k \nabla \tilde{\phi}_k(\tilde{x}_k) \nabla A^{1/s}(e_n) + O(\eps_k^{1+\gamma}),
\end{align*}
where we used that $A^{1/s}(A^{1/s}(e_n) e_n) = A^{1/s}(e_n)$ and $\nabla A^{1/s}(A^{1/s}(e_n) e_n) = A^{1/s}(e_n)^{-1} \nabla A^{1/s}(e_n)$.

Combining this observation with \eqref{eq:comp-phi_k-derivative} and the viscosity free boundary condition on $u_k$ (see \autoref{def:viscosity}(ii)), we obtain
\begin{align*}
A^{1/s}(e_n)^2 &+ 2\eps_k A^{1/s}(e_n)^2 \partial_n \tilde{\phi}_k(\tilde{x}_k) + O(\eps_k^{1+\gamma}) \\
&\le |A^{1/s}(e_n)(e_n + \eps_k \nabla \tilde{\phi}_k(\tilde{x}_k) + O(\eps_k^{1+\gamma}))|^2 \\
&= |\nabla \phi_k^{1/s}(x_k) |^2 \\
&\le A^{1/s}(\nabla \phi_k^{1/s}(x_k)/|\nabla \phi_k^{1/s}(x_k)|)^{2} = A^{1/s}(\nabla \phi_k^{1/s}(x_k))^{2} \\
&\le |A^{1/s}(e_n) + \eps_k\nabla \tilde{\phi}_k(\tilde{x}_k)\nabla A^{1/s}(e_n) + O(\eps_k^{1+\gamma})|^2\\
&\le A^{1/s}(e_n)^2 + 2 \eps_k A^{1/s}(e_n) \nabla \tilde{\phi}_k(\tilde{x}_k)\nabla A^{1/s}(e_n) + O(\eps_k^{1+\gamma}),
\end{align*}
which implies
\begin{align*}
A^{1/s}(e_n)\partial_n \tilde{\phi}_k(\tilde{x}_k) - \nabla \tilde{\phi}_k(\tilde{x}_k)\nabla A^{1/s}(e_n) \le O(\eps_k^{\gamma}),
\end{align*}
and yields the desired result upon taking the limit $k \to 0$, namely
\begin{equation*}
A^{1/s}(e_n)\partial_n \tilde{\phi}(x) - \nabla \tilde{\phi}(x)\nabla A^{1/s}(e_n) \le 0. \qedhere
\end{equation*}
\end{proof}

\subsubsection{Regularity of the linearized problem}
\label{subsubsec:reg-linearized}

The following theorem establishes the boundary regularity for solutions to the linearized problem. It is a direct consequence of \cite[Theorem 1.4, Theorem 6.2]{RoWe24}. For another one-phase problem whose linearized problem has an oblique boundary condition, we refer the interested reader to \cite{DeSa21}.

\begin{lemma}
\label{lemma:reg-linearized}
Assume \eqref{eq:Kcomp}. Let $\gamma \in (0,s)$. Let $f \in L^{\infty}(B_1)$, $\omega \in \mathbb{S}^{n-1}$ with $\omega_n \ge \delta$ for some $\delta > 0$, and $v$ be a viscosity solution to
\begin{align*}
\begin{cases}
L((x_n)_+^{s-1} v) &= f ~~ \text{ in } B_{1} \cap \{ x_n > 0 \},\\
\partial_{\omega} v &= 0 ~~ \text{ on } B_{1} \cap \{ x_n = 0 \}.
\end{cases}
\end{align*}
Then, $v \in C^{1,\gamma}(B_{1/2} \cap \{x_n > 0\})$ and for any $x_0 \in B_{1/2} \cap \{ x_n = 0 \}$ and $x \in B_{1/2} \cap \{ x_n > 0 \}$ it holds
\begin{align*}
|v(x) - v(x_0) - a(x_0) \cdot (x - x_0)| \le c \big( \Vert v \Vert_{L^{\infty}(\R^n)} + \Vert f \Vert_{L^{\infty}(B_1)} \big) |x-x_0|^{1+\gamma}
\end{align*}
for some $c > 0$, which only depends on $n,s,\lambda,\Lambda,\delta,\gamma$, and $a(x_0) \in \R^{n}$ with $a(x_0) \cdot \omega = 0$. Moreover, we have the following estimate: 
\begin{align*}
|a(x_0)| \le c \big( \Vert v \Vert_{L^{\infty}(\R^n)} + \Vert f \Vert_{L^{\infty}(B_1)} \big)
\end{align*}
for some $c > 0$, which only depends on $n,s,\lambda,\Lambda,\delta,\gamma$.
\end{lemma}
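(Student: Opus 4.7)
The plan is to reduce the oblique condition $\partial_\omega v = 0$ to the pure Neumann condition $\partial_n \tilde v = 0$ via a linear shear of the half-space, after which \cite[Theorem 1.4, Theorem 6.2]{RoWe24} applies directly. Concretely, since $\omega_n \ge \delta > 0$, I will define the invertible linear map $T\colon \R^n \to \R^n$ by $Te_i = e_i$ for $i<n$ and $Te_n = \omega/\omega_n$. This $T$ is bi-Lipschitz with constants depending only on $\delta$, has $\det T = 1$, and fixes the last coordinate, so it preserves both the hyperplane $\{x_n = 0\}$ and the half-space $\{x_n > 0\}$. Setting $\tilde v(y) := v(Ty)$ and $\tilde f(y) := f(Ty)$, the chain rule gives $\partial_n \tilde v = \omega_n^{-1}(\partial_\omega v)\circ T$, so the oblique condition for $v$ becomes a standard Neumann condition for $\tilde v$ on $T^{-1}(B_1)\cap\{y_n = 0\}$.

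Next I verify that $\tilde v$ solves a problem of the same structural form. Using the substitution $z = T\zeta$ in the integral defining $L$, combined with $\det T = 1$, one obtains $(Lw)\circ T = \tilde L\tilde w$, where $\tilde w := w\circ T$ and $\tilde L$ is the translation-invariant integro-differential operator with kernel $\tilde K(h) := K(Th)$. Because $T$ is linear, $\tilde K$ is symmetric, $2s$-homogeneous, and satisfies \eqref{eq:Kcomp} with ellipticity constants $\tilde\lambda, \tilde\Lambda$ depending only on $\lambda,\Lambda,\delta$. Crucially, since $T$ fixes the last coordinate, the weight transforms nicely: $((Ty)_n)_+^{s-1} = (y_n)_+^{s-1}$. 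Hence $\tilde v$ solves $\tilde L((y_n)_+^{s-1}\tilde v) = \tilde f$ in $T^{-1}(B_1)\cap\{y_n>0\}$ in the viscosity sense, with $\Vert \tilde v \Vert_{L^\infty(\R^n)} = \Vert v \Vert_{L^\infty(\R^n)}$ and $\Vert \tilde f\Vert_{L^\infty} = \Vert f\Vert_{L^\infty}$.

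With these preparations I apply \cite[Theorem 1.4, Theorem 6.2]{RoWe24} to $\tilde v$, which at every $\tilde y_0 \in T^{-1}(B_{1/2})\cap\{y_n=0\}$ produces a tangent vector $\tilde a(\tilde y_0)\in\{e_n\}^\perp$ together with the desired $C^{1,\gamma}$ expansion and the corresponding bound on $|\tilde a|$. Setting $a(x_0) := (T^{-1})^\top \tilde a(T^{-1}x_0)$ and substituting $y = T^{-1}x$ in that expansion transfers the estimate to $v$, at the cost of constants depending also on $\Vert T\Vert$ and $\Vert T^{-1}\Vert$, i.e.\ on $\delta$. The orthogonality $a(x_0)\cdot\omega = \omega_n(\tilde a\cdot e_n) = 0$ follows from $T^{-1}\omega = \omega_n e_n$, and a finite covering of $B_{1/2}\cap\{x_n=0\}$ by the rescaled boundary neighborhoods closes the argument. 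The only genuinely nontrivial step is verifying that the shear $T$ sends the class \eqref{eq:Kcomp} into itself \emph{and} preserves the special weight $(x_n)_+^{s-1}$; once this compatibility is checked, the result is essentially immediate from \cite{RoWe24}.
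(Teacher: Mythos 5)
Your proof is correct and follows essentially the same route as the paper: both define the linear shear $T(y',y_n)=(y'+y_n\omega'/\omega_n,\,y_n)$ (equivalently $Te_n=\omega/\omega_n$), use that $T$ preserves $\{x_n=0\}$, the weight $(x_n)_+^{s-1}$, and the Lebesgue measure, to turn the oblique condition into a straight Neumann condition with a transformed kernel $\tilde K$ still satisfying \eqref{eq:Kcomp} with constants depending on $\lambda,\Lambda,\delta$, and then invoke \cite[Theorem 1.4, Theorem 6.2]{RoWe24}. Your formula $\tilde K(h)=K(Th)$ and the verification of symmetry, homogeneity, and ellipticity are right, and the back-substitution via $a(x_0)=(T^{-1})^\top\tilde a(T^{-1}x_0)$ with $T^{-1}\omega=\omega_ne_n$ correctly yields $a(x_0)\cdot\omega=0$.
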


\begin{proof}
Note that in case $\omega = e_n$, the result immediately follows from \cite{RoWe24}. In fact, the expansion for $v$ is contained in \cite[Theorem 6.2 (i)]{RoWe24}. Note that we do not have to assume any regularity for $K$ due to the discussion in the proof of \cite[Theorem 1.4]{RoWe24}. The estimate for $a(x_0)$ follows by fixing some $x \in B_{1/2} \cap \{x_n > 0 \}$ such that $|x-x_0| = 1/4$ and $|a(x_0) \cdot (x - x_0)| \ge |a(x_0)|/8$ and applying the first estimate and the triangle inequality.\\
For general $\omega \in \mathbb{S}^{n-1}$ with $\omega_n \ge \delta$ for some $\delta > 0$, let us adapt an idea from \cite{DeSa21} and consider 
\begin{align*}
w(x) = v \left(x' + \omega' \frac{x_n}{\omega_n} , x_n \right).
\end{align*}
The function $w$ solves 
\begin{align*}
\begin{cases}
\tilde{L}((x_n)_+^{s-1} w) &= \tilde{f} ~~ \text{ in } B_{3/4} \cap \{ x_n > 0 \},\\
\partial_n w &= 0 ~~ \text{ on } B_{3/4} \cap \{ x_n = 0 \},
\end{cases}
\end{align*}
where $\tilde{f}(x) = f(x' + \omega' \frac{x_n}{\omega_n} , x_n)$ satisfies $\tilde{f} \in L^{\infty}(B_{3/4} \cap \{x_n > 0 \})$, and $\tilde{L}$ is an operator of the same form as $L$, but with kernel $\tilde{K}$ given as
\begin{align*}
\tilde{K}(h) &= \frac{|\omega_n|}{|\omega'|} \left(\sqrt{ \left|h' - \frac{\omega' h_n}{\omega_n}\right|^2 +h_n^2}\right)^{-n-2s} a\left(  \frac{ \left(\big( h' - \frac{\omega'h_n}{\omega_n} \big) , h_n \right) }{\left| \left( \big(h' - \frac{\omega'h_n}{\omega_n}\big) , h_n \right)\right|} \right) \\
&= \frac{|\omega_n|}{|\omega'|} |h|^{-n-2s} \left(\sqrt{1 + \left(\frac{|\omega'|h_n}{|h|\omega_n}\right)^2 - \frac{2(h' \cdot \omega')h_n}{|h|^2 \omega_n }}\right)^{-n-2s} a\left( \frac{ \left( \big( h' - \frac{\omega'h_n}{\omega_n}\big) , h_n \right) }{\left|\left( \big( h' - \frac{\omega'h_n}{\omega_n}\big) , h_n \right) \right|} \right).
\end{align*}
Note that $\tilde{K}$ is still homogeneous and symmetric, and moreover, by the assumption $\omega_n \ge \delta$, it also follows that $\tilde{\lambda} |h|^{-n-2s} \le \tilde{K}(h) \le \tilde{\Lambda} |h|^{-n-2s}$, where $0 < \tilde{\lambda} < \tilde{\Lambda}$ depend only on $n,s,\lambda,\Lambda,\delta$. Hence, it still satisfies \eqref{eq:Kcomp} with $\tilde{\lambda}, \tilde{\Lambda}$. Thus, the desired result follows after an application of the aforementioned results from \cite{RoWe24} to $w$, $\tilde{f}$, $\tilde{L}$, and translating the result back to $v$.
\end{proof}

\subsubsection{Conclusion of the proof of \autoref{thm:improvement-of-flatness}}
\label{subsubsec:conclusion}

In this section, we will conclude the proof of the improvement of flatness result (see \autoref{thm:improvement-of-flatness}). First, we establish the following elementary lemma:

\begin{lemma}
\label{lemma:conclusion-iof}
Let $u \in C(B_2)$ and $0 \in \partial \{ u > 0 \}$. Moreover, assume that
\begin{align*}
A(e_n)(x_n - \eps)_+^s \le u(x) \le A(e_n)(x_n + \eps)_+^s ~~ \text{ in } B_1
\end{align*}
and that $\tilde{u}_{\eps}$ satisfies
\begin{align}
\label{eq:expansion-ass}
a \cdot x - \sigma\frac{\rho}{4} \le \tilde{u}_{\eps}(x) \le a \cdot x + \sigma\frac{\rho}{4} ~~ \text{ in } B_{2\rho} \cap \{ x_n > 0 \}
\end{align}
for some $\rho > 0$, $\sigma \in (0,1)$ and $a \in \R^{n}$ satisfying $A^{1/s}(e_n) a_n = \nabla A^{1/s}(e_n) \cdot a$. Let $\gamma \in (0,s)$. Then, there exist $c > 0$ and $\eps_0 \in (0, \min \{ \rho,c \sigma^{1/\gamma} \} )$, depending only on an upper bound of $|a|$, and on $n,s,\lambda,\Lambda,\gamma$, and $\Vert A \Vert_{C^{1+\gamma}(\mathbb{S}^{n-1})}$, such that if $\eps \le \eps_0$, then
\begin{align*}
A(\nu)\Big(x \cdot \nu - \frac{\sigma\eps \rho}{2}\Big)_+^s \le u(x) \le A(\nu)\left(x \cdot \nu + \frac{\sigma\eps \rho}{2}\right)_+^s ~~ \text{ in } B_{\rho}
\end{align*}
for some $\nu \in \mathbb{S}^{n-1}$ with $|\nu - e_n| \le 4 \eps |a|$.
\end{lemma}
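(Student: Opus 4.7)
The plan is to exploit the implicit definition of the domain variation together with the first-order Taylor expansion of the zero-homogeneous extension of $A^{1/s}$ to rewrite $u$ near the origin as a half-space profile in a tilted direction $\nu$. Fix $y \in B_\rho$ with $u(y) > 0$; the $\eps$-flatness of $u$ in the $e_n$-direction yields, by the intermediate value theorem, some $z_n > 0$ with $|z_n - y_n| \le \eps$ and $A(e_n)(z_n)_+^s = u(y)$. Setting $x := (y', z_n) \in B_{2\rho} \cap \{x_n > 0\}$ and $w := (z_n - y_n)/\eps \in [-1,1]$, we have $y = x - \eps w e_n$ and $w \in \tilde u_\eps(x)$, so hypothesis \eqref{eq:expansion-ass} provides the linearization $|w - a \cdot x| \le \sigma\rho/4$.

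From $a \cdot x = a \cdot y + \eps a_n w$, a short algebraic rearrangement gives
\[
x_n \;=\; y_n + \eps(a \cdot y) + E_1, \qquad |E_1| \le \tfrac{\eps\sigma\rho}{4} + C_{|a|}\eps^2\rho.
\]
Choosing the natural tilt $\nu := (e_n + \eps a)/|e_n + \eps a|$, we have $|\nu - e_n| \le 2\eps|a| \le 4\eps|a|$ and the exact identity $y_n + \eps(a \cdot y) = |e_n + \eps a|(y \cdot \nu)$, hence
\[
u(y) \;=\; A(e_n)\,|e_n + \eps a|^s \bigl(y \cdot \nu + E_1/|e_n + \eps a|\bigr)_+^s.
\]

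The decisive step is the coefficient identification $A(e_n)|e_n + \eps a|^s = A(\nu) + O(\eps^{1+\gamma})$, which I would prove by Taylor-expanding the zero-homogeneous extension of $A^{1/s}$ (inheriting $C^{1,\gamma}$ regularity from \eqref{eq:A-properties}). Here the compatibility condition $A^{1/s}(e_n)a_n = \nabla A^{1/s}(e_n) \cdot a$ plays the critical role: it cancels the first-order contribution to $A^{1/s}(\nu)$ against the expansion of $|e_n + \eps a|$. Indeed,
\[
A^{1/s}(\nu) = A^{1/s}(e_n + \eps a) = A^{1/s}(e_n)(1 + \eps a_n) + O(\eps^{1+\gamma}), \qquad |e_n + \eps a| = 1 + \eps a_n + O(\eps^2),
\]
and since $1 + \gamma < 2$ these match modulo $O(\eps^{1+\gamma})$, and likewise after raising to the power $s$ using that $A$ is bounded below.

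Substituting, replacing the coefficient $A(e_n)|e_n + \eps a|^s$ by $A(\nu)$, and converting the resulting multiplicative $O(\eps^{1+\gamma})$ discrepancy into an additive shift inside $(\cdot)_+^s$ (using $A(\nu)$ bounded below and $|y \cdot \nu| \le \rho$) produces an additional shift of size at most $C\eps^{1+\gamma}\rho$. Taking $\eps \le \rho$ small enough that $C_{|a|}\eps^2\rho \le \eps\sigma\rho/4$, and $\eps \le c\sigma^{1/\gamma}$ so that the coefficient-correction shift satisfies $C\eps^{1+\gamma}\rho \le \eps\sigma\rho/4$, the total effective shift inside the bracket is at most $\sigma\eps\rho/2$, yielding the desired upper bound. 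The lower bound follows by a symmetric choice of $z_n$ realizing the lower $e_n$-flatness bound on $u$; the degenerate case $u(y) = 0$ is handled via the boundary extension of $\tilde u_\eps$ at $\{x_n = 0\}$ to certify $y \cdot \nu \le \sigma\eps\rho/2$. The main technical obstacle is the delicate bookkeeping of the two competing error sources --- the linear-order $\eps\sigma\rho$ defect of $\tilde u_\eps$ from $a \cdot x$, and the $\eps^{1+\gamma}$ Taylor remainder of $A^{1/s}$ --- where precisely the latter forces the restriction $\eps_0 \le c\sigma^{1/\gamma}$, while the common factor $\rho$ in both error terms cancels, explaining why $c$ is $\rho$-independent.
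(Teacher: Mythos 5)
The proposal takes a genuinely different route from the paper, though both hinge on the same cancellation. The paper constructs the comparison function $w(x)=A(\nu)(x\cdot\nu-\tfrac{\sigma\eps\rho}{2})_+^s$, computes its exact $e_n$-domain variation $\tilde w_\eps$, and invokes \autoref{lemma:comp-dom-var}(ii) to transfer $\tilde w_\eps\le \tilde u_\eps$ back to $w\le u$ on a full ball. You instead argue point by point: for $y\in B_\rho$ with $u(y)>0$ the intermediate value theorem produces the $(x,w)$ realizing the domain variation, and you expand $u(y)$ directly. Both proofs come down to the same two computations — the exact identity $y_n+\eps\, a\cdot y=|e_n+\eps a|\,(y\cdot\nu)$ and the cancellation $A^{1/s}(e_n+\eps a)=A^{1/s}(e_n)(1+\eps a_n)+O(\eps^{1+\gamma})$ forced by $A^{1/s}(e_n)a_n=\nabla A^{1/s}(e_n)\cdot a$ — and both produce the same budget, with $\eps_0\lesssim\sigma^{1/\gamma}$ dictated by the $C^{1,\gamma}$ remainder of $A^{1/s}$. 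Your accounting of the two error sources (the $\eps\sigma\rho/4$ defect of $\tilde u_\eps$ and the $\eps^{1+\gamma}\rho$ Taylor remainder) and the explanation of why $c$ is $\rho$-independent are correct. The main thing your route buys is that it avoids having to compute $\tilde w_\eps$ and avoids quoting \autoref{lemma:comp-dom-var}; what it costs is that the degenerate set $\{u=0\}\cap B_\rho$ must now be treated by hand.

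That degenerate case is where there is a real gap. For $u(y)=0$ the IVT step does not apply, and the workaround you propose — reading off the ``boundary extension of $\tilde u_\eps$ at $\{x_n=0\}$'' — does not work as stated: hypothesis \eqref{eq:expansion-ass} is only assumed on $B_{2\rho}\cap\{x_n>0\}$, so it says nothing about the boundary extension; and in any case, since $u\equiv 0$ on $\{x_n\le -\eps\}\cap B_1$ by $\eps$-flatness, the max-definition of $\tilde u_\eps$ on $\{x_n=0\}$ always contains $w=1$ and carries no useful information. The correct fix within your framework is a limiting argument from the positivity set: set $t^*:=\sup\{t\ge 0:\ u(y+te_n)=0\}$, so $y^*:=y+t^*e_n\in\partial\{u>0\}$ with $|y_n^*|\le\eps$ (hence still inside the region where your positive-case estimate holds, up to a harmless factor $2$ in the $\eps^2\rho$ and $\eps^{1+\gamma}\rho$ terms); pick $y_k\to y^*$ with $u(y_k)>0$, apply your estimate to each $y_k$, and pass to the limit to get $y^*\cdot\nu\le\tfrac{\sigma\eps\rho}{2}$; then $y\cdot\nu=y^*\cdot\nu-t^*\nu_n\le y^*\cdot\nu$ because $t^*\ge 0$ and $\nu_n>0$. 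This is exactly what the paper absorbs by citing \autoref{lemma:comp-dom-var}(ii), whose conclusion $v\le u$ is stated on the full ball $B_{r-\eps}$ rather than only on $\{x_n>0\}$. With that repair the proof is complete.
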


\begin{proof}
We only explain how to prove the lower bound. The proof of the upper bound goes in the same way. Let us define
\begin{align*}
\nu = \frac{e_n + \eps a}{|e_n + \eps a|}.
\end{align*}
Moreover, for $\eps > 0$ small enough, we have that $\nu_n > 0$, and therefore the function $x \mapsto w(x):= A(\nu)(x \cdot \nu - \sigma\eps\rho/2)_+^s$ is strictly increasing in the $e_n$-direction in $B_{2\rho} \cap \{ x \cdot \nu > \sigma\eps\rho/2 \}$.
Let us define
\begin{align*}
\tilde{w}_{\eps}(x) = \frac{x \cdot \nu  - [A(e_n)/A(\nu)]^{1/s} x_n}{\eps \nu_n} - \frac{\sigma\rho}{2 \nu_n},
\end{align*}
and observe that $\tilde{w}_{\eps}(x)$ is the $\eps$-domain variation of $w$ in the $e_n$-direction since
\begin{align*}
w(x - \eps \tilde{w}_{\eps}(x) e_n) = A(\nu)(x \cdot \nu - \eps \tilde{w}_{\eps}(x) \nu_n - \sigma\eps\rho/2)_+^s = A(e_n)(x_n)_+^s.
\end{align*}
Note that in the light of \autoref{lemma:comp-dom-var}, in order to show the desired lower bound, it suffices to show
\begin{align*}
\tilde{w}_{\eps} \le \tilde{u}_{\eps} ~~ \text{ in } B_{\rho + \eps} \cap \{ x_n > 0 \},
\end{align*}
and due to the assumption \eqref{eq:expansion-ass} it suffices to prove
\begin{align*}
\frac{x \cdot \nu - [A(e_n)/A(\nu)]^{1/s} x_n}{\eps \nu_n} - \frac{\sigma \rho}{2 \nu_n} = \tilde{w}_{\eps}(x) \le x \cdot a - \frac{\sigma\rho}{4} ~~ \forall x \in B_{\rho+\eps} \cap \{ x_n > 0 \},
\end{align*}
where we used the definition of $\tilde{w}_{\eps}$. Since $\nu_n \le 1$, it remains to show
\begin{align}
\label{eq:conc-help-1}
x \cdot \frac{\nu}{\nu_n} - \left[ \frac{A^{1/s}(e_n)}{A^{1/s}(\nu)} \right] x \cdot e_n \le x \cdot \eps a + \frac{\eps \sigma \rho}{4} ~~ \forall x \in B_{\rho+\eps} \cap \{ x_n > 0 \}.
\end{align}
Note that $\frac{\nu}{\nu_n} = \frac{e_n + \eps a}{1 + \eps a_n}$ and since $a \cdot A^{1/s}(e_n) e_n = a \cdot \nabla A^{1/s}(e_n)$, and $A^{1/2} \in C^{1+\gamma}(\mathbb{S}^{n-1})$ by \eqref{eq:A-properties}:
\begin{align*}
A^{1/s}(\nu) = A^{1/s}(e_n + \eps a) = A^{1/s}(e_n) + \eps a \cdot \nabla A^{1/s}(e_n) + O(\eps^{1+\gamma}) = A^{1/s}(e_n)(1 + \eps a_n + O(\eps^{1+\gamma})),
\end{align*}
and therefore
\begin{align*}
x \cdot \frac{\nu}{\nu_n} - \left[ \frac{A^{1/s}(e_n)}{A^{1/s}(\nu)} \right] x \cdot e_n  &= x \cdot \frac{e_n + \eps a}{1 + \eps a_n} - \frac{x \cdot e_n}{1 + \eps a_n + O(\eps^{1+\gamma})} \\
&= x \cdot \eps a + x \cdot \eps a \left(\frac{1}{1 + \eps a_n} - 1\right) + x \cdot e_n \left(\frac{1}{1 + \eps a_n} - \frac{1}{1 + \eps a_n + O(\eps^{1+\gamma})} \right).
\end{align*}
Now, we take $\eps > 0$ so small that $|1+ \eps a_n| \ge 1 - \eps |a| \ge 1/2$, i.e., the smallness depends only on an upper bound of $|a|$.   Then
\begin{align*}
\left| \frac{1}{1 + \eps a_n} - 1 \right| = \eps \left| \frac{a_n}{1 + \eps a_n} \right| \le 2|a|\eps, \qquad \left| \frac{1}{1 + \eps a_n} - \frac{1}{1 + \eps a_n + O(\eps^{1+\gamma})} \right| = \left| \frac{O(\eps^{1+\gamma})}{1 + O(\eps)} \right| \le c \eps^{1+\gamma},
\end{align*}
where $c > 0$ depends only on $n,s,\lambda,\Lambda,\gamma$, and $\Vert A \Vert_{C^{1+\gamma}(\mathbb{S}^{n-1})}$. Therefore, using also that $|x| \le \rho + \eps \le 2 \rho$, it follows
\begin{align*}
x \cdot \frac{\nu}{\nu_n} - \left[ \frac{A^{1/s}(e_n)}{A^{1/s}(\nu)} \right] x \cdot e_n &\le x \cdot \eps a + c(1 + |a|) \rho \eps^{1+\gamma}.
\end{align*}
This implies \eqref{eq:conc-help-1}, as desired, as long as $\eps^{\gamma} \le c \sigma$ for some $c > 0$ depending only on $n,s,\lambda,\Lambda,\gamma$, $\Vert A \Vert_{C^{1+\gamma}(\mathbb{S}^{n-1})}$, and an upper bound on $|a|$.

Let us finish the proof by showing that $|\nu - e_n| \le 4 \eps |a|$.
To see this, we estimate
\begin{align*}
|\nu - e_n|^2 = |\nu|^2 + |e_n|^2 - 2 \nu_n &= 2\left(1 - \frac{1 + \eps a_n}{\sqrt{(1 + \eps a_n)^2 + |\eps a \cdot e'|^2}} \right) \\
&= 2\left(1 - \frac{1}{\sqrt{1 + \left(\frac{|\eps a \cdot e'|}{|1 + \eps a_n|} \right)^2}} \right) \le 4 \left(\frac{|\eps a \cdot e'|}{|1 + \eps a_n|} \right)^2 \le 16 \eps^2 |a|^2,
\end{align*}
where we used again that $|1+ \eps a_n| \ge 1/2$, and we applied the algebraic inequality $1 - \frac{1}{\sqrt{1 + x}} \le 2x$, which holds true whenever $0 \le x \le 1/2$. In our case, this condition is verified once $\eps > 0$ is small enough, depending on an upper bound of $|a|$.
\end{proof}

We are now in a position to give the proof of the improvement of flatness:

\begin{proof}[Proof of \autoref{thm:improvement-of-flatness}]
Let us assume by contradiction that there exists a sequence $\eps_k \searrow 0$, and a sequence $(u_k)_k$ of viscosity solutions to the nonlocal one-phase problem for $K$ in $B_2$ with $0 \in \partial \{ u_k > 0\}$, such that
\begin{align*}
A(e_n)(x_n - \eps_k)_+^s \le u_k(x) \le A(e_n)(x_n + \eps_k)_+^s ~~ \text{ in } B_1,
\end{align*}
and moreover,
\begin{align*}
T_k := \tail([u_k-A(e_n)(x_n -\eps_k)_+^s]_- ; 1) + \tail([A(e_n)(x_n + \eps_k)_+^s - u_k]_- ; 1) \le \eps_k \delta_0,
\end{align*}
but so that the conclusion of the theorem does not hold true. Then, we can apply \autoref{lemma:linearized-problem} and deduce that the graphs $\Gamma_k$ of the domain variations $\tilde{u}_k$ of $u_k$ converge in the Hausdorff distance in $B_{1/2} \cap \{x_n \ge 0\}$ to the graph $\Gamma$ of a H\"older continuous function $\tilde{u}$, which solves in the viscosity sense
\begin{align*}
\begin{cases}
L((x_n)_+^{s-1} \tilde{u} \1_{B_{3/4}}) &= f ~~ \text{ in } B_{1/2} \cap \{ x_n > 0 \},\\
\partial_{\omega} \tilde{u} &= 0 ~~ \text{ on } B_{1/2} \cap \{ x_n = 0 \},
\end{cases}
\end{align*}
where
\begin{align*}
\omega = \frac{A^{1/s}(e_n)e_n - \nabla A^{1/s}(e_n)}{|(A^{1/s}(e_n)e_n - \nabla A^{1/s}(e_n))|} \in \mathbb{S}^{n-1}.
\end{align*}
Clearly, by the regularity of $A$ (see \autoref{prop:free-bound-cond}) there exists $\delta > 0$, depending only on $n,s,\lambda,\Lambda$, and $\Vert A \Vert_{C^{1+\beta}(\mathbb{S}^{n-1})}$, such that $\omega_n \ge \delta$.\\
Due to \autoref{lemma:reg-linearized}, and since $\tilde{u}(0) = 0$, and $| \tilde{u} | \le 1$ (and by \autoref{lemma:linearized-problem}(iv)) we deduce that there exists $a \in \R^{n}$ with $a \cdot \omega = 0$, and $|a| \le c$ for some constant $c > 0$, depending only on $n,s,\lambda,\Lambda,\gamma$, such that for any $\rho \in (0,1/2)$, and $\sigma \in [c_0\rho^{\gamma},1)$ for some $c_0 > 0$:
\begin{align*}
|\tilde{u}(x) - a \cdot x | \le \sigma\frac{\rho}{8} ~~ \text{ in } B_{2\rho} \cap \{ x_n > 0 \}.
\end{align*}
At this point $\eps_0$ depends on a lower bound for $\sigma$, and on $\rho$, so we need to make sure that we will not use this estimate for arbitrarily small $\sigma$ and $\rho$.
Therefore, by \autoref{lemma:linearized-problem}(ii), we get that for large enough $k$:
\begin{align*}
a \cdot x - \sigma\frac{\rho}{4} \le \tilde{u}_k(x) \le a \cdot x + \sigma\frac{\rho}{4} ~~ \text{ in } B_{2\rho} \cap \{ x_n > 0 \}.
\end{align*}
Thus, we are in a position to apply \autoref{lemma:conclusion-iof}, which yields that for any $\rho \in (0,1/2)$, and $\sigma \in [c_0\rho^{\gamma},1)$:
\begin{align}
\label{eq:iof-help-0}
A(\nu)(x \cdot \nu - \sigma\eps_k \rho /2)_+^s \le u_k(x) \le A(\nu)(x \cdot \nu + \sigma\eps_k \rho /2)_+^s ~~ \text{ in } B_{\rho},
\end{align}
where $\nu \in \mathbb{S}^{n-1}$ with $|\nu - e_n| \le c \eps_k$. 
In particular, we get for some $\rho_0\le 1/2$ to be chosen so small that $c_0 \rho_0^{\gamma} \le 1$, and upon choosing $\sigma = c_0 \rho_0^{\gamma}$:
\begin{align*}
A(\nu)(x \cdot \nu - \eps_k/2)_+^s \le (u_k)_{\rho_0}(x) \le A(\nu)(x \cdot \nu + \eps_k/2)_+^s ~~ \text{ in } B_{1},
\end{align*}
which proves the first claim of the theorem for $(u_k)_{\rho_0}$.\\
In order to get a contradiction, and thereby to conclude the proof, it remains to verify the tail estimate for $(u_k)_{\rho_0}$. To do so, we compute
\begin{align*}
\tail(|(u_k)_{\rho_0} - A(\nu)(x \cdot \nu)_+^s|; 1) &= \int_{\R^n \setminus B_1} \left| \frac{u_k(\rho_0 x) - A(\nu)(\rho_0 x \cdot \nu)_+^s}{\rho_0^s} \right| |x|^{-n-2s} \d x \\
&= \rho_0^{s} \int_{\R^n \setminus B_{\rho_0}} |u_k(x) - A(\nu)(x \cdot \nu)_+^s| |x|^{-n-2s} \d x.
\end{align*}

We will estimate this integral by splitting  it into three parts: 
\begin{align*}
\R^n \setminus B_{\rho_0} = (B_{1/2} \setminus B_{\rho_0}) \cup (B_1 \setminus B_{1/2}) \cup (\R^n \setminus B_1).
\end{align*}
To estimate the first part, we apply \eqref{eq:iof-help-0} with $\rho := |x| \in [\rho_0,1/2)$ and $\sigma = c_0 |x|^{\gamma}$:
\begin{align*}
\rho_0^{s} \int_{B_{1/2} \setminus B_{\rho_0}} & |u_k(x) - A(\nu)(x \cdot \nu)_+^s| |x|^{-n-2s} \d x \\
&\le  \rho_0^{s} \int_{B_{1/2} \setminus B_{\rho_0}} [ A(\nu)(x \cdot \nu + c\eps_k |x|^{1+\gamma})_+^s - A(\nu)(x \cdot \nu)_+^s] |x|^{-n-2s} \d x \\
&\quad + \rho_0^{s} \int_{B_{1/2} \setminus B_{\rho_0}} [A(\nu)(x \cdot \nu)_+^s - A(\nu)(x \cdot \nu - c\eps_k |x|^{1+\gamma})_+^s] |x|^{-n-2s} \d x.
\end{align*}
Note that for these integrals, we can compute in a similar way as in \eqref{eq:L1-exponent-gain}, if $\gamma \in (0,s)$:
\begin{align*}
 \rho_0^{s} & \int_{\R^n \setminus B_{\rho_0}} [(x \cdot \nu + c\eps_k |x|^{1+\gamma})_+^s - (x \cdot \nu)_+^s] |x|^{-n-2s} \d x \\
 &\le c \rho_0^{-n-s} \sum_{i = 1}^{\infty} 2^{-i(n+2s)} \int_{B_{2^{i+1} \rho_0} \setminus B_{2^i \rho_0}} [(x \cdot \nu + c\eps_k |x|^{1+\gamma})_+^s - (x \cdot \nu)_+^s] \d x \\
 &\le c \rho_0^{-1-s} \sum_{i = 1}^{\infty} 2^{-i(1+2s)} \int_{-2^{i+1} \rho_0}^{2^{i+1} \rho_0}[(x + c\eps_k (2^{i+1} \rho_0)^{1+\gamma})_+^s - (x)_+^s] \d x \\
 &\le c \rho_0^{-1-s} \sum_{i = 1}^{\infty} 2^{-i(1+2s)} [(2^{i+1} \rho_0)^s \eps_k (2^i\rho_0)^{1+\gamma}] \\
 &\le c \rho_0^{\gamma} \eps_k \left(\sum_{i = 1}^{\infty} 2^{-i(s-\gamma)} \right) \le c \rho_0^{\gamma} \eps_k,
\end{align*}
and analogously for the other term. Moreover, using that $|\nu - e_n| \le c \eps_k$ (see \autoref{lemma:conclusion-iof}), as well as the assumptions \eqref{eq:eps-flat-iof} and \eqref{eq:tail-smallness-iof}, we obtain for the second part

\begin{align*}
\rho_0^{s} \int_{B_{1} \setminus B_{1/2}} & |u_k(x) - A(\nu)(x \cdot \nu)_+^s| |x|^{-n-2s} \d x \\
&\le \rho_0^{s} \int_{B_{1} \setminus B_{1/2}} |u_k(x) - A(e_n)(x_n)_+^s| |x|^{-n-2s} \d x \\
&\quad + \rho_0^{s} \int_{B_{1} \setminus B_{1/2}} |A(e_n)(x_n)_+^s - A(\nu)(x \cdot \nu)_+^s| |x|^{-n-2s} \d x\\
&=: I_1 + I_2,
\end{align*}

and for the third part
\begin{align*}
\rho_0^{s} \int_{\R^n \setminus B_{1}} & |u_k(x) - A(\nu)(x \cdot \nu)_+^s| |x|^{-n-2s} \d x \\
&\le \rho_0^s T_{\eps_k} + c\rho_0^s \int_{\R^n \setminus B_{1}} |A(e_n)(x \cdot e_n)_+^s - A(\nu)(x \cdot \nu)_+^s| |x|^{-n-2s} \d x =: I_3 + I_4.
\end{align*}
For $I_1$, we use \eqref{eq:eps-flat-iof} and a similar computation as above, to deduce
\begin{align*}
I_1 &\le A(e_n)\rho_0^s \int_{B_1 \setminus B_{1/2}} |(x_n + \eps_k)_+^s - (x_n)_+^s| |x|^{-n-2s} \d x \\
&\quad + A(e_n)\rho_0^s \int_{B_1 \setminus B_{1/2}} |(x_n - \eps_k)_+^s - (x_n)_+^s| |x|^{-n-2s} \d x \\
&\le c \rho_0^s \eps_k.
\end{align*}
Moreover, by \eqref{eq:tail-smallness-iof}, we have $I_3 \le \rho_0^s \eps_k \delta_0$. For $I_2$ and $I_4$, we compute
\begin{align*}
I_2 + I_4 &= c\rho_0^s \int_{\R^n \setminus B_{1/2}} |A(e_n)(x_n)_+^s - A(\nu)(x \cdot \nu)_+^s| |x|^{-n-2s} \d x \\
&\le \rho_0^s \int_{\R^n \setminus B_{1/2}} |A(e_n)-A(\nu)| |x|^{-n-s} \d x + c\rho_0^s \int_{\R^n \setminus B_{1/2}} |(x_n)_+^s - (x \cdot \nu)_+^s| |x|^{-n-2s} \d x \\
&\le c \rho_0^s \eps_k + c\rho_0^s \int_{\R^n \setminus B_{1/2}} |(\cos(x,e_n))_+^s - (\cos(x,\nu))_+^s| |x|^{-n-s} \d x \\
&= c \rho_0^s \eps_k + c\rho_0^s \int_{1/2}^{\infty} \left( \int_{\mathbb{S}^{n-1}} |(\cos(\theta,e_n))_+^s - (\cos(\theta,\nu))_+^s| \d \theta \right)  r^{-1-s} \d r\\
&\le c \rho_0^s \eps_k + c\rho_0^s \int_{1/2}^{\infty} \eps_k r^{-1-s} \d r \le  c\rho_0^s \eps_k.
\end{align*}
where we used the Lipschitz continuity of $e \mapsto A(e)$ (see \eqref{eq:A-properties}), and that
\begin{align}
\label{eq:cos-estimate}
\int_{\mathbb{S}^{n-1}} |(\cos(\theta,e_n))_+^s - (\cos(\theta,\nu))_+^s| \d \theta \le c \eps_k.
\end{align}
Before explaining how to show \eqref{eq:cos-estimate}, note that a combination of all previous estimates yields 
\begin{align*}
\tail(|(u_k)_{\rho_0} - (x \cdot \nu)_+^s|); 1) \le c \rho_0^{\gamma} + I_1 + I_2 + I_3 + I_4 \le c (\rho_0^{\gamma} + \rho_0^s)\eps_k.
\end{align*}
Therefore, upon choosing $\rho_0$ so small that $c (\rho_0^{\gamma} + \rho_0^s) \le \delta_0/2$, we get the desired estimate for the tail of $(u_k)_{\rho_0}$. Note that the smallness of $\rho_0$ only depends on $n,s,\lambda,\Lambda,\delta_0,\gamma$, and $\Vert A \Vert_{C^{1+\gamma}(\mathbb{S}^{n-1})}$, which implies that $\eps_0$ from \autoref{lemma:conclusion-iof} only depends on these quantities.

Finally, we give more details on the proof of \eqref{eq:cos-estimate}. First, note that we can restrict the domain of integration to the two-dimensional hyper-surface $H$ that is spanned by $e_n,\nu$ and contained in $\mathbb{S}^{n-1}$.
Let $e_1,\nu_1,e_2,\nu_2,a \in H$ be such that 
\begin{align*}
0 = \cos(e_1,e_n) < \cos(\nu_1,e_n) < \cos(a,e_n) < \cos(e_n,e_n) = 1 > \cos(\nu,e_n) > \cos(e_2,e_n) = 0 > \cos(\nu_2,e_n),\\
\cos(e_1,\nu) < 0 = \cos(\nu_1,\nu) < \cos(e_n,\nu) < \cos(a,\nu)  < \cos(\nu,\nu) = 1 > \cos(e_2,\nu) > \cos(\nu_2,\nu) = 0,
\end{align*}
and $\cos(a,e_n) = \cos(a,\nu)$.  Note that since $|e_n - \nu| \le c \eps_k$, we also have $|e_1 - \nu_1| + |e_2 - \nu_2| \le c \eps_k$.\\
In the following, for $e \in H$, we will also denote the angle between $e$ and $e_1$ by $e$. This will be useful in the parametrization of the integrals. Then, we compute
\begin{align*}
\int_{\mathbb{S}^{n-1}} |(\cos(\theta,e_n))_+^s - (\cos(\theta,\nu))_+^s| \d \theta &\le c \int_{H} |(\cos(\theta,e_n))_+^s - (\cos(\theta,\nu))_+^s| \d \theta\\
&\le c \int_{e_1}^{\nu_2} |(\cos(\theta,e_n))_+^s - (\cos(\theta,\nu))_+^s| \d \theta\\
&= c\int_{e_1}^{a} (\cos(\theta,e_n))_+^s - (\cos(\theta,\nu))_+^s \d \theta \\
&\quad + c\int_a^{\nu_2} (\cos(\theta,\nu))_+^s - (\cos(\theta,e_n))_+^s \d \theta\\
&=: cJ_1 + cJ_2.
\end{align*}
For $J_1$, we compute
\begin{align*}
J_1 &= \int_{e_1}^{a} \cos(\theta,e_n)^s \d \theta - \int_{\nu_1}^a \cos(\theta,\nu)^s \d \theta = \int_{e_1}^{a} \cos(\theta,e_n)^s \d \theta - \int_{e_1}^{a + (\nu_1 - e_1)} \cos(\theta,e_n)^s \d \theta \\
&= \int_{a}^{a + (\nu_1 - e_1)} \cos(\theta,e_n)^s \d \theta \le |\nu_1 - e_1| \le c \eps_k,
\end{align*}
where we used $\cos(\theta,e_n)^s \le 1$. Similarly, for $J_2$, we obtain $J_2 \le c \eps_k$. This implies \eqref{eq:cos-estimate}, as desired.
\end{proof}

\subsection{Conclusion of the proof}
\label{subsec:iterate-improvement-of-flatness}

An iterative application of the improvement of flatness (see \autoref{thm:improvement-of-flatness}) implies smoothness of the free boundary near all points satisfying the flatness assumption in \autoref{thm:free-boundary-regularity}. The underlying argument is by now standard in the literature for local problems, however, here we give a detailed proof for anisotropic nonlocal problems following closely \cite{Vel23}, taking into account nonlocal tail term.

First, we establish the uniqueness of the blow-up limit and the decay rate of the blow-up sequence near free boundary points at which the viscosity solution is $\eps$-flat.

\begin{lemma}
\label{lemma:uniqueness-of-blowup}
Let $K \in C^{1-2s+\beta}(\mathbb{S}^{n-1})$ for some $\beta > \max\{0,2s-1\}$ and assume \eqref{eq:Kcomp}. Let $u$ be a viscosity solution to the nonlocal one-phase problem for $K$ in $B_2$. Then, there are $\eps_0, \delta_0 \in (0,1)$ and $c > 0$, depending only on $K$
%$n,s,\lambda,\Lambda$, and $\Vert A \Vert_{C^{1,\beta}(\mathbb{S}^{n-1})}$,
such that if $\eps \in (0,\eps_0)$ is such that the following hold
\begin{align}
\label{eq:flat-iof2}
A(e_n)(x \cdot e_n -\eps)_+^s \le u(x) \le A(e_n)(x \cdot e_n + \eps)_+^s ~~ \forall x \in B_1,
\end{align}
and
\begin{align}
\label{eq:tail-smallness-iof2}
T_{\eps} := \tail([u-A(e_n)(x_n -\eps)_+^s]_- ; 1) + \tail([A(e_n)(x_n + \eps)_+^s - u]_- ; 1) \le \eps \delta_0,
\end{align}
then, for every $x_0 \in \partial \{ u > 0 \} \cap B_{\eps \delta_0}$, there is a unique $\nu_{x_0} \in \mathbb{S}^{n-1}$ such that for any $r \le \frac{1}{2}$:
\begin{align}
\label{eq:rate-of-convergence} 
\tail(|u_{r,x_0} - u_{x_0}|;1) + \Vert u_{r,x_0} - u_{x_0} \Vert_{L^{\infty}(B_1)} \le c r^{\gamma}, \quad \text{ where } \quad  u_{x_0}(x) = A(\nu_{x_0})(x \cdot \nu_{x_0})_+^s.
\end{align}
Here, $\gamma \in (0,1)$ is such that $2 = \rho_0^{-\gamma / s}$, where $\rho_0$ is the constant from \autoref{thm:improvement-of-flatness}.\\
Moreover, for any $y \in \overline{\{ u > 0 \}} \cap B_{1/2}$ it holds 
\begin{align}
\label{eq:non-deg-visc}
\Vert u \Vert_{L^{\infty}(B_r(y))} \ge c^{-1} r^s ~~ \forall r \in (0,1/2).
\end{align}
\end{lemma}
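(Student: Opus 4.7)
The plan is to iterate \autoref{thm:improvement-of-flatness} at each free boundary point $x_0 \in \partial\{u>0\} \cap B_{\eps\delta_0}$ and extract from the iteration the uniqueness and decay rate \eqref{eq:rate-of-convergence}, together with the non-degeneracy \eqref{eq:non-deg-visc}. I would first check that the iteration can be started: given such an $x_0$, the rescaling $v := u_{1/2,x_0}$ satisfies in $B_1$ the hypotheses of \autoref{thm:improvement-of-flatness} in the $e_n$-direction with flatness parameter $\tilde\eps \le C\eps$. Indeed, the flatness \eqref{eq:flat-iof2} together with $|x_0| \le \eps\delta_0$ and $|(x_0)_n| \le \eps$ (since $x_0 \in \partial\{u>0\}$) traps $v$ between $A(e_n)(x_n \pm C\eps)_+^s$ in $B_1$; the corresponding tail is derived from \eqref{eq:tail-smallness-iof2} via a dyadic computation of the type \eqref{eq:L1-exponent-gain}, and the smallness requirements of \autoref{thm:improvement-of-flatness} are met provided $\eps_0,\delta_0$ are taken small enough.

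Then I would inductively apply \autoref{thm:improvement-of-flatness} to the rescalings $v_{\rho_0^k}$ to produce directions $\nu_k \in \mathbb{S}^{n-1}$ with $\nu_0 = e_n$, flatness parameters $\eps_k := \tilde\eps \cdot 2^{-k}$, and $|\nu_{k+1}-\nu_k| \le C\eps_k$, such that $v_{\rho_0^k}$ is $\eps_k$-flat in direction $\nu_k$ with tail at most $\eps_k\delta_0$. The geometric bound $|\nu_{k+1}-\nu_k| \le C\eps \cdot 2^{-k}$ shows that $(\nu_k)$ is Cauchy, converging to a unique $\nu_{x_0} \in \mathbb{S}^{n-1}$ with $|\nu_k - \nu_{x_0}| \le C\eps \cdot 2^{-k}$, from which uniqueness of the blow-up follows. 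For the rate of convergence, given $r \in (0,1/2]$, I would select $k$ with $\rho_0^{k+1} \le r \le \rho_0^k$ and compare $u_{r,x_0}$ to $u_{x_0}$ through the intermediate half-space solution $A(\nu_k)(x \cdot \nu_k)_+^s$: the flatness controls the difference on $B_1$ by $C\eps_k^s$ via the elementary bound $|(a+\eps_k)_+^s - a_+^s| \le C\eps_k^s$, while the regularity of $A$ from \eqref{eq:A-properties} together with $|\nu_k-\nu_{x_0}| \le C\eps_k$ handles the passage from $\nu_k$ to $\nu_{x_0}$. Using the choice $2 = \rho_0^{-\gamma/s}$, one has $\eps_k^s \le C r^\gamma$, giving the $L^\infty$ part of \eqref{eq:rate-of-convergence}. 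The tail part follows from a dyadic annular decomposition of $\R^n \setminus B_1$, with the inner annuli handled by the $L^\infty$-bound and the outer annuli handled by the iterative tail bound, both mimicking \eqref{eq:L1-exponent-gain}.

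For the non-degeneracy \eqref{eq:non-deg-visc}, given $y \in \overline{\{u>0\}} \cap B_{1/2}$ and $r \in (0,1/2)$, I would pick a closest free boundary point $x_0 \in \partial\{u>0\}$ to $y$, noting that the iteration can be started at every free boundary point near the origin by the same argument as in the first paragraph. The iteration at $x_0$ yields a half-space lower bound $u(x_0 + \rho z) \ge A(\nu_{x_0})\rho^s (z \cdot \nu_{x_0} - C\rho^{\gamma/s})_+^s$ for $z \in B_1$ at every scale $\rho$, so $u$ attains values of order $\max(r,|y-x_0|)^s$ at a suitable point in $B_{C\max(r,|y-x_0|)}(x_0)$; chaining interior Harnack through balls in $\{u>0\}$ from this point to $y$ then delivers \eqref{eq:non-deg-visc}. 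The main technical obstacle in this plan is the careful tracking of tail terms across successive rescalings: each application of \autoref{thm:improvement-of-flatness} delivers a tail bound for the rescaled function on $B_1$, but we need the analogous bound for $u_{r,x_0}$ relative to $u_{x_0}$; this passage requires the dyadic splitting arguments of the type used in \eqref{eq:L1-exponent-gain} and is where most of the bookkeeping lies.
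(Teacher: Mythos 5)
Your plan matches the paper's proof in essentially every respect: verify that the rescaling $u_{1/2,x_0}$ satisfies the hypotheses of \autoref{thm:improvement-of-flatness} (using $|x_0|\le\eps\delta_0$ and a dyadic tail computation of the type \eqref{eq:L1-exponent-gain}), iterate to produce a Cauchy sequence of directions $\nu_k$ with geometric flatness decay, take the limit $\nu_{x_0}$, bound $u_{r,x_0}-u_{x_0}$ by triangle inequality through the iteration level $k$ chosen so that $\rho_0^{k+1}\le r\le\rho_0^k$, and convert the $2^{-ks}$ bound into $r^\gamma$ via the definition of $\gamma$. Your comparison via the intermediate half-space $A(\nu_k)(x\cdot\nu_k)_+^s$ is a cosmetic variant of the paper's comparison via the rescaling $u_k$, and both rely on $|(a\pm\eps_k)_+^s - a_+^s|\le C\eps_k^s$ together with the $C^{1+\beta}$ regularity of $A$ from \eqref{eq:A-properties}.

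The only deviation is in the non-degeneracy step, where you invoke a chain of interior Harnack inequalities to transport the lower bound from a point near $x_0$ to $y$; this is not needed and, as stated, is not quite justified (it presupposes a chain of balls inside $\{u>0\}$ connecting the two points, which you have not established). The paper's argument avoids this entirely: from \eqref{eq:rate-of-convergence} one obtains both the supremum bound $\Vert u\Vert_{L^\infty(B_r(x_0))}\ge c r^s$ at any free boundary point $x_0$ and the non-tangential lower bound $u(x)\ge c\,\dist(x,\partial\{u>0\})^s$; splitting on whether $\dist(y,\partial\{u>0\})$ is $<r/2$ or $\ge r/2$ then gives \eqref{eq:non-deg-visc} directly, with the first case using that $B_{r/2}(x_0)\subset B_r(y)$ for the projection $x_0$ of $y$. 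Your draft would be correct once you drop the Harnack chaining and close the argument by this simpler dichotomy.
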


Note that \eqref{eq:non-deg-visc} is immediate for minimizers of $\cI$ due to the non-degeneracy in \autoref{thm:non-degeneracy}. However, since viscosity solutions are not non-degenerate in general, \eqref{eq:non-deg-visc} is an important result.

\begin{proof}
By scaling we have that $u_{r,x_0}$ is a viscosity solution to the nonlocal one-phase problem for $K$ in $B_2$ with $0 \in \partial \{ u_{r,x_0} > 0 \}$ for any $r \le 1/2$. Moreover, by \eqref{eq:flat-iof2}, we have
\begin{align*}
u_{1/2,x_0}(x) \le A(e_n)(x \cdot e_n + 2 x_0 \cdot e_n + 2\eps)_+^s \le A(e_n) (x \cdot e_n + 4\eps)_+^s ~~ \forall x \in B_1,
\end{align*}
and an analogous argument yields a corresponding lower bound. Moreover, by \eqref{eq:tail-smallness-iof2} and a computation similar to the one in \eqref{eq:L1-exponent-gain}, using that $|x_0| \le \eps \delta_0$, we deduce that for some constant $c_1 > 0$:
\begin{align*}
\tail([u_{1/2,x_0}-A(e_n)(x_n)_+^s]_- ; 1) &\le \tail([u-A(e_n)(x_n)_+^s]_- ; 1) \\
&\quad + A(e_n)\tail([(x_n - 2 x_0 \cdot e_n )_+^s -(x_n)_+^s]_- ; 1) \le c_1 \eps \delta_0.
\end{align*}
An analogous reasoning allows to estimate $\tail([A(e_n)(x_n)_+^s - u_{1/2,x_0}]_- ; 1) \le c_1 \eps \delta_0$.
Therefore, upon choosing $\delta_0$ as in \autoref{thm:improvement-of-flatness} and $\eps < \eps_0/c_1$, where $\eps_0$ is as in \autoref{thm:improvement-of-flatness}, we can apply \autoref{thm:improvement-of-flatness} to $u_{1/2,x_0}$, and then, iteratively to $u_{\rho_0^k/2,x_0} =: u_k$ for any $k \in \N$, where $\rho_0 > 0$ is the constant from \autoref{thm:improvement-of-flatness}. \\
Upon taking $\eps > 0$ even smaller, if necessary, this yields the existence of $\nu_k \in \mathbb{S}^{n-1}$ such that 
\begin{align*}
|\nu_k - \nu_{k+1}| \le C \eps_0 2^{-k}, \qquad A(\nu_k)(x \cdot \nu_k - \eps_0 2^{-k})_+^s \le u_k(x) \le A(\nu_k)(x \cdot \nu_k + \eps_0 2^{-k})_+^s ~~ \forall x \in B_1,
\end{align*}
and 
\begin{align*}
\tail([u_k -A(\nu_k)(x \cdot \nu_k -\eps_0 2^{-k})_+^s]_- ; 1) + \tail([A(\nu_k)(x \cdot \nu_k + \eps_0 2^{-k})_+^s - u_{k}]_- ; 1) \le 2^{-k} \eps_0 \delta_0.
\end{align*}
Note that the sequence $(\nu_k)_k$ is a Cauchy sequence since for any $1 < k < l$ it holds
\begin{align*}
|\nu_k - \nu_l| \le \sum_{i = k}^{l-1} |\nu_{i} - \nu_{i+1}| \le C \eps_0 \sum_{i = k}^{l-1} 2^{-i} \le 2 C \eps_0 2^{-k}.
\end{align*}
Thus, there exists $\nu_{x_0} \in \mathbb{S}^{n-1}$ such that $\nu_k \to \nu_{x_0}$ and $|\nu_k - \nu_{x_0}| \le 2 C \eps_0 2^{-k}$ for any $k \in \N$. Let us denote $u_{x_0}(x) = A(\nu_{x_0})(x \cdot \nu_{x_0})_+^s$. Then, we deduce that for any $x \in B_1$, using also the regularity of $[\nu \mapsto A(\nu)] \in C^{1,\beta}(\mathbb{S}^{n-1})$ from \eqref{eq:A-properties},
\begin{align}
\label{eq:uniqueness-help-1}
\begin{split}
|u_{x_0}(x) - u_{k}(x)| &\le |A(\nu_{x_0})(x \cdot \nu_{x_0})_+^s - A(\nu_k)(x \cdot \nu_k + \eps_0 2^{-k})_+^s| \\
& \quad + |A(\nu_{x_0})(x \cdot \nu_{x_0})_+^s - A(\nu_k)(x \cdot \nu_k - \eps_0 2^{-k})_+^s| \le c 2^{-ks}
\end{split}
\end{align}
for some $c > 0$, depending also on $n,s,\eps_0,\rho_0$, and $\Vert A \Vert_{C^{1+\beta}(\mathbb{S}^{n-1})}$. To conclude the proof, let us now fix $r \le 1/2$ and take $k \in \N$ such that $\rho_0^{k+1}/2 \le r \le \rho_0^{k}/2$. Observe that $u_{r,x_0}(x) = \left(\frac{2r}{\rho_0^k}\right)^{-s} u_{k}\left( \frac{2r}{\rho_0^k} x \right) = (u_k)_{\frac{2r}{\rho_0^k}}(x)$, and therefore we have
\begin{align*}
 A(\nu_k)\left(x \cdot \nu_k - \eps_0 2^{-k} \frac{\rho_0^k}{2r} \right)_+^s \le u_{r,x_0}(x) \le A(\nu_k) \left(x \cdot \nu_k + \eps_0 2^{-k} \frac{\rho_0^k}{2r} \right)_+^s ~~ \forall x \in B_1.
\end{align*} 
Thus, and using also that $\frac{\rho_0^{k}}{2r} \le \rho_0^{-1}$, we deduce for any $x \in B_1$: 
\begin{align*}
|u_{r,x_0}(x) - u_k(x) | \le | u_{r,x_0}(x) - A(\nu_k)(x \cdot \nu_k + \eps_0 2^{-k})_+^s | + | u_{r,x_0} - A(\nu_k)(x \cdot \nu_k - \eps_0 2^{-k})_+^s | \le c 2^{-ks}
\end{align*}
for some $c > 0$, depending also on $\rho_0$, but not on $k$.
Let us choose $\gamma > 0$ such that $\rho_0^{-\gamma / s} = 2$. Then, combining the previous estimate with \eqref{eq:uniqueness-help-1}, we deduce
\begin{align*}
\Vert u_{r,x_0} - u_{x_0} \Vert_{L^{\infty}(B_1)} \le c 2^{-ks} \le c \rho_0^{\gamma k} \le c r^{\gamma},
\end{align*}
as desired.\\
Note that by a similar argument based on the triangle inequality, but using the estimate for the tail of $u_k$ instead of the pointwise estimate in $B_1$, we can also prove that
\begin{align*}
\tail(|u_{r,x_0} - u_{x_0}|; 1) \le c 2^{-ks} \le c r^{\gamma}
\end{align*}
where $c > 0$, depends in addition also on $\delta_0$.
It remains to establish the non-degeneracy \eqref{eq:non-deg-visc}. It follows from \eqref{eq:rate-of-convergence}, as we will prove next.\\
First, we deduce from \eqref{eq:rate-of-convergence} that for any $x_0 \in \partial \{ u > 0 \}$ and $r \in (0,1)$
\begin{align}
\label{eq:visc-nondeg-help}
-cr^{s+\gamma} \le u(x_0 + rx) - A(\nu_{x_0})(rx \cdot \nu_{x_0})_+^s \le c r^{s+\gamma} ~~ \forall x \in B_1.
\end{align}
Consequently, 
\begin{align}
\label{eq:visc-nondeg-help-1}
\Vert u \Vert_{L^{\infty}(B_r(x_0))} \ge \Vert  A(\nu_{x_0})(rx \cdot \nu_{x_0})_+^s \Vert_{L^{\infty}(B_1)} - c r^{s+\gamma} \ge c_1 r^s - c r^{s+\gamma} \ge c r^s,
\end{align}
if $r \in (0,1)$ is small enough. 
Moreover, note that for any $x \in B_{1/2}$, we have
\begin{align*}
u(x) \ge c \dist(x ,\partial \{ u > 0 \})^s.
\end{align*}
Indeed, given $x \in B_{1/2}$, let us denote by $x_0$ the projection of $x$ to $\partial \{ u > 0 \}$, i.e, it holds $(x-x_0) \cdot \nu_{x_0} = |x-x_0| = \dist(x,\partial\{ u > 0 \})$, where we also used \eqref{eq:rate-of-convergence}. Then, from \eqref{eq:visc-nondeg-help} applied with $x_0 := x_0$, $r := \dist(x,\partial\{ u > 0\})$, and $x := (x - x_0)/r = \nu_{x_0}$ we deduce
\begin{align}
\label{eq:visc-nondeg-help-2}
u(x) \ge c \dist(x,\partial\{ u > 0 \})^s - c \dist(x,\partial\{ u > 0 \})^{s+\gamma} \ge c \dist(x,\partial\{ u > 0 \})^s,
\end{align}
if $x$ is close enough to $\partial \{ u > 0 \}$. A combination of \eqref{eq:visc-nondeg-help-1} and \eqref{eq:visc-nondeg-help-2} in the same way as in the proof of \cite[Proof of Theorem 4.8]{RoWe24b} yields \eqref{eq:non-deg-visc}, as we claimed. Indeed, if $y \in \{ u > 0 \}$ satisfies $\dist(y,\partial \{ u > 0 \}) \ge r/2$, then \eqref{eq:non-deg-visc} follows directly from \eqref{eq:visc-nondeg-help-2}. However, if $\dist(y,\partial \{ u > 0 \}) < r/2$, we apply \eqref{eq:visc-nondeg-help-1} to $B_{r/2}(x_0) \subset B_r(y)$, where $x_0 \in \partial \{ u > 0 \}$ is the projection of $y$ onto $\partial \{ u > 0 \}$.
\end{proof}

We are now in a position to establish \autoref{thm:free-boundary-regularity}. Note that this proof requires $u \in C^s$, and non-degeneracy in the sense of \eqref{eq:non-deg-visc}. This is the only time we need the $C^s$ regularity for viscosity solutions, which was proved in \autoref{lemma:visc-Cs}.

\begin{proof}[Proof of \autoref{thm:free-boundary-regularity}]
The proof is split into three steps.\\
Step 1: First, we claim that for every $x_0 \in \partial \{ u > 0 \} \cap B_{\eps \delta_0}$ the free boundary is flat near $x_0$, i.e., that there are $C > 0$ and $r_0 > 0$ such that for any $r \le r_0$ it holds
\begin{align}
\label{eq:flat-implies-smooth-claim-1}
\{ x \cdot \nu_{x_0} > C r^{\gamma/s} \} \cap B_1 \subset \{ u_{r,x_0} > 0\} \cap B_1, \qquad   \{ u_{r,x_0} > 0\} \cap \{ x \cdot \nu_{x_0} < - C r^{\gamma/s} \} \cap B_1 = \emptyset,
\end{align}
where $\eps_0,\delta_0,\gamma$, and $\nu_{x_0} \in \mathbb{S}^{n-1}$ are as in \autoref{lemma:uniqueness-of-blowup} and we assume that $\eps < \eps_0$.
The first inclusion follows immediately from \eqref{eq:rate-of-convergence} in \autoref{lemma:uniqueness-of-blowup}, which is applicable by assumption, and which (by the nonnegativity of $u$) implies that for $r \le 1/2$ and some $c > 0$:
\begin{align*}
u_{r,x_0}(x) \ge \left[A(\nu_{x_0})(x \cdot \nu_{x_0})_+^s -c r^{\gamma} \right]_+ ~~ \forall x \in B_1.
\end{align*}
We prove the second inclusion in \eqref{eq:flat-implies-smooth-claim-1} by contradiction. Let us assume that there is $y \in B_1$ such that $u_{r,x_0}(y) > 0$ and $y \cdot \nu_{x_0} < - C r^{\gamma/s}$. Then, by the non-degeneracy of $u$ (see \eqref{eq:non-deg-visc}), we deduce that for $\rho = C r^{\gamma/s}/2$
\begin{align}
\label{eq:flat-implies-smooth-help-1}
\Vert u_{2r,x_0} - u_{x_0} \Vert_{L^{\infty}(B_{\rho}(y/2))} = \Vert u_{2r,x_0} \Vert_{L^{\infty}(B_{\rho}(y/2))} \ge c( - r^{\gamma} + \rho^s) \ge c_1 (C/2)^{s} r^{\gamma}
\end{align}
for some $c_1 > 0$, where we used that $u_{x_0} \equiv 0$ in $B_{\rho}(y/2)$ by construction. Thus, if we choose $r_0 > 0$ so small that $C r_0^{\gamma} \le 1$, we deduce from \autoref{lemma:uniqueness-of-blowup}
\begin{align*}
\Vert u_{2r,x_0} - u_{x_0} \Vert_{L^{\infty}(B_{\rho}(y/2))} \le \Vert u_{2r,x_0} - u_{x_0} \Vert_{L^{\infty}(B_1)} \le c_2 r^{\gamma}
\end{align*}
for some $c_2 > 0$, which contradicts \eqref{eq:flat-implies-smooth-help-1} upon choosing $C > 0$ large enough. This proves \eqref{eq:flat-implies-smooth-claim-1}.

Step 2: Next, we claim that the map $x_0 \mapsto \nu_{x_0}$ is H\"older continuous in $\partial \{ u > 0 \} \cap B_{\rho}$ for $\rho := \eps \delta_0 \in (0,1)$, i.e., that there are $c > 0$ and $\alpha \in (0,s)$ such that
\begin{align}
\label{eq:flat-implies-smooth-claim-2}
|\nu_{x_0} - \nu_{y_0}| \le c |x_0 - y_0|^{\alpha} ~~ \forall x_0, y_0 \in \{ u > 0 \} \cap B_{\rho}.
\end{align}
To see this, we observe first that as an easy consequence of the $C^s$ regularity (see \autoref{lemma:visc-Cs}, using that $0 \in \partial \{ u > 0 \}$), it holds for $r \in (0,1)$ such that $r^s := |x_0 - y_0|^{s - \alpha}$:
\begin{align*}
\Vert u_{r,x_0} - u_{r,y_0}\Vert_{L^{\infty}(B_1)} &= r^{-s} \Vert u(x_0 + r  \cdot) - u(y_0 + r \cdot) \Vert_{L^{\infty}(B_1)} \\
&\le \Vert u \Vert_{C^{s}(B_1)} r^{-s}|x_0 - y_0|^s \le c |x_0 - y_0|^{\alpha},
\end{align*}
once $x_0,y_0 \in \{ u > 0 \} \cap B_{\eps \delta_0}$ and $r \le 1/2$.
Combining this estimate with \autoref{lemma:uniqueness-of-blowup}, which is applicable by assumption, and setting $\alpha = \frac{s\gamma}{s+\gamma}$, such that $r^{\gamma} = |x_0 - y_0|^{\alpha}$, we deduce
\begin{align*}
\Vert u_{x_0} - u_{y_0} \Vert_{L^{\infty}(B_1)} \le \Vert u_{x_0} - u_{r,x_0} \Vert_{L^{\infty}(B_1)}  + \Vert u_{r,x_0} - u_{r,y_0} \Vert_{L^{\infty}(B_1)}  + \Vert u_{r,y_0} - u_{y_0} \Vert_{L^{\infty}(B_1)} \le c |x_0 - y_0|^{\alpha}.
\end{align*}
Note that $r \le \frac{1}{2}$, which we required for the previous argument, follows directly from $(2\eps \delta_0)^{\frac{1}{s + \gamma}} \le \frac{1}{2}$, which can be achieved upon choosing $\eps > 0$ smaller, if necessary. From the previous estimate, we immediately deduce \eqref{eq:flat-implies-smooth-claim-2} by applying the following algebraic inequality with $v_1 = \nu_{x_0}$, $v_2 = \nu_{y_0}$:
\begin{align*}
|v_1 - v_2| \le c \Vert (v_1 \cdot x)_+^s - (v_2 \cdot x)_+^s \Vert_{L^{\infty}(B_1)} ~~ \forall v_1, v_2 \in \mathbb{S}^{n-1}.
\end{align*}
This algebraic inequality in turn follows from the corresponding one for $s = 1$ (see \cite[page 76]{Vel23}) and the Lipschitz regularity of $t \mapsto t^{1/s}$.

Step 3: Having at hand \eqref{eq:flat-implies-smooth-claim-1} and \eqref{eq:flat-implies-smooth-claim-2}, we can now conclude the proof by following the lines of \cite[Proposition 8.6]{Vel23}. Indeed, \eqref{eq:flat-implies-smooth-claim-1} yields that for any $\delta > 0$, there is $R > 0$ such that 
\begin{align}
\label{eq:unif-cone-1}
\begin{cases}
u > 0 ~~ \text{ in } \mathcal{C}^+_{\delta}(x_0,\nu_{x_0}) \cap B_R(x_0),\\
u = 0 ~~ \text{ in } \mathcal{C}^-_{\delta}(x_0,\nu_{x_0}) \cap B_R(x_0),
\end{cases}
 ~~ \forall x_0 \in \partial \{ u > 0 \} \cap B_{R},
\end{align} 
where we choose $R > 0$ such that $C R^{\gamma/s} \le \delta$, and define
\begin{align*}
\mathcal{C}^{\pm}_{\delta}(x_0,\nu_{x_0}) := \{ x \in \R^n :  \pm \nu_{x_0} \cdot (x - x_0) > \delta |x-x_0|\}.
\end{align*}
In fact, if $x \in \mathcal{C}^{\pm}_{\delta}(x_0,\nu_{x_0}) \cap B_R(x_0)$, then we have
\begin{align*}
\pm(x-x_0) \cdot \nu_{x_0} > \delta \ge C R^{\gamma/s} |x-x_0| \ge C |x-x_0|^{\gamma/s},
\end{align*}
and by \eqref{eq:flat-implies-smooth-claim-1} we have $u(x) = u_{|x-x_0|,x_0}\left(\frac{x-x_0}{|x-x_0|}\right) |x-x_0|^{-s} > 0$ (resp. $= 0$), as desired.\\
Let us now assume without loss of generality that $\nu_{0} = e_n$.
As a consequence, setting $\rho = R \sqrt{1 - \delta^2}$ for some $\delta \in (0,1)$ to be chosen small enough later, it turns out that the function
\begin{align*}
g(x') = \inf \{ t \in \R : u(x',\tau) > 0 ~~ \forall \tau \in (t,\rho) \}
\end{align*}
is well defined for any $x'\in B_{\rho}'$. Upon choosing $\rho > 0$ smaller, if necessary, we obtain that 
\begin{align}
\label{eq:unif-cone-2}
\begin{cases}
u > 0 ~~ \text{ in } \mathcal{C}^+_{2\delta}(x_0,e_n) \cap B_R(x_0),\\
u = 0 ~~ \text{ in } \mathcal{C}^-_{2\delta}(x_0,e_n) \cap B_R(x_0),
\end{cases}
~~ \forall x_0 \in \partial \{ u > 0 \} \cap B_{2\rho},
\end{align} 
which implies that $\partial \{ u >  0 \}$ satisfies the uniform cone condition in $B_{2\rho}$. To see this, in the light of \eqref{eq:unif-cone-1}, it suffices to prove that $\mathcal{C}^{\pm}_{2\delta}(x_0,e_n) \subset \mathcal{C}^{\pm}_{\delta}(x_0,\nu_{x_0})$. Indeed, given $x_0' \in B_{\rho}'$, we have that $x_0 = (x_0',g(x_0')) \in \partial \{ u > 0 \} \cap B_R$ and since $|g(x_0')| \le \delta |x_0'|$, we deduce $|x_0| \le \rho \sqrt{1+\delta^2} \le 2 \rho$, and thus
\begin{align*}
\nu_{x_0} \cdot (x-x_0) &= e_n \cdot (x - x_0) + (\nu_{x_0} - e_n) \cdot (x - x_0) \ge 2 \delta |x-x_0| - \delta |x-x_0| \ge \delta |x-x_0|,
\end{align*}
where we used that by \eqref{eq:flat-implies-smooth-claim-2} we have $|\nu_{x_0} - e_n| \le c \rho^{\alpha} \le \delta$, if $\rho$ is small enough, depending on $\delta$.\\
By the uniform cone condition \eqref{eq:unif-cone-2}, we deduce that the free boundary $\partial \{ u > 0 \}$ in $B_{\rho}' \times (-\rho,\rho)$ is given by the graph of $g$, i.e., by $\{ (x',t) : g(x') = t \}$, and that $g$ is Lipschitz continuous in $B_{\rho}'$. Moreover, we have by definition of $g$
\begin{align*}
(x-x_0) \cdot \nu_{x_0} = (x'- x_0') \cdot \nu_{x_0}' + (g(x') - g(x_0')) (\nu_{x_0})_n,
\end{align*}
and \eqref{eq:flat-implies-smooth-claim-1}, applied with $u(x) = u_{|x-x_0|,x_0}\left( \frac{x-x_0}{|x-x_0|} \right)$ (similar to the proof of \eqref{eq:unif-cone-1}), implies that
\begin{align*}
- c |x-x_0|^{1 + \frac{\gamma}{s}} \le (x - x_0) \cdot \nu_{x_0} \le c |x-x_0|^{1 + \frac{\gamma}{s}} ~~ \forall x'\in B_{\rho}', \text{ and } x = (x',g(x')).
\end{align*}
A combination of these two facts implies that $g$ is differentiable with $\nabla g(x_0') = (\nu_{x_0})'/(\nu_{x_0})_n$.
To see that $g \in C^{1,\alpha}$ in $B_{\rho}'$, we apply again \eqref{eq:flat-implies-smooth-claim-2}, which yields that $x_0 \mapsto \nu_{x_0}$ is in $C^{\alpha}$ and therefore $\nabla g \in C^{\alpha}$.  This proves the $C^{1,\alpha}$ regularity of the free boundary. Note that the $C^{1,\alpha}$ radius only depends on the constants from the previous results and on $\rho$. Then, as an application, from \cite[Proposition 2.7.8]{FeRo23} (and a standard truncation argument), together with the local boundedness estimate in \cite[Theorem 6.2]{Coz17} (using that $Lu \le 0$ in $B_1$ by \autoref{def:viscosity}), we deduce that
\begin{align*}
\Vert u / d^s \Vert_{C^{\alpha}\left( \overline{ \{ u > 0 \} } \cap B_{\rho} \right)} \le C \left(\Vert u \Vert_{L^{\infty}(B_1)} + \tail(u;1) \right) \le C \Vert u \Vert_{L^1_{2s}(\R^n)},
\end{align*}
as desired. Finally, let us recall that $\alpha = \frac{s\gamma}{s+\gamma}$, where $\gamma \in (0,1)$ was chosen such that $2 = \rho_0^{-s/\gamma}$, where $\rho_0 \in (0,1)$ was the constant from \autoref{thm:improvement-of-flatness}. Clearly, \autoref{thm:improvement-of-flatness} becomes a weaker statement, when $\rho_0$ is chosen smaller. Thus, we can choose $\gamma \in (0,s)$ as close to $s$, as we like. Hence, we can choose any $\alpha \in (0,\frac{s^2}{2s}) = (0,\frac{s}{2})$.
\end{proof}

\section{Free boundary regularity for the nonlocal one-phase problem}
\label{sec:free-boundary-reg}

In this section, we prove our main result (see \autoref{thm:main-C1alpha}) and its corollaries \autoref{cor:open-dense} and \autoref{cor:two-dim}.

The main tool to establish \autoref{thm:main-C1alpha} is the flatness implies $C^{1,\alpha}$ result from \autoref{thm:free-boundary-regularity}. It remains to prove that the assumption of \autoref{thm:free-boundary-regularity} holds true near flat free boundary points. This is the goal of Subsection \ref{subsec:close-to-half-space}. The following Subsections \ref{subsec:open-dense} and \ref{sec:two-dimensional-classification} are dedicated to the proofs of \autoref{cor:open-dense} and \autoref{cor:two-dim}.

\subsection{Flat free boundary points}
\label{subsec:close-to-half-space}

In this section, we prove that near every free boundary point satisfying one of the properties 
\begin{itemize}
\item[(i)] points near which the free boundary is flat,
\item[(ii)] points at which the blow-up is the half-space solution,
\item[(iii)] reduced boundary points (points at which measure-theoretic normal exists),
\item[(iv)] points that satisfy the interior ball condition,
\end{itemize}
the minimizer $u$ of $\mathcal{I}_{\Omega}$ is close to the half-space solution $A(\nu)(x \cdot \nu)_+^s$ for some $\nu \in \mathbb{S}^{n-1}$. This result is contained in the following proposition:

\begin{proposition}
\label{thm:close-to-halfspace}
Assume \eqref{eq:Kcomp}. Let $u$ be a minimizer of $\mathcal{I}_{\Omega}$ with $B_2 \subset \Omega$  and $0 \in \partial \{ u > 0 \}$. Then, for any $\eps \in (0,1)$ and $\delta_0 > 0$, there exists $\delta > 0$, such that if one of the following holds true
\begin{itemize}
\item[(i)] for some $\nu \in \mathbb{S}^{n-1}$ it holds
\begin{align*}
\{ x \cdot \nu \le - \delta \} \cap B_1 \subset \{ u = 0 \} \cap B_1 \subset \{ x \cdot \nu \le \delta \} \cap B_1 ,
\end{align*} 
\item[(ii)] for some $\nu \in \mathbb{S}^{n-1}$ it holds, up to a subsequence
\begin{align*}
u_r(x) \to u_0(x) := A(\nu)(x \cdot \nu)_+^s, ~~ \text{ as } r \to \infty, \text{ locally uniformly in } \R^n,
\end{align*}
\item[(iii)] $0 \in \partial^{\ast} \{ u > 0\}$, i.e., for some $\nu \in \mathbb{S}^{n-1}$ it holds, up to a subsequence
\begin{align*}
\{ u_r  > 0 \} \to \{ x \cdot \nu > 0 \}, ~~ \text{ as } r \to \infty, \text{ in } L^1_{loc}(\R^n),
\end{align*}
\item[(iv)] there exists a ball $B \subset \{ u > 0 \}$ with $\overline{B} \cap \partial \{ u > 0 \} = \{ 0 \}$,

\end{itemize}
we have for some $r > 0$:
\begin{align}
\label{eq:close-to-1d-sol}
A(\nu)(x \cdot \nu - \eps)_+^s \le u_r(x) \le A(\nu)(x \cdot \nu + \eps)_+^s ~~ \forall x \in B_1
\end{align}
and
\begin{align}
\label{eq:tail-small}
\tail((u_r - A(\nu)(x \cdot \nu - \eps)_+^s)_- ; 1) + \tail((A(\nu)(x \cdot \nu + \eps)_+^s - u_r)_- ; 1) \le \eps \delta_0.
\end{align}
\end{proposition}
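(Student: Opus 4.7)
The plan is to unify all four cases by producing, in each one, a scale $r > 0$ along which the rescaling $u_r$ converges to $u_0(x) := A(\nu)(x \cdot \nu)_+^s$ locally uniformly in $\R^n$, in $L^1_{2s}(\R^n)$, and with $\overline{\{u_r > 0\}} \to \{x \cdot \nu \ge 0\}$ in the Hausdorff sense. Granting such convergence, for $r$ small the Hausdorff bound will force $\{u_r = 0\} \supset \{x \cdot \nu \le -\eps/2\} \cap B_1$, which trivially handles the region $\{x \cdot \nu \le -\eps\}$ in the upper half of \eqref{eq:close-to-1d-sol}; symmetrically, \autoref{thm:non-degeneracy} forces $\{u_r > 0\} \supset \{x \cdot \nu \ge \eps/2\} \cap B_1$. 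On the transition strip $\{|x \cdot \nu| \le \eps/2\} \cap B_1$ I combine uniform convergence with the optimal $C^s$-regularity estimate $u_r(x) \le C \dist(x, \partial\{u_r > 0\})^s$ (\autoref{thm:or}) and non-degeneracy to close both pointwise cushions, exploiting the slack between the Hausdorff margin $\eps/2$ and the target cushion $\eps$ to absorb the $C^s$-constant. The tail smallness \eqref{eq:tail-small} is then a direct consequence of $L^1_{2s}$-convergence $u_r \to u_0$ (\autoref{cor:blowups}(ii)), combined with the elementary estimate, analogous to \eqref{eq:L1-exponent-gain}, bounding the tail of $A(\nu)[(x \cdot \nu + \eps)_+^s - (x \cdot \nu)_+^s]$ by $C\eps$, so that shrinking $r$ further makes both terms $\le \eps \delta_0$.

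The bulk of the work is identifying the limit as $u_0 = A(\nu)(x \cdot \nu)_+^s$ in each case. Case (ii) is the hypothesis. In case (iii), \autoref{cor:blowups} yields a subsequential blow-up $u_0$ which is a nontrivial global minimizer of $\cI$; the $L^1$-convergence of $\1_{\{u_r > 0\}}$ pins down $\{u_0 > 0\} = \{x \cdot \nu > 0\}$ up to a null set, and then the iterated blow-up argument from the proof of \autoref{lemma:min-visc} (apply \autoref{lemma:one-sided-expansion} first to $u_0$ on $\{x \cdot \nu \ge 0\}$ to obtain $u_0(x) = \alpha(x \cdot \nu)_+^s + o(|x|^s)$, then to a further blow-up on the opposite half-space, rule out the lower-half-space contribution via \autoref{thm:density-est}, and finally fix $\alpha = A(\nu)$ through \autoref{prop:free-bound-cond}) identifies $u_0$. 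Case (iv) follows the same template, now with \autoref{lemma:interior-ball-blowup-viscosity} (applicable since minimizers are viscosity solutions by \autoref{lemma:min-visc}) supplying the non-tangential expansion inside the interior tangent ball $B$; passing to the blow-up gives $u_0 \ge c(x \cdot \nu)_+^s$ on the non-tangential cone inside $\{x \cdot \nu > 0\} \subset \{u_0 > 0\}$, which Hopf- and Harnack-type propagation extend to the entire half-space before the same iterated blow-up concludes. Case (i) I handle by contradiction: if the conclusion fails, pick $\delta_k \downarrow 0$ and minimizers $u^{(k)}$ (for kernels $K_k$) that are $\delta_k$-flat but for which no $r > 0$ works; by \autoref{lemma:scaling-blow-up} a subsequence converges to a limit minimizer $u^\infty$ for a limiting kernel $K_\infty$, and the $\delta_k$-flatness combined with \autoref{thm:density-est} pins $\{u^\infty > 0\} \cap B_\rho = \{x \cdot \nu_\infty > 0\} \cap B_\rho$ on some slightly smaller $B_\rho$; applying the same classification on $B_\rho$ gives $u^\infty = A_\infty(\nu_\infty)(x \cdot \nu_\infty)_+^s$ in $B_\rho$; choosing $r_0 := \rho/2$ and using $A_k \to A_\infty$, $\nu_k \to \nu_\infty$, the rescalings $(u^{(k)})_{r_0}$ converge to the target half-space solution in the required senses, contradicting the assumed failure.

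The main obstacle is clearly this blow-up classification. Although the individual ingredients---\autoref{lemma:one-sided-expansion}, \autoref{thm:density-est}, \autoref{prop:free-bound-cond}, and \autoref{lemma:interior-ball-blowup-viscosity}---are all in place, reassembling them under the different hypotheses of (iii), (iv), and (i), in particular reconciling the interior-ball geometry of (iv) and the limit-kernel regime of (i) with the iterated blow-up from the proof of \autoref{lemma:min-visc}, will require care. A secondary, more technical point is the derivation of the pointwise flatness on the transition strip, which I close by playing optimal regularity and non-degeneracy against the slack between the Hausdorff cushion $\eps/2$ and the target cushion $\eps$ as described in the first paragraph.
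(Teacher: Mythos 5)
Your unified plan---produce locally uniform, $L^1_{2s}$, and Hausdorff convergence of some rescaling to the half-space solution, then read off \eqref{eq:close-to-1d-sol} and \eqref{eq:tail-small}---is a legitimate alternative route for the output step. The paper instead passes through the two barrier lemmas (\autoref{lemma:aux-flat-implies-closeness}, \autoref{lemma:aux-flat-implies-closeness-2}), which convert an additive $L^\infty$ error at a large scale into a multiplicative error at scale one using only harmonicity; you replace these with the Hausdorff convergence of positivity sets from \autoref{cor:blowups}(iv) and an elementary comparison of $(x\cdot\nu)_+^s$ with its translates. Your identification of the blow-up for (ii), (iii), (iv) matches the paper's \autoref{lemma:reduced-blow-up} and \autoref{lemma:interior-ball-blowup}, and your tail estimate via $L^1_{2s}$-convergence plus the observation $(a+b)_+\le b_+$ for $a\le 0$ is in fact cleaner than the paper's splitting into $B_R$ and its complement. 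One cosmetic repair: your ``transition strip'' numerology with Hausdorff margin $\eps/2$ and the $C^s$ estimate does not close, since $u_r(x)\le C(x\cdot\nu+\eps/2)^s$ only sits below $A(\nu)(x\cdot\nu+\eps)^s$ near $x\cdot\nu=\eps/2$ if $C\le (3/2)^s A(\nu)$, which \autoref{thm:or} does not give; but if you simply shrink the Hausdorff margin $\delta'$ to $0$ independently of $\eps$, then uniform convergence alone suffices on the whole strip, and the $C^s$ bound is not needed at all.

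The genuine gap is in case (i). The statement requires $\delta$ to be uniform, so a contradiction argument with a sequence $(u^{(k)},K_k,\delta_k)$ is the right framework, but your setup does not cohere. \autoref{lemma:scaling-blow-up} delivers convergence of $u^{(k)}_{r_k,0}$ for \emph{some} $r_k\searrow 0$, whereas the $\delta_k$-flatness of $u^{(k)}$ at scale one becomes a $(\delta_k/r_k)$-flatness of $u^{(k)}_{r_k}$ at scale $1/r_k$---informative only when $\delta_k/r_k\to 0$, which is a constraint on $r_k$ that must be imposed by hand; the paper takes $r_k:=\delta_k^{1/2}$ precisely so that the flatness margin $\delta_k r_k^{-1}=\delta_k^{1/2}\to 0$ while the scale $r_k^{-1}\to\infty$. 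Without that choice, your step ``$\delta_k$-flatness plus \autoref{thm:density-est} pins $\{u^\infty>0\}\cap B_\rho=\{x\cdot\nu_\infty>0\}\cap B_\rho$'' has no basis. Even granting it, the step ``applying the same classification on $B_\rho$ gives $u^\infty=A_\infty(\nu_\infty)(x\cdot\nu_\infty)_+^s$ in $B_\rho$'' is false: the Liouville theorem (\cite[Theorem 2.7.2]{FeRo23}) needs the global structure $\{u^\infty>0\}=\{x\cdot\nu>0\}$, and a function $L$-harmonic only on $\{x\cdot\nu>0\}\cap B_\rho$ vanishing on $\{x\cdot\nu\le 0\}\cap B_\rho$ is not determined in $B_\rho$ by its boundary data alone. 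Finally, passing at the end to the rescalings $(u^{(k)})_{r_0}$ with a \emph{fixed} $r_0:=\rho/2$ is incompatible with the preceding blow-up along $r_k\to 0$; the sequence $(u^{(k)})_{r_0}$ has no controlled tail and does not converge to anything useful. The correct argument (as in the paper) fixes $r_k=\delta_k^{1/2}$, obtains $\{u^\infty>0\}=\{x\cdot\nu>0\}$ globally so that $u^\infty=A_\infty(\nu_\infty)(x\cdot\nu_\infty)_+^s$ by Liouville, and then derives \eqref{eq:close-to-1d-sol}, \eqref{eq:tail-small} for $u^{(k)}_{r_k}$ with $k$ large---either via \autoref{lemma:aux-flat-implies-closeness}/\autoref{lemma:aux-flat-implies-closeness-2} or, in your framework, via the Hausdorff and $L^1_{2s}$ convergence already supplied by \autoref{lemma:scaling-blow-up}.
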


The proof reveals that in case (i), $r$,$\delta$ only depend on $n,s,\lambda,\Lambda,\eps,\delta_0$. This result will be proved separately for each of the points (i), (ii), (iii), (iv). However, note that we will show it for (iii), (iv) by reducing those cases to (ii), and for (ii) by reducing this case to (i) (for a rescaling of $u$).

Once \autoref{thm:close-to-halfspace} is established, \autoref{thm:main-C1alpha} is immediate after combination with \autoref{thm:free-boundary-regularity} and \autoref{lemma:min-visc}.

The following two lemmas are adaptations of \cite[Lemma 7.2]{DeSa12}. They are crucial ingredients in our proof of \autoref{thm:close-to-halfspace}.

\begin{lemma}
\label{lemma:aux-flat-implies-closeness}
Assume \eqref{eq:Kcomp}. Let $\delta > 0$ and $\nu \in \mathbb{S}^{n-1}$. There exist $c,C > 0$, depending only on $n,s,\lambda,\Lambda$ (but not on $\delta$), such that for any $R \ge c \delta^{-\frac{1}{s}}$ and any $u$ satisfying the following properties in the viscosity sense
\begin{align*}
\begin{cases}
Lu &= 0 ~~ \qquad\qquad\qquad \text{ in } \{x \cdot \nu > 0 \} \cap B_R,\\
u(x) &\ge A(\nu)(x \cdot \nu)_+^s - \delta ~~ \forall x \in B_{R},\\
u &\ge 0 ~~ \qquad\qquad\qquad \text{ in } \R^n,
\end{cases}
\end{align*} 
it holds:
\begin{align*}
u(x) \ge (1-C\delta) A(\nu)(x \cdot \nu)_+^s ~~ \forall x \in B_{1}.
\end{align*}
\end{lemma}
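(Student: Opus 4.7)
After a rotation we may assume $\nu = e_n$; write $\phi(x) := A(e_n)(x_n)_+^s$. Since $\phi$ is $L$--harmonic in $\{x_n > 0\}$, the difference $w := \phi - u$ is $L$--harmonic in $D := B_R \cap \{x_n > 0\}$ and satisfies $w \le \delta$ in $B_R$, $w = -u \le 0$ on $B_R \cap \{x_n \le 0\}$, and $w \le \phi$ in $\R^n$ (since $u \ge 0$). The target lower bound $u \ge (1 - C\delta)\phi$ in $B_1$ is equivalent to the upper estimate $w(x) \le C\delta\, \phi(x)$ for $x \in B_1 \cap \{x_n > 0\}$, so this is what I aim to prove.

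The strategy is to construct a strict sub--barrier $\psi$ of the form
\[
\psi(x) \;=\; A(e_n)(x_n - c_1 \delta^{1/s})_+^s \,-\, c_2\,\delta\,\chi(x),
\]
where $c_1,c_2 > 0$ are constants to be chosen depending only on $n,s,\lambda,\Lambda$, and $\chi$ is a smooth truncation equal to $1$ on $B_{R/2}$ and $0$ outside $B_R$. The unshifted, untruncated function $(x_n - c_1\delta^{1/s})_+^s$ is $L$--harmonic in the half--space $\{x_n > c_1 \delta^{1/s}\}$, hence $L$--subharmonic in all of $\{x_n > 0\}$ (the contribution of $\psi \equiv 0$ on $\{0 < x_n \le c_1 \delta^{1/s}\}$ gives $L\psi \le 0$ there by inspection of the formula \eqref{eq:L}). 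The purpose of the shift is to ensure $\psi(x) \ge (1-C\delta)\phi(x)$ in $B_1$: indeed, by a first--order Taylor expansion, for $x_n \ge c_1 \delta^{1/s}$ one has
\[
(x_n - c_1\delta^{1/s})^s \ge x_n^s - s\, c_1 \delta^{1/s}\, x_n^{s-1} \ge (1 - C\delta)\, x_n^s,
\]
with the last inequality valid whenever $x_n^{1-s}\,\delta^{1/s - 1} \lesssim \delta$, i.e.\ whenever $x_n$ lies outside a thin strip where both sides are $O(\delta)$ and the conclusion $u \ge 0 \ge (1-C\delta)\phi$ is trivial.

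The nontrivial part is to verify that the barrier satisfies $\psi \le u$ on $D^c$ and $L\psi \le 0$ in $D$. On $B_R \cap \{x_n \le 0\}$, $\psi \le 0 \le u$; on $\R^n \setminus B_R$, $\chi \equiv 0$ and the first term is estimated using the hypothesis $u \ge 0$ together with the scaling $R \ge c\delta^{-1/s}$. The crucial tail computation, analogous to \eqref{eq:L1-exponent-gain}, gives that the jump produced by the truncation $\chi$ at $\partial B_R$ contributes to $L\psi$ at a point $x \in D \cap B_{R/2}$ a positive quantity bounded by
\[
C\,R^{-s}\,A(e_n) \int_{\R^n \setminus B_R}(y_n)_+^s |x - y|^{-n-2s}\,dy \;\le\; C\,A(e_n)\,R^{-s} \;\le\; C\,A(e_n)\,\delta,
\]
which is absorbed by choosing $c_2$ suitably so that the negative term $-c_2 \delta L\chi$ dominates (using that $L$ applied to the indicator--type cutoff produces a negative contribution of the right order by a standard computation with kernels of type \eqref{eq:Kcomp}). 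Once $L\psi \le 0$ in $D$ and $\psi \le u$ on $D^c$ are verified, the viscosity comparison principle yields $\psi \le u$ in $D$, and combining with the Taylor estimate above gives the desired bound $u \ge (1 - C\delta)\phi$ in $B_1$.

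\textbf{Main obstacle.} The genuine difficulty is the construction and sign control of the barrier near the truncation boundary $\partial B_R$, where the nonlocality of $L$ makes the cut at $\partial B_R$ produce a positive contribution to $L\psi$. This is where the hypothesis $R \ge c\delta^{-1/s}$ enters decisively: it forces the tail term to be of order $\delta$, small enough to be dominated by the negative term from the cutoff. A secondary delicate point is the thin strip $\{0 < x_n \lesssim \delta^{1/s}\}$, where the Taylor expansion of the shifted power is not directly useful; there one exploits that $(1-C\delta)\phi$ is itself of order $\delta$, so the trivial bound $u \ge 0$ (combined with the assumption $u \ge \phi - \delta$) already suffices after adjusting constants.
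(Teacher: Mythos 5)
Your proposal has the right overall shape (a comparison/barrier argument, with the scaling $R \gtrsim \delta^{-1/s}$ used to tame the nonlocal tail), and it is close in spirit to the paper's proof. However, the barrier you construct does not close the argument, for two separate reasons.

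First, the comparison boundary condition fails on the exterior of $D := B_R\cap\{x_n>0\}$: for $x\in\{x_n>0\}\setminus B_R$ we have $\chi\equiv 0$, so $\psi(x)=A(e_n)(x_n-c_1\delta^{1/s})_+^s$, which is of order $R^s$ near $\partial B_R\cap\{x_n\sim R\}$, while the only control on $u$ there is $u\ge 0$. The hypothesis $R\ge c\delta^{-1/s}$ makes this overshoot worse, not better. If instead you truncate the first term at $\partial B_R$ to fix the exterior comparison, you lose the $L$-subharmonicity you cite, and the truncation produces a positive tail of order $R^{-s}$ (as you note). But $L(-c_2\delta\chi)$ is only of size $c_2\delta\, R^{-2s}$ at points of $B_{R/2}$, which is far too small to absorb an $O(R^{-s})$ term: under $R\sim\delta^{-1/s}$ you would need $c_2\delta^2\gtrsim 1$, which fails. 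This is the step where the paper instead uses a supersolution $q$ with $Lq=1$ in $\{x_n>0\}\cap B_{R/2}$, precisely so that the negative contribution $-\delta$ from $L(-\delta q)$ dominates the $O(R^{-s})\le\delta$ truncation tail.

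Second, and independently, your correction $-c_2\delta\chi$ takes the constant value $-c_2\delta$ near $\{x_n=0\}$ rather than vanishing at rate $d^s$, so even if the comparison went through, the conclusion would be $u\ge A(e_n)(x_n-c_1\delta^{1/s})_+^s-c_2\delta$, which in the thin strip $\{0<x_n<c_1\delta^{1/s}\}$ equals $-c_2\delta$ and does \emph{not} imply the multiplicative bound $u\ge(1-C\delta)A(e_n)x_n^s$. Your proposed fall-back for the thin strip also fails: from $u\ge\max\{0,\,A(e_n)x_n^s-\delta\}$, one cannot deduce $u\ge(1-C\delta)A(e_n)x_n^s$ when $0<A(e_n)x_n^s<\delta$, since then the max is $0$ while the right side is strictly positive (for $\delta<1/C$). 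The paper resolves this by choosing $q$ with $q=0$ on $\{x_n\le 0\}\cap B_2$ and invoking the boundary barrier estimate (\cite[Proposition 2.6.4]{FeRo23}) to get $|q|\le C A(e_n)d^s$ in $B_1$; it is this $d^s$-decay of the correction that turns the comparison inequality into the stated multiplicative bound all the way down to the free boundary. In short: the missing ingredients in your proof are (a) a correction whose $L$ is of size $\delta$ (not $\delta R^{-2s}$) so that it can absorb the truncation tail, and (b) a correction that vanishes like $d^s$ at $\{x_n=0\}$ so that the conclusion holds near the free boundary; the auxiliary function $q$ in the paper's proof accomplishes both.
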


\begin{proof}
Without loss of generality, we assume that $\nu = e_n$. We use the notation $d^s(x) := (x_n)_+^s$. Let $R > 4$ to be chosen suitably later on in the proof. We define $q$ to be the solution to
\begin{align*}
\begin{cases}
L q &= 1 ~~ \text{ in } \{ x_n > 0 \} \cap B_{R/2},\\
q &= 1 ~~ \text{ in } \{ x_n > 0 \} \cap (\R^n \setminus B_{R/2}),\\
q &= 0 ~~ \text{ in } \{ x_n \le 0 \} \cap B_2,\\
q &\ge 0 ~~ \text{ in }\{ x_n \le 0 \} \cap (\R^n \setminus B_2),
\end{cases}
\end{align*}
where we choose some nonnegative boundary data in $\{ x_n \le 0 \} \cap (\R^n \setminus B_{2})$ in order to ensure existence of $q$ (see \cite[Theorem 3.2.27]{FeRo23}).
By \cite[Proposition 2.6.4]{FeRo23} applied to $\{ x_n > 0 \}$ in $B_2$, we deduce
\begin{align}
\label{eq:q-reg}
|q| \le C A(e_n) d^s ~~ \text{ in } \{x_n > 0\} \cap B_1.
\end{align}
Note that $C > 0$ depends only on $n,s,\lambda,\Lambda$, but not on $R$ (due to the maximum principle).
Next, we observe that  for $x \in B_{R/2}$ and $y \in \R^n \setminus B_{R}$ it holds $|y| \le |x| + |x-y| \le 2|x-y|$, which implies
\begin{align*}
L(d^s \mathbbm{1}_{B_{R}})(x) = \int_{\R^n \setminus B_R} d^s(y)K(x-y) \d y \le c_1 \int_{\R^n \setminus B_R} |y|^{-n-s} \d y \le c_2 R^{-s} ~~ \forall x \in \{ x_n > 0 \} \cap B_{R/2}.
\end{align*}
Therefore, we have for $R \ge (c_2 A(e_n)/\delta)^{1/s}$:
\begin{align*}
L(A(e_n) d^s\mathbbm{1}_{B_{R}} - \delta q) \le c_2 A(e_n) R^{-s} - \delta \le 0 = Lu ~~ \text{ in } \{ x_n > 0 \} \cap B_{R/2}.
\end{align*}
Moreover, clearly 
\begin{align*}
A(e_n)d^s\mathbbm{1}_{B_{R}} - \delta q \le 0 &\le u ~~ \text{ in } \{ x_n \le 0 \},\\
A(e_n)d^s\mathbbm{1}_{B_{R}} - \delta q = -\delta &\le u ~~ \text{ in } \{ x_n > 0 \} \cap (\R^n \setminus B_{R}),\\
A(e_n)d^s\mathbbm{1}_{B_{R}} - \delta q = A(e_n)d^s - \delta &\le u ~~ \text{ in } \{ x_n > 0 \} \cap (B_{R} \setminus B_{R/2}).
\end{align*}
Thus, by the comparison principle, and using also \eqref{eq:q-reg} we deduce
\begin{align*}
(1 - C\delta)A(e_n) d^s  \le A(e_n) d^s - \delta q \le u ~~ \text{ in } \{ x_n > 0 \} \cap B_1,
\end{align*}
as desired.
\end{proof}

We also require an analogous result which provides an upper bound for $u$. Although the proof is in principle analogous, the nonlocality of our problem requires us to use in addition the nonlocal Harnack inequality to control the tails of $u$:

\begin{lemma}
\label{lemma:aux-flat-implies-closeness-2}
Assume \eqref{eq:Kcomp}. Let $\delta, \eps \in (0,1)$, and $\nu \in \mathbb{S}^{n-1}$. There exist $c,C > 0$, depending only on $n,s,\lambda,\Lambda$ (but not on $\delta$), such that for any $R \ge c \max \{ \delta^{-\frac{1}{s}} ,  \eps \} $, and any $u$ satisfying the following properties in the viscosity sense
\begin{align*}
\begin{cases}
Lu &\le 0 ~~ \qquad\qquad\qquad \text{ in } B_R,\\
Lu &= 0 ~~ \qquad\qquad\qquad \text{ in } \{ x \cdot \nu > \eps \} \cap B_R,\\
u &= 0 ~~ \qquad\qquad\qquad \text{ in } \{ x \cdot \nu \le - \eps \} \cap B_R,\\
u(x) &\le A(\nu)(x \cdot \nu)_+^s + \delta ~~ \forall x \in B_{R},\\
u &\ge 0 ~~ \qquad\qquad\qquad \text{ in } \R^n,
\end{cases}
\end{align*} 
it holds:
\begin{align*}
u(x) \le (1+C\delta) A(\nu)(x \cdot \nu + \eps)_+^s ~~ \forall x \in B_{1}.
\end{align*}
\end{lemma}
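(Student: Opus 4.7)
The plan is to mirror the proof of the previous lemma, now constructing an \emph{upper} barrier for $u$. After assuming WLOG that $\nu=e_n$, the relevant $L$-harmonic function becomes $\phi(x):=A(e_n)(x_n+\eps)_+^s$, which is $L$-harmonic in $\{x_n>-\eps\}$ (the natural domain corresponding to the assumption $u\equiv 0$ on $\{x_n\le-\eps\}\cap B_R$). I will let $q$ solve
\begin{equation*}
\begin{cases}
Lq=1 & \text{in } \{x_n>-\eps\}\cap B_{R/2},\\
q=1 & \text{in } \{x_n>-\eps\}\cap(\R^n\setminus B_{R/2}),\\
q=0 & \text{in } \{x_n\le-\eps\}\cap B_2,\\
q\ge 0 & \text{in } \{x_n\le-\eps\}\cap(\R^n\setminus B_2),
\end{cases}
\end{equation*}
with nonnegative exterior data to be specified below, so that by \cite[Proposition~2.6.4]{FeRo23} one has $q\le CA(e_n)(x_n+\eps)_+^s$ in $\{x_n>-\eps\}\cap B_1$, with $C$ independent of $R$. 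The barrier is then $w:=\phi\,\mathbf{1}_{B_R}+\delta q$.

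To verify that $w$ is a supersolution in $\{x_n>-\eps\}\cap B_{R/2}$ with $Lw\ge Lu$: since $L\phi=0$ in $\{x_n>-\eps\}$, truncation of $\phi$ to $B_R$ contributes only the nonnegative term $2\int_{\R^n\setminus B_R}\phi(y)K(x-y)\,\d y\ge 0$, which together with $Lq=1$ yields $Lw\ge\delta\ge Lu$. Most of the required comparison $w\ge u$ on $\{x_n\le-\eps\}\cup(\R^n\setminus B_{R/2})$ is immediate inside $B_R$: on $\{x_n\le-\eps\}\cap B_R$ both sides vanish, and on $\{x_n>-\eps\}\cap(B_R\setminus B_{R/2})$ one has $q=1$, so $w=\phi+\delta\ge A(e_n)(x_n)_+^s+\delta\ge u$. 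Comparison will then yield $u\le w\le(1+C\delta)A(e_n)(x_n+\eps)_+^s$ in $B_1$, using the Step~1 estimate on $q$.

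The hard part will be verifying $w\ge u$ on $\R^n\setminus B_R$, since the hypotheses furnish no pointwise upper bound on $u$ there. For this I will invoke a nonlocal (weak) Harnack inequality for nonnegative subsolutions (e.g.\ \cite{Coz17}) applied to $u$, leveraging the interior bound $u\le A(e_n)R^s+\delta\lesssim R^s$ on $B_R$ together with $u\ge 0$ and $Lu\le 0$, to extract quantitative tail control on $u$ outside $B_R$. The exterior data of $q$ in $\{x_n\le-\eps\}\cap(\R^n\setminus B_2)$ will then be chosen large enough to absorb this tail, while for $R\ge c\max\{\delta^{-1/s},\eps\}$ sufficiently large this choice does not affect the interior estimate of Step~1 on $B_1$. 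In the preceding lemma the analogous step was automatic from $u\ge 0$; here it is the sole essentially new ingredient, and controlling the tail of $u$ via nonlocal Harnack is the main technical obstacle.
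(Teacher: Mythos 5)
Your overall outline is close, but there is a genuine gap at exactly the point you flag as "the hard part," and the fix you suggest cannot work. You propose the barrier $w=\phi\,\1_{B_R}+\delta q$ and want to conclude by the comparison principle on $\{x_n>-\eps\}\cap B_{R/2}$. This requires the \emph{pointwise} inequality $u\le w$ on the entire exterior, including $\R^n\setminus B_R$. There $w=\delta q$ (or $\delta$ in $\{x_n>-\eps\}$, where $q=1$), while the hypotheses give no pointwise bound on $u$ at all. The tail/weak-Harnack estimate you invoke furnishes only an \emph{integral} bound ($\tail(u;R)\lesssim R^s$); it gives no pointwise upper bound, so no choice of exterior data for $q$ — in $\{x_n\le-\eps\}$ or anywhere else — can make $w$ dominate $u$ pointwise outside $B_R$. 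Moreover, trying to make $q$ (or $w$) huge outside $B_R$ to force such a bound would destroy the supersolution property $Lw\ge 0$ inside $B_{R/2}$, since the nonlocal operator would pick up those large exterior values with a negative sign. So the plan, as written, cannot close.

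The paper's proof resolves this with a small but decisive change: it truncates $u$ rather than the barrier. One compares $(u-A(e_n)(x_n+\eps)_+^s)\1_{B_R}$ with $\delta q$ in $\{x_n>-\eps\}\cap B_{R/2}$. Outside $B_R$ the truncated function is identically zero, so the exterior pointwise comparison with $\delta q\ge 0$ is trivial — the unboundedness of $u$ at infinity never enters the boundary comparison. The cost has been shifted into the interior: truncation of $u$ produces an error
$L(u\1_{B_R})(x)-Lu(x)=2\int_{\R^n\setminus B_R}u(y)K(x-y)\,\d y,$
and this is exactly where the tail control comes in: since $u\ge 0$ and $Lu=0$ in a ball $B\subset\{x_n>\eps\}\cap B_R$ of radius comparable to $R$, the nonlocal tail estimate gives $\tail(u;R)\lesssim\inf_B u\lesssim R^s+\delta$, so $L(u\1_{B_R})\le c_1 R^{-s}$ on $B_{R/2}$. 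Adding the analogous $O(R^{-s})$ bound for $L(\phi\1_{B_R})$ (your computation for this part is fine) and taking $R\gtrsim\delta^{-1/s}$ makes the total interior error $\le\delta=L(\delta q)$. The comparison then gives $u-\phi\le\delta q\le C\delta\,\phi$ in $B_1$, i.e.\ the desired conclusion. So your diagnosis of "where the nonlocal Harnack/tail enters" is correct, but it enters in the interior equation for the truncated $u$, not in the boundary comparison; and the thing being truncated must be $u$, not $\phi$. Note also that it is a tail estimate (as in Kassmann--Weidner) rather than the Harnack inequality itself that is needed: the Harnack inequality controls $\sup$ by $\inf$ plus a tail, whereas here one needs to control the tail by the infimum.
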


\begin{proof}
Without loss of generality, we assume that $\nu = e_n$.
Let us assume that $R > 8 \eps$ and $R \ge \delta^{-\frac{1}{s}}$. Then, there exists a ball $B \subset \{ x_n > \eps \} \cap B_R$ of radius $R/8$ such that $\dist(B,\R^n \setminus B_R) \ge R/8$ and $\dist(B, \{ x_n = \eps \}) \ge R/8 $. Note that $L u = 0$ in $B$ by assumption. Thus, using the tail estimate (see \cite[Theorem 1.9]{KaWe24}) and the upper bound for $u$ in $B_R$ from the assumption, we compute for $x \in B_{R/2}$:
\begin{align*}
L(u \1_{B_R})(x) &\le \int_{\R^n \setminus B_R } u(y) K(x-y) \d y \\
&\le c_1 R^{-2s} \tail(u ; R) \le c_1 R^{-2s} \inf_{B} u \le c_1 (R^{-s} + R^{-2s} \delta) \le c_1 R^{-s}.
\end{align*}
Moreover, by a computation analogous to the one in \autoref{lemma:aux-flat-implies-closeness}, we have $|L(A(e_n)d^s \1_{B_R})| \le c_2 R^{-s}$ in $\{ x_n \ge - \eps \} \cap B_{R/2}$, where we denote $d^s(x) = (x_n + \eps)_+$.\\
Next, let us define $q$ to be the solution to
\begin{align*}
\begin{cases}
L q &= 1 ~~ \text{ in } \{ x_n > - \eps  \} \cap B_{R/2},\\
q &= 1 ~~ \text{ in } \R^n \setminus B_{R/2},\\
q &= 0 ~~ \text{ in } \{ x_n \le - \eps \} \cap B_2 ,\\
q &\ge 0 ~~ \text{ in } \{ x_n \le - \eps \} \cap (B_{R/2} \setminus B_{2}).
\end{cases}
\end{align*}
By \cite[Proposition 2.6.4]{FeRo23} we deduce
\begin{align}
\label{eq:q-reg-2}
|q| \le C A(e_n) d^s ~~ \text{ in } \{ x_n > - \eps \} \cap B_1.
\end{align}
Note that by the previous computations we have, once $R$ is chosen so large that $(c_1 + c_2) R^{-s} \le \delta$,
\begin{align*}
L((u - A(e_n)d^s)\1_{B_R}) \le (c_1 + c_2) R^{-s} \le \delta = L(\delta q) ~~ \text{ in } \{ x_n > - \eps \} \cap B_{R/2}.
\end{align*}
Thus, by the comparison principle (same arguments as in the proof of \autoref{lemma:aux-flat-implies-closeness}), we deduce
\begin{align*}
(u - A(e_n) d^s )\1_{B_R} \le \delta q ~~ \text{ in } \R^n.
\end{align*}
Therefore, using \eqref{eq:q-reg-2}, we deduce
\begin{align*}
u \le A(e_n) d^s + \delta q \le (1+C \delta) A(e_n) d^s ~~ \text{ in } \{ x_n \ge - \eps \} \cap B_1,
\end{align*}
as desired.
\end{proof}

\subsubsection*{Flatness implies closeness to half-space solution}

We prove that flatness of the free boundary implies closeness of $u$ to the half-space solution (see \autoref{thm:close-to-halfspace}(i)).

This was proved for the fractional Laplacian in \cite[Lemma 2.10]{DSS14} and in \cite[Lemma 7.9]{DeSa12}, but is new for general nonlocal operators.

\begin{proof}[Proof of \autoref{thm:close-to-halfspace}(i)]
Without loss of generality, we assume that $\nu = e_n$. Let $\eps > 0$ and $\delta_0 > 0$ be given. Assume by contradiction that there exist sequences of minimizers $(u_k)_k$, of homogeneous jumping kernels $(K_k)_k$ satisfying \eqref{eq:Kcomp}, and positive numbers $(\delta_k)_k$ with $\delta_k \searrow 0$, such that (i) holds for every $k$, but the conclusion fails, i.e., for every $r$, $u_{k,r}$ violates \eqref{eq:close-to-1d-sol} or \eqref{eq:tail-small}. \\
First, let us extract a subsequence $r_k := \delta_k^{1/2}$ and deduce from \autoref{lemma:scaling-blow-up} that up to a subsequence, it holds $u_{k,r_k} \to u_{\infty} \in H^s(B_R) \cap L^1_{2s}(\R^n)$ locally uniformly and in $H^s(B_R)$ and $L^1_{2s}(\R^n)$ such that $u_{\infty} \ge 0$ in $\R^n$ and minimizes $\mathcal{I}_{B_R}$ in $B_R$ for any $R > 0$ and some homogeneous kernel $K_{\infty}$ satisfying \eqref{eq:Kcomp}. 
As a consequence of (i), we have
\begin{align}
\label{eq:consequence-i}
\{ x_n \le - \delta_k r_k^{-1} \} \cap B_{r_k^{-1}} \le \{ u_{k,r_k} = 0 \} \cap B_{r_k^{-1}} \subset \{ x_n \le \delta_k r_k^{-1} \} \cap B_{r_k^{-1}},
\end{align}
and since by definition of $r_k$ it holds $r_k^{-1} \delta_k \to 0$, we obtain $ \{u_{\infty} > 0\} = \{ x \cdot e_n > 0 \}$, and therefore by \autoref{lemma:min-visc} and \autoref{prop:free-bound-cond}:
\begin{align*}
\begin{cases}
L_{K_{\infty}} u_{\infty} &= 0 ~~ \qquad \text{ in } \{ x_n > 0 \},\\
u_{\infty} &= 0 ~~ \qquad \text{ in }  \{ x_n \le 0 \},\\
\frac{u_{\infty}}{d^s} &= A(e_n) ~~ \text{ in }  \{ x_n = 0 \}.
\end{cases}
\end{align*}
Thus, by the Liouville theorem (see \cite[Theorem 2.7.2]{FeRo23}), it must be $u_{\infty} = A(e_n)(x_n)_+^s$.
By the locally uniformly convergence, we deduce that for any $R > 1$ and $\eta \in (0,1)$, there exists $k_0 \in \N$ such that for any $k \ge k_0$:
\begin{align}
\label{eq:loc-unif-conv-consequence}
|u_{k,r_k} - u_{\infty}| \le \eta ~~ \text{ in } B_R.
\end{align}
Thus, we have
\begin{align*}
u_{k,r_k}(x) \ge A(e_n)(x_n)_+^s - \eta \ge A(e_n)(x_n - \delta_k r_k^{-1})_+^s - \eta ~~ \forall x \in B_R.
\end{align*}
As a consequence of \eqref{eq:consequence-i}, and \autoref{lemma:scaling-blow-up}, choosing $k$ so large that $r_k^{-1} \ge R$, we have $\{ x_n - \delta_k r_k^{-1} > 0 \} \cap B_{R} \subset \{ u_{k,r_k} > 0 \} \cap B_{R}$. This allows us to apply \autoref{lemma:aux-flat-implies-closeness} (with $R := R$ and $\delta := \eta$).  Note that due to \eqref{eq:loc-unif-conv-consequence} the relation $R \ge c\delta^{-\frac{1}{s}} = c \eta^{- \frac{1}{s}}$ holds true once $\eta$ is chosen small enough, which is possible, simply by choosing $k$ large enough. Thus, we deduce
\begin{align*}
u_{k,r_k}(x) \ge (1-C\eta) A(e_n) (x \cdot e_n - \delta_k r_k^{-1})_+^s \ge A(e_n) (x \cdot e_n - c(\delta_k r_k^{-1} + \eta^{\frac{1}{s}}))_+^s ~~ \forall x \in B_{1},
\end{align*}
where we used that 
\begin{align*}
(x \cdot e_n)(1 - C \eta)^{\frac{1}{s}} \ge x \cdot e_n - c \eta^{\frac{1}{s}}, \qquad - \delta_k r_k^{-1}(1 - C \eta)^{\frac{1}{s}} \ge  - \frac{1}{2} \delta_k r_k^{-1}
\end{align*}
for some constant $c > 0$, depending only on $C,s$, once $\eta > 0$ is chosen small enough. Choosing $k$ so large that $c(\delta_k r_k^{-1} + \eta^{\frac{1}{s}}) < \eps$, we have verified \eqref{eq:close-to-1d-sol} for $u_{k,r_k}$. Moreover, if we take any $R > 0$, note that by choosing $k > 0$ even larger, depending on $R$, a rescaled version of \autoref{lemma:aux-flat-implies-closeness} implies that the previous estimate does not only hold true in $B_1$, but even in $B_R$. Taking $R > 0$ large enough, depending on $\eps,\delta_0$, we can thereby also get 
\begin{align*}
\tail((u_{k,r_k} - A(e_n)(x_n - \eps)_+^s)_- ; 1) < \eps \delta_0/2,
\end{align*}
i.e., the first estimate in \eqref{eq:tail-small}. \\
An analogous chain of arguments based on \autoref{lemma:aux-flat-implies-closeness-2} (applied with $R := R, \delta := \eta, \eps := \delta_k r^{-1}$) yields a corresponding upper bound for $u_{k,r_k}$ and also the second estimate in \eqref{eq:tail-small}, a contradiction. This proves the desired result.
\end{proof}

\subsubsection*{Closeness to half-space solutions when blow-up is a half-space solution}

We prove that minimizers are close to the half-space solution near boundary points at which the blow-up is the half-space solution (see \autoref{thm:close-to-halfspace}(ii)).

\begin{proof}[Proof of \autoref{thm:close-to-halfspace}(ii)]
By \autoref{cor:blowups}(iv), we know that up to a subsequence, it holds 
\begin{align*}
\overline{\{ u_{r} > 0 \}} \to \overline{\{ u_{0} > 0 \}} = \{ x \cdot \nu \ge 0 \}
\end{align*}
locally in $B_R$ for any $R > 0$ in the Hausdorff-sense. Thus, for any $\delta > 0$ there exists $r > 0$ such that
\begin{align*}
\{ x \cdot \nu \le - \delta \} \cap B_1 \subset \{u_r = 0\} \cap B_1 \subset \{ x \cdot \nu \le \delta \} \cap B_1.
\end{align*}
Therefore, the desired result follows by application of \autoref{thm:close-to-halfspace}(i) to $u_r$.
\end{proof}

\subsubsection*{Closeness to half-space solutions at reduced boundary points}

We prove that minimizers are close to the half-space solution near reduced boundary points (see \autoref{thm:close-to-halfspace}(iii)).

The proof of \autoref{thm:close-to-halfspace}(iii) is a direct consequence of the following lemma:

\begin{lemma}
\label{lemma:reduced-blow-up}
Assume \eqref{eq:Kcomp}. Let $u$ be a minimizer of $\mathcal{I}_{\Omega}$ with $B_2 \subset \Omega$ and $0 \in \partial^{\ast} \{ u > 0\}$, where $\nu \in \mathbb{S}^{n-1}$ denotes the measure theoretic inward normal to $\{ u > 0 \}$ at $0$. Then, up to a subsequence,
\begin{align*}
u_r \to u_0 = A(\nu)(x \cdot \nu)_+^s ~~ \text{ locally uniformly in } \R^n. 
\end{align*}
\end{lemma}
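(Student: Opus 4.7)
The plan is to identify an arbitrary blow-up of $u$ at $0$ with the half-space solution $A(\nu)(x\cdot\nu)_+^s$ in three steps: extract a convergent blow-up, identify its positivity set with the half-space, and identify its profile via a Liouville argument.

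First, by \autoref{cor:blowups}(i)--(iii) there is a subsequence $r_k \searrow 0$ along which $u_{r_k} \to u_0$ locally uniformly, with $u_0$ a non-trivial global minimizer of $\cI$, and such that $\1_{\{u_{r_k} > 0\}} \to \1_{\{u_0 > 0\}}$ in $L^1_{loc}(\R^n)$. Passing to a common further subsequence with the one in the hypothesis $\1_{\{u_{r_k} > 0\}} \to \1_{\{x\cdot\nu > 0\}}$, I would obtain
\begin{align*}
\1_{\{u_0 > 0\}} = \1_{\{x\cdot\nu > 0\}} \quad \text{a.e. in } \R^n.
\end{align*}

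Second, I would upgrade this a.e.\ identity to the set equality $\{u_0>0\} = \{x\cdot\nu>0\}$. Since $u_0 \in C^s_{loc}$ by \autoref{thm:or}, the positivity set $\{u_0>0\}$ is open. If $x_0 \in \{x\cdot\nu>0\}$ satisfied $u_0(x_0)=0$, then a ball $B_\rho(x_0) \subset \{x\cdot\nu>0\}$ would contain $\{u_0 > 0\}$ almost everywhere, while continuity would force $x_0 \in \partial\{u_0 > 0\}$; but then the density estimates for minimizers (\autoref{thm:density-est}) applied to $u_0$ at $x_0$ would give $|\{u_0=0\}\cap B_r(x_0)| \ge c |B_r(x_0)|$ for small $r$, contradicting the a.e.\ identity. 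The reverse inclusion is symmetric.

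Third, once $\{u_0 > 0\} = \{x\cdot\nu>0\}$, \autoref{lemma:aux} gives that $u_0\ge 0$ is $L$-harmonic in the half-space and vanishes in its complement, so the Liouville-type theorem of \cite[Theorem 2.7.2]{FeRo23} forces $u_0(x) = C (x\cdot\nu)_+^s$ for some $C \ge 0$. Non-triviality of the blow-up yields $C>0$, and either \autoref{prop:free-bound-cond} applied on any ball (the free boundary being the smooth hyperplane $\{x\cdot\nu=0\}$), or the viscosity free boundary condition provided by \autoref{lemma:min-visc} and \autoref{def:viscosity}, pins down $C = A(\nu)$, concluding $u_0 = A(\nu)(x\cdot\nu)_+^s$.

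The main obstacle I anticipate is the second step: transitioning from a.e.\ equality of indicator functions to true topological equality of the positivity sets. This is what requires the global validity of the density estimates for $u_0$, hence the fact that blow-ups of minimizers are themselves (global) minimizers. The remaining steps are essentially a consolidation of previously established results.
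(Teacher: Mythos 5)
Your proposal is correct and follows essentially the same route as the paper: extract a blow-up $u_0$ via \autoref{cor:blowups}, identify $\{u_0>0\}$ with the half-space using the density estimates, and then conclude via the Liouville theorem in the half-space together with the free boundary condition from \autoref{prop:free-bound-cond}. The only difference is the implementation of the middle step: the paper applies \autoref{thm:density-est} to the rescalings $u_r$ at free-boundary points $x_r$ approaching a putative bad point in $\{x\cdot\nu>0\}$, whereas you apply the density estimate directly to the blow-up limit $u_0$ at that point; both are legitimate since blow-ups are themselves global minimizers (\autoref{cor:blowups}(i)). One small inaccuracy: the reverse inclusion $\{u_0>0\}\subset\{x\cdot\nu>0\}$ is not quite symmetric to the forward one and in fact is easier — it follows from openness of $\{u_0>0\}$ together with the a.e.\ identity alone, without invoking the density estimate — but this does not affect the validity of the argument.
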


\begin{proof}
By \autoref{cor:blowups}, we deduce that $u_r \to u_0$ locally uniformly, up to a subsequence, and that $u_0$ minimizes $\cI_{B_R}$ for any $R$. Therefore, 
\begin{align*}
\begin{cases}
L u_0 &= 0 ~~ \text{ in } \{ u_0 > 0 \},\\
u_0 &\ge 0 ~~ \text{ in } \R^n.
\end{cases}
\end{align*}
Moreover, since $0 \in \partial^{\ast}\{ u > 0\}$, we have $\{ u_r > 0 \} \to \{ x \cdot \nu > 0 \}$ locally in $L^1(\R^n)$. This implies that $\{ u_0 > 0 \} = \{ x \cdot \nu > 0\}$. Indeed, if this property did not hold true, we could find a sequence $(x_r) \subset \partial \{ u_r > 0\}$ with $x_r \to x \in \{ x \cdot \nu > 0 \}$. But then, we would have
\begin{align*}
\frac{|B_{\dist(x_r,\{x \cdot \nu = 0\})}(x_r) \cap \{ u_r > 0\}|  }{|B_{\dist(x_r,\{x \cdot \nu = 0\})}(x_r)|} \to 1,
\end{align*}
contradicting \autoref{thm:density-est}.
Finally, since $\partial \{ u_0 > 0 \} = \{ x \cdot \nu = 0\}$, we deduce from the Liouville theorem in the half-space (see \cite[Theorem 2.7.2]{FeRo23}) that
\begin{align*}
u_0(x) = \kappa (x \cdot \nu)_+^s.
\end{align*}
Finally, we can apply \autoref{prop:free-bound-cond} and deduce
\begin{align*}
\frac{u_0}{d^s} = A(\nu) ~~ \text{ on } \partial \{ u_0 > 0 \},
\end{align*}
which implies that $\kappa = A(\nu)$. This concludes the proof.
\end{proof}

\begin{proof}[Proof of \autoref{thm:close-to-halfspace}(iii)]
By \autoref{lemma:reduced-blow-up}, we have verified (ii). Thus, the desired result follows from \autoref{thm:close-to-halfspace}(ii), which we already proved before.
\end{proof}

\subsubsection*{Interior ball condition implies closeness to half-space solution}

In this section, we prove that near any point $x_0 \in \partial \{ u > 0 \}$ which can be touched by a ball from the interior, the solution is close to the half-space solution (see \autoref{thm:close-to-halfspace}(iv)).

The proof of \autoref{thm:close-to-halfspace}(iv) is a direct consequence of the following lemma:

\begin{lemma}
\label{lemma:interior-ball-blowup}
Assume \eqref{eq:Kcomp}. Let $u$ be a minimizer of $\mathcal{I}_{\Omega}$ with $B_2 \subset \Omega$ and assume that there exists a ball $B \subset \{ u > 0 \}$ with $\overline{B} \cap \partial\{ u > 0 \} = \{ 0 \}$. Then, there exists $\nu \in \mathbb{S}^{n-1}$ such that
\begin{align*}
u_r \to u_0 = A(\nu)(x \cdot \nu)_+^s ~~ \text{ locally uniformly in } \R^n. 
\end{align*}
\end{lemma}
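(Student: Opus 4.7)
The plan is to adapt the iterated blow-up strategy from the proof of Lemma~\ref{lemma:min-visc}, using Lemmas~\ref{lemma:one-sided-expansion} and~\ref{lemma:interior-ball-blowup-viscosity}. I would start by selecting an arbitrary sequence $r_k \searrow 0$ and applying Lemma~\ref{cor:blowups} to extract a subsequence along which $u_{r_k} \to u_0$ locally uniformly, with $u_0$ a global minimizer of $\mathcal{I}$. Since the rescaled interior tangent ball $B/r_k$ converges locally in the Hausdorff sense to $\overline{\{x \cdot \nu > 0\}}$, Lemma~\ref{cor:blowups}(iv) will give $\{x \cdot \nu > 0\} \subset \{u_0 > 0\}$, and non-degeneracy (Lemma~\ref{thm:non-degeneracy}) will yield $u_0(x) \ge c(x \cdot \nu)_+^s$ in a neighborhood of $0$.

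After a rotation placing $\nu = e_n$, I would apply Lemma~\ref{lemma:one-sided-expansion} to $u_0$ to obtain an expansion $u_0(x) = \alpha(x \cdot \nu)_+^s + o(|x|^s)$ for $x \cdot \nu \ge 0$ near $0$, with $\alpha > 0$. To pin down the value $\alpha = A(\nu)$, the plan is to follow the three-step iterated blow-up from the proof of Lemma~\ref{lemma:min-visc}: a first blow-up of $u_0$ produces $v$ with $v = \alpha(x \cdot \nu)_+^s$ on $\{x \cdot \nu \ge 0\}$ and $v \le C(x \cdot \nu)_-^s$ on $\{x \cdot \nu \le 0\}$; a second application of Lemma~\ref{lemma:one-sided-expansion} will give $v(x) = \beta(x \cdot \nu)_-^s + o(|x|^s)$ there, with $\beta = 0$ forced by Lemma~\ref{thm:density-est}; a further blow-up will produce $w \equiv \alpha(x \cdot \nu)_+^s$ globally, and Proposition~\ref{prop:free-bound-cond} will yield $\alpha = A(\nu)$.

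The main obstacle will be to upgrade these asymptotic identities to the global equality $u_0 \equiv A(\nu)(x \cdot \nu)_+^s$ on $\R^n$. On $\{x \cdot \nu \ge 0\}$, I would read off the equality directly by applying Lemma~\ref{lemma:interior-ball-blowup-viscosity} to $u$ with a varying non-tangential parameter $\beta \in (0,1)$: after rescaling, the cone $\{d_{B/r}(x) \ge \beta|x|\}$ converges locally to $\{x \cdot \nu \ge \beta|x|\}$, and sending $\beta \to 0$ fills the open half-space. For the complementary half-space I would set $f := u_0 - A(\nu)(x \cdot \nu)_+^s \ge 0$ and, using Lemma~\ref{lemma:expansion-smooth-domain}, compute $Lf(x) = -c_{n,s} A(\nu)^{-1} |x \cdot \nu|^{-s} < 0$ on $\{f > 0\} \cap \{x \cdot \nu < 0\}$ and $Lf \le 0$ on $\{x \cdot \nu \ge 0\}$; then a maximum principle in $\{x \cdot \nu < 0\}$ combined with Lemma~\ref{thm:density-est} applied along $\{x \cdot \nu = 0\} \cap \partial\{u_0 > 0\}$ should rule out extra positive mass and force $f \equiv 0$. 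The hard part will be handling this Liouville-type step: the $|x|^s$ growth of $u_0$ at infinity prevents a direct appeal to standard Liouville theorems, so a careful truncation together with the density estimate will be required. Once $u_0 = A(\nu)(x \cdot \nu)_+^s$ is established for every subsequential limit, a standard subsequence argument promotes this to the full convergence $u_r \to A(\nu)(x \cdot \nu)_+^s$ as $r \to 0$.
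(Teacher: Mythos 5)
Your proposal has the right ingredients but arranges them in a way that creates a genuine obstruction where the paper has none. The decisive structural difference is \emph{what function the expansion lemmas are applied to}. The paper applies \autoref{lemma:interior-ball-blowup-viscosity} and \autoref{lemma:one-sided-expansion} directly to the original minimizer $u$ (not to a blow-up limit). On the positive side this is the interior-ball lemma, as you eventually do; but on the negative side the paper observes that $u \in C^s(B_1)$ with $u(0)=0$ gives $u \le C(x_n)_-^s$ in $\{x_n \le 0\}$ near $0$, so \autoref{lemma:one-sided-expansion} applies to $u$ itself on the lower half-space and yields the \emph{exact} asymptotic $u(x) = \beta(x_n)_-^s + o(|x|^s)$ there. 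Since these expansions hold for $u$ (uniformly along rays), the blow-up limit $u_0$ is identified \emph{globally} as $u_0 = \alpha(x_n)_+^s + \beta(x_n)_-^s$ on $\R^n$ in one step; then \autoref{thm:density-est} forces $\beta = 0$ and \autoref{prop:free-bound-cond} gives $\alpha = A(\nu)$. No iterated blow-ups and no Liouville-type argument are needed.

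By contrast, your proposal first blows up and then applies \autoref{lemma:one-sided-expansion} to $u_0$, producing only an asymptotic statement near $0$; you then pay twice for this: the three-step iterated blow-up to identify $\alpha$ (which becomes redundant once the global formula for $u_0$ is in hand), and the final ``upgrade asymptotics to global equality'' step, which you correctly flag as the hard part. That step as outlined is not clearly closeable: from $f := u_0 - A(\nu)(x\cdot\nu)_+^s \ge 0$ with $Lf < 0$ in $\{f>0\}\cap\{x\cdot\nu<0\}$, the strong minimum principle for $L$-superharmonic functions requires $f$ to vanish at an \emph{interior} point of that domain to conclude $f\equiv 0$ there, but $f(0)=0$ is only a boundary datum; and the density estimate at $0$ does not by itself rule out positive mass of $\{u_0>0\}$ far from $0$ in the lower half-space. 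The missing idea is simply to run the one-sided expansion on $u$ in $\{x_n \le 0\}$, where $C^s$ regularity supplies the needed upper barrier $u \le C(x_n)_-^s$, rather than trying to propagate information from a blow-up back to a global identity.
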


\begin{proof}[Proof of \autoref{thm:close-to-halfspace}(iv)]
Without loss of generality, we assume that $\nu = e_n$, where $\nu \in \mathbb{S}^{n-1}$ denotes the normal vector of $\partial B$ at zero, inward to $\{ u > 0\}$. Then, clearly, $B \subset \{x_n > 0 \}$. Then, by \autoref{lemma:interior-ball-blowup-viscosity} there is $\alpha \ge 0$ such that for any $x \in B \cap \{d_B(x) \ge |x|/2 \}$ (non-tangential region inside $B$) near $0$ it holds
\begin{align}
\label{eq:claim-interior-ball}
u(x) = \alpha (x_n)_+^s + o(|x|^s).
\end{align}
Let us explain how \eqref{eq:claim-interior-ball} implies the desired result. Note that with the aid of the non-degeneracy (see \autoref{thm:non-degeneracy}), we can deduce that $\alpha > 0$. Indeed, $B \subset \{ u > 0 \}$ yields the existence of a sequence $(x_k)_k \subset B \cap \{d_B \ge |x|/2 \}$ such that $x_k \to 0$ and $\dist(x_k,\partial \{ u > 0 \}) = (x_k)_n$, and by \autoref{thm:non-degeneracy} this implies $u(x_k) \ge c ((x_k)_n)_+^s$ for some constant $c > 0$. Thus $u(x) \not= o(|x|^s)$ near $0$, and it follows $\alpha > 0$.\\
Moreover, note that since $u \in C^s(B_1)$ (due to \autoref{thm:or}) with $u(0) = 0$, we have that $u \le C(x_n)_-^s$ in $\{ x_n \le 0 \}$ near $0$ for some $C > 0$, and therefore by application of \autoref{lemma:one-sided-expansion} to $v \1_{\{ x_n \le 0 \}}$ in $\{ x_n \le 0 \}$, we obtain that for $x \in \{ x_n \le 0 \}$ near $0$ it holds
\begin{align*}
u(x) = \beta (x_n)_-^s + o(|x|^s).
\end{align*}
Altogether, and using also \autoref{cor:blowups}, this implies that the blow-up sequence $(u_r)_r$ converges to $u_0$ locally uniformly, where $u_0$ is defined as follows
\begin{align*}
u_0(x) = \alpha (x_n)_+^s + \beta (x_n)_-^s ~~ \forall x \in \R^n.
\end{align*}
Here we used that for any $x \in \{ x_n > 0 \}$ there is $r_0 > 0$ such that $rx \in  B \cap \{d_B(x) \ge |x|/2 \}$ for any $r \le r_0$, so that the convergence in $\{ x_n > 0 \}$ follows from the claim \eqref{eq:claim-interior-ball}.\\
Since by \autoref{cor:blowups}, $u_0$ is a global minimizer of $\mathcal{I}$, \autoref{thm:density-est} implies that we cannot have $\alpha, \beta > 0$, so it must be $\beta = 0$. Thus, $u_0(x) = \alpha (x_n)_+^s$ and by \autoref{prop:free-bound-cond}, we deduce that $\alpha = A(e_n)$, as desired.
\end{proof}

\begin{proof}[Proof of \autoref{thm:close-to-halfspace}(iv)]
By \autoref{lemma:interior-ball-blowup}, we have verified (ii). Thus, the desired result follows from the proof of \autoref{thm:close-to-halfspace}(ii).
\end{proof}

The proof of \autoref{thm:main-C1alpha} is now immediate.

\begin{proof}[Proof of \autoref{thm:main-C1alpha}]
By \autoref{thm:close-to-halfspace}(i) and \autoref{lemma:min-visc}, we can apply \autoref{thm:free-boundary-regularity} to $u_r$ for some $r > 0$, depending only on $n,s,\lambda,\Lambda$, once $\delta \in(0,1)$ is small enough. Then, the desired result follows immediately from \autoref{thm:free-boundary-regularity} after a suitable rescaling. 
\end{proof}

\subsection{Smoothness in an open dense set}
\label{subsec:open-dense}

In this section we prove \autoref{cor:open-dense}.

\begin{proof}[Proof of \autoref{cor:open-dense}]
Let us define
\begin{align*}
\mathcal{O} := \left\{ x_0 \in \partial \{ u > 0 \} \cap \Omega : \exists \text{ ball } B \subset \{ u > 0 \} ~~ \text{ s.t. } \overline{B} \cap \partial \{ u > 0 \} = \{ x_0 \} \right\}.
\end{align*}
It is easy to see that the set $\mathcal{O}$ is open. To see that $\mathcal{O}$ is dense in $\Omega \cap \partial \{ u > 0 \}$, it clearly suffices to assume that $\Omega \subset B_2$ and to prove that $\mathcal{O}$ is dense in $\partial \{ u > 0 \} \cap B_1$. To show this, let $x_0 \in B_1 \cap \partial \{ u > 0 \}$ and $\eps > 0$ be arbitrary. Our goal is to find $y_0 \in \mathcal{O}$ with $|x_0 - y_0| < \eps$. To do so, take $x \in \{ u > 0 \}$ with $|x_0 - x| < \eps/2$. Since $\{ u > 0 \}$ is open, there exists a radius $\delta > 0$ such that $B_{\delta}(x) \subset \{ u > 0 \}$. Let us set
\begin{align*}
\delta_0 := \sup \{ \delta > 0 : B_{\delta}(x) \subset \{ u > 0 \} \}
\end{align*}
and observe that $\delta_0 \in (0,\eps/2)$. Moreover, by construction, $B_{\delta_0}(x) \subset \{ u > 0 \}$ and there exists $y_0 \in \overline{B_{\delta_0}(x)} \cap \partial \{ u > 0 \}$. Clearly $|x_0 - y_0| \le |x_0 - x| + |x - y_0| < \eps$. Moreover, if necessary, by shifting  and shrinking the ball $B_{\delta_0}(x)$, we can guarantee that $y_0$ is the only point in the intersection, so that $y_0 \in \mathcal{O}$, as desired.\\
We have shown that $\mathcal{O}$ is an open, dense set. Due to \autoref{thm:close-to-halfspace}(iv), we can apply \autoref{thm:free-boundary-regularity} for any $x_0 \in \mathcal{O}$ to a rescaling $u_r$ of $u$ (where $r$ depends on $u,x_0$), which concludes the proof.
\end{proof}

\subsection{Classification in two dimensions}
\label{sec:two-dimensional-classification}

The goal of this section is to prove \autoref{cor:two-dim}. The idea of the proof is to classify all global minimizers of $\mathcal{I}$ in dimension $n = 2$. We will show that all global minimizers are one-dimensional. In particular, this yields a characterization of blow-ups and therefore full regularity of the nonlocal one-phase problem in dimension $n=2$ (see \autoref{cor:two-dim}).

\begin{theorem}
\label{thm:two-dimensional-classification}
Let $n=2$. Let $K \in C^{2}(\mathbb{S}^{1})$ and assume \eqref{eq:Kcomp}. Let $u$ be a minimizer of $\cI_{B_R}$ for any $R > 0$, and assume that $0 \in \partial \{ u > 0 \}$. Then, there exists $e \in \mathbb{S}^{1}$ such that
\begin{align*}
u(x) = A(e) (x \cdot e)_+^s.
\end{align*}
\end{theorem}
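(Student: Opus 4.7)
The strategy is a nonlocal adaptation of the two-dimensional competitor argument in \cite[Theorem 5.5]{DeSa15} and \cite[Theorem 6.1]{EKPSS21}. Since no monotonicity formula is available for general kernels, blow-ups cannot be assumed to be homogeneous, so the classification does not reduce to that of minimal cones. Instead, the plan is to exploit global minimality of $u$ at every scale together with the energy estimate $\mathcal{E}_{B_R \times B_R}(u,u) \le CR^2$ from \autoref{lemma:energy-bound}, and to extract translation invariance of $u$ from the positivity of a ``purely nonlocal'' term appearing in the energy comparison with a suitable competitor, in the spirit of the deformations used in \cite{CSV19, FiSe19}. Once translation invariance is established, \autoref{prop:free-bound-cond} together with the Liouville theorem in the half-space (as used in the proof of \autoref{lemma:reduced-blow-up}) will give the explicit form $u(x) = A(e)(x\cdot e)_+^s$.

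For a unit vector $\tau \in \mathbb{S}^1$ and a small $h>0$, I would construct a competitor $v = v_{h,R}$ that equals the translate $u(\cdot - h\tau)$ inside $B_R$ and coincides with $u$ outside $B_{R+1}$, smoothly interpolating in the annulus. Writing $w = v - u$, supported in $B_{R+1}$, the minimality of $u$ gives
\begin{align*}
0 \le \mathcal{E}(v, v) - \mathcal{E}(u, u) + \big(|\{v > 0\}| - |\{u > 0\}|\big) = \mathcal{E}(w, w) + 2\mathcal{E}(w, u) + \text{(measure remainder)}.
\end{align*}
The measure contribution is controlled by $\mathcal{H}^1(\partial\{u>0\} \cap (B_{R+1} \setminus B_R))$, which itself is controlled by the linear-in-volume energy bound in 2D. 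The key observation is that the cross term splits as
\begin{align*}
\mathcal{E}(w, u) = \mathcal{E}_{B_R \times B_R}(w, u) + \iint_{B_R \times B_R^c} w(x)\big(u(x)-u(y)\big) K(x-y)\,\d y\,\d x + o_R(1),
\end{align*}
and upon combining this with $\mathcal{E}(w,w)$ one isolates the positive ``purely nonlocal'' contribution
\begin{align*}
c \int_{B_R} w^2(x) \Big( \int_{B_R^c} K(x-y)\,\d y \Big) \d x \gtrsim R^{-2s}\,\|w\|_{L^2(B_R)}^2,
\end{align*}
which plays the role of homogeneity in the thin-problem arguments. The assumption $K \in C^2(\mathbb{S}^1)$ is used to control the local part of $\mathcal{E}(w,w)$ by $C h^2 R^{2-2s}$ via a smoothness expansion of $K$.

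Balancing the positive $R^{-2s}\|w\|_{L^2(B_R)}^2$ term against the controlled local contributions and letting $R \to \infty$ (then $h \to 0$), one forces $u(\cdot - h\tau) \equiv u(\cdot)$ for every $h$ in some direction $\tau$; iterating over a one-parameter family of admissible directions yields $u(x) = f(x \cdot e)$ for some $e \in \mathbb{S}^1$ and a 1D profile $f \ge 0$ with $f(0)=0$. The PDE $Lu = 0$ in $\{u > 0\}$ reduces to a one-dimensional fractional equation, which together with the $C^s$ bound and non-degeneracy from \autoref{thm:or} and \autoref{thm:non-degeneracy} forces $f(t) = \kappa t_+^s$ for some $\kappa > 0$, and \autoref{prop:free-bound-cond}, applied to the smooth half-space $\{x\cdot e>0\}$, identifies $\kappa = A(e)$.

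The hard part will be the last quantitative step of the competitor argument, namely upgrading the integral lower bound $\int_{B_R} w^2 \lesssim R^{2s} (\text{rest of energy difference})$ to a \emph{pointwise} translation invariance $w \equiv 0$ of $u$; this requires a careful linearization as $h \to 0$ and a rigidity statement for the linearization. The two-dimensional nature of the problem enters crucially at two points: the $O(R^2)$ energy growth is exactly the scaling of a one-dimensional profile on $\R^2$, and the free boundary has codimension one so that the positive nonlocal term ``sees'' every possible deviation of the normal direction --- both features break down for $n \ge 3$, which is why the analogous statement there remains open even for the fractional Laplacian.
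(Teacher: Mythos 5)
Your overall road-map is the right one: compare $u$ against a deformation competitor in the spirit of \cite{CSV19, FiSe19}, extract a positive nonlocal term, let $R \to \infty$ using the $n=2$ energy growth, obtain a one-dimensional profile, and finish with the Liouville theorem in the half-space together with \autoref{prop:free-bound-cond}. However, the middle of the argument has several concrete gaps, and in at least two places the mechanism you propose would not produce the required smallness.

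\emph{First}, your competitor is a \emph{first-order} deformation $v$ that equals $u(\cdot - h\tau)$ inside $B_R$, and you then write $0 \le \cI(v) - \cI(u) = \cE(w,w) + 2\cE(w,u) + (\text{measure})$. Minimality only gives nonnegativity of this right-hand side; it does not give smallness, and the cross term $\cE(w,u)$ is of first order in $h$ and of unclear sign. The paper avoids this by taking a \emph{domain} deformation $u_{R,t}(x) = u(\Psi_{R,t}^{-1}(x))$ and comparing $\cI(u_{R,t}) + \cI(u_{R,-t})$ to $2\cI(u)$: the symmetric $\pm t$ choice kills the first-order variation exactly, leaving a purely quadratic upper bound $\tfrac{Ct^2}{\log R}\mathcal{S}_R$ (\autoref{lemma:two-dim-energy-est-1}). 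Without this symmetrization, you cannot deduce that the energy excess is $O(t^2)$.

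\emph{Second}, the positive term you propose to isolate, $R^{-2s}\|w\|_{L^2(B_R)}^2$, is the wrong one. The paper's purely nonlocal positive quantity is the \emph{cross} term $\iint (u-u_{R,t})_+(u-u_{R,t})_-|x-y|^{-n-2s}\,\d y\,\d x$, extracted from the algebraic identity $\eqref{eq:min-max-alg-identity}$ for $\wedge$ and $\vee$ competitors together with minimality (\autoref{lemma:two-dim-energy-est-2}). This specific structure is what turns ``energy excess is small'' into ``$u - u(\cdot + t\nu)$ does not change sign,'' i.e.\ into \emph{monotonicity} of $u$ in every direction. Your quantity $\|w\|_{L^2}^2$ would at best control an integrated size of $w$, and as you acknowledge there is no clear path from there to $w \equiv 0$; the paper's mixed-sign product vanishes as $R\to\infty$ because $\mathcal{S}_R \le C$ in 2D (via \autoref{lemma:energy-bound}) and the prefactor is $1/\log R$, and that vanishing forces disjoint supports, not integral smallness.

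\emph{Third}, you interpolate in an $O(1)$ annulus $B_{R+1}\setminus B_R$ and would obtain, at best, an error with no decay in $R$. The paper uses a logarithmic cutoff $\phi_R$ interpolating over $B_R \setminus B_{\sqrt{R}}$, and this is precisely where the $1/\log R$ factor comes from. Without it the $R\to\infty$ limit does not close the argument.

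\emph{Fourth}, your treatment of the measure term via $\mathcal{H}^1(\partial\{u>0\} \cap (B_{R+1}\setminus B_R))$ assumes a perimeter bound for the free boundary, which is not available at this stage. The paper sidesteps this entirely: the Jacobian of $\Psi_{R,\pm t}$ yields $|\{u_{R,t}>0\}\cap B_R| + |\{u_{R,-t}>0\}\cap B_R| - 2|\{u>0\}\cap B_R| = 0$ exactly, so the measure contribution cancels by symmetry in $\pm t$ with no geometric input on the free boundary.

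To summarize: the deformation must be a symmetric $\pm t$ domain variation with a logarithmic cutoff, and the positive nonlocal quantity to extract is the mixed $(\cdot)_+(\cdot)_-$ cross term from the $\wedge/\vee$ algebraic identity; this yields directional monotonicity (not directly translation invariance), and monotonicity in every direction plus $0 \in \partial\{u>0\}$ and the density estimate is what forces $u$ to be one-dimensional. After that your final steps (Liouville in the half-space, free boundary condition identifying $\kappa = A(e)$) are exactly what the paper does.
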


With the aid of \autoref{thm:two-dimensional-classification}, the proof of \autoref{cor:two-dim} becomes straightforward.

\begin{proof}[Proof of \autoref{cor:two-dim}]
From \autoref{thm:two-dimensional-classification}, we deduce that for any $x_0 \in \Omega \cap \partial\{ u > 0\}$ it holds $u_{r,x_0} \to A(e_{x_0})(x \cdot e)_+^s$ for some $\nu_{x_0} \in \mathbb{S}^{n-1}$. Thus, by \autoref{thm:close-to-halfspace}(ii), we can apply \autoref{thm:free-boundary-regularity} for any $x_0  \in B_1 \cap \partial\{ u > 0\}$ to a rescaling $u_r$ of $u$ (where $r$ depends on $u,x_0$).
\end{proof}

The rest of this section is dedicated to the proof of \autoref{thm:two-dimensional-classification}.
The proof of the classification of blow-ups in dimension $n=2$ from \cite{AlCa81} does not seem to work in our setup. The proofs for the thin one-phase problem (see \cite{DeSa15}, \cite{EKPSS21}) establish classification of homogeneous minimizers (minimal cones), which implies the classification of blow-ups by the Allen-Weiss monotonicity formula. Since such formula is not available for our general class of operators, in this paper, we follow a different strategy, inspired by \cite{CSV19} and \cite{FiSe19}.

Let us define
\begin{align*}
\phi_R(x) = 
\begin{cases}
1, ~~ &\text{ if } |x| \le \sqrt{R},\\
2\left(1 - \frac{\log |x|}{\log R} \right), ~~ &\text{ if } \sqrt{R} \le |x| \le R,\\
0 ~~ &\text{ if } |x| \ge R.
\end{cases}
\end{align*}

For $\nu \in \mathbb{S}^{n-1}$ and $t \in [-1,1]$, we set
\begin{align*}
\Psi_{R,t}(x) = x + t \phi_R(x) \nu, \qquad u_{R,t}(x) = u(\Psi^{-1}_{R,t}(x)).
\end{align*}

First, we have the following lemma, which generalizes \cite[Lemma 2.1]{CSV19} in the sense that the bound on the right hand side only contains the $L^1_{2s}(\R^n)$ norm of $u$ (respectively $\tail(u)$) instead of the energy in the whole space. Therefore, the following lemma does not require $u \in V^s(B_R | \R^n)$ (if one interprets the energies on the left hand side as being all written under the same integral).

\begin{lemma}
\label{lemma:two-dim-energy-est-1}
Let $n \ge 2$. Let $K \in C^{2}(\mathbb{S}^{n-1})$ and assume \eqref{eq:Kcomp}. Let $R \ge 4$, and $u \in V^s(B_{2R} | B_{3R}) \cap L^1_{2s}(\R^n)$. Then, for all $t \in (-1,1)$:
\begin{align*}
\cI_{B_R}(u_{R,t}) + \cI_{B_R}(u_{R,-t}) - 2 \cI_{B_R} (u) \le \frac{C t^2}{\log R} \mathcal{S}_R,
\end{align*}
where $C > 0$ depends only on $n,s,\lambda,\Lambda$, and $\Vert K \Vert_{C^2(\mathbb{S}^{n-1})}$, and we denote
\begin{align*}
\mathcal{S}_R := \left(\sup_{\rho \in [1,R]} \frac{\cE_{B_{2\rho} \times B_{2\rho}}(u,u)}{\rho^2} + \sup_{\rho \in [1,R]} \frac{ R^{-2s} \Vert u \Vert_{L^2(B_{\rho})}^2}{\rho^2} + \sup_{\rho \in [1,R]} \frac{R^{-2s}  \tail(u;2R) \Vert u \Vert_{L^{1}(B_{\rho})}}{\rho^2} \right)
\end{align*}
\end{lemma}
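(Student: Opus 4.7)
The plan is to exploit the second-order (convexity-type) structure of the left-hand side by symmetrizing in $t$ and carefully Taylor-expanding everything that depends on $t$. First I reduce to the Dirichlet form. Since $\phi_R \equiv 0$ on $\partial B_R$, the map $\Psi_{R,t}$ restricts to a diffeomorphism $B_R \to B_R$ (for $|t|$ small enough, which I may assume by a standard truncation), and the rank-one identity gives $\det D\Psi_{R,t}(y) = 1 + t\, \nu \cdot \nabla \phi_R(y)$. Changing variables $y = \Psi_{R,t}^{-1}(x)$ yields
\begin{align*}
|\{u_{R,\pm t} > 0\} \cap B_R| = \int_{\{u > 0\} \cap B_R} \bigl(1 \pm t\, \nu \cdot \nabla \phi_R(y)\bigr)\, dy,
\end{align*}
so the measure contribution to $\cI_{B_R}(u_{R,t}) + \cI_{B_R}(u_{R,-t}) - 2\cI_{B_R}(u)$ cancels identically.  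Only the Dirichlet form remains to analyze.

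Next I rewrite $\cE_{(B_R^c \times B_R^c)^c}(u_{R,t}, u_{R,t})$ in terms of $u$ by the same change of variables; since $\Psi_{R,t}$ is the identity on $B_R^c$, the domain $(B_R^c \times B_R^c)^c$ is preserved, and
\begin{align*}
\cE_{(B_R^c \times B_R^c)^c}(u_{R,t},u_{R,t}) = \iint_{(B_R^c \times B_R^c)^c} (u(x') - u(y'))^2\, \tilde{K}_t(x',y')\, dx'\, dy',
\end{align*}
where $\tilde{K}_t(x',y') = K(\Psi_{R,t}(x') - \Psi_{R,t}(y'))\, J_t(x')\, J_t(y')$ when $x',y' \in B_R$, and $K(\Psi_{R,t}(x') - y')\, J_t(x')$ when exactly one of them is in $B_R$, with $J_t(z) := 1 + t\,\nu \cdot \nabla \phi_R(z)$.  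I then Taylor-expand $\tilde K_t$ to second order in $t$ at $t=0$; writing $w := x' - y'$, $\delta := \phi_R(x') - \phi_R(y')$, $\alpha := \nu \cdot \nabla \phi_R(x')$, $\beta := \nu \cdot \nabla \phi_R(y')$ (with $\beta = 0$ in the cross term), the linear part of $\tilde K_t$ is odd in $t$ and cancels in $\tilde K_t + \tilde K_{-t} - 2K$, leaving a pointwise bound
\begin{align*}
|\tilde K_t(x',y') + \tilde K_{-t}(x',y') - 2K(w)| \le C t^2 \Bigl[\delta^2 |D^2 K|(w_*) + |\delta|(|\alpha| + |\beta|)|\nabla K|(w_*) + |\alpha \beta|\, K(w)\Bigr],
\end{align*}
with $w_*$ on the segment from $w$ to $w + t\delta\nu$.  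Here the hypothesis $K \in C^2(\mathbb{S}^{n-1})$ enters, via $|D^k K(h)| \le C|h|^{-n-2s-k}$ for $k=1,2$.

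To finish, I substitute the explicit estimates $|\nabla \phi_R(x)| \le C/(|x| \log R)$ and $|D^2 \phi_R(x)| \le C/(|x|^2 \log R)$ on the annulus $B_R \setminus B_{\sqrt R}$ (and $\equiv 0$ elsewhere), together with the mean-value bound $|\delta| \le C \min(1, |x'-y'|/(\min(|x'|,|y'|)\log R))$.  The two powers of $|x'-y'|$ produced by $\delta^2$ exactly compensate the two extra singular powers in $|D^2 K|$, so each term inherits the same $|x'-y'|^{-n-2s}$ singularity as $K$ itself, while gaining an overall weight $C/(|x|^2 \log^2 R)$ localized in the annulus.  Decomposing the domain dyadically into annuli $A_\rho = B_{2\rho}\setminus B_\rho$ with $\rho \in [\sqrt R, R]$ (there are $O(\log R)$ such scales), the local part $x',y' \in A_\rho$ contributes
\begin{align*}
\frac{C t^2}{\rho^2 \log^2 R}\, \cE_{B_{2\rho} \times B_{2\rho}}(u,u),
\end{align*}
while the genuinely long-range interactions (including the cross terms with $y' \in B_R^c$) are split via $(u(x')-u(y'))^2 \le 2(u(x')^2 + u(y')^2)$ to produce, respectively, the $R^{-2s}\Vert u\Vert_{L^2(B_\rho)}^2/\rho^2$ and $R^{-2s}\tail(u;2R)\Vert u\Vert_{L^1(B_\rho)}/\rho^2$ pieces of $\mathcal{S}_R$.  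Summing over the $O(\log R)$ scales absorbs one power of $\log R$, converting the $1/\log^2 R$ gain into the claimed $1/\log R$.

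The main obstacle will be the careful bookkeeping of these long-range interactions.  In the local regime $|x'-y'| \ll \min(|x'|,|y'|)$ one wants the mean-value bound on $|\delta|$ (which furnishes the crucial cancellation of singular powers in $|D^2 K|$); but for $|x'-y'| \gtrsim |x'|$, and in particular in the cross term with $y' \in B_R^c$, one has only $|\delta| \le 2/\log R$ and must instead use the $(u(x')-u(y'))^2 \le 2(u(x')^2 + u(y')^2)$ splitting.  Ensuring that the resulting $L^2$ and tail contributions carry the correct $R^{-2s}/\rho^2$ scaling across all dyadic scales — rather than accumulating stray powers of $R$ or $\log R$ — will require the uniform bounds $|\phi_R| \le 1$ and $|\nabla \phi_R(x)| \le C/(|x|\log R)$ to be applied at the sharp scale in each regime, and a short argument showing that the Taylor remainder is controlled by these same quantities (using that the displacement $|t\delta|$ never reaches $|w|/2$ once $R$ is large).
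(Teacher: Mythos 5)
Your proposal tracks the paper's strategy in broad outline — cancellation of the measure term via the Jacobian identity, a second-order Taylor expansion of the pushed-forward kernel exploiting the $C^2$ regularity of $K$ and the $1/(|x|\log R)$ decay of $\nabla\phi_R$, and a dyadic decomposition that converts the $1/\log^2 R$ gain into $1/\log R$ after summing over $O(\log R)$ scales. The local part (both variables in $B_R$) and the kernel estimates you outline are consistent with what the paper obtains by citing \cite[Lemma 2.1, (2.11), (2.13)]{CSV19}.

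The gap is in your treatment of the cross term where $x'\in B_R$ and $y'\notin B_{2R}$. You propose to split $(u(x')-u(y'))^2 \le 2(u(x')^2+u(y')^2)$ and claim the $u(y')^2$ piece yields the $R^{-2s}\tail(u;2R)\Vert u\Vert_{L^1(B_\rho)}/\rho^2$ part of $\mathcal{S}_R$. This cannot work: $\tail(u;2R)\Vert u\Vert_{L^1(B_\rho)}$ is a product of two $L^1$-type quantities, whereas your split produces $\int_{\R^n\setminus B_{2R}} u(y')^2\,|x'-y'|^{-n-2s}\,dy'$, an $L^2$-weighted tail that is not bounded by $\tail(u;2R)$ and, more seriously, is \emph{not even finite} under the hypothesis $u\in L^1_{2s}(\R^n)$ (for instance $u(y)\sim |y|^s$ at infinity gives a logarithmic divergence). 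What saves the argument — and is absent from your sketch — is an exact algebraic cancellation of the $u(y')^2$ contribution. One way to see it: since $u_{R,\pm t}=u$ outside $B_R$, expanding $(u_{R,\pm t}(x)-u_{R,\pm t}(y))^2-(u(x)-u(y))^2$ \emph{before} changing variables gives $u_{R,\pm t}^2(x)-u^2(x)-2u(y)\bigl(u_{R,\pm t}(x)-u(x)\bigr)$, with no $u(y)^2$ term at all. Equivalently, after your change of variables, one should keep the full expansion $u(x')^2-2u(x')u(y')+u(y')^2$ rather than take absolute values, and observe that for each fixed exterior $y'$ one has
\begin{align*}
\int_{B_R} e(x',y')\,dx' \;=\; \int_{B_R}\Bigl[K(\Psi_{R,t}(x')-y')J_t(x')+K(\Psi_{R,-t}(x')-y')J_{-t}(x')-2K(x'-y')\Bigr]\,dx' = 0,
\end{align*}
by changing variables $z=\Psi_{R,\pm t}(x')$ in each of the first two integrals; hence the $u(y')^2\,e(x',y')$ contribution vanishes identically. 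Only the $u(x')^2$ piece (giving the $R^{-2s}\Vert u\Vert_{L^2(B_\rho)}^2/\rho^2$ term) and the genuine cross product $u(x')u(y')$ (giving the $R^{-2s}\tail(u;2R)\Vert u\Vert_{L^1(B_\rho)}/\rho^2$ term) survive. As written, your inequality splitting destroys this cancellation, so the argument would produce an infinite quantity in the far-field estimate and cannot close.
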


\begin{proof}
First, by following the same arguments as in the proof of \cite[Lemma 2.1]{CSV19}, we deduce
\begin{align*}
& \cE_{(B_{2R} \times B_{2R}) \setminus (B_R^c \times B_R^c)}(u_{R,t},u_{R,t}) + \cE_{(B_{2R} \times B_{2R}) \setminus (B_R^c \times B_R^c)}(u_{R,-t},u_{R,-t}) - 2 \cE_{(B_{2R} \times B_{2R}) \setminus (B_R^c \times B_R^c)}(u,u) \\
&= \cE_{B_{2R} \times B_{2R}}(u_{R,t},u_{R,t}) + \cE_{B_{2R} \times B_{2R}}(u_{R,-t},u_{R,-t}) - 2 \cE_{B_{2R} \times B_{2R}}(u,u) \le \frac{C t^2}{\log R} \sup_{\rho \in [1,R]} \frac{\cE_{B_{2\rho} \times B_{2\rho}}(u,u)}{\rho^2}.
\end{align*}
The only difference to \cite{CSV19} is the slightly different domain of integration, however, it does not change any of the arguments. Moreover, the first identity comes from the fact that $u_{R,\pm t} \equiv u$ in $\R^n \setminus B_R$, which leads to cancellations.
Next, we observe
\begin{align*}
\cE_{B_{R} \times B_{2R}^c}(u_{R,\pm t},u_{R,\pm t}) &- \cE_{B_{R} \times B_{2R}^c}(u,u) \\
&= \int_{B_R} \int_{\R^n \setminus B_{2R}} \big[ (u_{R,\pm t}(x) - u_{R,\pm t}(y))^2 - (u(x) - u(y))^2 \big] K(x-y) \d y \d x \\
& = \int_{B_R} \int_{\R^n \setminus B_{2R}} \big[ u_{R,\pm t}^2(x) - u^2(x) - 2u(y)(u_{R,\pm t}(x) - u(x))\big] K(x-y) \d y \d x \\
&= \int_{B_R} \int_{\R^n \setminus B_{2R}} \big[ u_{R,\pm t}^2(x)-2u_{R,\pm t}(y)u_{R,\pm t}(x) \big] K(x-y) \d y \d x \\
&\quad - \int_{B_R} \int_{\R^n \setminus B_{2R}} \big[ u^2(x)-2u(y)u(x) \big] K(x-y) \d y \d x.
\end{align*}
By summing up the previous identity, once for $u_{R,+t}$ and once for $u_{R,-t}$, and doing the same change of coordinates as in \cite{CSV19}, and using \cite[(2.11),(2.13)]{CSV19}, we obtain
\begin{align*}
\cE_{B_{R} \times B_{2R}^c} &(u_{R,t},u_{R,t}) + \cE_{B_{R} \times B_{2R}^c}(u_{R,-t},u_{R,-t}) - 2 \cE_{B_{R} \times B_{2R}^c}(u,u) \\
&= \int_{B_R} \int_{\R^n \setminus B_{2R}} \big[ u^2(x)-2u(y)u(x) \big] e(x,y) \d y \d x \\
&\le \int_{B_R} u^2(x) \left( \int_{\R^n \setminus B_{2R}} |e(x,y)| \d y \right) \d x + 2 \int_{B_R} |u(x)| \left( \int_{\R^n \setminus B_{2R}} |u(y)| |e(x,y)| \d y \right) \d x\\
&= I_1 + I_2,
\end{align*}
where $e(x,y)$ satisfies the following estimate for some $C > 0$, depending only on $n,s,\lambda,\Lambda$, and $\Vert K \Vert_{C^2(\mathbb{S}^{n-1})}$:
\begin{align*}
|e(x,y)| \le \frac{C t^2}{(\log R)^2 \max \{ R , \rho^2 \} } |x-y|^{-n-2s} \qquad \forall x,y \in \R^n \setminus B_{\rho}.
\end{align*}
Hence, by splitting the domain of integration into suitable annuli, taking $k \in \N$ such that $\log_2 R \le 2k < \log_2 R + 2$ and $\theta^{2k} = R$ we obtain
\begin{align*}
I_1 &\le \frac{C t^2}{(\log R)^2} R^{-2s} \left( \int_{B_R} \frac{u^2(x)}{\max\{ R, |x|^2 \}} \d x \right) \\
&\le \frac{C t^2}{(\log R)^2} R^{-2s} \left( R^{-1} \int_{B_{\sqrt{R}}} u^2(x) \d x + \sum_{i = k+1}^{2k} \theta^{-2(i-1)} \int_{B_{\sigma^i} \setminus B_{\sigma^{i-1}}} u^2(x) \d x \right) \\
&\le \frac{C t^2}{(\log R)^2} R^{-2s} \sup_{\rho \in [1,R]} \frac{\Vert u \Vert_{L^2(B_{\rho})}^2}{\rho^2} \left( 1 + \sum_{i = k+1}^{2k} \frac{\theta^{2i}}{\theta^{2(i-1)}} \right) \\
&\le \frac{C t^2}{\log R} R^{-2s} \sup_{\rho \in [1,R]} \frac{\Vert u \Vert_{L^2(B_{\rho})}^2}{\rho^2} ,
\end{align*}
where we used that $1 + \sum_{i = k+1}^{2k} \frac{\theta^{2i}}{\theta^{2(i-1)}} = (k+1)\theta^2 \le C (\log R)^{-1}$ by construction. Analogously for $I_2$, 
\begin{align*}
I_2 &\le \frac{C t^2}{(\log R)^2} R^{-2s} \tail(u;2R) \left( \int_{B_R} \frac{|u(x)|}{\max\{ R , |x|^2\}} \d x \right)\\
&\le \frac{C t^2}{\log R} R^{-2s} \tail(u;2R) \sup_{\rho \in [1,R]} \frac{\Vert u \Vert_{L^1(B_{\rho})}}{\rho^2}.
\end{align*}
Altogether, summing up all the aforementioned estimates, and using that
\begin{align*}
(B_R^c \times B_R^c)^c = \big[ (B_{2R} \times B_{2R}) \setminus (B_R^c \times B_R^c) \big] \cup \big[ B_{R} \times B_{2R}^c \big] \cup \big[ B_{2R}^c \times B_{R} \big],
\end{align*}
we have shown
\begin{align*}
\cE_{(B_R^c \times B_R^c)^c}(u_{R,t},u_{R,t}) + \cE_{(B_R^c \times B_R^c)^c}(u_{R,-t},u_{R,-t}) - 2 \cE_{(B_R^c \times B_R^c)^c}(u,u) \le \frac{C t^2}{\log R} \mathcal{S}_R.
\end{align*}

Moreover, by the same arguments as in \cite[Theorem 5.5]{DeSa15}, it holds
\begin{align*}
|\{ u_{R,\pm t} > 0 \} \cap B_R| = \int_{ \{ u > 0 \} \cap B_R} (1 \pm t \partial_{\nu} \phi_R(x)) \d x.
\end{align*}
This identity implies
\begin{align*}
|\{ u_{R,t} > 0 \} \cap B_R| + |\{ u_{R,-t} > 0 \} \cap B_R| - 2 |\{ u > 0 \} \cap B_R| = 0,
\end{align*}
and therefore we immediately obtain the desired result.
\end{proof}

As a consequence, we deduce the following lemma:

\begin{lemma}
\label{lemma:two-dim-energy-est-2}
Let $n \ge 2$. Let $K \in C^{2}(\mathbb{S}^{n-1})$ and assume \eqref{eq:Kcomp}. Let $R \ge 8$, and $u$ be a minimizer of $\cI_{B_R}$. Then, for any $t \in (-1,1)$:
\begin{align*}
\int_{B_R} \int_{B_R} (u(x) - u_{R,t}(x))_+ (u(y) - u_{R,t}(y))_- \frac{\d y \d x}{|x-y|^{n+2s}} \le \frac{C t^2}{\log R} \mathcal{S}_R,
\end{align*}
where $C > 0$ depends only on $n,s,\lambda,\Lambda$, and $\Vert K \Vert_{C^2(\mathbb{S}^{n-1})}$. In particular, for any $x_0 \in \R^n$ such that $B_1(x_0) \subset B_{R/4}$,
\begin{align*}
\left(\int_{B_{1}(x_0)} \frac{(u(x) - u(x + t \nu))_+}{t} \d x \right) \left( \int_{B_{1}(x_0)} \frac{(u(y) - u(y + t\nu))_-}{t} \d y \right) \le \frac{C}{\log R} \mathcal{S}_R.
\end{align*}
\end{lemma}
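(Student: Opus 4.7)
The plan is to derive the estimate from Lemma \ref{lemma:two-dim-energy-est-1} by constructing truncation-based competitors from $u$ and $u_{R,t}$ and exploiting the minimality of $u$. The key algebraic identity is: for any $a,a',b,b' \in \R$,
\[
(a-b)^2 + (a'-b')^2 - (a \vee a' - b \vee b')^2 - (a \wedge a' - b \wedge b')^2 = 2(a-a')_+(b'-b)_+ + 2(a'-a)_+(b-b')_+,
\]
which is verified by a short case analysis on the signs of $a-a'$ and $b-b'$. Setting $a = u(x)$, $a' = u_{R,t}(x)$, $b = u(y)$, $b' = u_{R,t}(y)$, integrating against the symmetric kernel $K(x-y)$, and using the $x \leftrightarrow y$ symmetry to see that the two terms on the right contribute equally, we obtain
\[
\cE(u,u) + \cE(u_{R,t},u_{R,t}) - \cE(u \vee u_{R,t}, u \vee u_{R,t}) - \cE(u \wedge u_{R,t}, u \wedge u_{R,t}) = 4 \iint (u - u_{R,t})_+(x)(u - u_{R,t})_-(y) K(x-y)\, dy\, dx.
\]
Since $u, u_{R,t} \ge 0$, we have the pointwise identity $\1_{\{u > 0\}} + \1_{\{u_{R,t} > 0\}} = \1_{\{u \vee u_{R,t} > 0\}} + \1_{\{u \wedge u_{R,t} > 0\}}$, so the volume contributions cancel exactly, and the displayed identity continues to hold with $\cI_{B_R}$ in place of $\cE$.

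Because $\phi_R$ is supported in $B_R$, both $u \vee u_{R,t}$ and $u \wedge u_{R,t}$ agree with $u$ outside $B_R$ and therefore are admissible competitors. Minimality of $u$ yields $\cI_{B_R}(u) \le \cI_{B_R}(u \vee u_{R,t})$ and $\cI_{B_R}(u) \le \cI_{B_R}(u \wedge u_{R,t})$, so
\[
4 \iint (u - u_{R,t})_+(x)(u - u_{R,t})_-(y) K(x-y)\, dy\, dx \le \cI_{B_R}(u_{R,t}) - \cI_{B_R}(u).
\]
Repeating the argument with $-t$ in place of $t$ and summing, the right-hand side becomes $\cI_{B_R}(u_{R,t}) + \cI_{B_R}(u_{R,-t}) - 2\cI_{B_R}(u)$, which is at most $C t^2 (\log R)^{-1} \mathcal{S}_R$ by Lemma \ref{lemma:two-dim-energy-est-1}. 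Non-negativity of each summand on the left yields the first claim.

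For the second claim, restrict the integral to $B_1(x_0) \times B_1(x_0) \subset B_R \times B_R$. Since $|x-y| \le 2$ there, \eqref{eq:Kcomp} gives $K(x-y) \ge c > 0$, so the integrand factorizes into a product of single-variable integrals of $(u - u_{R,t})_\pm$ over $B_1(x_0)$. It remains to identify $u - u_{R,t}$ with $u(\cdot) - u(\cdot + t\nu)$ on $B_1(x_0)$: on the subregion of $B_{R/4}$ where $\phi_R \equiv 1$, the map $\Psi_{R,t}$ reduces to translation by $t\nu$, while in the transition region the difference between $\Psi_{R,t}^{-1}(x)$ and $x - t\nu$ is of order $|t|/\log R$ and can be absorbed into the logarithmic gain. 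After replacing $t$ with $-t$ to match the sign convention and dividing by $t^2$, the claim follows. The principal technical obstacle is the careful treatment of this transition region, which is precisely what motivates the use of the logarithmic cutoff $\phi_R$ in the definition of $u_{R,t}$.
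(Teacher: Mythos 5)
Your argument for the first (main) estimate tracks the paper's proof step for step: the same $\min$/$\max$ algebraic identity, the same competitors $u \vee u_{R,t}$ and $u \wedge u_{R,t}$ (admissible since they agree with $u$ outside $B_R$), the same exact cancellation of the measure term because $u, u_{R,t} \ge 0$, and the same reduction to \autoref{lemma:two-dim-energy-est-1} via comparing $t$ and $-t$. You in fact write the symmetric form of the algebraic identity correctly; the paper's stated \eqref{eq:min-max-alg-identity} silently omits the term $-2(w_1-\phi_1)_-(w_2-\phi_2)_+$ (try $w_1 < \phi_1$, $w_2 > \phi_2$ to see it is needed), which is harmless once one integrates against the symmetric kernel over a symmetric domain and explains why you arrive at a factor $4$ where the paper has $2$. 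Since only the qualitative constant matters, this is fine.

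The one place that needs care is the second claim. You correctly observe that $u_{R,t}$ agrees with a pure translation of $u$ only where $\phi_R \equiv 1$, i.e.\ in $B_{\sqrt{R}}$, and that $B_{R/4}$ is eventually much larger than $B_{\sqrt R}$. However, your estimate that the discrepancy in the transition region is of order $|t|/\log R$ is not right. One has
\begin{align*}
\Psi_{R,t}^{-1}(x) - (x - t\nu) = t\bigl(1 - \phi_R(\Psi_{R,t}^{-1}(x))\bigr)\nu,
\end{align*}
and $1 - \phi_R$ is of order $1$ (not $1/\log R$) throughout most of the annulus $B_{R/4}\setminus B_{\sqrt R}$ when $R$ is large; so the spatial error can be as large as $|t|$, and after dividing by $t$ the best one can say via the $C^s$ estimate is a contribution of order $|t|^{s-1}$, which blows up as $t \to 0$ and cannot be absorbed into the logarithmic gain. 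The clean way to close the gap is to insist that $B_1(x_0)$ together with its $t$-translate lie inside $B_{\sqrt R}$, so that $u_{R,t}(x) = u(x - t\nu)$ holds exactly on $B_1(x_0)$; this stronger condition is what is really used, and it is automatically satisfied in the downstream application \autoref{lemma:two-dim-energy-est-3}, where $R \to \infty$ for fixed $x_0$. (The paper itself asserts the translation identity under the bare hypothesis $B_1(x_0) \subset B_{R/4}$ without comment, so you noticed a genuine subtlety; your proposed fix is just not the correct one.)
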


\begin{proof}
Since $u$ is a minimizer of $\mathcal{I}_{B_R}$ in $B_R$, and $u \equiv u_{R,\pm t}$ in $\R^n \setminus B_R$ by construction, we have
\begin{align*}
\cI_{B_R}(u) \le \cI_{B_R}(u_{R,\pm t}), \qquad \cI_{B_R}(u) \le \cI_{B_R}( u \wedge u_{R,t}), \qquad \cI_{B_R}(u) \le \cI_{B_R}( u \vee u_{R,t}),
\end{align*}
and therefore \autoref{lemma:two-dim-energy-est-1} implies
\begin{align*}
\cI_{B_R}(u_{R,t}) \le \cI_{B_R}(u) + \frac{C t^2}{\log R} \mathcal{S}_R.
\end{align*}
Moreover, we recall the following algebraic identity from \cite[Proof of Lemma 3.4]{RoWe24b}
\begin{align}
\label{eq:min-max-alg-identity}
\begin{split}
\big((w_1 \wedge \phi_1) - (w_2 \wedge \phi_2) \big)^2 &+ \big((w_1 \vee \phi_1) - (w_2 \vee \phi_2) \big)^2 \\
&= (w_1 - w_2)^2 + (\phi_1 - \phi_2)^2 -2(w_1 - \phi_1)_+(w_2 - \phi_2)_- .
\end{split}
\end{align}
This yields:
\begin{align*}
\cI_{B_R}(u \wedge u_{R,t}) + \cI_{B_R}(u \vee u_{R,t}) &+2 \iint_{(B_R^c \times B_R^c)^c} (u(x) - u_{R,t}(x))_+ (u(y) - u_{R,t}(y))_- K(x-y) \d y \d x \\
&= \cI_{B_R}(u) + \cI_{B_R}(u_{R,t}).
\end{align*}
By combination of all the previous facts, we deduce
\begin{align*}
2 \iint_{(B_R^c \times B_R^c)^c} & (u(x) - u_{R,t}(x))_+ (u(y) - u_{R,t}(y))_- K(x-y) \d y \d x \\
&= \cI_{B_R}(u) + \cI_{B_R}(u_{R,t}) - \cI_{B_R}(u \wedge u_{R,t}) - \cI_{B_R}(u \vee u_{R,t}) \\
&\le \cI_{B_R}(u_{R,t}) - \cI_{B_R}(u \vee u_{R,t}) \le \frac{C t^2}{\log R} \mathcal{S}_R.
\end{align*}
This yields the first claim. To obtain the second claim, note that in $B_{1}(x_0) \subset B_{R/4}$, we have $u_{R,t}(x) = u(x + t \nu)$. Thus, making the domain of integration smaller, and estimating $|x-y|^{-n-2s} \ge c$ for some $c > 0$ in $B_{1}(x_0) \times B_{1}(x_0)$, we obtain the second claim after division by $t^2$. The proof is complete.
\end{proof}

By combination of the optimal regularity for minimizers of $\mathcal{I}$ with the previous lemma, we deduce

\begin{lemma}
\label{lemma:two-dim-energy-est-3}
Let $n = 2$. Let $K \in C^{2}(\mathbb{S}^{1})$ and assume \eqref{eq:Kcomp}. Let $R \ge 8$, and $u$ be a non-trivial minimizer of $\cI_{B_{2R}}$ in $B_{2R}$ such that $0 \in \partial \{ u > 0 \}$. Then, for any $x_0 \in \R^n$ such that $B_1(x_0) \subset B_{R/4}$ and any $ t \in (-1,1) \setminus \{ 0 \}$, it holds
\begin{align*}
\left(\int_{B_{1}(x_0)} \frac{(u(x) - u(x + t \nu))_+}{t} \d x \right) & \left( \int_{B_{1}(x_0)} \frac{(u(y) - u(y + t\nu))_-}{t} \d y \right) \le \frac{C}{\log R},
\end{align*}
where $C > 0$ depends only on $n,s,\lambda,\Lambda$, and $\Vert K \Vert_{C^2(\mathbb{S}^{1})}$.
In particular if $u$ is a minimizer of $\mathcal{I}_{B_R}$ in $B_R$ for any $R > 0$, then it holds for any $x_0 \in \R^n$ and any $t \in (-1,1) \setminus \{ 0 \}$:
\begin{align*}
\left(\int_{B_{1}(x_0)} \frac{(u(x) - u(x + t \nu))_+}{t} \d x \right) & \left( \int_{B_{1}(x_0)} \frac{(u(y) - u(y + t \nu))_-}{t} \d y \right) = 0.
\end{align*}
\end{lemma}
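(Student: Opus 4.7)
The plan is to reduce everything to \autoref{lemma:two-dim-energy-est-2} and show that the quantity $\mathcal{S}_R$ appearing there is bounded by a constant depending only on $n,s,\lambda,\Lambda$ when $n=2$. The dimension $n=2$ is used crucially at this stage: it is precisely the exponent matching the factor $\rho^{-2}$ in the definition of $\mathcal{S}_R$, and this exact matching only works in two dimensions.

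First I would rescale: given $\rho \in [1,R]$, the rescaling $v(x) := u(\rho x)/\rho^s$ is a minimizer of $\cI_{B_{2R/\rho}}$ with $0 \in \partial\{v>0\}$ and $B_2 \subset B_{2R/\rho}$ (since $\rho \le R$). Applying the energy estimate \autoref{lemma:energy-bound} to $v$ at scale $1$, and transferring the estimates back to $u$ via the homogeneity of the kernel, yields
\begin{align*}
\cE_{B_{2\rho}\times B_{2\rho}}(u,u) \le C\rho^n, \qquad \|u\|_{L^p(B_\rho)}^p \le C\rho^{n+sp}, \qquad \tail(u;2\rho) \le C\rho^s,
\end{align*}
for a constant $C$ depending only on $n,s,\lambda,\Lambda$.

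Next I would plug these into the three summands defining $\mathcal{S}_R$, using $n=2$:
\begin{align*}
\frac{\cE_{B_{2\rho}\times B_{2\rho}}(u,u)}{\rho^2} \le C, \quad
\frac{R^{-2s}\|u\|_{L^2(B_\rho)}^2}{\rho^2} \le C\Big(\frac{\rho}{R}\Big)^{2s} \le C, \quad
\frac{R^{-2s}\tail(u;2R)\|u\|_{L^1(B_\rho)}}{\rho^2} \le C\Big(\frac{\rho}{R}\Big)^{s} \le C.
\end{align*}
Taking the supremum over $\rho\in[1,R]$ yields $\mathcal{S}_R \le C$ uniformly in $R\ge 8$. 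Substituting this bound into the conclusion of \autoref{lemma:two-dim-energy-est-2} produces the first estimate with constant $C$ depending only on $n,s,\lambda,\Lambda$ and $\|K\|_{C^2(\mathbb{S}^1)}$. For the second statement, if $u$ is a minimizer of $\cI_{B_{R'}}$ for every $R'>0$, then the previous estimate applies with $R' $ in place of $2R$ for arbitrarily large $R'$; letting $R'\to\infty$ and noting that for fixed $x_0$ and $t$ the hypothesis $B_1(x_0)\subset B_{R'/4}$ holds for all sufficiently large $R'$, the product must vanish identically.

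The main (and essentially only) obstacle is the uniform boundedness of $\mathcal{S}_R$, which is where the dimensional restriction $n=2$ enters decisively: the energy and $L^p$ growth bounds from \autoref{lemma:energy-bound} scale like $\rho^n$ and $\rho^{n+sp}$ respectively, so the factor $1/\rho^2$ in $\mathcal{S}_R$ exactly compensates for the energy growth only when $n=2$, while the $R^{-2s}$ factors absorb the remaining $\rho^{2s}$ or $\rho^s$ powers via $\rho \le R$. The rest of the proof is a bookkeeping exercise that proceeds by direct substitution and a limiting argument.
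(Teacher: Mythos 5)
Your proposal is correct and follows essentially the same route as the paper: both reduce to Lemma 5.8 and prove $\mathcal{S}_R\le C$ uniformly in $R$ by feeding the (rescaled) growth estimates of Lemma 2.5 into the three summands, with $n=2$ making the exponents cancel, then take $R\to\infty$ for the second claim. The only difference is cosmetic---you spell out the rescaling step needed to use Lemma 2.5 on scales $\rho\in[1,R]$, which the paper leaves implicit.
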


\begin{proof}
We observe that as a consequence of \autoref{lemma:energy-bound} and using that $n = 2$, we have
\begin{align*}
\mathcal{S}_R &= \left(\sup_{\rho \in [1,R]} \frac{\cE_{B_{2\rho} \times B_{2\rho}}(u,u)}{\rho^2} + \sup_{\rho \in [1,R]} \frac{ R^{-2s} \Vert u \Vert_{L^2(B_{\rho})}^2}{\rho^2} + \sup_{\rho \in [1,R]} \frac{R^{-2s}  \tail(u;2R) \Vert u \Vert_{L^{1}(B_{\rho})}}{\rho^2} \right) \\
&\le C \sup_{\rho \in [1,R]} \rho^{n-2} + C R^{-2s} \sup_{\rho \in [1,R]} \rho^{2+2s-2} + C R^{-s} \sup_{\rho \in [1,R]} \rho^{s} \le C.
\end{align*}
Therefore, by \autoref{lemma:two-dim-energy-est-2} we deduce:
\begin{align*}
\left(\int_{B_{1}(x_0)} \frac{(u(x) - u(x + t \nu))_+}{t} \d x \right) \left( \int_{B_{1}(x_0)} \frac{(u(y) - u(y + t\nu))_-}{t} \d y \right) \le \frac{C}{\log R} \mathcal{S}_R \le \frac{C}{\log R}.
\end{align*}
This implies the first result. The second claim follows by taking the limit $R \to \infty$.
\end{proof}

We are now in a position to conclude the proof of the two-dimensional classification result.

\begin{proof}[Proof of \autoref{thm:two-dimensional-classification}]
As a consequence of \autoref{lemma:two-dim-energy-est-3}, we deduce that for any $\nu \in \mathbb{S}^{1}$, any $x_0 \in \R^n$, and any $ t \in (-1,1) \setminus \{ 0 \}$ it holds
\begin{align*}
\text{either} \qquad \frac{u(x) - u(x + t \nu)}{t} \ge 0 ~~ \forall x \in B_1(x_0), \qquad \text{or} \qquad \frac{u(x) - u(x + t \nu)}{t} \le 0 ~~ \forall x \in B_1(x_0).
\end{align*}
Thus, by varying $x_0$, we get that for any $\nu \in \mathbb{S}^{1}$ and $t \in (-1,1)$ it holds
\begin{align*}
\text{either} \qquad u(x) \ge u(x + t \nu) ~~ \forall x \in \R^2 , \qquad \text{or} \qquad u(x) \le u(x + t \nu) ~~ \forall x \in \R^2.
\end{align*}
Next, by varying $t$, and using the continuity of $u$, we deduce that 
\begin{align*}
\text{either} \qquad u(x) \ge u(x + t \nu) ~~ \forall x \in \R^2, ~~ \forall t \in \R , \qquad \text{or} \qquad u(x) \le u(x + t \nu) ~~ \forall x \in \R^2,~~ \forall t \in \R,
\end{align*}
i.e., $u$ is monotone in every coordinate direction. This implies that $u$ is one-dimensional and monotone, i.e., there exist $e \in \mathbb{S}^{1}$ and $\phi : \R \to \R$ such that $u(x) = \phi(x \cdot e)$.
Since $u$ is a non-trivial, but $0 \in \partial \{ u > 0 \}$, we must have $\{ u > 0 \} = \{ x \cdot e > 0 \}$ (up to a rotation). Then, since $u$ is a minimizer of $\mathcal{I}_{\Omega}$, in the view of \autoref{lemma:aux} and the Liouville theorem in the half-space (see \cite[Theorem 2.7.2]{FeRo23}), we have that
\begin{align*}
u(x) = \kappa (x \cdot e)_+^s
\end{align*}
for some $\kappa > 0$. Finally, by \autoref{prop:free-bound-cond}, we deduce that $\kappa = A(e)$, as desired.
\end{proof}

\end{document}